\title{Classification of scaling limits of uniform quadrangulations with a
  boundary} \author{Erich Baur\footnote{Universit{\'e} de Lyon et ENS de Lyon,
    erich.baur@ens-lyon.fr}\and Gr\'{e}gory
  Miermont\footnote{Université{\'e} de Lyon, ENS de Lyon et Insitut
    universitaire de France, 
    gregory.miermon@ens-lyon.fr}\and Gourab Ray\footnote{University of Cambridge, gourab1987@gmail.com}}
\date{}
\newtheorem{thm}{Theorem}[section]
\newtheorem{lemma}[thm]{Lemma}
\newtheorem{corollary}[thm]{Corollary}
\newtheorem{prop}[thm]{Proposition}
\theoremstyle{definition}
\newtheorem{defn}[thm]{Definition}
\theoremstyle{definition}
\newtheorem{remark}[thm]{Remark}
\newtheorem{remex}[thm]{Remark/Exercise}
\numberwithin{equation}{section}
\renewcommand{\P}{\mathbb P}
\newcommand{\Z}{\mathbb Z}
\newcommand{\E}{\mathbb E}
\newcommand{\R}{\mathbb R}
\newcommand{\N}{\mathbb N}
\newcommand{\eps}{\varepsilon}
\newcommand{\1}{1 \mkern -6mu 1} 
\newcommand{\m}{\textup{$\mathfrak{m}$}}
\newcommand{\cR}{\mathcal{R}}
\newcommand{\cT}{\mathcal{T}}
\newcommand{\cQ}{\mathcal{Q}}
\newcommand{\Wha}{W^{\theta}}
\newcommand{\Xha}{X^{\theta}}
\newcommand{\Zha}{Z^{\theta}}
\newcommand{\Di}{D^{\textup{I}}}
\newcommand{\Wi}{W^{\textup{I}}}
\newcommand{\Zi}{Z^{\textup{I}}}
\newcommand{\Q}{\mathbb Q}
\newcommand{\bq}{{\bf q}}
\newcommand{\dgr}{d_{\textup{gr}}}
\newcommand{\dgh}{d_{\textup{GH}}}
\newcommand{\dha}{d_{\textup{H}}}
\newcommand{\dq}{d_{{\bf q}}}
\newcommand{\dmap}{d_{\textup{map}}}
\newcommand{\uC}{\underline{C}}
\def \e{{\rm e}}
\newcommand{\dis}{\textup{dis}}
\newcommand{\BHP}{\textup{\textsf{BHP}}}
\newcommand{\BP}{\textup{\textsf{BP}}}
\newcommand{\IBD}{\textup{\textsf{IBD}}}
\newcommand{\BM}{\textup{\textsf{BM}}}
\newcommand{\BD}{\textup{\textsf{BD}}}
\newcommand{\ICRT}{\textup{\textsf{ICRT}}}
\newcommand{\CRT}{\textup{\textsf{CRT}}}
\newcommand{\UIHPQ}{\textup{\textsf{UIHPQ}}}
\newcommand{\UIPQ}{\textup{\textsf{UIPQ}}}
\newcommand{\Y}{\textup{\textsf{Y}}}
\newcommand{\Zsf}{\textup{\textsf{Z}}}
\newcommand{\slimGH}{\textup{$\textsf{s-Lim}$}}
\newcommand{\slimLGH}{\textup{$\textsf{s-Lim}_{\textup{loc}}$}}
\newcommand{\f}{\textup{$\mathfrak{f}$}}
\newcommand{\la}{\textup{$\mathfrak{l}$}}
\newcommand{\an}{\textup{$\mathfrak{r}$}}
\newcommand{\q}{\textup{$\mathfrak{q}$}}
\newcommand{\Br}{\textup{$\mathfrak{B}$}}
\newcommand{\Fo}{\textup{$\mathfrak{F}$}}
\newcommand{\La}{\textup{$\mathfrak{L}$}}
\newcommand{\cb}{\textup{Ball}}
\newcommand{\suc}{\textup{succ}}
\newcommand{\arcr}{\curvearrowright}
\newcommand{\arcl}{\curvearrowleft}
\newcommand{\br}{\textup{\textsf{b}}}
\newcommand{\vd}{v^{\bullet}}
\newcommand{\uuX}{\underline{\underline{X}}}
\newcommand{\uuY}{\underline{\underline{Y}}^\sigma}
\renewcommand{\d}{\mathrm{d}}
\begin{document}
\maketitle

\begin{abstract} 
  We study non-compact scaling limits of uniform random planar
  quadrangulations with a boundary when their size tends to
  infinity. Depending on the asymptotic behavior of the boundary size and
  the choice of the scaling factor, we observe different limiting metric
  spaces. Among well-known objects like the Brownian plane or the infinite
  continuum random tree, we construct two new one-parameter families of
  metric spaces that appear as scaling limits: the Brownian half-plane with skewness parameter $\theta$
  and the infinite-volume Brownian disk of perimeter $\sigma$. We also
  obtain various coupling and limit results clarifying the relation between these
  objects.
\end{abstract} 
\footnote{{\it Acknowledgment of support.} The research of EB was
  supported by the Swiss National Science Foundation grant
  P300P2\_161011, and performed within the framework of the LABEX
  MILYON (ANR-10-LABX-0070) of Universit\'e de Lyon, within the
  program ``Investissements d'Avenir'' (ANR-11-IDEX-0007) operated by
  the French National Research Agency (ANR). GM acknowledges support
  of the grants ANR-14-CE25-0014 (GRAAL) and ANR-15-CE40-0013
  (Liouville), and of Fondation Simone et Cino Del Duca. }

\setcounter{tocdepth}{2}

\begin{figure}[ht]

\begin{minipage}{1\linewidth}
\centering
  \parbox{5.1cm}{\includegraphics[width=0.25\textwidth]{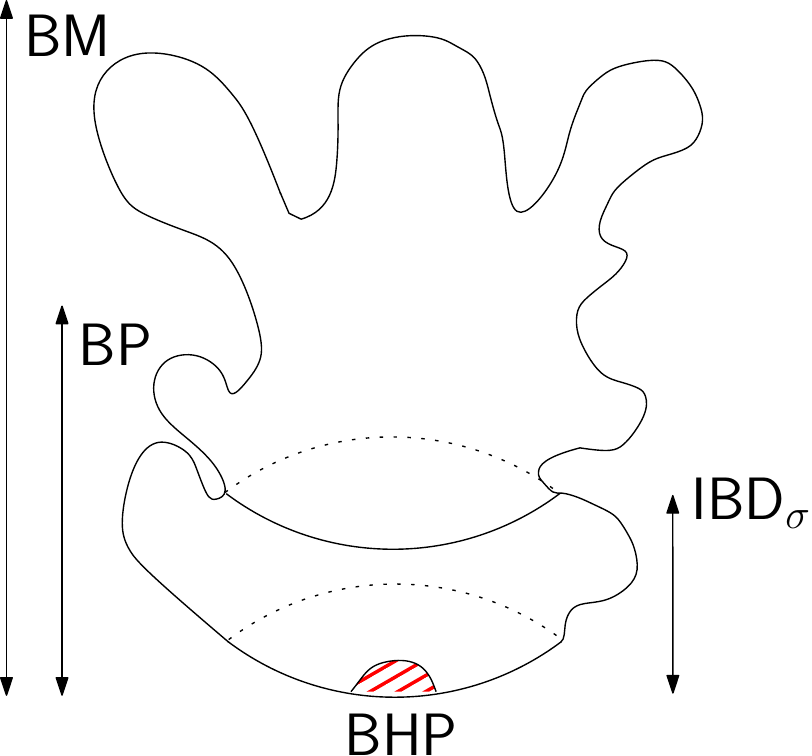}}
  \parbox{4.8cm}{\includegraphics[width=0.20\textwidth]{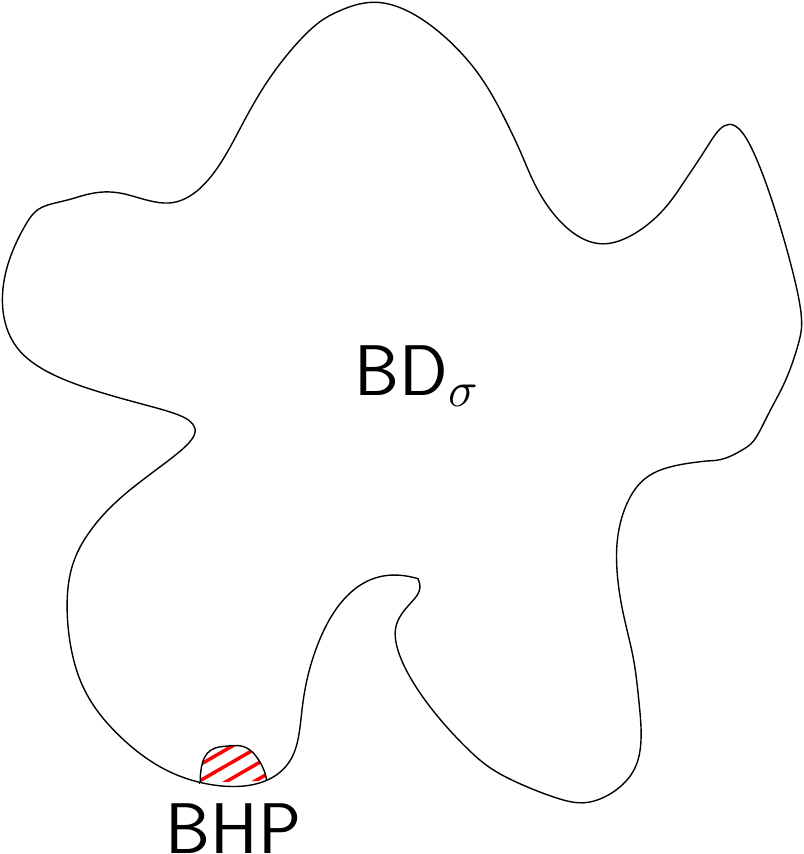}}
  \parbox{4.1cm}{\includegraphics[width=0.25\textwidth]{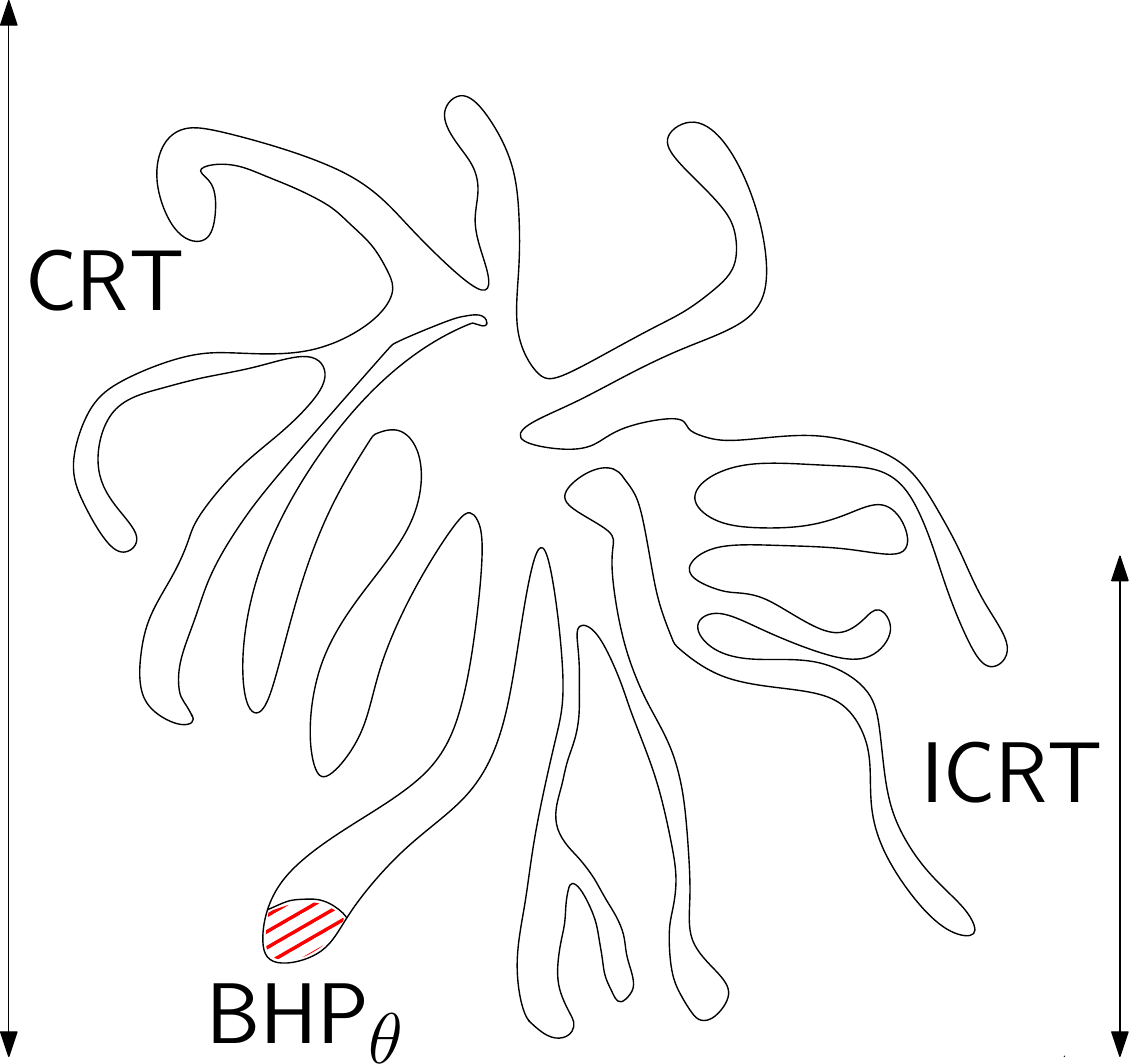}}
\end{minipage}

\caption{Schematic representation of the non-trivial scaling limits in the
  regimes $1\ll \sigma_n\ll\sqrt{n}$ (left), $\sigma_n\approx\sqrt{n}$
  (middle), $\sqrt{n}\ll \sigma_n\ll n$ (right). See the listings in
  Section~\ref{sec:overview}.}
  \label{fig:regimes-schema}
\end{figure}
\tableofcontents
\section{Introduction}
In this work, we obtain a complete classification of possible scaling limits
of finite random planar quadrangulations with a boundary when their size
tends to infinity.

Recall that a planar map is a proper embedding of a finite connected graph
in the two-dimensional sphere. The graph may have loops and multiple
edges. The faces of a map are the connected components of the complement of
its edges. A planar quadrangulation with a boundary is a particular planar
map where its faces have degree four, i.e., are incident to four oriented
edges (an edge is counted twice if it lies entirely in the face), except
possibly one face which may have an arbitrary (even) degree. This face is
referred to as the external face, where the other faces that form
quadrangles are called internal faces.  The boundary of the map is given by
the oriented edges that are incident to the external face, and the number
of such edges is called the size of the boundary, or the {\it perimeter} of
the map. The size of the map is given by the number of internal faces.  We
do not ask for the boundary to be a simple curve. We always
consider rooted maps with a boundary, which means that we distinguish one
oriented edge of the boundary such that the root face lies to the left of
that edge. This edge will be called the root edge, and its origin the root
vertex.  As usual, two (rooted) maps are considered equivalent if they
differ by an orientation- and root-preserving homeomorphism of the sphere.

We are interested in scaling limits of planar maps picked uniformly at
random among all quadrangulations with a boundary when the size and
(possibly) the perimeter of the map tend to infinity. This means that we
view the vertex set of the quadrangulation as a metric space for the graph
distance and consider (under a suitable rescaling of the distance)
distributional limits of such metric spaces, either in the global or local
Gromov-Hausdorff topology.

In~\cite{LG3} and independently in~\cite{Mi2}, it was shown that uniformly
chosen quadrangulations of size $n$, equipped with the graph distance
$\dgr$ rescaled by a factor $n^{-1/4}$, converge to a random compact metric
space called the Brownian map. The latter turns out to be a universal
object which appears as the distributional limit of many classes of random
maps. We refer to the recent overview~\cite{Mi3} for various aspect of the
Brownian map and for more references.

Here we shall deal with quadrangulations of size $n$ having a boundary of
size $2\sigma_n$, and we will distinguish three boundary regimes as $n$
tends to infinity:
$${\bf a})\,\,\sigma_n/\sqrt{n}\rightarrow 0;\quad{\bf b})\,\,\sigma_n/\sqrt{n}\rightarrow \sqrt{2}\sigma\;\hbox{ for some
}\sigma\in(0,\infty);\quad{\bf c})\,\, \sigma_n/\sqrt{n}\rightarrow
\infty.$$ Bettinelli~\cite{Be3} showed that in regime {\bf a}), the boundary
becomes negligible in the scale $n^{-1/4}$, and the Brownian map appears in
the limit when $n$ tends to infinity. In regime {\bf b}), he obtained under the
same rescaling convergence along appropriate infinite subsequences to a
random metric space called the Brownian disk $\BD_\sigma$. Uniqueness of
this limit was later established by Bettinelli and Miermont
in~\cite{BeMi}. For the third regime {\bf c}), it is shown in~\cite{Be3} that a
rescaling by $\sigma_n^{-1/2}$ leads in the limit to Aldous's continuum
random tree $\CRT$~\cite{Al1,Al2}.

The scaling factors considered by Bettinelli~\cite{Be3} ensure that the
diameter of the rescaled planar map stays bounded in
probability. Consequently, the limits he obtains are random
compact metric spaces, and the right notion of convergence is the
Gromov-Hausdorff convergence in the space of (isometry classes of) compact
metric spaces. 

We will study all possible scalings $a_n\rightarrow\infty$ in all the above
boundary regimes. When $a_n$ grows slower then the diameter of the map as
$n$ tends to infinity, the right notion of convergence is the {\it local}
Gromov-Hausdorff convergence. Depending on the ratio of
perimeter and scaling parameter, the boundary will in the limit be
either invisible, or of a size comparable to the full map, or dominate the
map.

In the process we obtain two new one-parameter families of limit 
spaces: the Brownian half-plane $\BHP_\theta$ with parameter
$\theta\in[0,\infty)$ and the infinite-volume Brownian disk $\IBD_\sigma$
with boundary length $\sigma\in(0,\infty)$.

The Brownian disk $\BD_\sigma$ and the Brownian half-plane $\BHP=\BHP_0$
play a central role in this work. The latter can be seen as the
Gromov-Hausdorff tangent cone in distribution of $\BD_\sigma$ at its root,
and also as the scaling limit of the so-called uniform infinite half-planar
quadrangulation $\UIHPQ$, which in turn arises as the local limit in the
sense of Benjamini and Schramm of uniform quadrangulations with $n$ faces
and a boundary growing slower than $n$.

The space $\BHP_\theta$ for $\theta>0$ can be understood as an
interpolation between $\BHP$ (when $\theta\to 0$) and the so-called
infinite continuum random tree $\ICRT$ introduced by Aldous~\cite{Al1}
(when $\theta\to \infty$). The $\IBD_\sigma$ in turn interpolates between
$\BHP$ (when $\sigma\rightarrow \infty$) and the Brownian plane $\BP$
introduced by Curien and Le Gall~\cite{CuLG,CuLG2} (when $\sigma\rightarrow
0$). See also the recent work of Budzinski~\cite{Bu} for a hyperbolic
version of the Brownian plane.  These interpretations are easy consequences
of our results. We refer to Remark~\ref{rem:exercises} and the exercises there for
the exact statements.
 
For a better overview, we begin with a rough list of our main results on
scaling limits of finite-size quadrangulations with a boundary (including
results of~\cite{Be3} and~\cite{BeMi}). We then mention further results
that will be obtained below, including limit statements on
$\BD_{\sigma}$. The precise formulations can be found in
Section~\ref{sec:mainresults}, after a proper definition of the limit
spaces and a reminder on the notion of convergence in
Section~\ref{sec:def}.

As in many works in this context, our approach is based on the Bouttier-Di
Francesco-Guitter bijection~\cite{BoGu,BoDFGu}, which establishes a
one-to-one correspondence between (finite-size) quadrangulations with a 
boundary on the one hand and discrete labeled forests and bridges on the
other hand. The bijection is recalled in
Section~\ref{sec:encoding}. Section~\ref{sec:auxiliaryresults} contains
some more auxiliary results, mostly convergence results on forests and bridges
when their size tends to infinity. The statements proved there are of
some independent interest, but can also be skipped at first
reading. Section~\ref{sec:proofs} contains all the proofs of our main statements.

\subsection{Overview over the main results}
\label{sec:overview}
For any $\sigma_n\in \N=\{1,2,\ldots\}$, we write $Q_n^{\sigma_n}$ for a uniformly distributed rooted
quadrangulation with $n$ inner faces and a boundary of size
$2\sigma_n$. The vertex set of $Q_n^{\sigma_n}$ is denoted by
$V(Q_n^{\sigma_n})$, $\rho_n$ represents the root vertex and $\dgr$ stands
for the graph distance on $V(Q_n^{\sigma_n})$. For any two sequences $(a_n,n\in\N),(b_n,n\in\N)$
of reals, we write $a_n \ll b_n$ or $b_n \gg a_n$ if and only if $a_n/b_n \to 0$ as
$n\to\infty$, and we write $a_n\sim b_n$ if $a_n/b_n\to 1$. 

We denote by $\circ$ the trivial one-point metric space and write $\slimGH$
($\slimLGH$) for the distributional scaling limit of
$(V(Q_n^{\sigma_n}),a_n^{-1}\dgr,\rho_n)$ in the Gromov-Hausdorff topology
(in the local Gromov-Hausdorff topology) as $n$ tends to infinity.

\paragraph{The regime $\sigma_n \ll  \sqrt{n}$.}
\begin{mdframed}
\begin{itemize}
\item If $1\ll a_n \ll \sqrt{\sigma_n}$, then
$\slimLGH=\BHP.$  
\item If $1\ll a_n \sim (1/9)^{1/4}\sqrt{2\sigma_n/\sigma}$, $\sigma \in (0,\infty)$, then
$\slimLGH=\IBD_\sigma.$ 
\item If $\sqrt{\sigma_n} \ll a_n \ll n^{1/4}$, then  
$\slimLGH=\BP.$  
\item If $a_n \sim (8/9)^{1/4}n^{1/4}$, then (see~\cite{Be3})
$\slimGH=\BM.$  
\item If $a_n \gg n^{1/4}$, then $\slimGH=\circ\,.$
\end{itemize} 
\end{mdframed}

\paragraph{The regime $\sigma_n \sim \sigma \sqrt{2n}$, $\sigma\in(0,\infty)$.}
\begin{mdframed}
\begin{itemize}
\item If $1 \ll a_n \ll n^{1/4}$, then
$\slimLGH=\BHP.$
\item If $a_n \sim (8/9)^{1/4}n^{1/4}$, then (see~\cite{Be3}
  and~\cite{BeMi}) $\slimGH=\BD_\sigma.$
\item If $a_n \gg n^{1/4}$, then $\slimGH=\circ\,.$
\end{itemize}
\end{mdframed}

\paragraph{The regime $\sigma_n\gg \sqrt{n}$.}
\begin{mdframed}
\begin{itemize}
\item If $\sigma_n\ll n$ and $\lim_{n\rightarrow\infty}
  (9/4)^{1/4}a_n/\sqrt{2n/\sigma_n}=\sqrt{\theta}\in[0,\infty)$, then 
$\slimLGH=\BHP_\theta.$
\item If $\max\{1,\sqrt{n/\sigma_n}\}  \ll a_n \ll \sqrt{\sigma_n}$, then
$\slimLGH=\ICRT.$
\item If $a_n \sim \sqrt{2\sigma_n}$ (see~\cite{Be3}), then
$\slimGH=\CRT.$
\item If $a_n \gg \sqrt{\sigma_n} $, then $\slimGH=\circ\,.$
\end{itemize}
\end{mdframed}
The new results in these listings are covered by
Theorems~\ref{thm:BP},~\ref{thm:IBD},~\ref{thm:BHP1},~\ref{thm:BHP3}
and~\ref{thm:ICRT} below. 

In the regime $\sigma_n\ll\sqrt{n}$ in the first list, the last three
convergences include the case of bounded $\sigma_n$. In the last regime
$\sigma_n\gg \sqrt{n}$, we allow $\sigma_n$ to grow faster than $n$. The
scaling constants are chosen in such a way that the description of the
limiting objects is the most natural. See also
Figure~\ref{fig:regimes-schema} for a schematic representation.

Figure~\ref{fig:usersmanual} shows all possible regimes in one diagram, in
which the $x$-axis denotes the limiting possible values for the logarithm
of the boundary length $\log(\sigma_n)/\log(n)$ in units of $\log(n)$, and
the $y$-axis corresponds to the limit of the logarithm of the scaling
factor $\log(a_n)/\log(n)$ in units of $\log(n)$. For the specific value
$y=0$, it will be assumed that $a_n=1$, so that we are really in the regime
of local limits, without any rescaling. Similarly, for some specific values
of $(x,y)$, that are shown on the colored lines, we will require some
particular scaling behaviors that are detailed in the list above. For
instance, for $x=1/2$ and $y=1/4$, we really ask that $\sigma_n\sim
\sigma\sqrt{2n}$ for some $\sigma>0$ and $a_n\sim (8/9)^{1/4}n^{1/4}$.

As it is shown in Theorem~\ref{thm:UIHPQ-BHP}, the $\BHP$ can also be
obtained from the $\UIHPQ$ by zooming-out around the root:
$\lambda\cdot\UIHPQ\rightarrow\BHP$ in distribution in the local
Gromov-Hausdorff sense as $\lambda \to 0$. Here,
$\lambda\cdot\UIHPQ$ is obtained from $\UIHPQ$ by keeping the same
set of points, but rescaling the metric by a factor $\lambda$, see
Section~\ref{sec:locGH} below. 

Many of our results, for example those involving the Brownian half-planes
$\BHP_\theta$, $\theta\geq 0$, are based on coupling methods, which yield
in fact stronger statements than those mentioned above. In particular,
couplings will allow us to deduce that the topology of $\BHP_\theta$ is
that of a closed half-plane, whereas $\IBD_\sigma$ is homeomorphic to the
pointed closed disk (Corollaries~\ref{cor:topology-BHP}
and~\ref{cor:topology-IBD}).  

The above results will moreover enable us to determine the limiting
behavior of the Brownian disk $\BD_{T,\sigma}$ of volume $T$ and perimeter
$\sigma$ when zooming-in around its root vertex, or, equivalently by
scaling, by blowing up its volume and perimeter. Depending on the behavior
of the ``perimeter'' function $\sigma(\cdot):(0,\infty)\rightarrow
(0,\infty)$ for large volumes $T$, we observe $\BP$, $\IBD_\varsigma$,
$\BHP_\theta$ or the $\ICRT$ as the distributional limit in the local
Gromov-Hausdorff sense of $\BD_{T,\sigma(T)}$ when
$T\rightarrow\infty$. See Figure~\ref{fig:diagram2} below and
Corollaries~\ref{cor:BD1},~\ref{cor:BD4},~\ref{cor:BD2},~\ref{cor:BD3}.

\begin{figure}[ht]
  \centering
  \includegraphics[width=0.6\textwidth]{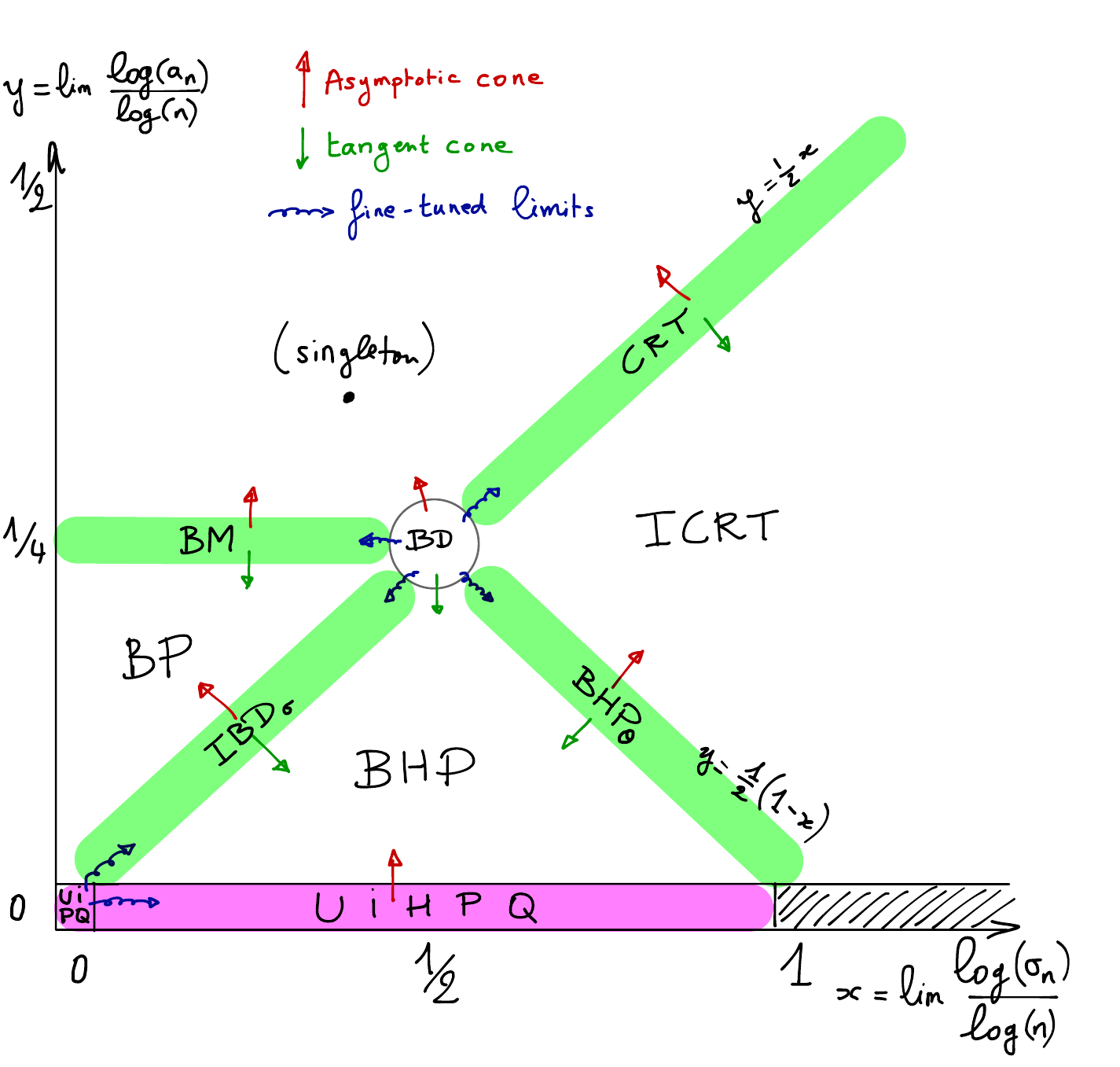}
\caption{The user's manual to this paper, displaying all possible regimes
  and limits for the rescaled pointed space
  $(V(Q_n^{\sigma_n}),a_n^{-1}\dgr,\rho_n)$.}
  \label{fig:usersmanual}
\end{figure}

\section{Definitions}
\label{sec:def}
In this section, we define our limit objects and recall some facts about
the (local) Gromov-Hausdorff convergence and the local limits of maps. 

All our limit metric spaces will be defined in terms of certain random
processes. To make the presentation unified, we will denote by $(X,W)$ the
canonical continuous process in $\mathcal{C}(I,\R)^2$, where $I$ will be an
interval of the form $I=[0,T]$ for some $T>0$, or $I=\R$. The set
$\mathcal{C}(I,\R)$ of continuous functions on $I$ is equipped with the
compact-open topology (topology of uniform convergence over compact subsets
of $I$).

For $t\in I\cap [0,\infty)$, we write
$\underline{X}_t=\inf_{[0,t]}X,$
and in case $I=\R$, we put for $t<0$
$\underline{X}_t=\inf_{(-\infty,t]}X.$

If $Y=(Y_t:t\geq 0)$ is a real-valued process indexed by the positive real
half-line, we write $\Pi(Y)$ for its {\it Pitman transform} defined as
$\Pi(Y)_t=Y_t-2\underline{Y}_t$, $t\geq 0$.  We will often use the fact
that if $B=(B_t,t\geq 0)$ is a standard Brownian motion, then its Pitman
transform $\Pi(B)$ has the law of a three-dimensional Bessel process, and
$\inf_{[t,\infty)}\Pi(B)= -\inf_{[0,t]}B$ for every $t\geq 0$. See~\cite[Theorem 0.1 (ii)]{Pi}.

\subsection{Metric spaces coded by real functions}
\label{sec:continuum-tree-coded}

\paragraph{Real trees. }
If $f$ is an element of $\mathcal{C}(I,\R)$, and $s,t\in I$, we denote
by $\underline{f}(s,t)$ the quantity 
$$
\underline{f}(s,t)=\left\{
\begin{array}{lcl}
\inf_{[s,t]}f & \mbox{ if }& s\leq t\\
\inf_{I\setminus[t,s]}f & \mbox{ if }&s>t
\end{array}\right.,
$$
and for $s,t\in I$ we let
$$d_f(s,t)=f(s)+f(t)-2\max(\underline{f}(s,t),\underline{f}(t,s))\, .$$
This defines a pseudo-metric on $I$, which is a class function for the
equivalence relation $\{d_f=0\}$. Therefore, we can define the
quotient space $\mathcal{T}_f=I/\{d_f=0\}$, on which $d_f$ induces a
true distance, still denoted by $d_f$ for simplicity. Since we assumed
that $I$ contains $0$, it is natural to ``root'' the space
$(\mathcal{T}_f,d_f)$ at the point $\rho$ given by the equivalence
class $[0]=\{s\in I:d_f(0,s)=0\}$ of $0$. 

The metric space $(\mathcal{T}_f,d_f,\rho)$ is called the {\it continuum
  tree coded by $f$}. In more precise terms, it is a rooted $\R$-tree,
which is also compact if $I$ is compact. This fact is well-known in the
``classical case'' where $f$ is a non-negative function on an interval
$[0,T]$, and $f(0)=f(T)=0$, see, e.g.~\cite[Section 3]{LGMi}, but it
remains true in this more general context. This fact will not be used in
this paper, so we do not prove it here.

Note that the space $(\mathcal{T}_f,d_f)$ comes with a natural Borel
$\sigma$-finite measure, $\mu_f$, which is defined as the push-forward
of the Lebesgue measure on $I$ by the canonical projection $p_f:I\to
\mathcal{T}_f$.

\paragraph{Metric gluing of a real tree on another. }
Let $f,g$ be two elements of $\mathcal{C}(I,\R)$. These functions code
two $\R$-trees $\mathcal{T}_f,\mathcal{T}_g$ in the preceding
sense. We now define a new metric space $(M_{f,g},D_{f,g})$ by
informally quotienting the space $(\mathcal{T}_g,d_g)$ by the
equivalence relation $\{d_f=0\}$. Formally, for $s,t\in I$, we let
\begin{equation}
\label{eq:Dfg}
D_{f,g}(s,t)=\inf\left\{\sum_{i=1}^kd_g(s_i,t_i):\begin{array}{l}
k\geq 1, \, s_1,\ldots,s_k,t_1,\ldots,t_k\in I,s_1=s,t_k=t,\\ 
d_f(t_i,s_{i+1})=0\mbox{ for every }i\in \{1,2,\ldots,k-1\}
    \end{array}
    \right\}\, .
\end{equation} 
This defines a pseudo-metric on $I$, and we let $M_{f,g}$ be the
quotient space $I/\{D_{f,g}=0\}$, endowed with the true metric
inherited from $D_{f,g}$ (and again, still denoted by $D_{f,g}$). 
Again, this space is naturally pointed at the equivalence
class of $0$ for $\{D_{f,g}=0\}$, which we still denote by $\rho$. 

Again, the space $(M_{f,g},D_{f,g})$ is naturally endowed with the
measure $\mu_{f,g}$, defined as the push-forward of the Lebesgue
measure on $I$ by the canonical projection $p_{f,g}:I\to M_{f,g}$. 

\subsection{Random snakes}\label{sec:random-snakes}
The definition for most of our limiting random spaces depend on the
notion of a random snake, which we now introduce. Let $f\in
\mathcal{C}(I,\R)$ be a continuous path on an interval $I$. The random
snake driven by $f$ is a random Gaussian process $(Z^f_s,s\in I)$ satisfying
$Z^f_0=0$ a.s.\ and
$$\E[|Z^f_s-Z^f_t|^2]=d_f(s,t)\, .$$ These specifications characterize
the law of $Z^f$: roughly speaking, it can be seen as Brownian motion
indexed by the tree $\mathcal{T}_f$, see, e.g., Section 4 of~\cite{LGMi}.
It is easy to see and well-known that the process $Z^f$ admits a continuous
modification as soon as $f$ is a locally H\"older-continuous function on
$I$. In this case, we always work with this modification.

The snake driven by a random function $Y$ is then defined as the random
Gaussian process $Z^Y$ conditionally given $Y$. In all our applications,
$Y$ will be considered under probability distributions that make it a
H\"older-continuous function with probability one.

More specifically, except for the case of the infinite-volume Brownian
disk, see below, we will either let $Y=X$ for $X$ the canonical process on
$\mathcal{C}(I,\R)$ (namely for the Brownian map and the Brownian plane),
or $Y=X-\underline{X}$ (for the Brownian disk and the Brownian half-planes).

\subsection{Limit random metric spaces}
We apply the preceding constructions to a variety of random
versions of the functions $f,g$.
\subsubsection{Compact spaces}\label{sec:compact-spaces}
In this section the processes considered all take values in
$\mathcal{C}([0,T],\R)$ for some $T>0$. 
\paragraph{Continuum random tree {\normalfont $\CRT_T$, $T>0$.}} The continuum
random tree was introduced by Aldous~\cite{Al1, Al2} and is defined as
follows.

\begin{defn}
  Let $T>0$. The continuum random tree $\CRT_T$ with volume $T$ is the random rooted real tree
  $(\mathcal{T}_X,d_x,\rho)$ for the probability distribution that
  makes the canonical process $X$ of $\mathcal{C}([0,T],\R)$ the
 standard Brownian excursion with duration $T$. 
\end{defn}

The term ``CRT'' usually denotes $\CRT_1$ with volume $T=1$, in
which case $X$ is taken under the law of the normalized Brownian
excursion. We simply write $\CRT$ instead of $\CRT_1$.

Note the scaling relation, for $\lambda,T>0$:  
$$\lambda\cdot\CRT_T =_d \CRT_{\lambda^2 T} \, .$$
This comes from the fact that, if $\mathbbm{e}^T$ is a Brownian excursion with
duration $T$, then $\lambda\mathbbm{e}^{T}(\cdot/\lambda^2)$ has same
distribution as $\mathbbm{e}^{\lambda^2 T}$. 

We should also discuss the role of $\rho$ in the above definition. The
re-rooting property of $\CRT_T$~\cite[(20)]{Al2} states, roughly speaking, that
if $\rho'$ is a random variable with distribution $\mu_X/\mu_X(1)$
(the normalized version of the measure defined above), then
$(\mathcal{T}_X,d_X,\rho')$ has same distribution as
$(\mathcal{T}_X,d_X,\rho)$. In this sense, the point $\rho$ plays no
distinguished role in the construction of $\CRT_T$.

\paragraph{Brownian map {\normalfont $\BM_T$, $T>0$.}} 
The Brownian map is roughly speaking the metric gluing of the tree
coded by a snake driven by a normalized Brownian excursion, on the
tree coded by the excursion itself. 

\begin{defn}
  The Brownian map $\BM_T$ with volume $T$ is the metric space
  $(M_{X,W},D_{X,W},\rho)$ for the probability law that makes $X$ a
  Brownian excursion of duration $T$, and $W$ is the snake driven by
  $X$.  
\end{defn}
We write $\BM$ instead of $\BM_1$.
The scaling properties of Gaussian processes imply easily that for
$\lambda>0$,
$$\lambda\cdot \BM_T=_d \BM_{\lambda^4 T}\, .$$

Just as for $\CRT_T$, the point $\rho$ in $\BM_T$ should be seen as a
random choice according to the normalized measure
$\mu_{X,W}/\mu_{X,W}(1)$, which is known as the re-rooting property
of the Brownian map (Theorem 8.1 of~\cite{LG4}). The latter is a crucial
property for characterizing the Brownian map, see, e.g., the recent
work~\cite{MiSh}.

\paragraph{Brownian disk {\normalfont $\BD_{T,\sigma}$,
    $\sigma\in(0,\infty)$, $T>0$.}} The Brownian disk first appears in~\cite{Be3}
as limiting metric space along suitable infinite subsequences. Uniqueness
of the limit and a concrete description of the metric were obtained
in~\cite{BeMi}.

The description is slightly more elaborate than that of the Brownian
map. For $t\geq 0$, we let $\underline{X}_t=\inf_{[0,t]}X$. 

\begin{defn}
  \label{def:BD}
  The Brownian disk $\BD_{T,\sigma}$ with volume $T$ and boundary length
  $\sigma$ is the metric space $(M_{X,W},D_{X,W},\rho)$ under the
  probability measures that makes $X$ a first-passage Brownian bridge from
  $0$ to $-\sigma$ of duration $T$, and conditionally given $X$,
  $(W_t,0\leq t\leq T)$ has same distribution as $(\sqrt{3}\,
  \gamma_{-\underline{X}_t}+Z_t,0\leq t\leq T)$, where
\begin{itemize}
\item $(Z_t,0\leq t\leq T)=Z^{X-\underline{X}}$ is the random snake driven
  by the reflected process $(X_t-\underline{X}_t,0\leq t\leq T)$, i.e., (a
  continuous modification of) the centered Gaussian process with
  covariances given by
      $$
      \E\left[Z_sZ_t\right]=\min_{[s\wedge t,s\vee
        t]}(X-\underline{X}).
      $$        
\item $(\gamma_x,0\leq x\leq \sigma)$ is a standard Brownian bridge
  with duration $\sigma$, independent of $Z^{X-\underline{X}}$.
\end{itemize}
\end{defn}

The Brownian disks are homeomorphic to the closed unit disk
$\overline{\mathbb{D}}$, where $\mathbb{D}=\{z\in \mathbb{C}:|z|<1\}$,
see~\cite[Proposition 21]{Be3} (cited as Lemma~\ref{lem:proof-prop-refpr}
below).  They enjoy the following scaling property: For $\lambda>0$,
$$\lambda\cdot \BD_{T,\sigma}=_d \BD_{\lambda^4 T,\lambda^2\sigma}\, .$$
If $T=1$, we will simply write $\BD_\sigma$ instead of $\BD_{1,\sigma}$.
Contrary to the Brownian tree or the Brownian map, $\rho$ does not
play the role of a random point distributed according to
$\mu_{X,W}/\mu_{X,W}(1)$. The reason is that $\rho$ is a.s.\ a point
of the boundary of the disk, which is of zero measure, see~\cite{BeMi}
for more details. 

\subsubsection{Non-compact spaces}\label{sec:non-compact-spaces}
In this subsection, all processes take values in $\mathcal{C}(\R,\R)$.
\paragraph{Infinite continuum random tree {\normalfont $\ICRT$.}} 
The $\ICRT$ is process $2$ in~\cite{Al1} and can be defined as
follows.

\begin{defn}
  \label{def:ICRT}
  The infinite continuum random tree $\ICRT$ is the random rooted real tree
  $(\mathcal{T}_X,d_X,\rho)$, for the probability distribution under
  which the canonical process $X$ in $\mathcal{C}(\R,\R)$ is such that
  $(X_t,t\geq 0)$ and $(X_{-t},t\geq 0)$ are two independent standard
  three-dimensional Bessel processes started at $0$. 
\end{defn}

This results in an a.s.\ non-compact real tree, which enjoys the
remarkable self-similarity property that $\lambda\cdot \ICRT=_d\ICRT$
for every $\lambda>0$.

Note that if we let $Y$ be the canonical process in
$\mathcal{C}(\mathbb{R},\mathbb{R})$ such that $(Y_t,t\geq 0)$ and
$(Y_{-t},t\geq 0)$ are two independent standard Brownian motions, then the
random rooted real tree $(\mathcal{T}_Y,d_Y,[0])$ has same distribution as
$\ICRT$. This follows readily from the fact that $\Pi((Y_t,t\geq 0))$ has
the law of a three-dimensional Bessel process.

\paragraph{Brownian plane {\normalfont $\BP$.}} The Brownian plane was
introduced in~\cite{CuLG}. 

\begin{defn}
  \label{def:BP}
  The Brownian plane $\BP$ is the pointed space
  $(M_{X,W},D_{X,W},\rho)$ under the probability distribution such
  that 
  \begin{itemize}
  \item $(X_t,t\geq 0)$ and $(X_{-t},t\geq 0)$ are two independent
    three-dimensional Bessel processes.   
  \item Given $X=(X_t, t\in\mathbb{R})$, $W$ has same distribution as the
    random snake $Z^X$ driven by $X$.
\end{itemize}
\end{defn}

The Brownian plane is a.s.\ homeomorphic to $\R^2$, and is invariant
under scaling: for $\lambda>0$,
$$\lambda\cdot \BP=_d\BP\, .$$

\paragraph{Brownian half-planes {\normalfont $\BHP_{\theta}$,
  $\theta\in[0,\infty)$.}}

The Brownian half-planes are the first truly new limiting metric
spaces that we encounter in this study. 
Recall that $\underline{X}_t=\inf_{[0,t]}X$ for $t\geq 0$ and
$\underline{X}_{t}=\inf_{(-\infty,t]}X$ for $t<0$. 
\begin{defn}
  \label{def:BHP}
  Let $\theta\geq 0$ be fixed. The Brownian half-plane $\BHP_\theta$
  with skewness parameter $\theta$ is the pointed space
  $(M_{X,W},D_{X,W},\rho)$ under the probability distribution such
  that 
  \begin{itemize}
  \item $(X_t,t\geq 0)$ is a standard Brownian motion with linear drift
    $-\theta$, and $(X_{-t},t\geq 0)$ is the Pitman transform
    $\Pi(X')$ of an independent copy $X'$ of $(X_t,t\geq
    0)$. 
  \item Given $X$, $W$ has same distribution as
    $(\sqrt{3}\, \gamma_{-\underline{X}_t}+Z_t,t\in \R)$, where
    \begin{itemize}
    \item $(Z_t,t\in\R)=Z^{X-\underline{X}}$ is the snake driven by the
      process $(X_t-\underline{X}_t,t\in \R)$, i.e., the centered Gaussian
      process with covariances given by
      $$
      \E\left[Z_sZ_t\right]=\min_{[s\wedge t,s\vee t]}(X-\underline{X}).
      $$ 
   \item $(\gamma_x,x\in \R)$ is a two-sided standard Brownian motion
      with $\gamma_0=0$, independent of $Z^{X-\underline{X}}$. 
     \end{itemize}
\end{itemize}
\end{defn}

The scaling property enjoyed by $\BHP_\theta$ is that for $\lambda>0$,
$$\lambda\cdot \BHP_\theta=_d\BHP_{\theta/\lambda^2}\, .$$
This makes the value $\theta=0$ special in the sense that the space is
self-similar in distribution in this case (just as $\ICRT$ or $\BP$). Keep
in mind that we often write $\BHP$ instead of $\BHP_0$.  We will see in
Corollary~\ref{cor:topology-BHP} that for every $\theta\geq 0$, $\BHP_\theta$
is a.s.\ homeomorphic to the closed half-plane
$\overline{\mathbb{H}}=\R\times\R_+$.

\begin{remark}
  Note that a random metric space also called the Brownian half-plane first
  appeared in the recent work~\cite{CaCu}, where it is conjectured that it
  arises as the scaling limit of the uniform infinite half-planar
  quadrangulation $\UIHPQ$, the definition of which is recalled in
  Section~\ref{sec:constr-UIHPQ}.  Theorem~\ref{thm:UIHPQ-BHP} below states
  indeed that the scaling limit of $\UIHPQ$ is the space $\BHP_0$.
  However, an important caveat is that the definition of the Brownian
  half-plane from~\cite{CaCu} is different from ours: it is still of the
  form $(M_{X,W},D_{X,W},\rho)$, but for processes $(X,W)$ having a very
  different law from the one presented in Definition~\ref{def:BHP}
  (with $\theta=0$).  We do not actually prove that the two definitions
  coincide, since we believe that this would require some specific
  work. Nonetheless, we prefer to stick to the name ``Brownian half-plane''
  since we feel that this should be the proper denomination for the scaling
  limit of the $\UIHPQ$. See also Remark~\ref{rem:BHP-char} below.
\end{remark}

\paragraph{Infinite-volume Brownian disk {\normalfont $\IBD_\sigma$,
    $\sigma\in(0,\infty)$.}} The infinite-volume Brownian disk
$\IBD_\sigma$ should be thought of as a Brownian disk $\BD_\sigma$ filled
in with a Brownian plane $\BP$. The definition is a bit elaborate; we
give some explanation in Remark~\ref{rem:def-IBD} below.

Let $(B_t,t\geq 0)$ be a standard Brownian motion with $B_0=0$, and
$T_{x}=\inf\{t\geq 0:B_t<-x\}$ the first hitting time of
$(-\infty,-x)$. Let $R,R'$ be two independent three-dimensional Bessel
processes independent of $B$, and $U_0$ be a uniform random variable
in $[0,\sigma]$, independent of $B,R,R'$. We set
$$Y^\sigma_t=\left\{\begin{array}{l@{\quad\mbox{if }\,}l}
      R'_{-t+T_\sigma-T_{U_0}}+\sigma -U_0  & t \leq T_{U_0}-T_\sigma\\
      B_{T_\sigma+t}+\sigma & T_{U_0}-T_\sigma\leq t \leq 0\\
      B_t & 0\leq t\leq T_{U_0}\\  
      -U_0+R_{t-T_{U_0}} & t \geq T_{U_0}
\end{array}\right. .
$$

\begin{defn}
\label{def:IBD}
  Let $\sigma>0$ be fixed. The infinite-volume Brownian disk
  $\IBD_\sigma$ with boundary length $\sigma$ is the pointed space
  $(M_{X,W},D_{X,W},\rho)$ under the probability distribution such
  that
  \begin{itemize}
  \item $(X_t,t\in \R)$ is given by the process $Y^\sigma$ described above.
  \item Given $X$, $W$ has same distribution as
    $(\sqrt{3}\, \gamma_{-\uuX^\sigma_t}+Z_t,t\in \R)$, where 
    \begin{itemize}
    \item $$\uuX_t=\left\{\begin{array}{l@{\quad\mbox{if }\,}l}
            \min\left\{\inf_{(-\infty,t]}X,\, \inf_{[0,\infty)}
                X+\sigma\right\}
              &t\leq 0\\
              \min_{[0,t]}X & t\geq 0
           \end{array}\right.,$$
  and $\uuX^\sigma=\uuX-\sigma$ on $(-\infty,0)$, $\uuX^\sigma=\uuX$ on $[0,\infty)$.

       \item $(Z_t,t\in \R)=Z^{X-\uuX}$ is the random snake driven by the
         process $X-\uuX$.
    \item $(\gamma_x,0\leq x\leq \sigma)$ is a standard Brownian
      bridge with duration $\sigma$, independent of $Z^{X-\uuX}$. 
    \end{itemize}
\end{itemize}
\end{defn}

The infinite-volume Brownian disks enjoy the scaling property
$$\lambda\cdot \IBD_{\sigma}=_d\IBD_{\lambda^2\sigma}\, ,$$
for $\lambda,\sigma>0$. We will prove in Corollary~\ref{cor:topology-IBD}
below that for every $\sigma>0$, $\IBD_\sigma$ is a.s.\ homeomorphic to the
pointed closed disk $\overline{\mathbb{D}}\setminus \{0\}$.

\begin{remark}
\label{rem:def-IBD}
We give some intuition behind the above definition. Recall that in the case
of the Brownian disk $\BD_{T,\sigma}$, the contour process $X$ is given by
a first-passage Brownian bridge from $0$ to $-\sigma$ of duration
$T$. Here, in the case of $\IBD_\sigma$, we consider a Brownian motion $B$
stopped upon first hitting $-\sigma$. When for the first time level $-U_0$
is hit, where $U_0$ is uniform on $[0,\sigma]$, the encoding of a Brownian
plane ``inside'' the disk starts. The encoding of the latter is given by
two independent three-dimensional Bessel processes $R$ and $R'$, as in the
definition of $\BP$. The part of the Brownian plane encoded by $R'$ as well
as the trees encoded by $B$ along $(U_0,\sigma]$ appear in the definition
of $Y^\sigma$ to the left of zero.
\end{remark}
\begin{figure}[ht]
  \centering
  \includegraphics[width=0.5\textwidth]{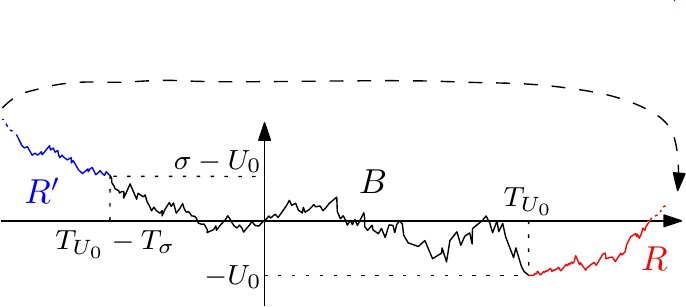}
\caption{The contour process $X=_dY^\sigma$ of the infinite Brownian disk $\IBD_\sigma$.}
  \label{fig:IBD-contour}
\end{figure}

\paragraph{Uniform infinite half-planar quadrangulation {\normalfont
    $\UIHPQ$}.}
The (non-compact) random metric space $\UIHPQ$
$Q_{\infty}^{\infty}=(V(Q_{\infty}^{\infty}),\dgr,\rho)$ is an infinite
rooted random quadrangulation with an infinite boundary. It arises as the
distributional limit of $Q_n^{\sigma_n}$, $1\ll\sigma_n\ll n$, for the
so-called local metric $\dmap$, see Proposition~\ref{prop:Qn-UIHPQ}.  We defer
to Section~\ref{sec:locallimit} for a definition of the metric and to
Section~\ref{sec:constr-UIHPQ} for a precise construction of the
$\UIHPQ$. 

\subsection{Notion of convergence}
\label{S-notionconvergence}
\subsubsection{Gromov-Hausdorff convergence}
Given two pointed compact metric spaces $\mathbf{E}=(E,d,\rho)$ and
$\mathbf{E}' =(E',d',\rho')$, the Gromov-Hausdorff distance between
$\mathbf{E}$ and $\mathbf{E}'$ is given by
$$
\dgh(\mathbf{E},\mathbf{E}') =
\inf\left\{\dha(\varphi(E),\varphi'(E))\vee\delta(\varphi(\rho),\varphi'(\rho'))\right\},
$$
where the infimum is taken over all isometric embeddings $\varphi :
E\rightarrow F$ and $\varphi' : E'\rightarrow F$ of $E$ and $E'$ into the
same metric space $(F,\delta)$, and $\dha$ denotes the Hausdorff distance
between compact subsets of $F$. The space of all isometry classes of
pointed compact metric spaces $(\mathbb{K},\dgh)$ forms a Polish space.

We will use a well-known alternative characterization of the Gromov-Hausdorff
distance {\it via} correspondences, which we recall here for the reader's convenience.
A {\it correspondence} between two pointed metric spaces $\mathbf{E}=(E,d,\rho)$,
$\mathbf{E}'=(E',d',\rho')$ is a subset $\cR\subset E\times E'$ such that
$(\rho,\rho')\in \cR$, and for every $x\in E$ there exists at least one $x'\in E'$ such that
$(x,x')\in E\times E'$ as well as for every $y'\in E'$, there exists at least one
$y\in E$ such that $(y,y')\in\cR$. The distortion of $\cR$
with respect to $d$ and $d'$ is given by
$$
\dis(\cR) = \sup\left\{|d(x,y)-d'(x',y')| : (x,x'), (y,y')\in\cR\right\}.
$$
Then it holds that (see, for example,~\cite{BuBuIv})
$$
\dgh(\mathbf{E},\mathbf{E}') = \frac{1}{2}\inf_\cR\dis(\cR),
$$
where the infimum is taken over all correspondences between $\mathbf{E}$
and $\mathbf{E}'$. 

The convergences listed in the overview above which involve compact limiting spaces,
i.e., $\BM$, $\BD_\sigma$, $\CRT$ and the trivial one-point space, hold
in distribution in $(\mathbb{K},\dgh)$.

\subsubsection{Local Gromov-Hausdorff convergence}
\label{sec:locGH}
For non-compact spaces like $\BHP_\theta$, $\IBD_\sigma$ or 
$\ICRT$, the Gromov-Hausdorff convergence is too restrictive.
Instead, the right notion is convergence in the so-called local
Gromov-Hausdorff sense, which, roughly speaking, requires only convergence
of balls of a fixed radius seen as compact metric spaces.

We give here a quick reminder of this form of convergence; for more
details, we refer to Chapter $8$ of~\cite{BuBuIv}. As in~\cite{CuLG}, we
can restrict ourselves to the case of (pointed) complete and locally
compact length spaces (see our discussion below).

More precisely, a metric space $(E,d)$ is a length space if for every pair
$(x,y)$ of points in $E$, the distance $d(x,y)$ agrees with the infimum over
the lengths of continuous paths from $x$ to $y$. Here, a continuous path
from $x$ to $y$ is a continuous function $\gamma : [0,T]\rightarrow E$ with
$\gamma(0)=x$ and $\gamma(T)=y$ for some $T\geq 0$, and the length of
$\gamma$ is given by
$$
L(\gamma)=\sup_{\tau}\sum_{k=1}^{n-1} d(\gamma(t_k),\gamma(t_{k+1})),
$$
where the supremum is taken over all subdivisions $\tau$ of $[0,T]$ of the
form $0=t_1<t_2<\dots<t_n=T$ for some $n\in\N$. Note that in a complete
and locally compact length space $(E,d)$, there exists between any two
points $x,y\in E$ with $d(x,y)<\infty$ a continuous path of minimal length,
see~\cite[Theorem 2.5.23]{BuBuIv}.

Now let $\mathbf{E}=(E,d,\rho)$ be a pointed metric space, that is a metric
space with a distinguished point $\rho\in E$. We denote by
$B_r(\mathbf{E})$ the closed ball of radius $r$ around $\rho$ in
$\mathbf{E}$. Equipped with the restriction of $d$, we view
$B_r(\mathbf{E})$ as a pointed compact metric space, with distinguished
point given by $\rho$. By a small abuse of notation, we shall also view
$B_r(\mathbf{E})$ as a set and write $x\in B_r(\mathbf{E})$ if $x\in E$ is
at distance at most $r$ from $\rho$.

Given pointed complete and locally compact length spaces $(\mathbf{E}_n)_n$ and
$\mathbf{E}$, the sequence $(\mathbf{E}_n)_n$ converges to $\mathbf{E}$ in the local
Gromov-Hausdorff sense if for every $r\geq 0$,
$$
\dgh(B_r(\mathbf{E}_n),B_r(\mathbf{E}))\rightarrow 0\quad\textup{ as }n\rightarrow\infty.
$$
This notion of convergence is metrizable (see~\cite[Section 2.1]{CuLG} for
a definition of the metric) and turns the space $\mathbb{K}_{bcl}$ of
isometry classes of pointed boundedly compact length spaces into a Polish
space.  

We are interested in limits of quadrangulations; however, as
discrete planar maps the latter are clearly not length
spaces. Following~\cite{CuLG}, we may nonetheless interpret a (finite or
infinite) quadrangulation $Q$ as a pointed complete and locally finite
length space ${\mathbf Q}$. Namely, we replace each edge of $Q$ by an
Euclidean segment of length one such that two segments can intersect only
at their endpoints, and they do so if and only if the corresponding edges
in $E$ share one or two vertices.

The resulting metric space ${\mathbf Q}$ is then a union of copies of the
interval $[0,1]$, one for each edge of $Q$. The distance between two points
is simply given by the length of a shortest path between them. With the
root vertex of $Q$ as distinguished point, this new metric space ${\mathbf
  Q}$ is a (pointed) complete and locally compact length space. Moreover,
it is easy to see that $\dgh(B_r(Q),B_r({\bf Q}))\leq 1$ for every
$r\geq 0$. 
 \\
\newline {\noindent\bf Notation:} Given a pointed metric space $\mathbf{E}=(E,d,\rho)$ and
$\lambda >0$, we write $\lambda\cdot\mathbf{E}$ for the dilated (or rescaled) space
$(E,\lambda\cdot d,\rho)$. In particular, if $\lambda,\delta>0$,
$\lambda\cdot B_\delta(\mathbf{E})=B_{\lambda
  \delta}(\lambda\cdot\mathbf{E})$. 
\begin{remark}
\label{rem:localGH}
From our observation above, we deduce that our limit results for
quadrangulations $Q_n^{\sigma_n}$ in the local Gromov-Hausdorff sense will
follow if we show that for each $r\geq 0$, $B_r(a_n^{-1}\cdot
Q_n^{\sigma_n})$ converges in distribution in $\mathbb{K}$ towards the
ball of radius $r$ in the corresponding limit space (all our limit spaces
are already locally compact length spaces). We therefore do not
have to deal with the more complicated notion of local Gromov-Hausdorff
convergence for general (pointed) metric spaces, see~\cite[Definition
8.1.1]{BuBuIv}.
\end{remark}

\subsubsection{Local limits of maps}
\label{sec:locallimit}
Local limits of maps in the sense of Benjamini and Schramm~\cite{BeSc}
concern the convergence of combinatorial balls. More specifically, given a
rooted planar map $\m$ and $r\geq 0$, write $\cb_r(\m)$ for the combinatorial
of radius $r$, that is the submap of $\m$ formed by all the vertices $v$ of
$\m$ with $\dgr(\varrho,v)\leq r$, together with the edges of $\m$ in between
such vertices. For two rooted maps $\m$ and $\m'$, the local distance between
$\m$ and $\m'$ is defined as
$$
\dmap(\m,\m') = \left(1+\sup\{r\geq 0:\cb_r(\m)=\cb_r(\m')\}\right)^{-1}.
$$
The metric $\dmap$ induces a topology on the space of all finite
quadrangulations (with or without boundary). {\it Infinite
  quadrangulations} are the elements in the completion of this space with
respect to $\dmap$ that are not finite quadrangulations (the $\UIHPQ$ is a
random infinite quadrangulation with an infinite boundary).
See~\cite{CuMeMi} for more on this.

\section{Main results}
\label{sec:mainresults}
We formulate now in a proper way our main results, which cover together
with the results of~\cite{Be3,BeMi} all the convergences listed in the
introduction. The proofs will be given in Section~\ref{sec:proofs}. 

\subsection{Scaling limits of quadrangulations with a boundary}
\label{sec:results-scalinglimits}
  We let $Q_n^{\sigma_n}$ be uniformly distributed over the set of all
  rooted planar quadrangulations with $n$ inner faces and a boundary of
  perimeter $2\sigma_n$, $\sigma_n\in\N$. Recall that we
  write $V(Q_n^{\sigma_n})$ for the vertex set of $Q_n^{\sigma_n}$,
  $\rho_n$ for its root vertex and $\dgr$ for the graph distance on
  $V(Q_n^{\sigma_n})$. Always, $(a_n,n\in\N)$ a sequence of (strictly)
  positive reals. All convergences in this section are in law, with respect
  to the local Gromov-Hausdorff topology. We always let $n\rightarrow\infty$.
\begin{thm}
  \label{thm:BP}
  Assume 
  $\sigma_n\ll\sqrt{n}$. If $\sqrt{\sigma_n} \ll a_n \ll n^{1/4}$, then
$$
(V(Q_n^{\sigma_n}),a_n^{-1} \dgr,\rho_n) \longrightarrow
\BP.
$$  
\end{thm}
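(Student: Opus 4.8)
The plan is to exploit the Bouttier--Di Francesco--Guitter bijection to reduce the scaling limit statement for $Q_n^{\sigma_n}$ to a limit statement for the encoding labeled forests and bridges, and then to compare the rescaled encoding processes with those defining the Brownian plane $\BP$. Concretely, under the BDFG bijection $Q_n^{\sigma_n}$ corresponds to a forest of $\sigma_n$ labeled mobiles with a total of roughly $n$ vertices, together with a discrete bridge of length $2\sigma_n$ recording the boundary labels. I would first describe the contour and label processes of this forest, say $(C^n, V^n)$, rescaled in space by $a_n^{-1}$ (labels) and $a_n^{-2}$ (contour time, up to constants), and show that, in the regime $\sqrt{\sigma_n}\ll a_n\ll n^{1/4}$, these converge locally (on compact time intervals around the root) to a pair of independent three-dimensional Bessel processes run on $\R$ and the snake $Z^X$ driven by this $X$ --- exactly the pair $(X,W)$ appearing in Definition~\ref{def:BP}. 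The key point of this step is that because $a_n\ll n^{1/4}$ the total volume $n$ is ``seen as infinite'' from the scale $a_n$, so only finitely many trees of the forest near the root are relevant and each looks, after rescaling, like a branch of a critical Galton--Watson tree conditioned to survive, whose contour is a Bessel(3); and because $a_n\gg\sqrt{\sigma_n}$ the boundary (of discrete length $\sim\sigma_n$, hence of order $\sqrt{\sigma_n}$ after the natural square-root rescaling of the bridge) collapses to a point and plays no role in the limit. These are precisely the kinds of forest/bridge convergence statements announced as living in Section~\ref{sec:auxiliaryresults}, which I would invoke.

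Having the convergence of encoding processes, the second step is to transfer it to Gromov-Hausdorff convergence of balls. Following the by-now standard scheme (as in the constructions of $\BM$, $\BD_\sigma$, $\BP$), the label process gives an upper bound on graph distances in $Q_n^{\sigma_n}$, so a ball of radius $r$ in $a_n^{-1}\cdot Q_n^{\sigma_n}$ is contained in the image of a bounded time-interval of the contour exploration; uniform control of this ``localization'' (i.e., that points far along the contour are also far in graph distance, up to the usual re-rooting at the minimum-label vertex argument of Le Gall and Bettinelli) shows that only a compact, tight piece of the encoding is needed to reconstruct $B_r(a_n^{-1}\cdot Q_n^{\sigma_n})$. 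Then the continuity of the map $(f,g)\mapsto (M_{f,g},D_{f,g})$ restricted to balls, together with Skorokhod representation, upgrades the process convergence to $\dgh\big(B_r(a_n^{-1}\cdot Q_n^{\sigma_n}),B_r(\BP)\big)\to 0$ in probability for each fixed $r$, which by Remark~\ref{rem:localGH} is exactly local Gromov-Hausdorff convergence to $\BP$.

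I expect the main obstacle to be the uniform localization / lower-bound estimate: one must rule out that two points which are close in $a_n^{-1}\cdot Q_n^{\sigma_n}$ correspond to contour times that are far apart (on the rescaled time scale), uniformly in $n$, and in particular control the effect of the boundary, which is macroscopic in the discrete map (perimeter $2\sigma_n\to\infty$) even though it vanishes at scale $a_n$. Handling this requires quantitative estimates on graph distances in quadrangulations with a boundary --- typically a Bettinelli-type bound showing that the ``boundary contribution'' to distances is of order $\sqrt{\sigma_n}=o(a_n)$ --- together with tightness of the rescaled label process near the root and the re-rooting identity $\inf_{[t,\infty)}\Pi(B)=-\inf_{[0,t]}B$ recalled in Section~\ref{sec:def}, which is what makes the Bessel(3) picture internally consistent on $\R$. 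Once this localization is in place, and given the auxiliary forest/bridge convergences, the rest is a routine adaptation of the arguments already used for $\BD_\sigma$ and $\BP$; I would also remark that the constants (the $n^{1/4}$ and $\sqrt{\sigma_n}$ thresholds, and the scaling constants $(8/9)^{1/4}$ etc.) are fixed by matching the variances of the Gaussian label increments, exactly as in the classical case.
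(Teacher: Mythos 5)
There is a genuine gap in the first step of your plan. You assert that the contour and label processes of the forest $\f_n$, rescaled and viewed near contour-time $0$ (where the root vertex $(0)$ is visited), converge to a two-sided three-dimensional Bessel process and its snake. This is not the case. In the regime $\sigma_n\ll\sqrt n$, the forest $\f_n$ has (with high probability) a unique giant tree $\tau_\ast^{(n)}$ of size of order $n$, while the remaining $\sigma_n-1$ trees behave like independent \emph{unconditioned} critical geometric Galton--Watson trees; this is the content of Lemmas~\ref{lem:GW1} and~\ref{lem:GW2}. The index of the giant tree is uniform over $\{0,\dots,\sigma_n-1\}$, so the trees visited near contour-time $0$ are w.h.p.\ the small ones, and after the rescalings by $a_n^{-2}$ (contour) and $a_n^{-1}$ (labels) their contribution converges to $(0,0)$, not to a Bessel(3) and its snake --- this is precisely Lemma~\ref{lem:BP2}. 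The Bessel(3)/snake limit is associated with the contour of the giant tree near \emph{its own} root, which sits at a contour time that is typically far from $0$.

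The missing ingredient is therefore a reduction/re-rooting step: one must first show that, after rescaling by $a_n^{-1}$, the pointed quadrangulation coded by the whole triple $((\f_n,\la_n),\br_n)$ is Gromov--Hausdorff close to the pointed quadrangulation coded by the giant tree $\tau_\ast^{(n)}$ alone, i.e., to a uniform quadrangulation with $|\tau_\ast^{(n)}|\sim n$ faces and trivial boundary. This is exactly what Bettinelli's Lemma~\ref{lem:BP1} (Lemma~23 of~\cite{Be3}) provides: the GH distance between the two maps is controlled by the oscillation of the label function $\La_n$ outside $\tau_\ast^{(n)}$, and Lemma~\ref{lem:BP2} shows that this oscillation is $o(a_n)$ under the hypothesis $\sqrt{\sigma_n}\ll a_n$. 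Once this reduction is made, the paper does not re-derive the Bessel(3)/snake convergence at all; it invokes~\cite[Theorem~2]{CuLG} for boundary-less quadrangulations as a black box, thereby sidestepping entirely the localization and lower-bound estimates that you correctly identify as the hard part of the direct route. Two smaller remarks on your informal description: the ``finitely many trees near the root'' are ordinary unconditioned GW trees and contribute nothing at scale $a_n$; and the Bessel(3) behaviour arises from a single GW tree conditioned to be very large (of size $\sim n$), not from Kesten trees conditioned to survive, which have an altogether different contour asymptotic.
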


\begin{thm}
  \label{thm:IBD}
  Assume $1\ll\sigma_n\ll\sqrt{n}$ and $a_n \sim
  (4/9)^{1/4}\sqrt{\sigma_n/\sigma}$ for some $\sigma \in (0,\infty)$.
  Then
$$
(V(Q_n^{\sigma_n}),a_n^{-1} \dgr,\rho_n) \longrightarrow\IBD_{\sigma}.
$$ 
\end{thm}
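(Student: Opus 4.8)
The plan is to prove Theorem~\ref{thm:IBD} by a now-standard route: encode $Q_n^{\sigma_n}$ via the Bouttier--Di Francesco--Guitter bijection into a labeled forest and a bridge, pass to the scaling limit of the encoding objects, and then transfer the convergence of the discrete contour/label processes to a convergence of the associated metric spaces. Concretely, the BDFG bijection represents $Q_n^{\sigma_n}$ as a forest of $\sigma_n$ labeled trees whose total number of edges is $n$, together with a discrete bridge of length $\sigma_n$ recording how the roots of the trees are glued around the boundary. Rescaling, one tracks the contour function $C^{(n)}$ of the forest, its label (spatial displacement) process $V^{(n)}$, and the bridge process $W^{(n)}$. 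Under the assumption $1\ll \sigma_n\ll\sqrt n$ and with the indicated normalization $a_n\sim(4/9)^{1/4}\sqrt{\sigma_n/\sigma}$, the trees are individually large (their typical size is of order $n/\sigma_n\gg\sqrt n$, so after rescaling distances by $a_n$ they survive in the limit) but the whole forest, seen through the bridge, still reflects a boundary of finite length $\sigma$. I would invoke the convergence results on forests and bridges announced in Section~\ref{sec:auxiliaryresults}: the rescaled contour and label processes of the forest converge to the pair $(X-\uuX, Z^{X-\uuX})$ driving $\IBD_\sigma$, where the ``infinite tree'' part comes from the trees near the root (a single macroscopic tree whose contour is a pair of Bessel-type processes, as in the Brownian plane), while the bridge, rescaled, converges to the first-passage structure encoded in $Y^\sigma$ — precisely the Brownian-motion-stopped-at-$-\sigma$ plus the independent Bessel pieces $R,R'$ and the uniform split point $U_0$ appearing in Definition~\ref{def:IBD}.

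The second main step is the metric transfer. Here I would follow the argument of Bettinelli~\cite{Be3} and Bettinelli--Miermont~\cite{BeMi} adapted to the non-compact local setting, as in Curien--Le Gall~\cite{CuLG} for the Brownian plane: one constructs an explicit correspondence between the vertex set $V(Q_n^{\sigma_n})$ and the limit space $M_{X,W}$ using the canonical projections $p_{f,g}$, and bounds the distortion of this correspondence on balls of fixed radius $r$ using (i) the Schaeffer-type lower bound $\dgr\ge$ (label increments along geodesics) and (ii) the classical ``cactus'' / upper bound that $\dgr$ between two vertices is controlled by the infimum of the sum of label differences over intermediate points — the discrete analogue of the formula~\eqref{eq:Dfg} defining $D_{f,g}$. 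Combined with the process convergence from the previous step and tightness, this yields $\dgh(B_r(a_n^{-1}\cdot Q_n^{\sigma_n}), B_r(\IBD_\sigma))\to 0$ in distribution for each fixed $r$, which by Remark~\ref{rem:localGH} is exactly the asserted local Gromov--Hausdorff convergence. One should also check that balls of radius $r$ in $a_n^{-1}\cdot Q_n^{\sigma_n}$ correspond, with probability tending to one, to a piece of the encoding forest of bounded rescaled size, so that only a compact part of the limiting processes is relevant — this is what makes the local (rather than global) statement the correct one, since the full map has diameter of order $n^{1/4}\gg a_n$.

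The step I expect to be the main obstacle is establishing that the rescaled encoding pair converges \emph{to the specific law in Definition~\ref{def:IBD}}, i.e., correctly identifying the limit as the process $Y^\sigma$ with its peculiar four-piece description and the double infimum $\uuX$ rather than some other Brownian-plane-with-boundary hybrid. The subtlety is that the relevant regime mixes two scales: the tree containing the root becomes infinite (Brownian-plane-like, contributing the Bessel processes $R$), while the rest of the forest, viewed along the boundary bridge, must collapse to a \emph{finite} boundary of length $\sigma$ — so one has to carefully follow which trees are ``near the root'' and which are ``across the boundary'', track the uniform position $U_0$ of the root tree's ancestor along the bridge, and verify the asymptotic independence of the $R,R'$ pieces from the first-passage bridge part $B$ stopped at $-\sigma$. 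Getting the constants right (the $(4/9)^{1/4}$ in $a_n$, matching $\E[Z_sZ_t]=\min(X-\uuX)$ and the $\sqrt3$ factor on the bridge term $\gamma$) requires bookkeeping the BDFG normalization exactly as in~\cite{Be3,BeMi}, and I would lean on Section~\ref{sec:auxiliaryresults} for the forest/bridge limit so that this theorem's proof reduces to assembling these pieces and running the distortion estimate.
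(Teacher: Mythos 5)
Your route is genuinely different from the paper's. You propose a direct scaling-limit argument: prove convergence of the rescaled contour/label processes of the BDFG encoding of $Q_n^{\sigma_n}$ to the encoding pair $(X,W)$ defining $\IBD_\sigma$, then transfer to metric spaces via a distortion estimate on an explicit correspondence. The paper's proof of Theorem~\ref{thm:IBD} (Section~\ref{sec:proof-thmIBD}) never touches the encoding processes of $Q_n^{\sigma_n}$ directly. It chains three ingredients by the triangle inequality: first, Proposition~\ref{prop:coupling-Qn-largevol}, a \emph{discrete} coupling under which $B_{ra_n}(Q_n^{\sigma_n})$ and $B_{ra_n}(Q_{R\sigma_n^2}^{\sigma_n})$ are isometric with high probability (so the ``too small'' volume $n$ is replaced by a critical volume $R\sigma_n^2$ at no cost near the root); second, the Bettinelli--Miermont convergence $a_n^{-1}\cdot Q_{R\sigma_n^2}^{\sigma_n}\to\BD_{2R\sigma^2,\sigma}$ from~\cite{BeMi}, used as a black box since $Q_{R\sigma_n^2}^{\sigma_n}$ sits in the boundary-critical regime; and third, Theorem~\ref{thm:coupling-BD-IBD}, a \emph{continuum} coupling under which $B_r(\BD_{T,\sigma})$ and $B_r(\IBD_\sigma)$ are isometric with high probability once $T$ is large. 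What this buys is that the delicate identification of the $\IBD_\sigma$ law is done purely at the continuum level (Lemma~\ref{lem:coupl-brown-disks-1} and Proposition~\ref{prop:coupl-brown-disks-2} are the continuum analogs of your Section~\ref{sec:auxiliaryresults} forest lemmas), while the discrete-to-continuum step is offloaded entirely to the compact result of~\cite{BeMi}.

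The step you dismiss as ``bookkeeping'' is in fact where the substance lies, and your plan underestimates it. Section~\ref{sec:auxiliaryresults} gives total-variation approximations of the forest and bridge (Lemmas~\ref{lem:GW1}--\ref{lem:GW3},~\ref{lem:bridge1}), not a functional limit theorem identifying the rescaled $(C_n,\La_n)$ with the four-piece process $Y^\sigma$ of Definition~\ref{def:IBD}. Producing that directly would require (a) isolating the macroscopic tree, rooting it at a size-biased vertex, and showing its contour rescales to the spliced Bessel pieces $R,R'$ at level $-U_0$, (b) showing the remaining trees plus the bridge rescale to $B$ stopped at $T_\sigma$, with asymptotic independence, and (c) a metric-transfer argument valid in the non-compact setting. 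Step (c) is not a routine distortion bound: $D_{f,g}$ is not locally continuous in $(f,g)$, so one must first show that $B_{ra_n}(Q_n^{\sigma_n})$ is determined by a compact, stable portion of the encoding. The only tool for that is a cactus-type lower bound together with a coupling of the encoding on that portion --- at which point you have essentially re-derived, in discrete form, the content of Theorem~\ref{thm:coupling-BD-IBD} and Proposition~\ref{prop:coupling-Qn-largevol}. Your plan is conceptually workable, but those two coupling statements are not a shortcut the paper chose out of elegance; they \emph{are} the hard part, and the coupling packaging is how one avoids re-proving the discrete-to-continuum convergence from scratch.
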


\begin{thm}
  \label{thm:BHP1}
  Assume $1\ll \sigma_n\ll n$ and $1\ll a_n \ll \min\{\sqrt{\sigma_n},\,\sqrt{n/\sigma_n}\}$.
  Then
$$
(V(Q_n^{\sigma_n}),a_n^{-1} \dgr,\rho_n) \longrightarrow \BHP.
$$  
\end{thm}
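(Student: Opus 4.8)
The plan is to follow the standard route via the Bouttier--Di Francesco--Guitter encoding, combined with the ``coupling'' philosophy advertised in the introduction: rather than computing the limit of $B_r(a_n^{-1}\cdot Q_n^{\sigma_n})$ directly, I would compare $Q_n^{\sigma_n}$ with a reference model whose scaling limit is already known to be $\BHP$. The natural reference is the $\UIHPQ$: by Proposition~\ref{prop:Qn-UIHPQ}, $Q_n^{\sigma_n}$ converges locally (for $d_{\textup{map}}$) to the $\UIHPQ$ whenever $1\ll\sigma_n\ll n$, and by Theorem~\ref{thm:UIHPQ-BHP} we have $\lambda\cdot\UIHPQ\to\BHP$ in the local Gromov--Hausdorff sense as $\lambda\to 0$. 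So morally $\BHP$ is forced; the real content is a \emph{quantitative} comparison showing that, in the window $1\ll a_n\ll\min\{\sqrt{\sigma_n},\sqrt{n/\sigma_n}\}$, a ball of radius $r$ around the root in $a_n^{-1}\cdot Q_n^{\sigma_n}$ is, with probability tending to one, isometric (or $o(a_n)$-close) to the corresponding ball in $a_n^{-1}\cdot\UIHPQ$. The two constraints $a_n\ll\sqrt{\sigma_n}$ and $a_n\ll\sqrt{n/\sigma_n}$ are exactly what is needed so that a ball of radius $r a_n$ sees neither ``the other end'' of the boundary (whose discrete length is $2\sigma_n$, corresponding to $\asymp\sqrt{\sigma_n}$ in metric units) nor the finiteness of the volume (whose $n$ inner faces correspond to $\asymp n^{1/4}$ metric units, but the relevant local scale when the boundary is comparable is $\sqrt{n/\sigma_n}$).

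Concretely I would work on the encoding side. Via the BDG bijection recalled in Section~\ref{sec:encoding}, $Q_n^{\sigma_n}$ corresponds to a labelled forest of $\sigma_n$ trees with a bridge, and distances from the root are read off from the label (= spatial) process together with the Schaeffer-type minima. The non-compact analogue (the $\UIHPQ$) is encoded by a two-sided walk/snake pair, and $\BHP_0$ is precisely $(M_{X,W},D_{X,W},\rho)$ for $X$ two Brownian motions glued via a Pitman transform and $W=\sqrt3\,\gamma_{-\underline X}+Z^{X-\underline X}$. The key steps, in order, would be: (i) state the BDG encoding of $Q_n^{\sigma_n}$ and identify the contour/label processes $(C^n,V^n)$ read from the root; (ii) invoke the convergence results of Section~\ref{sec:auxiliaryresults} to show that, under the rescaling by $a_n$ (with $a_n^{-1/?}$ in the label direction chosen consistently with the $(9/4)$-type constants in the listings), the pair $(C^n,V^n)$ converges, \emph{on compact sets}, to the contour/label process of $\BHP_0$, i.e.\ to $(X-\underline X, W)$ with $X$ as in Definition~\ref{def:BHP} with $\theta=0$ --- here the forest of $\sigma_n$ trees seen locally looks like a critical Galton--Watson forest, whose contour converges to reflected Brownian motion, while the bridge contributes the $\sqrt3\,\gamma$ term, and the regime $a_n\ll\sqrt{\sigma_n}$ guarantees we never run out of trees; (iii) pass from convergence of encoding functions to Gromov--Hausdorff convergence of the \emph{balls} $B_r$, using the now-standard argument (as in Le Gall--Miermont, Curien--Le Gall, Bettinelli--Miermont) that the map $D_{f,g}$ depends continuously enough on $(f,g)$ and that distances in the map are uniformly comparable to distances in the encoding, so that a ball of fixed radius depends only on a compact portion of the encoding; (iv) use the regime $a_n\ll\sqrt{n/\sigma_n}$ to argue that the conditioning on total volume $n$ does not affect this compact portion of the encoding in the limit (an absolute-continuity / local-limit argument for the underlying random walk bridge, exactly the kind of statement the excerpt promises in Section~\ref{sec:auxiliaryresults}).

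The main obstacle, as in all papers of this type, is step (iii)--(iv): controlling distances \emph{uniformly} well enough to localize balls. The delicate point is that $D_{X,W}$ is defined by an infimum over chains, so an a priori one only gets $D_{X,W}(s,t)\le d$-type upper bounds easily, and the matching lower bound (``points far in the map are far in the encoding'') requires the re-rooting / absolute continuity tricks together with control of the label process --- and here, because the space is non-compact and the boundary is macroscopic, one must be careful that geodesics from the root to a point in $B_r$ do not wander through a region of the encoding that escapes to infinity. Concretely, I expect the crux to be a lemma of the form: for every $r,\varepsilon>0$ there exists $A>0$ such that with probability $\to 1$, every point of $B_{ra_n}(Q_n^{\sigma_n})$ is the image under the BDG projection of a point whose contour-coordinate lies within $A a_n^{2}$ (appropriate power) of the root-coordinate, \emph{uniformly in $n$}; this is what makes the local comparison with the $\UIHPQ$ legitimate. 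Once this localization is in hand, the identification of the limit as $\BHP=\BHP_0$ is essentially bookkeeping: match the encoding of the localized portion of $Q_n^{\sigma_n}$ to that of the $\UIHPQ$ (which agree exactly in a combinatorial ball of radius $\to\infty$ by Proposition~\ref{prop:Qn-UIHPQ}), then apply Theorem~\ref{thm:UIHPQ-BHP}.
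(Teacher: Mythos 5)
Your proposal is correct and, at the core, is the same argument the paper uses: couple $Q_n^{\sigma_n}$ with the $\UIHPQ$ via Proposition~\ref{prop:Qn-UIHPQ} (so that balls of radius $ra_n\ll\sqrt{\vartheta_n}$ agree with high probability, where $\vartheta_n=\min\{\sigma_n,n/\sigma_n\}$), then invoke Theorem~\ref{thm:UIHPQ-BHP} for $\lambda\cdot\UIHPQ\to\BHP$. The encoding and localization considerations you lay out in steps (ii)--(iv) are not needed as separate steps in the proof of this theorem; they are exactly the content already packaged inside Proposition~\ref{prop:Qn-UIHPQ} and Theorem~\ref{thm:UIHPQ-BHP}, and once those two inputs are available the proof reduces to the two-term comparison of expectations you describe in your last paragraph.
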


\begin{thm}
  \label{thm:BHP3}
  Assume $\sqrt{n}\ll\sigma_n\ll n$ and $a_n\sim 2\sqrt{\theta n/3\sigma_n}$ for some $\theta\in (0,\infty)$.
  Then
$$
(V(Q_n^{\sigma_n}),a_n^{-1} \dgr,\rho_n) \longrightarrow\BHP_\theta.
$$  
\end{thm}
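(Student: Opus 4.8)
The plan is to follow the now-standard route through the Bouttier--Di Francesco--Guitter bijection, showing that the encoding objects of $Q_n^{\sigma_n}$ — a labeled forest of $\sigma_n$ trees together with a bridge encoding the boundary labels — converge, under the scaling $a_n\sim 2\sqrt{\theta n/3\sigma_n}$, to the pair $(X,W)$ of processes appearing in Definition~\ref{def:BHP}. The regime $\sqrt n\ll\sigma_n\ll n$ means the forest has many trees ($\sigma_n\to\infty$) but each tree is large on average ($n/\sigma_n\to\infty$), and the product $(9/4)^{1/4}a_n/\sqrt{2n/\sigma_n}\to\sqrt\theta$ is exactly tuned so that the contour/label fluctuations within a single tree and the random-walk fluctuations of the tree sizes across the forest both survive in the limit at the same scale; this is what produces the drift $-\theta$ in the Brownian motion $(X_t,t\geq 0)$ and the Pitman-transformed part $(X_{-t},t\geq 0)$. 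First I would invoke the auxiliary convergence results of Section~\ref{sec:auxiliaryresults} (the forest/bridge limit theorems announced there) to get joint convergence, in $\mathcal C(\R,\R)^2$ with the compact-open topology, of the rescaled contour-type process and the rescaled label process of a suitably re-rooted neighborhood of $\rho_n$ toward $(X,W)$ with the law of Definition~\ref{def:BHP}; the linear drift appears because, looking from a uniformly chosen boundary edge, the number of internal faces carried by the first $k$ trees behaves like a random walk with negative drift once $k$ is of order $\sigma_n$, reflecting that only an $O(\sqrt n/\sigma_n\cdot\sqrt{\sigma_n})$-sized portion of the map is visible at scale $a_n$.

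Next I would transfer this to a statement about the metric. As explained in Remark~\ref{rem:localGH}, it suffices to prove that for every fixed $r\geq 0$, $B_r(a_n^{-1}\cdot Q_n^{\sigma_n})$ converges in distribution in $(\mathbb K,\dgh)$ to $B_r(\BHP_\theta)$. The standard mechanism is: (i) use the BDG bijection to express $\dgr$ on $V(Q_n^{\sigma_n})$ in terms of the label process via the usual ``cactus bound'' $\dgr(u,v)\geq W_u+W_v-2\max(\underline W(u,v),\underline W(v,u))$ and the matching upper bound coming from successor geodesics; (ii) pass these bounds to the limit using the established process convergence, obtaining that the rescaled distance converges to $D_{X-\underline X,\,W}$ (note $Y=X-\underline X$ here, as flagged in Section~\ref{sec:random-snakes}, since $\BHP_\theta$ is of ``disk type''); (iii) build an explicit correspondence between the ball $B_r$ in the discrete map and the ball $B_r$ in $(M_{X,W},D_{X,W},\rho)$ whose distortion tends to $0$, using the Skorokhod representation to work almost surely and a tightness/equicontinuity argument to control the label process uniformly on the (random but tight) set of indices contributing to $B_r$. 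The drift term requires no change in this part; it is fully absorbed into the law of $X$.

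A technical point that needs care, and which I expect to be the main obstacle, is the identification of the \emph{left half} of the limiting contour process as the Pitman transform $\Pi(X')$ and the correct gluing of the two halves at the root. Discretely, the boundary of $Q_n^{\sigma_n}$ is a closed curve, and re-rooting at a uniform boundary edge and then ``unrolling'' both ways produces, on one side, the forest read in increasing order (giving the Brownian motion with drift $-\theta$ in the limit) and, on the other side, what amounts to the forest read together with the running infimum of the boundary-label bridge; the combinatorial reason the latter becomes a Bessel-type process is precisely Pitman's theorem, but making this rigorous requires a careful analysis of the joint law of (boundary bridge, tree sizes) conditioned on the total number of faces being $n$, and of how the conditioning interacts with the scaling limit when $\sigma_n/\sqrt n\to\infty$. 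Concretely, one must show that the bridge increments, after centering and rescaling by $a_n$, together with the cumulative tree-volume process, converge jointly to $(X',\,\text{something})$ in such a way that the \emph{visible} part to the left of the root is $\Pi(X')$ and not $X'$ itself; this is the analogue, in the $\sigma_n\gg\sqrt n$ world, of the identity $\inf_{[t,\infty)}\Pi(B)=-\inf_{[0,t]}B$ quoted after Definition~\ref{def:BHP}, and getting the constants $2\sqrt{\theta/3}$ right is a bookkeeping exercise with the $(9/4)^{1/4}$ and $\sqrt 2$ factors from the BDG normalization. Once the process-level convergence with the right Pitman structure is in hand, the metric conclusion follows from the same machinery as in Theorems~\ref{thm:BHP1} and~\ref{thm:BP}, so I would organize the proof to reuse those arguments verbatim wherever possible.
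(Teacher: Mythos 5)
Your plan is plausible, but it takes a genuinely different route from the paper's proof, and the route you propose is substantially harder precisely at the step you flag as the main obstacle. You propose to prove, directly, process-level convergence of the rescaled contour and label functions of $Q_n^{\sigma_n}$ to $(\Xha,\Wha)$ with the law of Definition~\ref{def:BHP}, including the Pitman-transform structure of the left half, and then transfer to the metric via the cactus bound and a correspondence. The paper instead avoids ever proving this functional limit for the conditioned forest: Lemma~\ref{lem:RN-Deriv} computes the Radon--Nikodym derivative of the law of the restricted contour function $C_n$ (of the forest conditioned to have $n$ edges) with respect to the law of the restricted contour function $C_\infty$ of the \emph{unconditioned} critical infinite forest that encodes the $\UIHPQ$, and shows this density converges to $\exp\bigl(2s\theta-\tfrac{1}{2}(T_{-s}-U_s)(X^0)\theta^2\bigr)$. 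The paper then reuses the already-established joint convergence of the rescaled $\UIHPQ$ data to $\BHP_0$ (Remark~\ref{rem:jointconv-CLBr}, itself a byproduct of Theorem~\ref{thm:BHP1}), so that the expectation for $Q_n^{\sigma_n}$ factors as a density times an expectation for $Q_\infty^\infty$, which passes to the limit as a Girsanov density times an expectation for $\BHP_0$. That limiting density is exactly the Girsanov factor that adds drift $-\theta$ to Brownian motion on the right and, via Pitman's theorem, turns the left half into the Pitman transform of a Brownian motion with drift $-\theta$; in other words, the hard identification you singled out (showing that the critical conditioning $\{T_{-\sigma_n}=2n+\sigma_n\}$ produces $\Pi(X')$ rather than $X'$ on the left) is completely absorbed into a single change-of-measure step applied to the known $\BHP_0$ limit, because $C_\infty$ already carries the discrete Pitman structure. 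So the paper's route buys a clean reduction to the $\theta=0$ case with one explicit local-limit-theorem computation, while your route would yield a self-contained invariance principle but would require a considerably more delicate analysis of the conditioned forest and bridge — essentially rederiving from scratch what Lemma~\ref{lem:RN-Deriv} and Girsanov deliver in two steps.
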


\begin{thm}
  \label{thm:ICRT}
  Assume $\sigma_n\gg \sqrt{n}$ and $\max\{1,\,\sqrt{n/\sigma_n}\} \ll a_n
  \ll \sqrt{\sigma_n}$.  Then
$$
(V(Q_n^{\sigma_n}),a_n^{-1} \dgr,\rho_n) \longrightarrow\ICRT.
$$
\end{thm}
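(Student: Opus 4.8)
The plan is to encode $Q_n^{\sigma_n}$ via the Bouttier--Di Francesco--Guitter bijection into a labeled forest and a boundary bridge, and to show that under the scaling $a_n$ the label process (which controls graph distances) is dominated by a one-dimensional tree-like component, so that distances collapse onto those of the $\ICRT$. Concretely, in the regime $\sigma_n\gg\sqrt n$ the forest has $\sigma_n$ trees of total size $\sim n$, so the individual trees are small (of size $\sim n/\sigma_n$, hence of height $\sim\sqrt{n/\sigma_n}$), while the spine linking the roots of these trees has length $\sim\sigma_n$ and, being a bridge, contributes a contour fluctuation of order $\sqrt{\sigma_n}$. Since $\sqrt{n/\sigma_n}\ll a_n\ll\sqrt{\sigma_n}$, in the scale $a_n$ the trees shrink to points while the spine, read through its contour/height process, converges after rescaling to a two-sided Brownian motion; the Pitman transform that appears in the BDG encoding of distances in the external face then turns this into the pair of independent $3$-Bessel processes defining the $\ICRT$ (Definition~\ref{def:ICRT}), using the remark there that $(\mathcal T_Y,d_Y,[0])=_d\ICRT$ for $Y$ a two-sided Brownian motion.

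The key steps, in order, would be: (i) recall the BDG encoding of $Q_n^{\sigma_n}$ from Section~\ref{sec:encoding} and express $\dgr(\rho_n,\cdot)$ and more generally the Schaeffer-type distance bounds in terms of the labels on the forest and the boundary bridge; (ii) invoke the convergence results on forests and bridges from Section~\ref{sec:auxiliaryresults} to show that, rescaling lengths by $\sigma_n$ along the spine and rescaling labels by $a_n$, the pair (spine contour process, label process along the spine) converges to (Brownian motion, Brownian snake driven by it)—but here, crucially, since $a_n\gg\sqrt{n/\sigma_n}$ the \emph{spatial displacement} contributed by the trees hanging off the spine is negligible in the scale $a_n$, so the limiting snake degenerates: the label of a vertex depends only on its position along the spine, i.e.\ $W$ becomes a function of $X$ alone and in fact $W=X$ up to the Pitman transform; (iii) identify the limiting metric: since labels encode distances to the root through the usual ``$W_s+W_t-2\max(\ldots)$'' cactus bound, and the correction terms vanish, the rescaled distance converges to $d_X$ on $\mathcal T_X$ where $X$ is the two-sided Brownian motion obtained in (ii), and a re-rooting/Pitman argument (as in the $\ICRT$ remark in the excerpt) shows $(\mathcal T_X,d_X,\rho)=_d\ICRT$; (iv) upgrade to local Gromov-Hausdorff convergence via Remark~\ref{rem:localGH}, i.e.\ prove ball-by-ball convergence in $(\mathbb K,\dgh)$ by exhibiting explicit correspondences between $B_r(a_n^{-1}\cdot Q_n^{\sigma_n})$ and $B_r(\ICRT)$ built from the projection $p_X$, controlling the distortion by the convergence in (ii)–(iii).

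The main obstacle I expect is step (ii)–(iii): showing rigorously that the transversal (tree) contributions to distances are negligible at scale $a_n$ while simultaneously keeping enough uniform control to pass to the Gromov-Hausdorff limit of balls, rather than just finite-dimensional convergence of the encoding processes. This requires (a) a uniform modulus-of-continuity / tightness estimate for the label process that separates the ``longitudinal'' scale $\sqrt{\sigma_n}$ from the ``transversal'' scale $\sqrt{n/\sigma_n}$, and (b) an argument that shortest paths in $Q_n^{\sigma_n}$ can, up to $o(a_n)$ error, be taken to run along (the images of) the spine—so that the graph distance is well-approximated by the cactus bound and no ``shortcuts through the bulk'' survive in the limit. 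A secondary technical point is handling the root: $\rho_n$ sits on the boundary, so one must make sure that the two halves of the spine emanating from the root vertex genuinely converge to two \emph{independent} Brownian motions (equivalently, after Pitman, two independent $3$-Bessel processes), which should follow from the bridge decomposition and the fact that in regime~{\bf c}) the bridge is asymptotically a two-sided random walk near a typical point. I would treat the case of bounded versus unbounded $\sqrt{n/\sigma_n}$ uniformly, since the hypothesis $\max\{1,\sqrt{n/\sigma_n}\}\ll a_n$ is exactly what makes the transversal scale (or the lattice scale, when $\sigma_n\gg n$) negligible.
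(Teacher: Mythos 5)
Your high-level plan matches the paper's: encode via the BDG bijection, argue that the tree contributions are negligible at scale $a_n$ so the metric is governed by the bridge $\br_n$, build an explicit correspondence ball-by-ball for the Gromov--Hausdorff convergence, and identify the limit with $\ICRT$ through the two-sided Brownian motion description $(\mathcal{T}_Y,d_Y,[0])$. That said, there are a few genuine inaccuracies worth fixing.

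First, the ``degenerating snake, $W=X$ up to Pitman'' picture is not right, and the Pitman transform is a red herring. What actually happens is: the rescaled forest contour $\tilde{C}_n$ converges to the \emph{deterministic} line $-t$ (not a Brownian motion), and the rescaled tree-label process $\tilde{L}_n$ converges to $0$; the only nontrivial limit is the bridge $\br_n$, rescaled by $a_n$, which converges to a two-sided Brownian motion $Y$. The paper then identifies the limit with $\ICRT$ directly via the remark after Definition~\ref{def:ICRT} ($\ICRT=_d(\mathcal{T}_Y,d_Y,[0])$); it never passes through the Bessel description, and there is no sense in which $W$ ``becomes'' $X$ --- one goes to a line, the other to zero, and the bridge lives on a separate rescaling.

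Second, you cannot simply ``invoke the convergence results on forests and bridges from Section~\ref{sec:auxiliaryresults}'': Lemmas~\ref{lem:GW1}--\ref{lem:GW3} all require $\sigma_n\ll n$, whereas the present theorem allows $\sigma_n$ to grow faster than $n$. The missing ingredient is a new technical estimate, namely the paper's Lemma~\ref{lem:ICRT}, which proves $(\tilde{C}_n,\tilde{L}_n)\to(-t,0)$ by a change of measure for the random walk (a discrete Girsanov tilt, exactly absorbing the drift induced by conditioning on $T_{-\sigma_n}=N$), Kemperman's formula, Doob's maximal inequality, and the refined local CLT~\eqref{eq:localCLT2}. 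This is the ``uniform tightness estimate separating the longitudinal and transversal scales'' that you correctly flag as the main obstacle, but a genuinely different argument from the Galton--Watson lemmas is needed to get it in this boundary regime.

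Third, two smaller points: the ``shortcuts through the bulk'' worry is resolved not by controlling geodesics but by noting that the cactus lower bound~\eqref{eq:cactus1} and the upper bound~\eqref{eq:dist-upperbound} differ only by $O(\sup L_n-\inf L_n)=o(a_n)$, so the distance is already pinned down up to negligible error; and the root issue (that $\rho_n$ is a generic boundary vertex rather than $(0)$) is dispatched once and for all by Lemma~\ref{lem:ball0} via the fact that $\br_n(\sigma_n)$ is stochastically bounded, rather than by any special bridge-decomposition argument.
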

When the scaling sequence $(a_n,n\in\N)$ satisfies
$a_n\gg\max\{\sqrt{\sigma_n},n^{1/4}\}$, then the limiting space is the
trivial one-point metric space. This is a direct consequence of the results
in~\cite{Be3}, for example.

The Brownian half-plane $\BHP$ does also arise as the weak scaling limit of the
$\UIHPQ$ (similarly, the Brownian plane $\BP$ is the scaling limit of the
so-called uniform infinite planar quadrangulation $\UIPQ$, see the first
part of~\cite[Theorem 2]{CuLG}).
\begin{thm}
\label{thm:UIHPQ-BHP}
$$
\lambda\cdot\UIHPQ \xrightarrow[]{\lambda\rightarrow 0}\BHP.
$$
\end{thm}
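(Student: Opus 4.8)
The plan is to deduce Theorem~\ref{thm:UIHPQ-BHP} from Theorem~\ref{thm:BHP1} by interchanging two limits, using the local limit result $Q_n^{\sigma_n}\to\UIHPQ$ in the $\dmap$-topology (Proposition~\ref{prop:Qn-UIHPQ}) as the bridge. The core idea: fix a radius $r>0$ and a small $\lambda>0$; on the one hand, the ball $B_{r}(\lambda\cdot\UIHPQ)=\lambda\cdot B_{r/\lambda}(\UIHPQ)$ depends only on a combinatorial ball of $\UIHPQ$ of bounded (random) radius, hence is well-approximated by $\lambda\cdot B_{r/\lambda}(Q_n^{\sigma_n})$ for $n$ large; on the other hand, choosing $\sigma_n$ and $a_n:=1/\lambda$ in the regime $1\ll a_n\ll\min\{\sqrt{\sigma_n},\sqrt{n/\sigma_n}\}$ (e.g.\ $\sigma_n\sim\sqrt{n}$ works once $\lambda^{-1}\ll n^{1/4}$), Theorem~\ref{thm:BHP1} gives $\lambda\cdot Q_n^{\sigma_n}=a_n^{-1}\cdot Q_n^{\sigma_n}\to\BHP$ as $n\to\infty$. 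Letting first $n\to\infty$ and then $\lambda\to 0$ along the diagonal should produce $B_r(\lambda\cdot\UIHPQ)\to B_r(\BHP)$ for every $r$, which by Remark~\ref{rem:localGH} is exactly the claimed local Gromov--Hausdorff convergence.

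To make this rigorous I would argue as follows. First, recall from Remark~\ref{rem:localGH} that it suffices to show $B_r(\lambda\cdot\UIHPQ)\to B_r(\BHP)$ in distribution in $(\mathbb{K},\dgh)$ for each fixed $r\geq 0$. Fix such an $r$ and an $\eps>0$. Step one: a tightness/continuity input. Since $\dgh(B_r(Q),B_r(\mathbf Q))\leq 1$ as noted in the excerpt, and since $\dmap$-convergence of maps entails that combinatorial balls eventually agree, one gets that for maps $\m_n\to\m$ in $\dmap$ one has $B_\rho(a^{-1}\m_n)\to B_\rho(a^{-1}\m)$ for each fixed scaling $a$ once one controls how many graph-distance balls fit inside a given combinatorial ball; concretely, a ball of graph-radius $\rho a$ in $a^{-1}\cdot\m$ is contained in the combinatorial ball $\cb_{\lceil\rho a\rceil}(\m)$, and this combinatorial ball stabilizes along $\m_n\to\m$. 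Step two: introduce a coupling. Using Proposition~\ref{prop:Qn-UIHPQ} and Skorokhod representation, realise $Q_n^{\sigma_n}\to\UIHPQ$ almost surely in $\dmap$ for a suitable sequence $\sigma_n$ (say $\sigma_n=\lfloor\sqrt n\rfloor$, which satisfies $1\ll\sigma_n\ll n$). For $\lambda$ fixed, $B_{r}(\lambda\cdot\UIHPQ)$ is a measurable functional of $\cb_{\lceil r/\lambda\rceil}(\UIHPQ)$, hence $\dgh(B_r(\lambda\cdot Q_n^{\sigma_n}),B_r(\lambda\cdot\UIHPQ))\to 0$ a.s.\ as $n\to\infty$, for each fixed $\lambda$. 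Step three: combine. For $\lambda=\lambda_n\to 0$ slowly enough that $\lambda_n^{-1}\ll n^{1/4}$ (so $a_n:=\lambda_n^{-1}$ lies in the window of Theorem~\ref{thm:BHP1} with $\sigma_n=\lfloor\sqrt n\rfloor$), a diagonal extraction gives $B_r(\lambda_n\cdot\UIHPQ)\to B_r(\BHP)$ in distribution; since the left side is, for each fixed $n$, a functional only of $\UIHPQ$ and $\lambda_n$, and since $\lambda_n\to 0$ is arbitrary among slowly-decaying sequences, a standard subsequence argument upgrades this to $B_r(\lambda\cdot\UIHPQ)\to B_r(\BHP)$ as $\lambda\to 0$ along the full continuum. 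This holds for all $r$, giving the theorem.

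The main obstacle is the interchange-of-limits step and in particular getting uniform (in $\lambda$) control so that the diagonal passage is legitimate. The subtlety is that $B_r(\lambda\cdot Q_n^{\sigma_n})$ approximates \emph{two different things} depending on the order of limits: as $n\to\infty$ with $\lambda$ fixed it tends to $B_r(\lambda\cdot\UIHPQ)$, while along the scaling regime of Theorem~\ref{thm:BHP1} it tends to $B_r(\BHP)$; one must show these are compatible, i.e.\ that $\lambda\cdot Q_{n}^{\sigma_n}$ and $\lambda\cdot\UIHPQ$ stay $\dgh$-close \emph{uniformly enough} as $\lambda\to0$. Concretely, one needs that the combinatorial ball $\cb_{\lceil r/\lambda\rceil}(\UIHPQ)$, whose size blows up as $\lambda\to0$, is still well-approximated by that of $Q_n^{\sigma_n}$ provided $n$ is large relative to $1/\lambda$ — this is quantitative control on the speed in Proposition~\ref{prop:Qn-UIHPQ}, or equivalently a statement that the number of vertices within graph-distance $R$ of the root in $\UIHPQ$ does not grow too fast (polynomially in $R$ suffices, and follows from known volume estimates for the $\UIHPQ$). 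Once that quantitative stabilization is in hand, the rest is soft: it is the standard ``zooming out commutes with taking local limits'' argument, exactly parallel to the proof that $\lambda\cdot\UIPQ\to\BP$ in~\cite{CuLG}. I would also double-check the numerology: the scaling constant $(8/9)^{1/4}$ versus the constants appearing in Theorem~\ref{thm:BHP1} must be consistent with the normalisation chosen for $\BHP=\BHP_0$, but since both the local limit $\UIHPQ$ and the scaling limit $\BHP$ here carry the \emph{same} intrinsic normalisation inherited from $Q_n^{\sigma_n}$, no extra constant is introduced and the statement is exactly $\lambda\cdot\UIHPQ\to\BHP$ with no prefactor, as written.
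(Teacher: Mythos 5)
Your plan hinges on invoking Theorem~\ref{thm:BHP1} as a black box, but in the paper's logical ordering this creates a circularity: Theorem~\ref{thm:BHP1} is itself proved \emph{from} Theorem~\ref{thm:UIHPQ-BHP}. The paper's proof of Theorem~\ref{thm:BHP1} consists of (i) coupling $B_{ra_n}(Q_n^{\sigma_n})$ and $B_{ra_n}(\UIHPQ)$ via Proposition~\ref{prop:Qn-UIHPQ}, and (ii) appealing to Theorem~\ref{thm:UIHPQ-BHP} to conclude that $a_n^{-1}\cdot B_{ra_n}(\UIHPQ)\to B_r(\BHP)$. So if you want to run your interchange-of-limits argument, you must first supply an independent proof of Theorem~\ref{thm:BHP1} — but the only ingredients you'd have available are Proposition~\ref{prop:Qn-UIHPQ}, the Bettinelli--Miermont convergence \eqref{eq:BeMi}, and the disk-to-half-plane coupling Proposition~\ref{prop:isometry-BD-BHP}, and assembling those directly is exactly the content of the paper's proof of Theorem~\ref{thm:UIHPQ-BHP}. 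You would simply be re-deriving the target theorem under a different label, not deducing it from something logically prior.

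The paper's actual argument sidesteps the interchange-of-limits difficulty you flag by building an explicit chain of couplings, rather than an abstract diagonalization. Fix $r=1$ and $\eps>0$. First, Proposition~\ref{prop:isometry-BD-BHP} produces $T_0$ such that for $T\geq T_0$ the balls $B_1(\BD_{T,\sqrt{T}})$ and $B_1(\BHP)$ can be coupled to coincide with probability $\geq 1-\eps$. Second, with $\sigma_n=\lceil\sqrt{2n}\rceil$, Proposition~\ref{prop:Qn-UIHPQ} gives a $\delta>0$ and a coupling of $Q_n^{\sigma_n}$ with $\UIHPQ$ so that $B_{\delta\sqrt{\sigma_n}}(Q_n^{\sigma_n})=B_{\delta\sqrt{\sigma_n}}(\UIHPQ)$ with high probability; setting $m_n=\lceil\delta^{-4}a_n^4\rceil$ arranges that $a_n\leq\delta\sqrt{\sigma_{m_n}}$, hence $B_{a_n}(Q_{m_n}^{\sigma_{m_n}})=B_{a_n}(\UIHPQ)$ with high probability. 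Third, with $T=\delta^{-4}(8/9)$, the convergence \eqref{eq:BeMi} plus the scaling $\BD_{T,\sqrt{T}}=_d T^{1/4}\cdot\BD_1$ gives $a_n^{-1}\cdot Q_{m_n}^{\sigma_{m_n}}\to\BD_{T,\sqrt{T}}$ on balls. Chaining the three estimates in a triangle inequality on $\bigl|\E[F(B_1(a_n^{-1}\cdot\UIHPQ))]-\E[F(B_1(\BHP))]\bigr|$ finishes the proof. The coupling events carry a fixed probability $1-\eps$ uniformly in $n$, which is precisely the uniform control you recognized was missing in the diagonal argument; the paper manufactures it rather than trying to extract it from a soft convergence-in-distribution statement.

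In short: your two main ideas — use Proposition~\ref{prop:Qn-UIHPQ} to transfer between $Q_n^{\sigma_n}$ and $\UIHPQ$, and relate the limit to $\BHP$ via what is already known for quadrangulations with boundary — are both present in the paper, but the second must be realized via the Brownian-disk route (BeMi convergence plus Proposition~\ref{prop:isometry-BD-BHP}), not via Theorem~\ref{thm:BHP1}, which sits downstream of the result you are proving. Your proposal is therefore not a proof as it stands.
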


\subsection{Couplings and topology}
For proving Theorem~\ref{thm:BHP1}, we follow a strategy similar to
that in Curien and Le Gall~\cite{CuLG}. As an intermediate step, we
establish a coupling between the Brownian disk $\BD_\sigma$ and the
Brownian half-plane $\BHP_\theta$, which we also apply to determine the topology of $\BHP_\theta$.
\begin{thm}
\label{thm:coupling-BD-BHP}
Let $\eps>0$, $r\geq 0$. Let
$\sigma(\cdot):(0,\infty)\rightarrow(0,\infty)$ be a function satisfying
$\lim_{T\rightarrow\infty}\sigma(T)/T=\theta\in[0,\infty)$ and
$\liminf_{T\rightarrow\infty}\sigma(T)/\sqrt{T}>0$. Then there exists
$T_0=T_0(\eps,r,\sigma)$ such that for all $T\geq T_0$, one can construct
copies of $\BD_{T,\sigma(T)}$ and $\BHP_\theta$ on the same probability
space such that with probability at least $1-\eps$, there exist two
isometric open subsets $O_\BD$, $O_{\BHP}$ in these spaces which are both
homeomorphic to the closed half-plane $\overline{\mathbb{H}}$ and contain
the balls $B_r(\BD_{T,\sigma(T)})$ and $B_r(\BHP_\theta)$, respectively.
\end{thm}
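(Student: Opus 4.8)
The strategy is to compare the contour encodings of $\BD_{T,\sigma(T)}$ and $\BHP_\theta$ directly, using the fact that both spaces are of the form $(M_{X,W},D_{X,W},\rho)$ built from processes of the type $(X-\underline{X},\sqrt 3\,\gamma_{-\underline X}+Z^{X-\underline X})$. The point is that a ball of radius $r$ in either space only depends on the encoding functions over an initial (random) time window whose length is tight as $T\to\infty$, so it suffices to couple the two pairs of processes so that they agree on such a window with probability at least $1-\eps$.

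\emph{Step 1: Reduce to a local property of the encoding.} First I would recall, via the standard "re-rooting/locality of balls" estimates for metric spaces coded by snakes (as used throughout the Brownian-map/Brownian-disk literature, e.g.\ in~\cite{BeMi,CuLG}), that there is a deterministic function $L\mapsto r(L)$ and, conversely, for each $r$ a random but tight $L_r$ such that $B_r(M_{X,W},D_{X,W},\rho)$ is a measurable function of the restriction of $(X-\underline X, W)$ to the interval $[-L_r,L_r]$ (in the two-sided case) together with the pieces of $\gamma$ indexed by the corresponding range of $-\underline X$. The key quantitative input is that distances from the root grow at least like a positive power of time, so that $\P(L_r > L)\to 0$ as $L\to\infty$, uniformly in $T$ large; this is where the hypothesis $\liminf_T \sigma(T)/\sqrt T>0$ is used, to guarantee that the perimeter part of the boundary process does not degenerate on the relevant scale.

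\emph{Step 2: Couple the contour processes.} For $\BHP_\theta$, the two-sided process $X$ consists of a Brownian motion with drift $-\theta$ on $[0,\infty)$ and an independent Pitman transform $\Pi(X')$ on $(-\infty,0]$; the reflected process $X-\underline X$ then splits, around time $0$, into independent pieces governed by this drift. For $\BD_{T,\sigma(T)}$, the process $X$ is a first-passage Brownian bridge from $0$ to $-\sigma(T)$ of duration $T$. The plan is to invoke an absolute-continuity/coupling statement (of the kind established in Section~\ref{sec:auxiliaryresults} for forests and bridges, passed to the continuum) showing that, on any fixed compact time window, a first-passage bridge of duration $T$ with endpoint $-\sigma(T)$, where $\sigma(T)/T\to\theta$, is close in total variation to a Brownian motion with drift $-\theta$; simultaneously one couples the time-reversed excursion structure near the "free" end of the bridge with the Pitman transform appearing in the $\BHP_\theta$ construction. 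Conditionally on these, the snakes $Z^{X-\underline X}$ and the boundary Brownian bridges/motions $\gamma$ are then coupled to agree on the common window (the Brownian bridge $\gamma$ of duration $\sigma(T)$ restricted to a compact set is close to two-sided Brownian motion when $\sigma(T)\to\infty$).

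\emph{Step 3: Extract the isometric open neighbourhoods.} Given that the encodings agree on $[-L,L]$ with high probability for $L$ large, the canonical projections identify an open neighbourhood of $\rho$ in $M_{X,W}$ — namely the image under $p_{X,W}$ of the portion of the time axis mapped into a slightly smaller window — isometrically in the two spaces, and this neighbourhood contains $B_r$ by Step 1. That these neighbourhoods are homeomorphic to $\overline{\mathbb H}$ follows because locally, near a boundary point, both $\BD_{T,\sigma}$ and $\BHP_\theta$ look like the half-plane; concretely one can take the neighbourhood to be (the interior of) a ball in $\BHP_\theta$, whose half-plane topology is part of what the theorem is used to establish, so at this stage one argues the topology directly from the snake description of $\BHP_\theta$ restricted to a window (a bounded region bordered by the image of the real line, which plays the role of the boundary). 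Choosing $L=L(\eps,r)$ large enough and then $T_0=T_0(\eps,r,\sigma)$ so that the coupling in Step 2 succeeds on $[-L,L]$ with probability $\ge 1-\eps$ completes the argument.

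\emph{Main obstacle.} The delicate point is Step 2 together with the "half-plane topology of the window" claim in Step 3: one must establish the coupling of the \emph{full} decorated encoding $(X,W)$ — bulk contour, boundary process $\gamma$, and snake $Z$ — not just the contour alone, uniformly over $T\ge T_0$, and one must do so in a way compatible with the locality estimate of Step 1, i.e.\ controlling simultaneously the event that the relevant window $L_r$ is not too large. Handling the near-critical regime $\sigma(T)/T\to\theta$ with $\theta$ possibly $0$, where the drift degenerates and one is in the scaling window between the disk and the half-plane, is what forces the combined hypothesis $\sigma(T)/T\to\theta$ and $\liminf\sigma(T)/\sqrt T>0$, and making the error terms in the total-variation bounds uniform there is the technical heart of the proof.
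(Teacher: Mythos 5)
Your Steps~1 and~2 are, modulo the vagueness, essentially the same approach the paper uses to prove the \emph{weaker} statement that the balls $B_r(\BD_{T,\sigma(T)})$ and $B_r(\BHP_\theta)$ are isometric with high probability (Proposition~\ref{prop:isometry-BD-BHP}): one establishes an absolute-continuity relation between the first-passage bridge $F$ and a pair (Brownian motion with drift $-\theta$, Pitman transform) on a compact window (Lemma~\ref{lem:abs-cont-F}), a similar one for the boundary bridge $b$ versus two-sided Brownian motion (Lemma~\ref{lem:abs-cont-b}), couples the remaining snake components given the contours, and then uses cactus bounds and the infimum formula for $D$ to show that the ball in each space only depends on the encoding on this window. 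This part of your plan is sound in outline.

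The genuine gap is Step~3. You propose to take the desired open set to be ``(the interior of) a ball in $\BHP_\theta$'' and to argue its half-plane topology ``directly from the snake description''. This fails on two counts. First, balls in these spaces are \emph{not} simply connected and in particular not homeomorphic to $\overline{\mathbb{H}}$ (the paper emphasizes this explicitly and draws the picture: the ball $B_r(\Y)$ is not simply connected but sits inside a simply connected $O_\BD$). Second, and more fundamentally, asserting that a window in the snake encoding of $\BHP_\theta$ ``looks like a half-plane near a boundary point'' is circular: the theorem's whole purpose (via Corollary~\ref{cor:topology-BHP}) is to establish the topology of $\BHP_\theta$, which is not known a priori; you cannot appeal to it to produce the open set. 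Also, the image under $p_{X,W}$ of a time window is not automatically open, and the paper has to work to show the set it constructs is open.

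What you are missing is the construction that the paper actually carries out: it works on the \emph{disk} side, where the topology \emph{is} known (the Brownian disk $\Y=\BD_{T,\sigma(T)}$ is a.s.\ homeomorphic to $\overline{\mathbb{D}}$, quoting~\cite{Be3}). Starting from the window $O^0_\BD=[0,\eta_{\textup{l}}(a_0)]\cup[T-\eta_{\textup{r}}(a_0),T]$, it picks the two boundary points $x_{\textup{l}}=p_\Y(\eta_{\textup{l}}(a_0))$, $x_{\textup{r}}=p_\Y(T-\eta_{\textup{r}}(a_0))$, and follows the canonical geodesics $\Gamma_{\eta_{\textup{l}}(a_0)}$ and $\Gamma_{T-\eta_{\textup{r}}(a_0)}$ toward the label-minimizing point $x_\ast$ until they first coalesce (which they do at $p_\Y(t_\ast)$ with $t_\ast$ realizing $\min_{O^0_\BD}W$). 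The two geodesic segments concatenate into a simple curve $P$ between $x_{\textup{l}}$ and $x_{\textup{r}}$ that touches $\partial\Y$ only at its endpoints (this uses disjointness of increase points of $F$ and $W$, i.e.\ Lemma~\ref{lem:proof-prop-refpr-3}), and $O_\BD$ is defined as the connected component of $\Y\setminus P$ containing the root. Since $\Y\cong\overline{\mathbb{D}}$, a Jordan-type argument identifies $O_\BD$ with a closed half-plane, and one checks it equals the interior of $p_\Y(O^0_\BD)$. The set on the half-plane side is then defined as $O_\BHP=I(O_\BD)$ via the isometry $I$ from Corollary~\ref{cor:DBD-DH}, with a short separate argument at the very end to show $O_\BHP$ is open. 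Without this geodesic/Jordan-curve construction (and the supporting lemmas about geodesics and the disk boundary), Step~3 of your proposal does not go through.
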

We remark that for the proof of Theorem~\ref{thm:BHP1}, it would be
sufficient to show that the balls of radius $r$ around the root in the
corresponding spaces are isometric. From the stronger version of the
coupling stated above, we can however additionally deduce
\begin{corollary}
\label{cor:topology-BHP}
For every $\theta \geq 0$, the space $\BHP_\theta$ is a.s. homeomorphic to
the closed half-plane $\overline{\mathbb{H}}=\R\times\R_+$.
\end{corollary}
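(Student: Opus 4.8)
The plan is to deduce the corollary directly from Theorem~\ref{thm:coupling-BD-BHP} together with the known topology of the Brownian disk, namely the fact (cited from~\cite{Be3}) that $\BD_{T,\sigma}$ is a.s.\ homeomorphic to the closed unit disk $\overline{\mathbb{D}}$. The key observation is that a homeomorphism type of a separable, locally compact, connected length space is determined by the homeomorphism types of an exhausting sequence of open sets, so it suffices to show that for every $r\geq 0$ the ball $B_r(\BHP_\theta)$ embeds in an open subset of $\BHP_\theta$ that is homeomorphic to $\overline{\mathbb{H}}$, and then to glue these descriptions together.

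First I would fix $\theta\geq 0$ and apply Theorem~\ref{thm:coupling-BD-BHP} with a fixed choice of perimeter function $\sigma(\cdot)$ satisfying $\sigma(T)/T\to\theta$ and $\liminf\sigma(T)/\sqrt T>0$ (for $\theta>0$ one may simply take $\sigma(T)=\theta T$; for $\theta=0$ take e.g.\ $\sigma(T)=\sqrt T$). For each integer $k\geq 1$, applying the theorem with $\eps=2^{-k}$ and $r=k$ yields, for $T$ large, a coupling of $\BD_{T,\sigma(T)}$ and $\BHP_\theta$ on a common probability space and an event $A_k$ of probability at least $1-2^{-k}$ on which there is an open set $O_{\BHP}^{(k)}\subset\BHP_\theta$ containing $B_k(\BHP_\theta)$ and homeomorphic to $\overline{\mathbb{H}}$. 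By Borel--Cantelli, a.s.\ all but finitely many $A_k$ occur; hence, a.s., for every $k$ large enough there is an open set $O_k\subset\BHP_\theta$ with $B_k(\BHP_\theta)\subset O_k$ and $O_k\cong\overline{\mathbb{H}}$. (Strictly, the couplings for different $k$ live on different probability spaces, so what one really extracts is the \emph{deterministic} statement: for every $k$, with positive probability $\BHP_\theta$ contains such an $O_k$; since the existence of $O_k$ is an almost sure or almost never event by a $0$--$1$ argument, or more simply since it is implied with probability $\to 1$, one concludes it holds a.s. The cleanest route is to note that ``$\BHP_\theta$ admits an open neighbourhood of $B_k(\BHP_\theta)$ homeomorphic to $\overline{\mathbb{H}}$'' is an event whose probability is at least $1-2^{-k}$ for \emph{every} choice of large $T$, hence equals $1$.)

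The second step is purely topological: given a pointed, boundedly compact length space $\mathbf{E}=(E,d,\rho)$ such that $E=\bigcup_k B_k(\mathbf{E})$ and for each large $k$ there is an open $O_k$ with $B_k(\mathbf{E})\subset O_k\subset E$ and $O_k$ homeomorphic to $\overline{\mathbb{H}}$, one must conclude $E\cong\overline{\mathbb{H}}$. Here I would use the characterization of $\overline{\mathbb{H}}=\R\times\R_{\geq 0}$ among surfaces with boundary: it is the unique connected, simply connected, non-compact, separable metrizable $2$-manifold with boundary whose boundary is a single properly embedded copy of $\R$ (equivalently, by classification of surfaces with boundary, or by a direct exhaustion argument à la Richards). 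The conditions to verify — that $E$ is a $2$-manifold with boundary (immediate from the $O_k$ covering $E$, since each point has a neighbourhood homeomorphic to an open subset of $\overline{\mathbb{H}}$), that it is connected (it is a length space), non-compact (it contains arbitrarily large balls), and has the right fundamental group and boundary — are read off from the $O_k\cong\overline{\mathbb{H}}$, using that each inclusion $O_k\hookrightarrow O_{k+1}$ is a map between half-planes that exhausts, and that $\overline{\mathbb{H}}$ is simply connected with one boundary line. One small point to handle carefully: ruling out that $E$ could be, say, $\overline{\mathbb{H}}$ with extra ends or an exotic non-compact surface — this is exactly what the exhaustion by half-planes with $O_k$ containing $B_k$ prevents, since any loop or any boundary component is eventually contained in some $B_k\subset O_k\cong\overline{\mathbb{H}}$ and so is null-homotopic / connected to the rest in the expected way.

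The main obstacle I anticipate is this last topological bookkeeping — converting ``exhausted by open half-planes, each containing a ball'' into ``globally a half-plane.'' The subtlety is that an increasing union of open sets each homeomorphic to $\overline{\mathbb{H}}$ need not be homeomorphic to $\overline{\mathbb{H}}$ in general (the inclusions could be wild or could create ends); what saves us is the extra control that $O_k\supset B_k(\mathbf{E})$ and $\bigcup_k B_k(\mathbf{E})=E$, which forces the exhaustion to be \emph{cofinal} and kills spurious ends. I would make this rigorous either by invoking Richards' classification of non-compact surfaces (with boundary) after identifying the space of ends as a single point and the boundary as a single line, or, to keep the argument self-contained, by building an explicit homeomorphism $E\to\overline{\mathbb{H}}$ as a direct limit of homeomorphisms of nested half-planes, using the annulus/collar theorem in dimension $2$ to adjust successive identifications on the overlaps $O_k\setminus\mathrm{int}\,O_{k-1}$. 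Everything else — the Borel--Cantelli extraction and the reduction to the topological lemma — is routine.
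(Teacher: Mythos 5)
Your first step — extracting the almost-sure statement that for every $r>0$ the ball $B_r(\BHP_\theta)$ sits inside an open set homeomorphic to $\overline{\mathbb{H}}$ — is correct, and the ``cleanest route'' you finally settle on is exactly the right one: the event depends only on the marginal law of $\BHP_\theta$, which is the same in every coupling, so its probability is $\geq 1-\eps$ for all $\eps$ and hence equals $1$. The Borel--Cantelli digression is a red herring you can discard, since the couplings for distinct $k$ need not live on a common space and, as you note yourself, this route is unnecessary. Up to this point your argument coincides with the paper's.

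Where you and the paper diverge is in the purely topological step. You propose to classify $\BHP_\theta$ directly as a non-compact surface \emph{with} boundary, either by invoking a version of Richards' classification allowing boundary, or by building an explicit homeomorphism as a direct limit of half-plane embeddings $O_k\hookrightarrow O_{k+1}$ (which would first require rearranging the $O_k$ into a genuinely nested exhaustion — doable since each $O_k$ is bounded, but not automatic from the statement of Theorem~\ref{thm:coupling-BD-BHP}). The paper instead uses a doubling trick: having established that $\BHP_\theta$ is a simply connected surface with connected non-compact boundary (hence $\cong\R$), glue a copy of $\overline{\mathbb{H}}$ along the boundary to obtain a \emph{boundaryless} non-compact surface $S$; $S$ is simply connected by van Kampen and hence, by Richards' classification for surfaces without boundary, $S\cong\R^2$; the Jordan--Schoenflies theorem then shows the boundary line cuts $\R^2$ into two closed half-planes, one of which is $\BHP_\theta$. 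This avoids the boundary case of the classification entirely. Your proposed characterization of $\overline{\mathbb{H}}$ (``the unique simply connected non-compact surface with boundary a single properly embedded $\R$'') is true, but its proof is essentially the doubling argument, so the paper's route is the shorter path to the same place; moreover your alternative (b), the direct-limit construction, would need considerably more care than a one-line appeal to the collar theorem, for the reason you yourself flag — increasing unions of half-planes do require extra control to avoid pathologies. In short: your plan is morally sound and would go through, but the paper's doubling reduction is cleaner and is the detail you were missing.
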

Since the Brownian half-plane $\BHP=\BHP_0$ is scale-invariant, i.e.,
$\lambda\cdot\BHP =_d\BHP$ for every $\lambda>0$,
Theorem~\ref{thm:coupling-BD-BHP} moreover implies that $\BHP$ is locally
isometric to the disk $\BD_\sigma(=\BD_{1,\sigma})$.
\begin{corollary}
\label{cor:isometry-BD-BHP}
  Fix $\sigma\in(0,\infty)$, and let $\eps>0$. Then one can find 
  $\delta>0$ and construct on the same probability space copies of
  $\BD_\sigma$ and $\BHP$ such that with probability at least
  $1-\eps$, $B_\delta(\BHP)$ and $B_\delta(\BD_\sigma)$ are
  isometric.
\end{corollary}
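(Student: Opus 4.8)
The plan is to deduce this directly from Theorem~\ref{thm:coupling-BD-BHP} together with the scale invariance of $\BHP=\BHP_0$. First I would specialize Theorem~\ref{thm:coupling-BD-BHP} to the constant perimeter function $\sigma(\cdot)\equiv\sigma$, which trivially satisfies $\sigma(T)/T\to 0=\theta$ and $\liminf_T \sigma(T)/\sqrt T=+\infty>0$, so the hypotheses hold with $\theta=0$. Applying the theorem with parameters $\eps$ and, say, $r=1$, I obtain a constant $T_0=T_0(\eps,1,\sigma)$ such that for every $T\ge T_0$ one can couple $\BD_{T,\sigma}$ and $\BHP$ so that, with probability at least $1-\eps$, there are isometric open subsets $O_\BD, O_{\BHP}$ (each homeomorphic to $\overline{\mathbb{H}}$) containing the balls $B_1(\BD_{T,\sigma})$ and $B_1(\BHP)$ respectively. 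In particular, on this event, $B_1(\BD_{T,\sigma})$ and $B_1(\BHP)$ are isometric as pointed metric spaces: the isometry between $O_\BD$ and $O_{\BHP}$ maps the root to the root (correspondences/couplings in this paper are pointed), hence restricts to a pointed isometry between the two closed unit balls, which lie inside the respective open sets.

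Next I would transfer this to $\BD_\sigma=\BD_{1,\sigma}$ using the scaling relations. Fix $T\ge T_0$ and set $\lambda=T^{-1/4}$. By the scaling property $\lambda\cdot\BD_{T,\sigma}=_d\BD_{\lambda^4 T,\lambda^2\sigma}=\BD_{1,\sigma/\sqrt T}=\BD_{\sigma/\sqrt T}$, and by scale invariance $\lambda\cdot\BHP=_d\BHP$. Since dilation by $\lambda$ sends $B_1$ to $B_\lambda$, rescaling the coupled pair $(\BD_{T,\sigma},\BHP)$ by $\lambda$ yields, with probability at least $1-\eps$, a pointed isometry between $B_\lambda(\BD_{\sigma/\sqrt T})$ and $B_\lambda(\BHP)$. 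There is a cosmetic mismatch: the corollary asks for $\BD_\sigma$ exactly, not $\BD_{\sigma/\sqrt T}$. I would fix this by reparametrizing: given the target $\sigma$, apply the above with the perimeter value $\sigma':=\sigma\sqrt T$ in place of $\sigma$ (still a legitimate constant function), so that $\BD_{\sigma'/\sqrt T}=\BD_\sigma$, and take $\delta:=\lambda=T^{-1/4}$ with $T:=T_0(\eps,1,\sigma')$; equivalently, one notes that by the scaling property $\BD_\sigma=_d \lambda^{-1}\cdot(\lambda\cdot\BD_\sigma)$ and $\lambda\cdot\BD_\sigma=_d\BD_{\lambda^4,\lambda^2\sigma}=\BD_{T,\sigma\sqrt{T}}$ for $\lambda=T^{1/4}$, then run Theorem~\ref{thm:coupling-BD-BHP} with perimeter function $\equiv\sigma\sqrt T$ and rescale back by $\lambda^{-1}=T^{-1/4}=:\delta$. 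Either way one arrives at: there exist $\delta>0$ and a coupling of $\BD_\sigma$ and $\BHP$ under which $B_\delta(\BD_\sigma)$ and $B_\delta(\BHP)$ are isometric with probability at least $1-\eps$.

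The only genuinely delicate point, and hence where I would be most careful, is verifying that the isometry of the open neighborhoods produced by Theorem~\ref{thm:coupling-BD-BHP} restricts to a \emph{pointed} isometry of the balls $B_1$ — i.e., that it actually carries the root of one space to the root of the other, rather than merely being an isometry of unpointed metric spaces that happens to contain both balls. This should follow from how the coupling in Theorem~\ref{thm:coupling-BD-BHP} is constructed (the root of $\BD_{T,\sigma(T)}$ and the root of $\BHP_\theta$ are identified in the correspondence underlying the GH-closeness, both being the natural boundary root point $\rho$), but I would want to point explicitly to that step of the construction when invoking it. Everything else — the scaling computations and the fact that dilations commute with taking closed balls of the correspondingly dilated radius — is routine and already recorded in the notation paragraph of Section~\ref{sec:locGH}.
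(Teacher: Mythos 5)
Your overall strategy---rescale the coupling of Theorem~\ref{thm:coupling-BD-BHP} (or Proposition~\ref{prop:isometry-BD-BHP}) using the scaling relations $\lambda\cdot\BD_{T,\sigma}=_d\BD_{\lambda^4T,\lambda^2\sigma}$ and $\lambda\cdot\BHP=_d\BHP$---is exactly the route the paper intends; the remark after the corollary says precisely that it is ``immediate from the scaling properties.'' However, your very first step is wrong: the constant perimeter function $\sigma(\cdot)\equiv\sigma$ does \emph{not} satisfy the hypothesis of Theorem~\ref{thm:coupling-BD-BHP}. You wrote $\liminf_{T\to\infty}\sigma(T)/\sqrt T=+\infty$, but for constant $\sigma$ one has $\sigma/\sqrt T\to 0$, so the required condition $\liminf_{T\to\infty}\sigma(T)/\sqrt T>0$ fails. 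Thus there is no license to apply the theorem in that form.

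The fix you describe as ``cosmetic'' is in fact the substantive step. The correct move is to run Theorem~\ref{thm:coupling-BD-BHP} with the genuinely $T$-dependent perimeter function $\sigma(T)=\sigma\sqrt T$. This satisfies both hypotheses: $\sigma(T)/T=\sigma/\sqrt T\to 0=\theta$ and $\sigma(T)/\sqrt T\equiv\sigma>0$. For $T\geq T_0(\eps,1,\sigma(\cdot))$ one then gets the coupling of $\BD_{T,\sigma\sqrt T}$ and $\BHP$ with $B_1(\BD_{T,\sigma\sqrt T})$ and $B_1(\BHP)$ isometric with probability $\geq 1-\eps$; dilating by $\lambda=T^{-1/4}$ gives $\lambda\cdot\BD_{T,\sigma\sqrt T}=_d\BD_{1,\sigma}=\BD_\sigma$ and $\lambda\cdot\BHP=_d\BHP$, and balls of radius $1$ become balls of radius $\delta:=T^{-1/4}$. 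Your wording ``apply the above with the perimeter value $\sigma':=\sigma\sqrt T\ldots$ with $T:=T_0(\eps,1,\sigma')$'' is circular as written ($T$ depends on $\sigma'$ depends on $T$) and should be replaced by the non-circular formulation just described. Finally, the ``delicate point'' about pointed isometry can be sidestepped entirely by invoking Proposition~\ref{prop:isometry-BD-BHP} rather than Theorem~\ref{thm:coupling-BD-BHP}: the proposition already asserts isometry of the \emph{balls} (pointed, around the respective roots), which is all the corollary needs.
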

The proof of Corollary~\ref{cor:isometry-BD-BHP} is immediate from the
scaling properties of $\BD_{T,\sigma}$ and $\BHP$, whereas
Corollary~\ref{cor:topology-BHP} needs an extra argument, which we give in
Section~\ref{sec:proof-coupling-BD-BHP}.  
\begin{remark}
\label{rem:BHP-char}
  The local isometry between $\BHP$ and $\BD_\sigma$ together with the fact
  that $\BHP$ is scale-invariant uniquely characterizes the law of $\BHP$
  in the set of all probability measures on $\mathbb{K}_{bcl}$. This follows from the
  argument in the proof of~\cite[Proposition 3.2]{CuLG2}, where a
  similar characterization of the Brownian plane is given.
\end{remark}

For establishing Theorem~\ref{thm:BHP1}, we shall also need a coupling
between the $\UIHPQ$ and $Q_n^{\sigma_n}$ when $\sigma_n$ grows slower than
$n$.
\begin{prop}
\label{prop:Qn-UIHPQ}
Assume $1\ll \sigma_n\ll n$, and put $\vartheta_n=\min\left\{\sigma_n,\,
  n/\sigma_n\right\}$.  Given any $\eps>0$, there exist $\delta>0$
and $n_0\in\N$ such that for every $n\geq n_0$, one can construct copies of
$Q_n^{\sigma_n}$ and $\UIHPQ$ on the same probability space such that with
probability at least $1-\eps$, the balls $B_{\delta
  \sqrt{\vartheta_n}}(Q_n^{\sigma_n})$ and $B_{\delta
  \sqrt{\vartheta_n}}(\UIHPQ)$ are isometric. Moreover, we have the local
convergence $$ (V(Q_n^{\sigma_n}),\dgr,\rho_n) \longrightarrow \UIHPQ
$$  
in distribution for the metric $\dmap$, as $n\rightarrow\infty$.
\end{prop}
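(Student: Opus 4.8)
The plan is to prove the two assertions in turn, using the Bouttier--Di Francesco--Guitter (BDG) encoding as the common ground between the finite quadrangulation $Q_n^{\sigma_n}$ and the $\UIHPQ$. The second assertion—the local convergence in $\dmap$—is essentially known and should follow from the existing literature on local limits (see~\cite{CuMeMi} and references to the $\UIHPQ$ therein): one checks that for each fixed $r$, the law of the combinatorial ball $\cb_r(Q_n^{\sigma_n})$ converges to that of $\cb_r(\UIHPQ)$, which in the BDG picture reduces to a convergence statement for the labeled forest-and-bridge encoding restricted to a neighborhood of the root. Since $1\ll\sigma_n\ll n$, the boundary length diverges (so the half-planar, rather than planar, local limit appears) while $n$ is large enough that the root neighborhood does not ``feel'' the finiteness of the volume; the precise bookkeeping is of the same nature as in the passage from finite quadrangulations to the $\UIPQ$, and I would cite or lightly adapt those arguments rather than redo them.

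The first assertion is the substantive one. First I would recall the BDG bijection (Section~\ref{sec:encoding}) which represents $Q_n^{\sigma_n}$ by a labeled forest of $\sigma_n$ trees with a total of $n$ edges, together with a discrete bridge of length $\sigma_n$ recording the boundary labels, and represents the $\UIHPQ$ by a two-sided infinite analogue (an infinite forest indexed by $\Z$ with an infinite bridge). The ball $B_\rho(\cdot)$ of radius $\rho\sqrt{\vartheta_n}$ in the map is, up to a bounded additive error and by the Schaeffer-type label-to-distance estimates, determined by the portion of the encoding lying within ``label-distance'' $O(\sqrt{\vartheta_n})$ of the root—which in the forest picture corresponds to $O(\vartheta_n)$ trees on each side of the root tree and $O(\vartheta_n)$ edges of bridge, with labels of order $\sqrt{\vartheta_n}$. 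The key step is then an invariance-principle-type coupling: on the event that the relevant finite window of the encoding of $Q_n^{\sigma_n}$ agrees \emph{exactly} with the corresponding window of the encoding of the $\UIHPQ$, the two balls are literally isometric (the map inside a ball is a deterministic function of the encoding restricted to a slightly larger window). So the task reduces to: given $\eps$, choose $\delta$ small and a window size $K=K(\delta)$ of order $\vartheta_n$ such that the law of the first $K$ trees-and-bridge-steps of $Q_n^{\sigma_n}$ on each side of the root can be coupled to coincide with that of the $\UIHPQ$ with probability $\ge 1-\eps$, uniformly for $n$ large.

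That coupling of the discrete encodings is where I expect the main work to lie, and it is exactly the kind of statement the paper flags as being collected in Section~\ref{sec:auxiliaryresults} (``convergence results on forests and bridges''). Concretely: the forest of $Q_n^{\sigma_n}$ is a uniform (equivalently, critical geometric Galton--Watson) forest of $\sigma_n$ trees conditioned to have $n$ edges total, and its bridge is a uniform discrete bridge; the $\UIHPQ$'s encoding is the corresponding \emph{unconditioned, doubly-infinite} forest and bridge. One must show that when we look only at a window of $O(\vartheta_n)=O(\min\{\sigma_n,n/\sigma_n\})$ trees near the root, the conditioning ``$\sigma_n$ trees, $n$ edges'' is asymptotically invisible—both because $\sigma_n\to\infty$ (so the number-of-trees constraint does not localize) and because $n/\sigma_n\to\infty$ (so the total-edges constraint leaves the window with an essentially free, near-critical number of edges); the definition $\vartheta_n=\min\{\sigma_n,n/\sigma_n\}$ is precisely the largest window scale on which both effects are negligible. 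Technically I would realize this by an absolute-continuity / total-variation estimate: compare the law of the window under the conditioned forest to that under an i.i.d.\ sequence of critical GW trees (a Radon--Nikodym computation with a local CLT for the tail edge-count, plus the corresponding bridge estimate), show the total-variation distance is $o(1)$ once the window is $o(\vartheta_n^{1+})$—more carefully, for a window of size $c\vartheta_n$ the TV distance tends to a function of $c$ that is small for small $c$—and then invoke the maximal coupling. Combining this with the deterministic ``encoding-window determines the ball'' statement, and absorbing the bounded Schaeffer-estimate error into the choice of $\delta$, yields the isometry of $B_{\delta\sqrt{\vartheta_n}}(Q_n^{\sigma_n})$ and $B_{\delta\sqrt{\vartheta_n}}(\UIHPQ)$ with probability $\ge 1-\eps$ for all $n\ge n_0$.
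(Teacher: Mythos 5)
Your high-level strategy matches the paper's: encode via BDG, couple the forest-and-bridge of $Q_n^{\sigma_n}$ with that of the $\UIHPQ$ on a window of $O(\vartheta_n)$ trees and bridge steps around the root (via the total-variation estimates you correctly identify — these are the paper's Lemmas~\ref{lem:GW3} and~\ref{lem:bridge1}), and then argue the balls of radius $\delta\sqrt{\vartheta_n}$ agree on the coupling event. Your intuition for why $\vartheta_n=\min\{\sigma_n, n/\sigma_n\}$ is the right window scale is also exactly right.

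However, there is a genuine gap in the middle step. You assert that ``on the event that the relevant finite window of the encoding of $Q_n^{\sigma_n}$ agrees exactly with the corresponding window of the encoding of the $\UIHPQ$, the two balls are literally isometric (the map inside a ball is a deterministic function of the encoding restricted to a slightly larger window).'' This is not true as stated for the finite object. In the BDG construction of $Q_n^{\sigma_n}$, the successor of a corner $i$ is the \emph{first} corner in cyclic order around the whole forest whose label equals $\La(i)-1$; this can in principle wrap around the far side of the boundary cycle, producing an arc between two vertices both sitting in the window but with no counterpart in $Q_\infty^\infty$. So agreement of the encoding windows does not by itself force agreement of the submaps. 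The paper's proof needs an additional event — $\mathcal{E}^3(n,\delta)=\{\min_{[A_n+1,\sigma_n-(A_n+1)]}\mathsf{b}_n<-5\delta\sqrt{\vartheta_n}\}$ in their notation — saying the bridge dips well below $-5\delta\sqrt{\vartheta_n}$ somewhere in the middle of the cycle, outside the window. Since labels change by at most one along the contour, any arc of $Q_n^{\sigma_n}$ whose cyclic contour interval crosses the far side must then have both endpoints with labels below $-5\delta\sqrt{\vartheta_n}$, hence, by the cactus bound, at distance greater than $5\delta\sqrt{\vartheta_n}$ from the root vertex $(0)$, so such arcs cannot appear on geodesics inside the ball. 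This event holds with high probability by the bridge invariance principle (Lemma~\ref{lem:bridge0}), but it must be introduced explicitly; without it your claimed isometry can fail. (A smaller remark: the paper obtains the $d_{\textup{map}}$ convergence as a direct consequence of the coupling, rather than by a separate argument as you propose; both routes work, but note that~\cite{CuMi} only treats the iterated limits $n\to\infty$ then $\sigma\to\infty$, so the diagonal statement does require the uniform coupling you are proving.)
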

Note that the above mentioned $\UIPQ$ is in turn the weak limit in the
sense of $\dmap$ for uniform quadrangulations {\it without} a boundary, see
Krikun~\cite{Kr}.

For proving Theorem~\ref{thm:IBD} and showing that the topology of
$\IBD_\sigma$ is that of a pointed closed disk, we couple the Brownian
disk $\BD_{T,\sigma}$ for large volumes $T$ with the infinite-volume
Brownian disk $\IBD_\sigma$.
\begin{thm}
\label{thm:coupling-BD-IBD}
Fix $\sigma\in(0,\infty)$, and let $\eps>0$, $r\geq 0$. There exists
$T_0=T_0(\eps,r,\sigma)$ such that for all $T\geq T_0$, we can
construct copies of $\BD_{T,\sigma}$ and $\IBD_\sigma$ on the same
probability space such that with probability at least $1-\eps$,
there exist two isometric open subsets $A_{\BD}$, $A_{\IBD}$ in these
spaces which are both homeomorphic to the pointed closed disk
$\overline{\mathbb{D}}\setminus \{0\}$ and contain the balls
$B_r(\BD_{T,\sigma})$ and $B_r(\IBD_\sigma)$, respectively.
\end{thm}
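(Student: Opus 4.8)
The plan is to couple $\BD_{T,\sigma}$ and $\IBD_\sigma$ by coupling their encoding data, exploiting the fact that both spaces are of the form $(M_{X,W},D_{X,W},\rho)$, and that the only genuine difference between the two lies in the contour process $X$ and the associated reflection/minimum process. First I would recall that $\BD_{T,\sigma}$ is encoded by a first-passage Brownian bridge $X$ from $0$ to $-\sigma$ of duration $T$, together with the snake $W=\sqrt{3}\,\gamma_{-\underline X}+Z^{X-\underline X}$, while $\IBD_\sigma$ is encoded by the process $Y^\sigma$ of Definition~\ref{def:IBD}, together with the snake built from $\uuX$ and $\uuX^\sigma$. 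The key structural observation is that both contour processes, read from the root in the two directions, eventually look like independent $3$-dimensional Bessel processes: for $\IBD_\sigma$ this is built into $Y^\sigma$ (the parts $R$ and $R'$), and for $\BD_{T,\sigma}$ it follows from the fact that a first-passage bridge from $0$ to $-\sigma$ of large duration $T$, near the endpoints of the ``excursion above the running minimum'' that straddles the root, resembles a Bessel process on a fixed time-window with probability tending to $1$ as $T\to\infty$ --- this is the kind of absolute-continuity/coupling statement that should already be available among the auxiliary results on bridges in Section~\ref{sec:auxiliaryresults}.

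The main steps, in order, are as follows. (1) Fix $r$ and $\eps$. Using the fact that in $\IBD_\sigma$ (resp.\ $\BD_{T,\sigma}$) the ball $B_r$ depends only on the restriction of the encoding pair $(X,W)$ to a random but a.s.\ finite sub-interval of the time axis, choose a deterministic $A=A(r,\eps,\sigma)$ such that, with probability $\geq 1-\eps/3$, $B_r$ is already determined by $(X,W)$ restricted to $[-A,A]$ in \emph{both} spaces. This uses that distances $D_{X,W}(0,\cdot)$ are controlled by the snake $W$ and that $W$ has continuous, hence locally bounded, sample paths, so a finite window suffices; one also needs that the identifications in the metric gluing that affect $B_r$ only involve times in $[-A,A]$, which follows from the same bound on $W$. (2) On this window, compare the laws of the two encodings. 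On $[0,A]$ the $\IBD_\sigma$ contour is $-U_0+R_{\,\cdot\,-T_{U_0}}$ past the hitting time of $-U_0$, which is absolutely continuous w.r.t.\ a Bessel process run from the root; on the $\BD_{T,\sigma}$ side, conditionally on the location of the root within its excursion, the contour restricted to $[0,A]$ converges in total variation, as $T\to\infty$, to the same Bessel-type description. Hence for $T\geq T_0$ one can couple the two contour processes on $[-A,A]$ to agree with probability $\geq 1-\eps/3$. (3) Given the contour processes agree on $[-A,A]$, couple the snakes: since $W$ is, conditionally on $X$, a Gaussian process whose covariance is a local functional of $X$ (the running minimum of $X-\underline X$ between the two times), and since $\gamma$ is in both cases a Brownian bridge / two-sided motion that can be coupled on the relevant range $[0,\sigma]$, one couples $(\gamma,Z)$ to coincide on $[-A,A]$ with probability $\geq 1-\eps/3$. (4) On the event that all three couplings succeed, the encoding pairs coincide on $[-A,A]$, hence the metric gluings restricted to the part of the space determined by $[-A,A]$ are isometric; take $O_{\BD},O_{\IBD}$ (here $A_{\BD},A_{\IBD}$) to be suitable open neighbourhoods of these pieces --- e.g.\ the interior of the image of the open sub-interval $(-A,A)$ under $p_{X,W}$, enlarged slightly --- which contain $B_r$ and are isometric. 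Finally, invoke Corollary~\ref{cor:topology-IBD} (or rather its proof ingredient) to see that such a neighbourhood, being the ``filled-in'' part of an infinite-volume disk cut at a finite contour window, is homeomorphic to $\overline{\mathbb D}\setminus\{0\}$; the corresponding piece of $\BD_{T,\sigma}$ is the same homeomorphism type by the isometry.

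The hard part will be Step (4): producing \emph{open} subsets that are genuinely isometric and simultaneously have the prescribed topology $\overline{\mathbb D}\setminus\{0\}$. Coinciding of the encodings on $[-A,A]$ gives an isometry between the subsets $p_{X,W}([-A,A])$ of the two spaces, but these need not be open, and one must rule out that identifications coming from \emph{outside} the window $[-A,A]$ glue points of $p_{X,W}([-A,A])$ together differently in the two spaces. The standard way around this, following the Curien--Le Gall argument behind Corollary~\ref{cor:topology-IBD}, is to choose $A$ large enough that, with high probability, \emph{no} geodesic from the root to a point of $B_{r+1}$ leaves $p_{X,W}([-A,A])$, and that the ``hull'' $B^{\bullet}_{r+1}$ (the complement of the unbounded component of the complement of $B_{r+1}$) is contained in $p_{X,W}((-A',A'))$ for some $A'<A$; then the hull is determined by the window, is open in its interior, contains $B_r$, and the cactus-type bound on $D_{X,W}$ (via $W$) shows external identifications cannot reach it. I expect the bookkeeping of these hull/geodesic-confinement estimates --- essentially re-running the topology argument of Theorem~\ref{thm:coupling-BD-BHP} and Corollary~\ref{cor:topology-IBD} in tandem --- to be the technically delicate core, while Steps (1)--(3) are routine given the auxiliary convergence results on Brownian bridges and the Gaussian structure of the snake.
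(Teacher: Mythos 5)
Your high-level strategy (couple encodings, use cactus bounds to localize, then build a geodesic-bounded open region) is the right family of ideas, but the specific plan has a genuine gap in Step (2), plus a circularity and two mischaracterizations that signal a misread of the structure.

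The key missing idea is that the natural coupling here is not a \emph{time}-window coupling around $t=0$, but a coupling of the \emph{jump decomposition} of the first-passage subordinator $x\mapsto T^F(x)$ of the disk's contour $F$. The paper's Lemma~\ref{lem:coupl-brown-disks-1} (the continuum analogue of Lemmas~\ref{lem:GW1}--\ref{lem:GW2}) shows that, conditionally on $\sum_i\Delta_i=T$, the size-biased first jump $\Delta_0'$ is huge (order $T$) while the remaining jumps $(\Delta_1',\Delta_2',\ldots)$ converge in total variation to the unconditioned jumps; Proposition~\ref{prop:coupl-brown-disks-2} then synthesizes $F$ and the stopped Brownian motion $B$ of the $\IBD_\sigma$ encoding \emph{from the same excursions and the same uniform levels}, except that the big excursion is coupled to the two Bessel processes $R,R'$ near its endpoints. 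This coupling is exact along the \emph{entire} boundary (all small excursions are literally identical), not merely on a time window. That matters for the topology: the boundary of $\BD_{T,\sigma}$ is a compact circle of length $\sigma$, all of which sits near the root, and to get an open set homeomorphic to $\overline{\mathbb D}\setminus\{0\}$ you need that whole circle inside $A_\BD$. A fixed window $[-A,A]$ in time captures the full boundary only on an event of probability that you would have to control separately; the paper's jump-level coupling gives it for free.

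Concretely, in Step (2) you assert that the $\IBD_\sigma$ contour restricted to $[0,A]$ is ``absolutely continuous w.r.t.\ a Bessel process run from the root,'' and that the $\BD_{T,\sigma}$ contour ``conditionally on the location of the root within its excursion'' converges to the same Bessel description. Neither is accurate: the $\IBD_\sigma$ contour on $[0,A]$ is a Brownian motion up to the random time $T_{U_0}$ and a shifted Bessel afterwards (a mixture with a random regime change, not a Bessel), and the root of $\BD_{T,\sigma}$ is the deterministic time-$0$ endpoint of the first-passage bridge, on the boundary, not a uniform point inside the big excursion. In Step (3) you describe $\gamma$ in $\IBD_\sigma$ as a ``two-sided motion'' — that is the $\BHP$ case; per Definition~\ref{def:IBD} it is a Brownian bridge on $[0,\sigma]$, just as for $\BD$, so no total-variation coupling of $\gamma$ is needed at all — one simply uses the \emph{same} bridge in both constructions. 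Finally, you invoke Corollary~\ref{cor:topology-IBD} to recognize the topology of $A_{\IBD}$, but that corollary is deduced \emph{from} the present theorem, so this is circular; instead one must build $A_\BD$ inside $\BD_{T,\sigma}$ (whose topology is known by Lemma~\ref{lem:proof-prop-refpr}) as the region bounded by a simple loop $P$ made of two geodesic segments to the point realizing $\min W$, verify $A_\BD\cong\overline{\mathbb D}\setminus\{0\}$ there, and then transport to $\IBD_\sigma$ by the isometry.

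Your Step (4) instinct — use cactus bounds and geodesic confinement to show that no identifications from outside the coupled region can reach $B_r$, and that the relevant open region is bounded by geodesics — is indeed what the paper does (Lemma~\ref{lem:DBD2} and Lemma~\ref{sec:proof-theor-refthm:c}). So the outline of the topological upgrade is sound; what is missing is the correct coupling (jump decomposition, not time window), the correct description of the two encodings, and the non-circular direction for the topology argument.
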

It will be straightforward to deduce
\begin{corollary}
\label{cor:topology-IBD}
For each $\sigma\in(0,\infty)$, the space $\IBD_\sigma$ is a.s. homeomorphic to
the pointed closed disk $\overline{\mathbb{D}}\setminus \{0\}$, where $\mathbb{D}=\{z\in \mathbb{C}:|z|<1\}$.
\end{corollary}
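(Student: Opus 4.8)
The plan is to deduce the topology of $\IBD_\sigma$ directly from the coupling in Theorem~\ref{thm:coupling-BD-IBD}, exactly in the spirit of how one deduces the topology of the Brownian plane from its coupling with the Brownian map. The starting point is the known fact (Bettinelli, \cite[Proposition 21]{Be3}, quoted here as Lemma~\ref{lem:proof-prop-refpr}) that $\BD_{T,\sigma}$ is a.s.\ homeomorphic to the closed disk $\overline{\mathbb D}$ for every $T,\sigma>0$, together with the scaling relation $\lambda\cdot \BD_{T,\sigma}=_d\BD_{\lambda^4 T,\lambda^2\sigma}$, which shows it suffices to treat $\IBD_\sigma$ itself (not $\lambda\cdot\IBD_\sigma$).

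First I would fix a sequence $r_k\uparrow\infty$ and a summable sequence $\eps_k$, say $\eps_k=2^{-k}$. For each $k$, apply Theorem~\ref{thm:coupling-BD-IBD} with parameters $(\eps_k,r_k,\sigma)$ to obtain a volume $T_k\geq T_0(\eps_k,r_k,\sigma)$ and a coupling on which, with probability at least $1-\eps_k$, there are isometric open sets $A_{\BD}^{(k)}\subset \BD_{T_k,\sigma}$ and $A_{\IBD}^{(k)}\subset \IBD_\sigma$, each homeomorphic to the pointed closed disk $\overline{\mathbb D}\setminus\{0\}$ and containing the balls $B_{r_k}$ of the respective spaces. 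By Borel--Cantelli, almost surely the event $E_k$ (that such sets exist for index $k$) holds for all large $k$; fix such a realization. On $E_k$, the ball $B_{r_k}(\IBD_\sigma)$ is contained in an open set $A_{\IBD}^{(k)}$ which is homeomorphic to $\overline{\mathbb D}\setminus\{0\}$; since the homeomorphism identifies $A_{\IBD}^{(k)}$ with an open subset of a surface-with-boundary whose topology is completely understood, one reads off that $B_{r_k}(\IBD_\sigma)$ is a \emph{surface with boundary}, that its boundary is a single arc (the trace of $\partial(\overline{\mathbb D}\setminus\{0\})$ minus the puncture), and that the unique non-locally-Euclidean behaviour near the puncture is the conical/cusp point $\rho$. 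Letting $k\to\infty$ and using that $\IBD_\sigma=\bigcup_k B_{r_k}(\IBD_\sigma)$ is an increasing exhaustion by such pieces, one concludes that $\IBD_\sigma$ is a $2$-manifold with boundary, with exactly one end, one puncture-type point $\rho$ on the boundary, and non-compact. The only non-compact, connected $2$-manifold with boundary having a single boundary component homeomorphic to $\R$, one end, and the local structure of $\overline{\mathbb D}\setminus\{0\}$ near $\rho$ is the pointed closed disk $\overline{\mathbb D}\setminus\{0\}$ itself; this classification step finishes the proof.

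The technical heart is making the gluing across scales rigorous: one must check that the homeomorphisms supplied for different $k$ are compatible, or rather that one does not need them to be — it is enough that each $B_{r_k}(\IBD_\sigma)$ embeds, via an isometry, as an open set inside \emph{some} space already known to be $\overline{\mathbb D}$, so its intrinsic topology is that of an open subset of $\overline{\mathbb D}\setminus\{0\}$ containing a neighbourhood of the distinguished point. One then invokes the general principle (used in \cite{CuLG}, \cite{CuLG2} for $\BP$) that a boundedly compact length space which is an increasing union of closed balls, each of which sits as such an open subset, is homeomorphic to the ``limit'' of these pieces, here $\overline{\mathbb D}\setminus\{0\}$. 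I expect the main obstacle to be precisely this last topological identification: ruling out exotic non-compact surfaces with the right local data and verifying that the puncture at $\rho$ is genuinely a single point of the boundary (not, say, collapsed behaviour creating extra ends). This is handled exactly as the analogous statement for $\BHP_\theta$ in Section~\ref{sec:proof-coupling-BD-BHP}, and the remark that the proof is ``straightforward'' reflects that all the real work is already contained in Theorem~\ref{thm:coupling-BD-IBD}; what remains is the routine surface-classification bookkeeping.
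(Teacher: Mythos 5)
There is a genuine problem: you have the wrong picture of $\overline{\mathbb D}\setminus\{0\}$. The removed point $0$ is an \emph{interior} point of $\overline{\mathbb D}$, so its removal changes nothing about the boundary, which remains the full circle $\mathbb S^1$. In particular, $\IBD_\sigma$ has a boundary homeomorphic to $\mathbb S^1$, not an arc and not $\R$; the root $\rho$ is an ordinary point of that circle and has nothing to do with the puncture; and near the puncture the space is locally Euclidean (it is just an annular open set of the plane), so there is no ``conical/cusp'' behaviour anywhere. Your final classification step---a non-compact surface with one boundary component $\cong\R$, one end, and prescribed local structure at $\rho$ must be $\overline{\mathbb D}\setminus\{0\}$---is therefore asserting a fact about the wrong object (it is closer to a description of $\overline{\mathbb H}$), and even with the boundary corrected to $\mathbb S^1$ it is unjustified: no classification theorem is named, and the classification of non-compact surfaces (Richards) applies to surfaces \emph{without} boundary.

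The paper's proof supplies precisely the idea your argument is missing. From Theorem~\ref{thm:coupling-BD-IBD} one gets, a.s., that every ball $B_r(\IBD_\sigma)$ lies in an open set homeomorphic to $\overline{\mathbb D}\setminus\{0\}$, so $\IBD_\sigma$ is a non-compact surface with a single boundary circle and one end (this part of your argument is essentially right, and your Borel--Cantelli step, while heavier than needed---monotonicity of the events in $r$ already gives the a.s. statement---is not wrong). The decisive next move is to glue a copy of $\overline{\mathbb D}$ along the boundary circle, producing a non-compact, \emph{boundaryless}, simply connected surface $S$; by the classification of non-compact surfaces $S$ is homeomorphic to $\R^2$; then the Jordan--Schoenflies theorem extends a homeomorphism $\partial\IBD_\sigma\to\mathbb S^1$ to $S\to\R^2$, which forces $\IBD_\sigma$ to map onto the unbounded region $\{|z|\geq1\}\cong\overline{\mathbb D}\setminus\{0\}$. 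You gesture at ``handled exactly as the analogous statement for $\BHP_\theta$,'' which is the correct template, but the concrete content of that template---cap off the boundary, identify the capped surface, invoke Jordan--Schoenflies---is exactly what your write-up replaces with the flawed classification claim.
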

In order to prove Theorem~\ref{thm:IBD}, we finally need a coupling of
balls in the quadrangulations $Q_n^{\sigma_n}$ and
$Q_{R\sigma_n^2}^{\sigma_n}$ of a radius of order $\sqrt{\sigma_n}$, when
$1\ll \sigma_n\ll \sqrt{n}$ and $R$ is large.
\begin{prop}
\label{prop:coupling-Qn-largevol}
Assume $1\ll \sigma_n\ll \sqrt{n}$. Given any $\eps>0$ and $r>0$,
there exist $R_0>0$ and $n_0\in\N$ such that for every integer $R\geq R_0$
and every $n\geq n_0$, on can construct copies of $Q_n^{\sigma_n}$ and
$Q_{R\sigma_n^2}^{\sigma_n}$ on the same probability space such that with
probability at least $1-\eps$, the balls
$B_{r\sqrt{\sigma_n}}(Q_n^{\sigma_n})$ and
$B_{r\sqrt{\sigma_n}}(Q_{R\sigma_n^2}^{\sigma_n})$ are isometric.
\end{prop}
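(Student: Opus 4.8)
The plan is to transfer the statement, via the Bouttier--Di Francesco--Guitter bijection recalled in Section~\ref{sec:encoding}, into a coupling statement for the encoding labelled forests and bridges, and there exploit the fact that balls of radius $r\sqrt{\sigma_n}$ in $Q_n^{\sigma_n}$ only depend on a portion of the encoding data of ``length'' of order $\sigma_n^2$ — a window that is macroscopic relative to $\sigma_n^2$ but negligible relative to $n$. Under the BDG bijection, $Q_n^{\sigma_n}$ corresponds to a uniform labelled forest with $\sigma_n$ trees and $n$ edges together with a uniform bridge encoding the boundary labels, and similarly $Q_{R\sigma_n^2}^{\sigma_n}$ corresponds to a uniform labelled forest with $\sigma_n$ trees but $R\sigma_n^2$ edges, with the \emph{same} bridge distribution (the bridge law depends only on $\sigma_n$, not on the number of inner faces). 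So the two models differ only through the total size of the forest. The first step is therefore to make precise, following the Schaeffer-type arguments and the radius estimates in~\cite{Be3} (and the analogous localization lemmas used there and in~\cite{CuLG}), that the metric ball $B_{r\sqrt{\sigma_n}}(Q_n^{\sigma_n})$ is a measurable function of: (i) the bridge $(\beta_k)_{0\le k\le 2\sigma_n}$ encoding the boundary, and (ii) the labelled subforest obtained by keeping, around each of a suitable set of ``active'' trees, only the first $O(\sigma_n^2)$ edges in contour order, provided a good event of probability $\ge 1-\eps/3$ holds (this good event controls that no label drops by more than $r\sqrt{\sigma_n}$ outside that window, i.e.\ that distances to the root from far-away vertices are large — this is exactly the kind of estimate underlying the convergence to $\IBD_\sigma$ in Theorem~\ref{thm:IBD}).

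The second step is the coupling of the two forests. Here the key probabilistic input is the following: a uniform plane forest with $\sigma_n$ trees and $N$ edges, read in contour order, is (up to the usual cycle-lemma bijection) a sequence of $\sigma_n$ independent critical geometric Galton--Watson trees conditioned on their total progeny being $N$; as $N\to\infty$ with $N\gg\sigma_n^2$, the configuration of the first $O(\sigma_n^2)$ steps of the contour/height process converges, uniformly over the relevant window and with explicit total-variation control, to that of $\sigma_n$ independent \emph{unconditioned} (or appropriately tilted) critical geometric trees, and this limiting law does not depend on $N$ as long as $N\gg\sigma_n^2$. Consequently, for $n$ and $R$ large, the laws of the restricted labelled subforests feeding into step~(ii) for $Q_n^{\sigma_n}$ (with $N=n$) and for $Q_{R\sigma_n^2}^{\sigma_n}$ (with $N=R\sigma_n^2$) are within total variation $\eps/3$ of each other, uniformly once $R\ge R_0(\eps,r)$ and $n\ge n_0(\eps,r,R)$. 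Maximal coupling then puts the two restricted labelled forests — and, together with a common bridge, the two balls — equal with probability at least $1-\eps/3$; combined with the two good events, this yields isometric balls with probability at least $1-\eps$. The passage from ``the encoding data agree'' to ``the balls are isometric (as pointed metric spaces)'' is exactly the content of the BDG construction of the metric, so no new geometric argument is needed there beyond the measurability assertion of step one.

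The main obstacle is step one: proving the radius estimate that localizes $B_{r\sqrt{\sigma_n}}(Q_n^{\sigma_n})$ to a window of $O(\sigma_n^2)$ edges of the encoding forest, with an error probability that is uniform in $n$ (and eventually lets $R\to\infty$). This requires controlling, on one hand, the minimal displacement of the label process (so that vertices encoded outside the window are genuinely far from the root in graph distance), and, on the other hand, that geodesics from the root cannot ``leave and re-enter'' the window — i.e.\ that the successor-chains defining distances stay within the localized data. Both are available in spirit from~\cite{Be3,CuLG} in the regimes $\sigma_n\asymp\sqrt{n}$ and $\sigma_n=\infty$; the work here is to run these estimates in the regime $1\ll\sigma_n\ll\sqrt{n}$ with the scale $\sqrt{\sigma_n}$, using the scaling relation $\BD_{T,\sigma}=_d T^{-1/4}\cdot\BD_{1,T^{1/2}\sigma/\ldots}$ heuristically but the discrete bounds rigorously, and uniformly over the large parameter $R$. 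Once this localization is in place, steps two and three are essentially soft, relying only on standard conditioned Galton--Watson forest asymptotics and maximal coupling.
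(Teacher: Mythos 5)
Your proposal correctly reduces the problem, via the BDG bijection, to a coupling of the encoding data (forests, labels, bridges), and it is right that the bridge laws already coincide and that only the forest laws differ. However, there is a genuine structural gap: your plan treats the localized encoding data as though they are uniformly close to $\sigma_n$ i.i.d.\ critical geometric GW trees, and your localization is phrased as a ``window of $O(\sigma_n^2)$ edges in contour order.'' Both of these miss the central phenomenon in the regime $1\ll\sigma_n\ll\sqrt{n}$, which is that the conditioned forest contains a \emph{unique} macroscopic tree of size $\asymp n$ (and $\asymp R\sigma_n^2$, respectively), while only the remaining $\sigma_n-1$ trees behave like i.i.d.\ GW trees (Lemmas~\ref{lem:GW1} and~\ref{lem:GW2}). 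The ball $B_{r\sqrt{\sigma_n}}(Q_n^{\sigma_n})$ does penetrate this giant tree, and the portion of it entering the ball cannot be matched via a GW-tree comparison. The correct coupling of the giant trees — which is the key new ingredient — is done by pointing each giant tree uniformly and pruning at height $H_n=H\sigma_n$, then appealing to~\cite[Proposition~7]{CuLG}, which gives closeness in total variation to Kesten's tree pruned at $H_n$ (a size-biased, not a plain GW, object). Your caveat ``(or appropriately tilted)'' gestures at this but does not identify the mechanism, and without it the total-variation estimate in your step two fails outright.

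Your step one also does not have the right shape. The localization that works is structural, not a contour window: on the coupling event, one matches \emph{all} the small trees, the bridge, and the pruned giant tree, and then one uses a cactus-bound argument — controlling the minimum label along the spine of the giant tree up to height $H_n$ (the event $\mathcal{E}_2(n,H)$ in the paper, $M_n\geq 5r\sqrt{\sigma_n}$ with high probability by Donsker for the random-walk label along the spine) — to show that vertices lying beyond the pruning are at graph distance $\geq 5r\sqrt{\sigma_n}$ from $(0)$. There is no a-priori reason to expect the ball to be confined to a fixed-length contour window; since bridge fluctuations are themselves of order $\sqrt{\sigma_n}$, the ball can reach small trees at arbitrary indices along the boundary, and the paper accordingly couples every small tree rather than only a window of them. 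Once the giant-tree/small-trees decomposition and the pruning coupling are in place, your remaining steps (maximal coupling, agreement of encoding data implies isometry of balls, appeal to Lemma~\ref{lem:ball0} to shift from $\rho_n$ to $(0)$) are indeed soft and align with the paper.
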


  Some of our results involving $\UIHPQ$, $\BHP$ and $\BD_\sigma$ are
  depicted in Figure~\ref{fig:diagram1}, which should be compared
  with~\cite[Figure 1]{CuLG}.
\begin{figure}[ht]
\centering\parbox{9.5cm}{\includegraphics[width=0.6\textwidth]{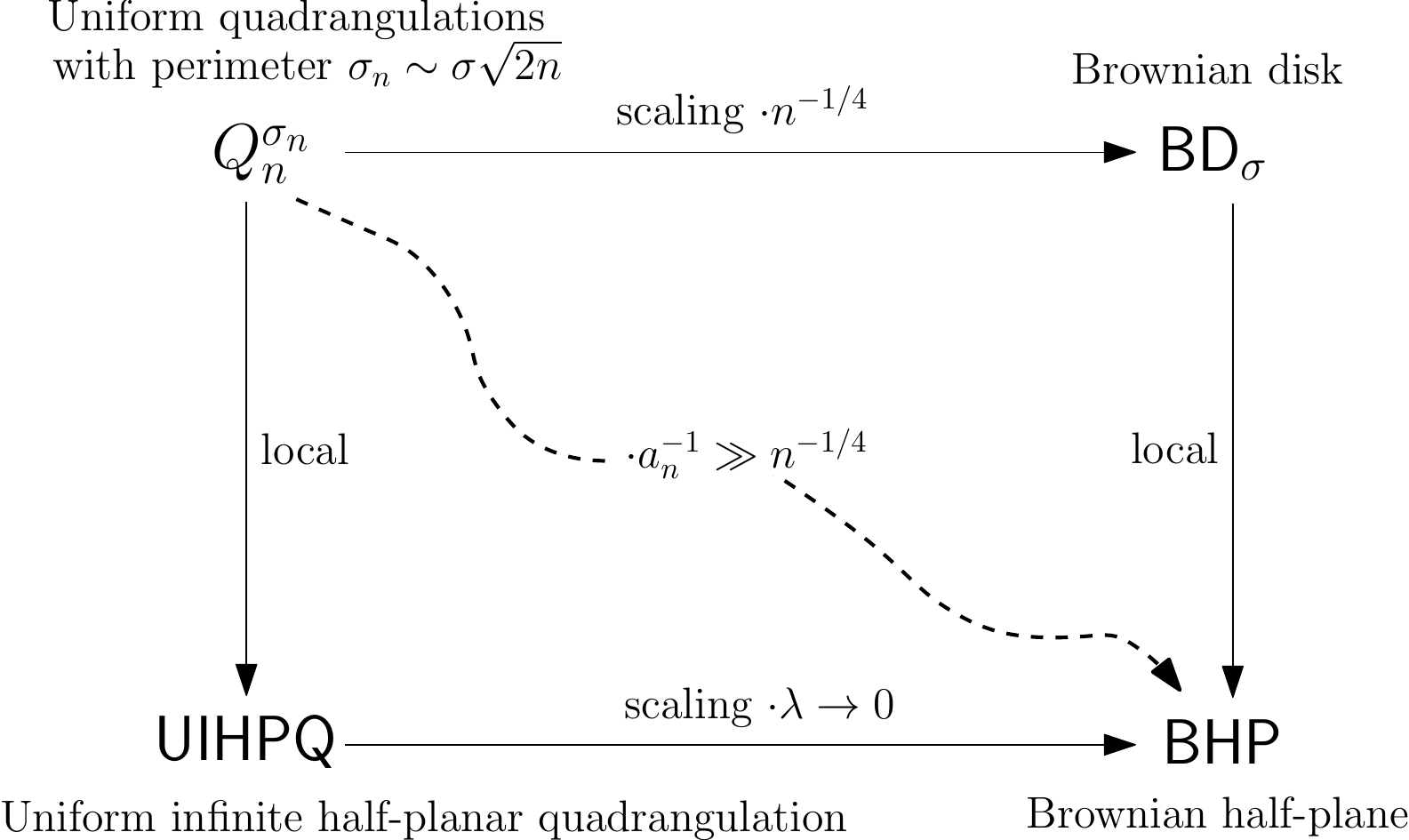}}
 \parbox{6.5cm}{
\caption{Illustration of~\cite[Theorem 1]{BeMi} for the regime
  $\sigma_n\sim\sigma\sqrt{2n}$, Theorem~\ref{thm:BHP1},
  Theorem~\ref{thm:UIHPQ-BHP}, Corollary~\ref{cor:BD2} in the case
  $\sigma(T)\equiv \sigma\in(0,\infty)$, and
  Proposition~\ref{prop:Qn-UIHPQ}. Compare with~\cite[Figure 1]{CuLG}.}
\label{fig:diagram1}}
\end{figure}

\subsection{Limits of the Brownian disk}
\label{sec:results-BDlimits}
Our statements from the last two sections imply various limit results for
the Brownian disk $\BD_{T,\sigma(T)}$ when zooming-in around its root. We let
$\sigma(\cdot):(0,\infty)\rightarrow(0,\infty)$ be a function of the volume
of the Brownian disk that specifies its perimeter. All of the following
convergences hold in distribution with respect to the local
Gromov-Hausdorff topology when the volume $T$ of the disk tends to infinity.

\begin{corollary}
\label{cor:BD1}
Assume $\lim_{T\rightarrow\infty}\sigma(T)=
0$. Then
$$
\BD_{T,\sigma(T)}\longrightarrow\BP.
$$
\end{corollary}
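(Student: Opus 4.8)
The plan is to combine the scaling property of the Brownian disk with Theorem~\ref{thm:coupling-BD-IBD} and an iteration argument that lets $\sigma\to 0$. First I would use the scaling relation $\lambda\cdot\BD_{T,\sigma}=_d\BD_{\lambda^4 T,\lambda^2\sigma}$ to rewrite the problem: zooming in around the root of $\BD_{T,\sigma(T)}$ as $T\to\infty$ is equivalent, after rescaling, to a statement about balls of a \emph{fixed} radius $r$. Concretely, for fixed $r\geq 0$ it suffices (by Remark~\ref{rem:localGH}) to show that $B_r(\BD_{T,\sigma(T)})$ converges in distribution in $\mathbb{K}$ to $B_r(\BP)$. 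Using scaling with a parameter $\lambda=\lambda(T)\to 0$ chosen so that $\lambda^2\sigma(T)$ stays comparable to a \emph{fixed} small $\varsigma>0$ — which is possible precisely because $\sigma(T)\to 0$ forces no lower constraint, but one must be slightly careful and instead let $\lambda^4 T\to\infty$ while $\lambda^2\sigma(T)\to 0$; since $\sigma(T)\to 0$ one can always find such $\lambda(T)\to 0$ with, say, $\lambda^4T\to\infty$ — we reduce to understanding $\BD_{T',\varsigma'}$ with $T'\to\infty$ and $\varsigma'\to 0$.

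Next I would invoke Theorem~\ref{thm:coupling-BD-IBD}: for fixed $\varsigma>0$, $\eps>0$ and radius $r$, for all $T'$ large one can couple $\BD_{T',\varsigma}$ and $\IBD_\varsigma$ so that with probability $\geq 1-\eps$ the balls $B_r$ agree (are contained in isometric open sets). Hence $B_r(\BD_{T',\varsigma})\to B_r(\IBD_\varsigma)$ in distribution as $T'\to\infty$, for each fixed $\varsigma$. It then remains to let $\varsigma\to 0$ in $\IBD_\varsigma$ and show $B_r(\IBD_\varsigma)\to B_r(\BP)$. This is where the stated interpolation ``$\IBD_\sigma\to\BP$ as $\sigma\to 0$'' (mentioned in the introduction as an easy consequence of the results) enters; if it is not separately available I would prove it directly from Definition~\ref{def:IBD} by a process-level argument: as $\sigma\to 0$ the uniform point $U_0\in[0,\sigma]$ and the hitting time $T_\sigma$ both degenerate, the bridge $\gamma$ on $[0,\sigma]$ becomes negligible on compacts, and the contour process $Y^\sigma$ converges locally uniformly to the two-sided pair of independent three-dimensional Bessel processes driving $\BP$ (Definition~\ref{def:BP}); continuity of the maps $(X,W)\mapsto(M_{X,W},D_{X,W})$ on balls of fixed radius, together with convergence of the driving snake, then gives $B_r(\IBD_\varsigma)\to B_r(\BP)$.

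Finally I would assemble these pieces with a standard ``diagonal'' $\eps/2$ argument: given $\eps>0$ and $r$, first pick $\varsigma$ small enough that $B_r(\IBD_\varsigma)$ is within $\eps$ (in law, say in the Prokhorov/Lévy sense on $\mathbb{K}$) of $B_r(\BP)$; then pick $T$ large enough that, along the chosen scaling $\lambda(T)$, the rescaled disk has the form $\BD_{T',\varsigma'}$ with $\varsigma'$ close to $\varsigma$ and $T'$ large, and apply the coupling of Theorem~\ref{thm:coupling-BD-IBD} to bring $B_r(\BD_{T',\varsigma'})$ within $\eps$ of $B_r(\IBD_{\varsigma'})\approx B_r(\IBD_\varsigma)$. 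Continuity of $\varsigma\mapsto\IBD_\varsigma$ (again from the explicit construction) handles the mismatch between $\varsigma$ and $\varsigma'$. The main obstacle I anticipate is \emph{uniformity}: Theorem~\ref{thm:coupling-BD-IBD} gives $T_0=T_0(\eps,r,\varsigma)$ with no stated control as $\varsigma\to 0$, so one cannot send $T\to\infty$ and $\varsigma\to 0$ simultaneously through that theorem alone — this is exactly why the argument must be staged (fix $\varsigma$, send $T\to\infty$ via the coupling, \emph{then} send $\varsigma\to 0$ via the soft process-convergence $\IBD_\varsigma\to\BP$), and why establishing the local convergence $\IBD_\varsigma\to\BP$ cleanly is the real content beyond quoting earlier results.
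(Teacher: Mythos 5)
Your high-level plan — reduce to the coupling between $\BD_{T,\sigma}$ and $\IBD_\sigma$ and then send $\sigma\to 0$ — does not close, and the obstacle is exactly the one you flag at the end, but your proposed ``staging'' does not resolve it. The difficulty is that in $\BD_{T,\sigma(T)}$ the two parameters move together: you cannot ``first fix $\varsigma$, then send $T\to\infty$'' on a one-parameter family. Your attempt to decouple them via scaling is internally inconsistent. You first ask for $\lambda=\lambda(T)\to 0$ with $\lambda^2\sigma(T)$ comparable to a fixed $\varsigma>0$; but $\lambda\to 0$ and $\sigma(T)\to 0$ force $\lambda^2\sigma(T)\to 0$, so no such choice exists. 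If instead you make the rescaled perimeter $\lambda^2\sigma(T)=\varsigma$ genuinely constant, you must take $\lambda=\sqrt{\varsigma/\sigma(T)}\to\infty$; then the scaling relation $\lambda\cdot\BD_{T,\sigma}=_d\BD_{\lambda^4T,\lambda^2\sigma}$ turns the ball $B_r(\BD_{T,\sigma(T)})$ into $\lambda^{-1}\cdot B_{\lambda r}(\BD_{\lambda^4 T,\varsigma})$, whose radius $\lambda r\to\infty$. Theorem~\ref{thm:coupling-BD-IBD} gives a coupling of balls of \emph{fixed} radius $r$, with a threshold $T_0=T_0(\eps,r,\sigma)$ that is not claimed (and by the scaling relation $T_0(\eps,\lambda r,\lambda^2\sigma)=\lambda^4 T_0(\eps,r,\sigma)$ would not be) uniform as either $r\to\infty$ or $\sigma\to 0$. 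So the coupling never becomes applicable along any rescaling, and the two-step limit you describe (``$T\to\infty$ then $\varsigma\to 0$'') simply is not the limit in the statement.

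The paper's proof avoids $\IBD$ entirely and is structured to handle precisely this joint limit. It writes $\BD_{T,\sigma(T)}$ as the scaling limit (via~\cite[Theorem 1]{BeMi}) of quadrangulations $Q_{m_n(T)}^{\sigma_n(T)}$ with $m_n(T)=Tn$, $\sigma_n(T)=\lfloor\sigma(T)\sqrt{2n}\rfloor$, $a_n=(8/9)^{1/4}n^{1/4}$, and then reduces the double limit $T\to\infty$, $n\to\infty$ to a single sequence by a contradiction argument: a hypothetical bad sequence $(T_k,n_k)$ is reparametrized through $m_k=T_kn_k$ with $\tilde\sigma_{m_k}=\lfloor\sigma(T_k)\sqrt{2m_k/T_k}\rfloor$ and $\tilde a_{m_k}=(8/9)^{1/4}(m_k/T_k)^{1/4}$. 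Because $\sigma(T)\to 0$, these satisfy $\tilde\sigma_m\ll\sqrt m$ and $\sqrt{\tilde\sigma_m}\ll\tilde a_m\ll m^{1/4}$, so Theorem~\ref{thm:BP} applies directly and contradicts the assumed failure. The crucial point is that Theorem~\ref{thm:BP} is a statement about \emph{the entire regime} $\sigma_n\ll\sqrt n$, $\sqrt{\sigma_n}\ll a_n\ll n^{1/4}$, so it already contains the uniformity that your route via Theorem~\ref{thm:coupling-BD-IBD} lacks. If you want to pursue a continuum-only proof, the missing ingredient would be a version of the $\BD$--$\IBD$ (or a direct $\BD$--$\BP$) coupling with explicit control of $T_0$ as $\sigma\to 0$, which is not provided in the paper. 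As a side remark, the limit $\IBD_\sigma\to\BP$ as $\sigma\to 0$ is listed in Remark~\ref{rem:exercises} as a consequence of the corollaries, so quoting it here (rather than reproving it directly from Definition~\ref{def:IBD}, as you briefly sketch) would be circular in the paper's own logical order.
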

\begin{corollary}
\label{cor:BD4}
Assume $\lim_{T\rightarrow\infty}\sigma(T)=
\varsigma\in(0,\infty)$. Then
$$
\BD_{T,\sigma(T)}\longrightarrow\IBD_\varsigma.
$$
\end{corollary}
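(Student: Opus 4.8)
The plan is to deduce this corollary directly from the coupling Theorem~\ref{thm:coupling-BD-IBD} together with the scaling property of the Brownian disk, $\lambda\cdot\BD_{T,\sigma}=_d\BD_{\lambda^4T,\lambda^2\sigma}$. First I would observe that it suffices, by Remark~\ref{rem:localGH} (adapted to the limit objects, which are themselves locally compact length spaces), to prove that for every fixed $r\geq 0$ the ball $B_r(\BD_{T,\sigma(T)})$ converges in distribution in $(\mathbb{K},\dgh)$ to $B_r(\IBD_\varsigma)$ as $T\to\infty$. Since $\sigma(T)\to\varsigma\in(0,\infty)$, the scaling relation lets us write, with $\lambda=\lambda(T)=(\varsigma/\sigma(T))^{1/2}\to 1$,
$$
\BD_{T,\sigma(T)} =_d \lambda^{-1}\cdot\BD_{\lambda^4 T,\,\varsigma}.
$$
Because $\lambda\to 1$ and $\lambda^4 T\to\infty$, and because $\lambda^{-1}\cdot\mathbf{E}$ is close to $\mathbf{E}$ in the relevant Gromov-Hausdorff sense when $\lambda$ is near $1$ (the rescaling acts continuously on balls), it is enough to treat the case $\sigma(T)\equiv\varsigma$ constant, i.e.\ to show $\BD_{T,\varsigma}\to\IBD_\varsigma$ locally in distribution as $T\to\infty$.

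Next I would invoke Theorem~\ref{thm:coupling-BD-IBD} with parameters $\varsigma$, a given $\eps>0$ and the fixed radius $r$: for all $T\geq T_0(\eps,r,\varsigma)$ there is a coupling of $\BD_{T,\varsigma}$ and $\IBD_\varsigma$ on one probability space carrying isometric open subsets $A_{\BD}$, $A_{\IBD}$ that contain $B_r(\BD_{T,\varsigma})$ and $B_r(\IBD_\varsigma)$ respectively, with probability at least $1-\eps$. On this event, the isometry between $A_{\BD}$ and $A_{\IBD}$ restricts to an isometry of the two balls $B_r(\cdot)$ \emph{as subsets}, but one must be slightly careful: being contained in isometric open sets does not immediately give that the \emph{intrinsic} metric balls coincide, since distances within a ball could in principle use paths leaving the ball. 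Here the standard fix (as in~\cite{CuLG}) is to note that the open sets are homeomorphic to $\overline{\mathbb D}\setminus\{0\}$ and, more importantly, one should take the coupling at radius, say, $2r$ or $r+1$ rather than $r$; then geodesics between points of $B_r$ in either space stay within $B_{2r}\subset A_{\BD}\cap(\text{image})$, so the isometry of the larger neighborhoods genuinely identifies $B_r(\BD_{T,\varsigma})$ with $B_r(\IBD_\varsigma)$ as pointed compact metric spaces. Consequently $\dgh\big(B_r(\BD_{T,\varsigma}),B_r(\IBD_\varsigma)\big)=0$ on an event of probability $\geq 1-\eps$, when the two are realized via this coupling.

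Finally I would conclude: for each fixed $r$ and each $\eps>0$, the above produces, for all large $T$, a coupling under which $\dgh(B_r(\BD_{T,\varsigma}),B_r(\IBD_\varsigma))=0$ with probability $\geq 1-\eps$; hence this Gromov-Hausdorff distance converges to $0$ in probability, and in particular $B_r(\BD_{T,\varsigma})\to B_r(\IBD_\varsigma)$ in distribution in $(\mathbb{K},\dgh)$. Since $r$ was arbitrary, $\BD_{T,\varsigma}\to\IBD_\varsigma$ in the local Gromov-Hausdorff topology; combining with the first paragraph's reduction handles general $\sigma(T)\to\varsigma$. The only genuinely delicate point is the one flagged above — passing from ``isometric open neighborhoods containing the balls'' to ``isometric intrinsic balls'' — which is handled by applying the coupling theorem at a slightly inflated radius so that all relevant geodesics are captured; everything else is a routine assembly of Theorem~\ref{thm:coupling-BD-IBD}, the scaling relation, and Remark~\ref{rem:localGH}. (The same scheme, with Theorem~\ref{thm:coupling-BD-IBD} replaced by the analogue toward $\BP$, is what underlies Corollary~\ref{cor:BD1}.)
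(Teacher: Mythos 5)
Your argument is sound, and it yields the corollary, but it is a genuinely different route from the one the paper follows. The paper proves Corollaries~\ref{cor:BD1},~\ref{cor:BD4},~\ref{cor:BD2},~\ref{cor:BD3} all at once (Section~\ref{sec:proofs-BDlimits}) by comparing $\BD_{T,\sigma(T)}$ to the rescaled finite quadrangulation $Q_{m_n(T)}^{\sigma_n(T)}$ via~\eqref{eq:BeMi}, and then closing the gap between the quadrangulation and $\IBD_\varsigma$ by a diagonal argument by contradiction that invokes Theorem~\ref{thm:IBD}; the point of that uniform scheme is that for the other three corollaries ($\BP$, $\ICRT$, and the $\theta=0$ part of $\BHP_\theta$) the paper has no direct coupling with $\BD_{T,\sigma}$, so the quadrangulation detour is forced, and the authors simply reuse it here. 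By contrast, your proof goes directly through the coupling of Section~\ref{sec:proof-coupling-BD-IBD}, which is cleaner for this particular corollary and avoids any mention of quadrangulations. Two remarks on the details. First, your flagged worry about passing from ``isometric open neighborhoods containing the balls'' to ``isometric intrinsic balls'' is legitimate if you only have the statement of Theorem~\ref{thm:coupling-BD-IBD} in hand, and your inflated-radius fix does work; but the paper already proves the stronger ball-level statement as an intermediate step, Proposition~\ref{prop:isometry-BD-IBD}, which asserts outright that $B_r(\BD_{T,\sigma})$ and $B_r(\IBD_\sigma)$ can be coupled to be isometric (as pointed compact spaces), so invoking that removes the delicacy entirely. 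Second, in the scaling reduction to $\sigma(T)\equiv\varsigma$, you should say a word about why $\dgh(B_r(\lambda^{-1}\cdot\mathbf E),B_r(\mathbf E))\to 0$ as $\lambda\to 1$; this is fine because the relevant spaces are length spaces (so $\dgh(B_{r_1}(\mathbf E),B_{r_2}(\mathbf E))\leq |r_1-r_2|$), but it is worth recording.
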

\begin{corollary}
\label{cor:BD2}
Assume $\sigma(T)\rightarrow\infty$ and
$\sigma(T)/T\rightarrow\theta\in[0,\infty)$ as $T\rightarrow\infty$. Then
$$
\BD_{T,\sigma(T)}\longrightarrow\BHP_{\theta}.
$$
\end{corollary}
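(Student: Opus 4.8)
The plan is to prove, for every fixed $r\ge 0$, the convergence in distribution $B_r(\BD_{T,\sigma(T)})\to B_r(\BHP_\theta)$ in $(\mathbb K,\dgh)$ as $T\to\infty$; since every space in sight is a boundedly compact length space, this is equivalent to the claimed local Gromov--Hausdorff convergence, exactly as in Remark~\ref{rem:localGH}. By a subsequence argument it suffices to establish two statements: \emph{(A)} if $v_k\to\infty$, $p_k/v_k\to\theta$ and $\liminf_k p_k/\sqrt{v_k}>0$, then $B_r(\BD_{v_k,p_k})\to B_r(\BHP_\theta)$; and \emph{(B)} if $p_k\to\infty$ and $p_k=o(\sqrt{v_k})$, then $B_r(\BD_{v_k,p_k})\to B_r(\BHP_0)$. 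Indeed, were the desired convergence to fail, there would be a subsequence $T_k\to\infty$ along which $B_r(\BD_{T_k,\sigma(T_k)})$ stays bounded away from the law of $B_r(\BHP_\theta)$; passing to a further subsequence so that $\sigma(T_k)/\sqrt{T_k}$ converges in $[0,\infty]$, one is either in the case $\liminf_k\sigma(T_k)/\sqrt{T_k}>0$, handled by (A) with $(v_k,p_k)=(T_k,\sigma(T_k))$, or in the case $\sigma(T_k)/\sqrt{T_k}\to 0$, which forces $\theta=\lim\sigma(T_k)/T_k=0$ and is handled by (B) --- a contradiction either way. Note that when $\theta>0$ one has $\sigma(T)/\sqrt T\to\infty$, so the whole corollary reduces to (A); (B) only serves to absorb slowly growing perimeters in the $\theta=0$ regime.

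\textbf{Proof of (A).} This is essentially a restatement of Theorem~\ref{thm:coupling-BD-BHP}. Given $(v_k,p_k)$ as in (A), assume the $v_k$ strictly increasing and choose a perimeter function $\sigma(\cdot):(0,\infty)\to(0,\infty)$ interpolating the values $p_k$ at the abscissae $v_k$ and still satisfying $\sigma(v)/v\to\theta$ and $\liminf_{v}\sigma(v)/\sqrt v>0$ (the hypotheses on $(v_k,p_k)$ make this possible). Then for each $\eps>0$ and all large $k$, Theorem~\ref{thm:coupling-BD-BHP} couples $\BD_{v_k,p_k}$ and $\BHP_\theta$ so that, with probability $\ge 1-\eps$, $B_r$ of the two spaces lie in isometric open subsets of half-plane type, whence the two balls $B_r$ are themselves isometric --- in the way this theorem is used in the proof of Theorem~\ref{thm:BHP1}. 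Letting $\eps\downarrow 0$ gives $B_r(\BD_{v_k,p_k})\to B_r(\BHP_\theta)$ in distribution.

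\textbf{Proof of (B).} Here I would route through the infinite-volume Brownian disk. Using the scaling identities $\BD_{v,\sigma}=_d\sqrt\sigma\cdot\BD_{v/\sigma^2,1}$ and $\IBD_\sigma=_d\sqrt\sigma\cdot\IBD_1$, applying Theorem~\ref{thm:coupling-BD-IBD} with perimeter $1$, error level $\eps$ and ball-radius $r/\sqrt\sigma\le r$ --- its threshold $T_0(\eps,\cdot,1)$ being taken nondecreasing in the radius, so that $T_0(\eps,r/\sqrt\sigma,1)\le T_0(\eps,r,1)=:C(\eps,r)$ --- and then rescaling the coupling by $\sqrt{p_k}$, one gets a coupling of $\BD_{v_k,p_k}$ and $\IBD_{p_k}$ under which the balls $B_r$ are isometric with probability $\ge 1-\eps$ (in the manner of the proof of Theorem~\ref{thm:IBD}), provided $v_k/p_k^2\ge C(\eps,r)$; the latter holds for all large $k$ since $p_k^2=o(v_k)$. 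Thus $B_r(\BD_{v_k,p_k})$ and $B_r(\IBD_{p_k})$ agree asymptotically in law, and it remains to show $B_r(\IBD_\sigma)\to B_r(\BHP_0)$ as $\sigma\to\infty$. For this, fix $T:=C(\eps,r)$; the same scaling-plus-Theorem~\ref{thm:coupling-BD-IBD} argument (now rescaling by $\sqrt\sigma$) couples $\IBD_\sigma$ with $\BD_{\sigma^2T,\sigma}$ so that the balls $B_r$ are isometric with probability $\ge 1-\eps$, while the family $\{\BD_{\sigma^2T,\sigma}\}$ has volume $\sigma^2T\to\infty$, perimeter-to-volume ratio $1/(\sigma T)\to 0$, and perimeter-to-$\sqrt{\text{volume}}$ ratio $1/\sqrt T>0$ bounded below, so $B_r(\BD_{\sigma^2T,\sigma})\to B_r(\BHP_0)$ by (A). Chaining these comparisons by the triangle inequality for convergence in distribution on $\mathbb K$ and letting $\eps\downarrow 0$ finishes (B).

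\textbf{Main obstacle.} The step I expect to require the most care is the quantitative control of the coupling thresholds in Theorem~\ref{thm:coupling-BD-IBD}: one must ensure that, through the scaling relations, $T_0(\eps,r,\sigma)$ depends on $\sigma$ only in the scale-covariant way $\sigma^2 T_0(\eps,r/\sqrt\sigma,1)$, with the residual radius dependence tamed by monotonicity. This is exactly what legitimises the $\IBD$-comparison when the perimeter $\sigma=\sigma(T)$ itself grows with $T$. Once it is granted, the rest is bookkeeping with the triangle inequality in $\mathbb K$ and the scaling identities for $\BD$, $\IBD$ and $\BHP$.
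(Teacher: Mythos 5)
Your proof is correct, and your case~(A) is exactly what the paper does: for $\theta>0$, or $\theta=0$ with $\liminf_T\sigma(T)/\sqrt T>0$, the corollary is read straight off Theorem~\ref{thm:coupling-BD-BHP}. Where you diverge is case~(B), the regime $\theta=0$ and $\sigma(T)/\sqrt T\to 0$ (along a subsequence). The paper handles this by going back to the discrete world: it sets $m_n(T)=Tn$, $\sigma_n(T)=\lfloor\sigma(T)\sqrt{2n}\rfloor$, $a_n=(8/9)^{1/4}n^{1/4}$, approximates $B_1(\BD_{T,\sigma(T)})$ by $B_1(a_n^{-1}\cdot Q_{m_n(T)}^{\sigma_n(T)})$ via the Bettinelli--Miermont convergence~\eqref{eq:BeMi}, and then invokes Theorem~\ref{thm:BHP1} on the diagonal sequence to reach $B_1(\BHP)$, arguing by contradiction. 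You instead stay entirely in the continuum, using $\IBD_\sigma$ as a pivot: the scaling identities $\BD_{v,\sigma}=_d\sqrt\sigma\cdot\BD_{v/\sigma^2,1}$, $\IBD_\sigma=_d\sqrt\sigma\cdot\IBD_1$ let you rescale Theorem~\ref{thm:coupling-BD-IBD} (with fixed perimeter $1$) into couplings of $\BD_{v_k,p_k}$ with $\IBD_{p_k}$ and of $\IBD_{p_k}$ with $\BD_{p_k^2 T,p_k}$, and the latter family falls back under case~(A) with $\theta=0$; chaining three triangle-inequality steps finishes (B). Both routes are sound. The paper's has the virtue of treating Corollaries~\ref{cor:BD1}, \ref{cor:BD4}, \ref{cor:BD2}, \ref{cor:BD3} by one uniform discretize-and-contradict template; yours has the virtue of never leaving the continuum and, as a byproduct, gives a clean proof of the exercise $\IBD_\sigma\to\BHP$ as $\sigma\to\infty$ from Remark~\ref{rem:exercises}. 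As for your flagged ``main obstacle'': the monotonicity of $T_0(\eps,r,\cdot)$ in $r$ is automatic --- an isometry of pointed $r$-balls restricts to an isometry of pointed $r'$-balls for $r'\le r$ --- so that step is unproblematic and could be stated with more confidence.
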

\begin{corollary}
\label{cor:BD3}
Assume $\sigma(T)/T\rightarrow\infty$ as
$T\rightarrow\infty$. Then
$$
\BD_{T,\sigma(T)}\longrightarrow\ICRT.
$$
\end{corollary}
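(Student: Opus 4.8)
The plan is to reduce the statement to the convergence of quadrangulations already established in Theorem~\ref{thm:ICRT}, using the coupling between the Brownian disk and the discrete model that underlies the proofs in Section~\ref{sec:proofs}. First I would recall that, by the results of~\cite{Be3}, the Brownian disk $\BD_{T,\sigma}$ arises as the scaling limit of $Q_n^{\sigma_n}$ in the regime $\sigma_n\sim\sigma\sqrt{2n}$ with scaling $(8/9)^{1/4}n^{1/4}$; more precisely, one should exploit the fact that $\BD_{T,\sigma}$ is coded by the pair $(X,W)$ of Definition~\ref{def:BD}, where $X$ is a first-passage Brownian bridge from $0$ to $-\sigma$ of duration $T$, while $\ICRT$ is coded by the process $(X,W)$ of Definition~\ref{def:ICRT}, with $X$ a two-sided three-dimensional Bessel process (equivalently by a two-sided Brownian motion via the Pitman transform). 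So the real content is a convergence of encoding processes and random snakes on compact time intervals, together with a control of the associated metrics.

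The key steps I would carry out, in order, are the following. \emph{Step 1: rescaling of the contour process.} Apply the scaling relation $\lambda\cdot\BD_{T,\sigma}=_d\BD_{\lambda^4T,\lambda^2\sigma}$ to normalize the volume to $1$; then $\BD_{T,\sigma(T)}$ rescaled by $a_T^{-1}$ (for an appropriate $a_T\to\infty$ matching the diameter growth, which in the regime $\sigma(T)/T\to\infty$ is governed by the perimeter rather than the volume) corresponds to a first-passage bridge from $0$ to $-\sigma(T)$ of fixed duration, looked at on the scale where the relevant part of the contour is the ``Bessel-like'' initial and terminal pieces. \emph{Step 2: local convergence of the contour.} Show that, on any compact interval $[-L,L]$ around the branch point of the first-passage bridge corresponding to the root, the appropriately time- and space-rescaled first-passage bridge from $0$ to $-\sigma(T)$ of duration $T$ converges in $\mathcal{C}([-L,L],\R)$ to a two-sided three-dimensional Bessel process; this is the analogue at the continuum level of the discrete statements proven in Section~\ref{sec:auxiliaryresults} about forests and bridges, and should follow from absolute-continuity relations between the first-passage bridge and Brownian motion together with Pitman's theorem (invoked in Section~\ref{sec:def}) in the regime where the target $-\sigma(T)$ recedes much faster than $\sqrt{T}$. \emph{Step 3: convergence of the snake and the metric.} Deduce convergence of $W=\sqrt3\,\gamma_{-\underline X}+Z^{X-\underline X}$ jointly with $X$; since $\sigma(T)\to\infty$, the Brownian bridge $\gamma$ of duration $\sigma(T)$, evaluated at the (bounded) running infimum $-\underline X$ restricted to the relevant compact interval, converges after rescaling to a two-sided Brownian motion, and hence the snake contribution becomes exactly that of $\ICRT$ (where $W=Z^X$ with $X$ the Bessel process). \emph{Step 4: pass to the metric spaces.} Use the continuity of the constructions $f\mapsto(\mathcal T_f,d_f)$ and $(f,g)\mapsto(M_{f,g},D_{f,g})$ with respect to uniform convergence on compacts — more precisely, the ball-by-ball estimates used elsewhere in Section~\ref{sec:proofs} — to upgrade the process convergence of Steps 2--3 to convergence of $B_r(\BD_{T,\sigma(T)})$ to $B_r(\ICRT)$ in $(\mathbb K,\dgh)$ for every $r\geq 0$, which by Remark~\ref{rem:localGH} is exactly the asserted local Gromov-Hausdorff convergence.

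Alternatively, and perhaps more cleanly, I would try to bypass the direct continuum computation by a diagonal coupling argument paralleling Proposition~\ref{prop:coupling-Qn-largevol}: choose $\sigma_n$ with $\sigma_n/\sqrt n\to\infty$ but $\sigma_n\ll n$, so that on one hand $Q_n^{\sigma_n}$ rescaled converges to $\ICRT$ by Theorem~\ref{thm:ICRT}, while on the other hand $Q_{n}^{\sigma_n}$ is close in the local metric to the Brownian disk $\BD_{T,\sigma(T)}$ for $T$ comparable to $n/\sigma_n^2$ (or whatever relation makes the perimeters match), by the discrete-to-continuum scaling limit in regime~{\bf b}); chaining the two couplings gives the result. \textbf{The main obstacle} I anticipate is precisely this matching of parameters and the control of the perimeter-dominated regime: when $\sigma(T)/T\to\infty$ the root sits on a boundary of length much larger than $\sqrt T$, so the ``volume'' part of the disk is invisible at the scale of the diameter, and one must show that what survives in the ball of radius $r$ is entirely the tree structure coming from the boundary excursions — equivalently, that the first-passage bridge, near its endpoints, genuinely looks Bessel rather than Brownian. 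Making this uniform in $r$, and ruling out that any macroscopic ``bulk'' of the disk intrudes into small balls around the root, is the delicate point; I expect it to require a quantitative version of the absolute-continuity argument of Step 2 together with a volume estimate showing $\mu_{X,W}(B_r)\to 0$ after rescaling, analogous to the volume bounds used in the proof of Theorem~\ref{thm:ICRT}.
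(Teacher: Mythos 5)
Your second, ``alternative'' route is essentially the paper's proof, and your first (the detailed four-step continuum argument) has a genuine gap. Let me address both.

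The paper proves Corollaries~\ref{cor:BD1}--\ref{cor:BD3} simultaneously by the exact diagonal argument you sketch as an alternative: fix $T$, set $m_n(T)=Tn$, $\sigma_n(T)=\lfloor\sigma(T)\sqrt{2n}\rfloor$, $a_n=(8/9)^{1/4}n^{1/4}$, use the Bettinelli--Miermont convergence~\eqref{eq:BeMi} (plus the scaling relation $\BD_{T,\sigma(T)}=_dT^{1/4}\cdot\BD_{1,\sigma(T)/\sqrt T}$) to show $a_n^{-1}\cdot Q_{m_n(T)}^{\sigma_n(T)}$ is close to $\BD_{T,\sigma(T)}$ for $n\geq n_0(T)$, and then argue by contradiction: if the remaining distance to $B_1(\ICRT)$ did not vanish as $T\to\infty$, one could extract sequences $(T_k,n_k)$ and reinterpret $Q_{T_kn_k}^{\sigma_{n_k}(T_k)}$ as a sequence $Q_m^{\tilde\sigma_m}$ in the regime $\tilde\sigma_m\gg\sqrt m$, which by Theorem~\ref{thm:ICRT} converges to $\ICRT$, a contradiction. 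Two minor corrections to how you stated it: the paper does not need a coupling in the sense of Proposition~\ref{prop:coupling-Qn-largevol} (convergence in law plus a diagonal extraction suffices), and you should drop the restriction ``$\sigma_n\ll n$'' — Theorem~\ref{thm:ICRT} imposes only $\sigma_n\gg\sqrt n$ and explicitly allows $\sigma_n$ to grow faster than $n$, which is needed here since $\sigma(T)/T\to\infty$ can make the effective perimeter superlinear.

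The detailed four-step plan, in contrast, would fail at Step~2. You claim that in the regime $\sigma(T)/T\to\infty$ the rescaled first-passage bridge $X$ (from $0$ to $-\sigma(T)$ over duration $T$) converges near its endpoints to a two-sided three-dimensional Bessel process, and you propose to derive this from an absolute-continuity argument ``together with Pitman's theorem.'' That is the picture for Corollary~\ref{cor:BD2}, where $\sigma(T)/T\to\theta<\infty$ and Lemma~\ref{lem:abs-cont-F} applies; but that lemma's density $\varphi_{T,\alpha,\beta}$ contains the factor $\exp(\omega_\alpha\theta+(\alpha+\beta)\theta^2/2)$ and the kernels $q^{(\theta)}$, which do not have a sensible limit as $\theta\to\infty$. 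In the present regime the contour $X$ does not tend to a Bessel process on the scale of the diameter; rather, the drift $-\sigma(T)/T$ is so steep that the excursions of $X-\underline X$ become negligible, the pseudo-metric $d_f$ collapses the tree $\mathcal T_X$, and the surviving metric structure is that of the real tree coded by the \emph{label} component $\sqrt3\,\gamma_{-\underline X}$, where $\gamma$ is the bridge of duration $\sigma(T)$ — and it is $\gamma$, localized near $0$ and $\sigma(T)$, that converges to a two-sided Brownian motion, whose coded tree is $\ICRT$. Steps~2 and~3 thus assign the Bessel/Brownian dichotomy to the wrong coordinate, and the metric-gluing argument in Step~4 would then go through the wrong degeneration. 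Quantifying the collapse of $\mathcal T_X$ and the emergence of $\mathcal T_\gamma$ directly in the continuum is precisely the delicate point you flag at the end; the paper sidesteps it entirely by working on the discrete side where Lemma~\ref{lem:ICRT} (which shows $\tilde C_n\to(-t)$ and $\tilde L_n\to0$) already packages this degeneration.
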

Note that Corollary~\ref{cor:BD2} includes the case where
$\sigma(T)=\sqrt{T}$. Then $\theta=0$, and since by scaling, $T^{1/4}\cdot
\BD_1=_d \BD_{T,\sqrt{T}}$, it follows that $\BHP$ is the tangent cone in
distribution of any disk $\BD_{A,L}$ for fixed $A,L>0$. See~\cite[Section
8.2]{BuBuIv} for an explanation of this terminology in the context of
boundedly compact length spaces, and compare with~\cite[Theorem 1]{CuLG},
where it is shown that the Brownian plane is the tangent cone of the
Brownian map at its root.

For completeness, but without going into details, let us mention that 
identically to the proof of Corollary~\ref{cor:BD1} (or Corollary~\ref{cor:BD3}), a combination
of~\cite[Theorem 1]{BeMi} and~\cite[Theorem 4]{Be3} (or ~\cite[Theorem
4]{Be3}) leads to the convergences
$$
\BD_{T,\sigma}\xrightarrow[]{\sigma\to
  0}\BM_T,\quad\quad \BD_{T,\sigma}\xrightarrow[]{T\to
  0}\CRT_{3\sigma}
$$
in law in the sense of the {\it global} Gromov-Hausdorff topology. The
factor $3$ in $\CRT_{3\sigma}$ stems from the particular normalization of
the Brownian disk.
\begin{remex}
\label{rem:exercises}
  We leave it as an exercise to the reader to find the right combination of
  our (or Bettinelli's, cf.~\cite{Be3}) foregoing
  results to deduce the following additional results on
  tangent cones (in distribution, with respect to the local Gromov-Hausdorff topology):
$$\CRT_T\xrightarrow[]{T\rightarrow\infty} \ICRT,\quad\quad \BHP_\theta\xrightarrow[]{\theta
\to 0} \BHP,\quad\quad 
\IBD_\sigma\xrightarrow[]{\sigma\rightarrow\infty} \BHP.$$ Combining
results from the regime $\sigma_n\ll \sqrt{n}$ in the first and from
$\sigma_n\gg \sqrt{n}$ in the second case, one may also prove the following
scaling results in law:
$$ \BHP_\theta\xrightarrow[]{\theta\rightarrow\infty} \ICRT\quad,\quad\quad 
 \IBD_\sigma\xrightarrow[]{\sigma\rightarrow 0} \BP.$$
\end{remex}

\begin{figure}[!h]
\centering\parbox{8.7cm}{\includegraphics[width=0.45\textwidth]{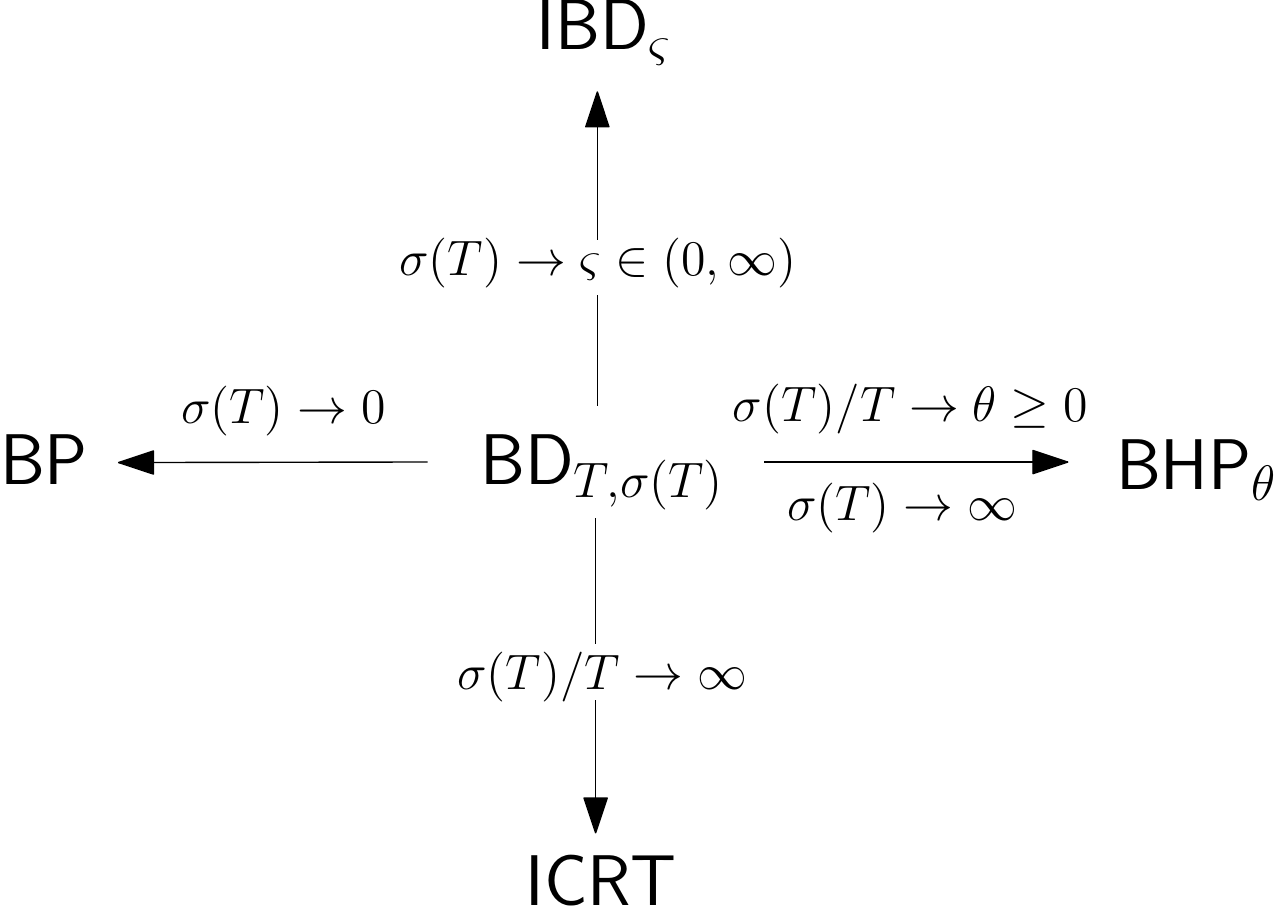}}
 \parbox{7.3cm}{
   \caption{Zooming-in around the root of the Brownian disk
     $\BD_{T,\sigma(T)}$ of volume $T$ and perimeter $\sigma(T)$. The
     figure shows all possible weak limits in the local Gromov-Hausdorff
     sense when $T\rightarrow\infty$
     (Corollaries~\ref{cor:BD1},~\ref{cor:BD4},~\ref{cor:BD2},~\ref{cor:BD3}).}
   \label{fig:diagram2}}
  \end{figure}

\section{Encoding of quadrangulations with a boundary}
\label{sec:encoding}
We will use a variant of the Cori-Vauquelin-Schaeffer~\cite{CoVa,Sc}
bijection developed by Bouttier, Di Francesco and
Guitter~\cite{BoDFGu} to encode quadrangulations with a boundary. More
specifically, we will encode planar quadrangulation of size $n$ with a
boundary of size $2\sigma$ in terms of $\sigma$ trees with $n$ edges in
total, which are attached to a discrete bridge of length $\sigma$. We first
introduce the encoding objects. Our notation is inspired by~\cite{Be2,Be3}.
\subsection{Encoding in the finite case}
\label{sec:encoding-finite}
\subsubsection{Well-labeled tree, forest and bridge}
\label{sec:welllabeledforest}
A {\it well-labeled tree} $(\tau,(\ell(u))_{u\in\tau})$ of size $|\tau|=n$
consists of a rooted plane tree $\tau$ with $n$ edges together with integer
labels $(\ell(u))_{u\in V(\tau)}$ attached to the vertices of $\tau$, such
that the root has label $0$, and $|\ell(u)-\ell(v)| \leq 1$ whenever $u$
and $v$ are neighbors. 

A {\it well-labeled forest} with $\sigma$ trees and $n$ tree edges is a
collection $\f=(\tau_0,\ldots,\tau_{\sigma-1})$ of $\sigma$
trees with $n$ edges in total, together with a labeling of vertices $\la
:\cup_{i=0}^{\sigma-1}V(\tau_i)\rightarrow\mathbb{Z}$, which has the
property that for each $i=0,\ldots,\sigma-1$, the tree $\tau_i$ together
with the restriction $\la\restriction V(\tau_i)$ forms a well-labeled tree.

The vertex set of $\f$ is $V(\f)=\cup_{i=0}^{\sigma-1}V(\tau_i)$. Note that
$|V(\f)|=n+\sigma$. The size of $\f$ is given by $|\f|= n$, i.e., its
number of edges. We write $(0),\ldots,(\sigma-1)$ for the root vertices of
$\tau_0,\ldots,\tau_{\sigma-1}$. If $u$ is a vertex of a tree of $\f$,
$\an(u)$ denotes the root of this tree. In particular, the vertex set of
the $j$th tree of $\f$ is the set $\{u\in V(\f):\an(u)=(j-1)\}$,
$j=1,\ldots,\sigma$.  We write $t(\f)=\sigma$ for the number of trees of
$\f$. We will often identify the root vertices with the integers
$0,\ldots,\sigma-1$ and consequently regard $\an(u)$ as a number.

We call the pair $(\f,\la)$ a {\it well-labeled forest} and denote by
$$\Fo_\sigma^n = \{(\f,\la): t(\f) = \sigma, |\f| = n\}$$
the set of all well-labeled forests of size $n$ with $\sigma$ trees.

A {\it bridge} of length $\sigma\geq 1$ is a sequence of numbers
$(\br(0),\br(1), \ldots ,\br(\sigma))$ with $\br(0)=0$ and such that $\br(i+1) - \br(i) \in
\N_0\cup\{-1\}$, and $\br(\sigma) \le 0$.

By linear interpolation between integer
values, we will view $\br: [0,\sigma]\to\R$ as a
continuous function and write $\Br_\sigma\subset\mathcal{C}([0,\sigma],\R)$
for the set of all possible bridges of
length $\sigma$. 

The terminal value $\br(\sigma)$ of a bridge has a special
interpretation: It keeps the information where to find the root in the
quadrangulation associated to a triplet
$((\f,\la),\br)\in\Fo_\sigma^n\times\Br_\sigma$, see
Section~\ref{sec:BDG-bijection} below.

\subsubsection{Contour pair and label function}
\label{sec:contourlabel-finite}
Consider a well-labeled forest $(\f,\la)$ of size $n$ with $\sigma$ trees. In order to define
its contour pair and label function, it is convenient to associate to $\f$
a representation in the plane, as depicted in Figure~\ref{fig:finiteforest}: We
add $\sigma-1$ edges which link the root vertices $(0),\ldots,(\sigma-1)$,
such that vertex $(i-1)$ gets connected to $(i)$ for $i=1,\ldots,\sigma-1$,
plus an extra vertex $(\sigma)$ and an extra edge linking $(\sigma-1)$ to
$(\sigma)$. We extend $\la$ to $(\sigma)$ by setting $\la((\sigma))=0$.
We refer to the segment connecting the roots of $\f$ and the extra vertex
$(\sigma)$ as the {\it floor} of $\f$.

\begin{figure}[ht]
\begin{center}
\begin{minipage}{1\linewidth}
\parbox{8.5cm}{\center\includegraphics[width=0.44\textwidth]{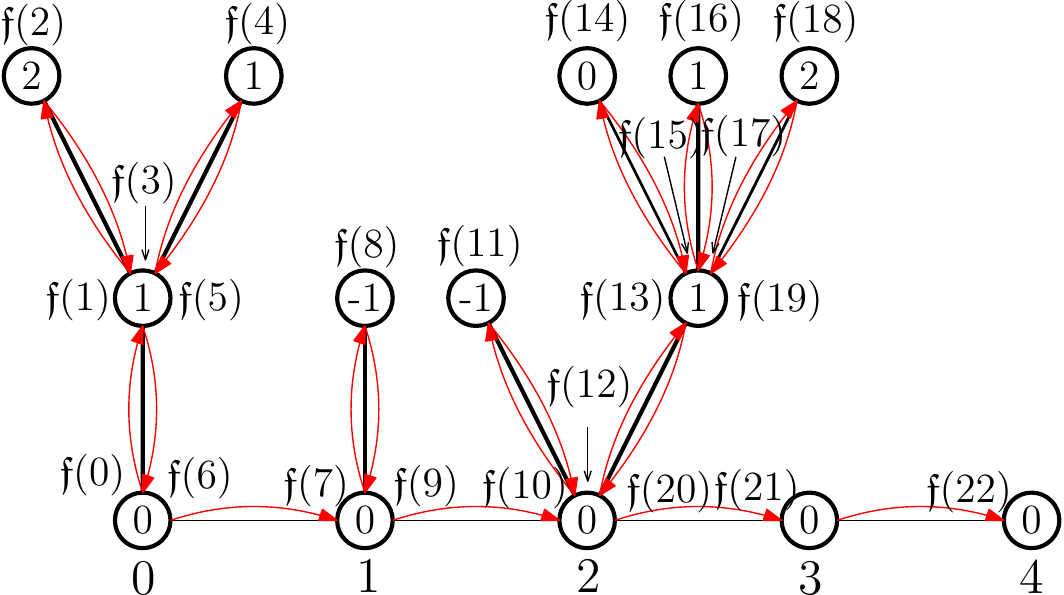}}
\parbox{5cm}{\includegraphics[width=0.44\textwidth]{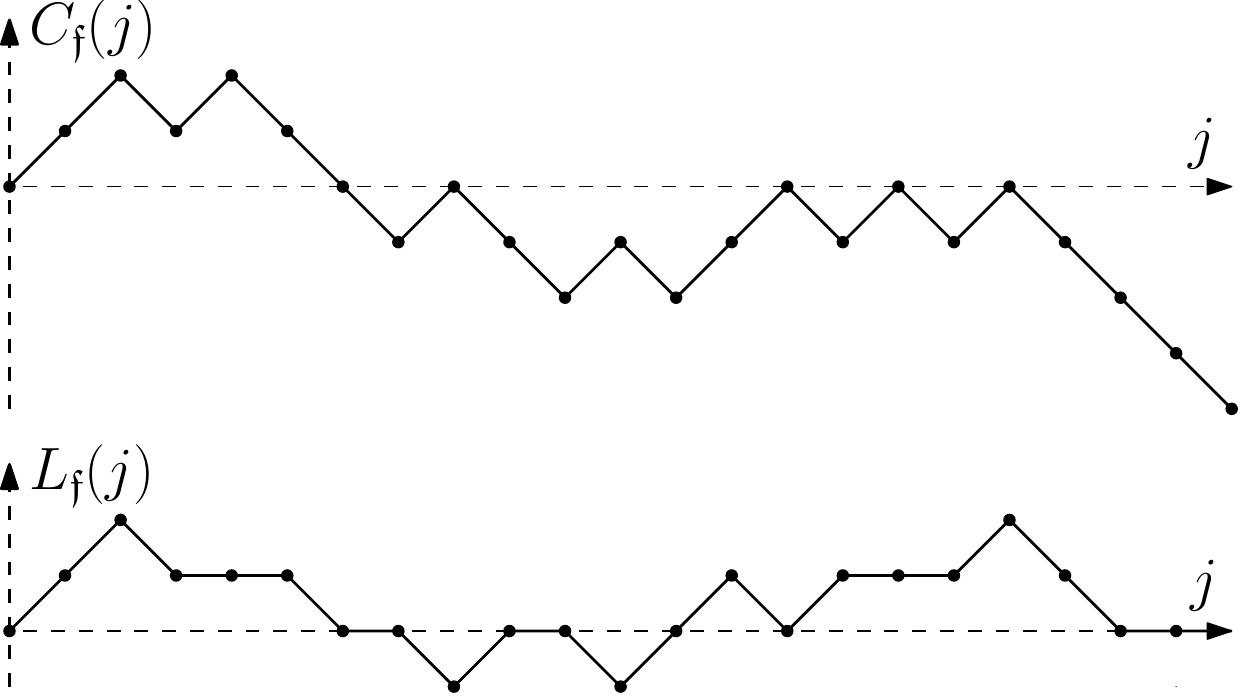}}
\end{minipage}
\end{center}
\caption{On the left: A proper representation of a finite forest $\f$ of
  size $13$ with $4$ trees, together with its facial sequence. The
  rightmost vertex labeled $4$ is the added extra vertex. On the right: Its
  contour pair.}
  \label{fig:finiteforest}
\end{figure}

The {\it facial sequence} 
$\f(0),\ldots,\f(2n+\sigma)$ of $\f$ is the sequence of vertices
obtained from exploring (the embedding of) $\f$ in the contour order, 
starting from vertex $(0)$. In other words,
$\f(0),\ldots,\f(2n+\sigma-1)$ is given by the sequence of vertices of the
discrete contour paths of the trees
$\tau_0,\ldots,\tau_{\sigma-1}$, and the sequence terminates
with value $\f(2n+\sigma)=(\sigma)$. See, e.g.,~\cite[Section 2]{LGMi} for
more on contour paths.

Given a well-labeled forest $(\f,\la)$, we define its {\it
  contour pair} $(C_\f,L_\f)$  by 
$$
C_\f(j) =d_{\f}(\f(j),(\sigma))-\sigma, \quad L_\f(j) = \la(\f(j)),\quad j=0,\ldots,2n+\sigma.
$$
Here, $d_{\f}$ denotes the graph
distance on the representation of $\f$ in the plane.

We call $C_\f$ the {\it contour function} of $\f$, since it is obtained from concatenating the contour
paths of the trees $\tau_0,\ldots,\tau_{\sigma-1}$, with an additional $-1$
step after a tree has been visited. Note that $L_\f(\f(j))=0$ if $\f(j)$
lies on the floor of $\f$.  See again Figure~\ref{fig:finiteforest} for
an illustration.

Now consider additionally a bridge $\br\in\Br_\sigma$. Put 
$\uC_{\f}(j)=\min_{[0,j]}C_{\f}$. The function
$$
\La_{\f}(j) = L_{\f}(j) + \br(-\uC_{\f}(j)),\quad
j=0,\dots,2n+\sigma,
$$
is called the {\it label function} associated to
$((\f,\la),\br)$. The label function plays an important role in measuring distances in the
quadrangulation associated through the Bouttier-Di Francesco-Guitter
bijection, see Section~\ref{sec:distances}.

By linear interpolation between integers, we extend all three functions
$C_\f$, $L_\f$ and $\La_\f$ to continuous real-valued functions on
$[0,2n+\sigma]$. 

\subsection{Encoding in the infinite case}
\label{sec:encoding-infinite}
We next introduce the infinite analogs of the objects from the previous
section. They will encode certain infinite
quadrangulations with an infinite boundary.
\subsubsection{Well-labeled infinite forest and infinite bridge}
A {\it well-labeled infinite forest} is an infinite
collection $\f=(\tau_{i},i\in\Z)$ of finite rooted plane
trees, together with a labeling of vertices $\la
:\cup_{i\in\Z}V(\tau_i)\rightarrow\mathbb{Z}$ such that for each $i\in\Z$, $\tau_i$ together
with the restriction of $\la$ to $V(\tau_i)$ forms a well-labeled tree.

We write again $(k)$ for the root vertex of $\tau_k$ and often identify
$(k)$ with $k\in\Z$.  We call the pair $(\f,\la)$ an {\it well-labeled
  infinite forest} and denote by $\Fo_\infty$ the set of all well-labeled
infinite forests.

An {\it infinite bridge} is a sequence of numbers
$\br=(\br(i),i\in\Z\cup\{\partial\})$ with $\br(0)=0$, $\br(i+1)- \br(i)
\in \N_0\cup\{-1\}$ for all $i\in\Z$ and
$\br(\partial)\in\{\br(-1)-1,\ldots,0\}$.

The extra value $\br(\partial)$ will keep track of the position of the root
in the quadrangulation. Often, we consider only the values $\br(i)$,
$i\in\Z$, and then view $\br$ as a continuous
function from $\R$ to $\R$, by linear interpolation between integer
values. We write $\Br_\infty$ for the set of all infinite bridges $\br$ which
have the property that $\inf_{i\in \N}\br(i)=-\infty$, and $\inf_{i\in \N}\br(-i)=-\infty$.

\subsubsection{Contour pair and label function in the infinite case}
\label{sec:contourlabel-infinite}
We consider a well-labeled infinite forest $(\f,\la)\in\Fo_\infty$. Again, we view
$\f$ as a graph properly embedded in the plane (Figure~\ref{fig:infiniteforest}): We identify
the set of roots of the trees of $\f$ with $\Z$ and connect neighboring
roots by an edge. We obtain what we call the {\it floor} of $\f$. The trees
$\tau_i$ of $\f$ are drawn in the upper half-plane and attached to
the floor.

The {\it facial sequence} $(\f(i),i\in\Z)$ of $\f$ is defined as follows:
$(\f(0),\f(1),\ldots)$ is the sequence of vertices of the contour
paths of the trees $\tau_i, i\in\N_0$, in the contour order, starting from
the root of the tree $\tau_0$, and 
$(\f(-1),\f(-2),\ldots)$ is given by the sequence of vertices of the
contour paths $\tau_{-1},\tau_{-2},\ldots$, in the {\it counterclockwise}
order, starting from the root of the tree $\tau_{-1}$.

\begin{figure}[ht]
\begin{center}
\begin{minipage}{1\linewidth}
\parbox{8.4cm}{\center\includegraphics[width=0.44\textwidth]{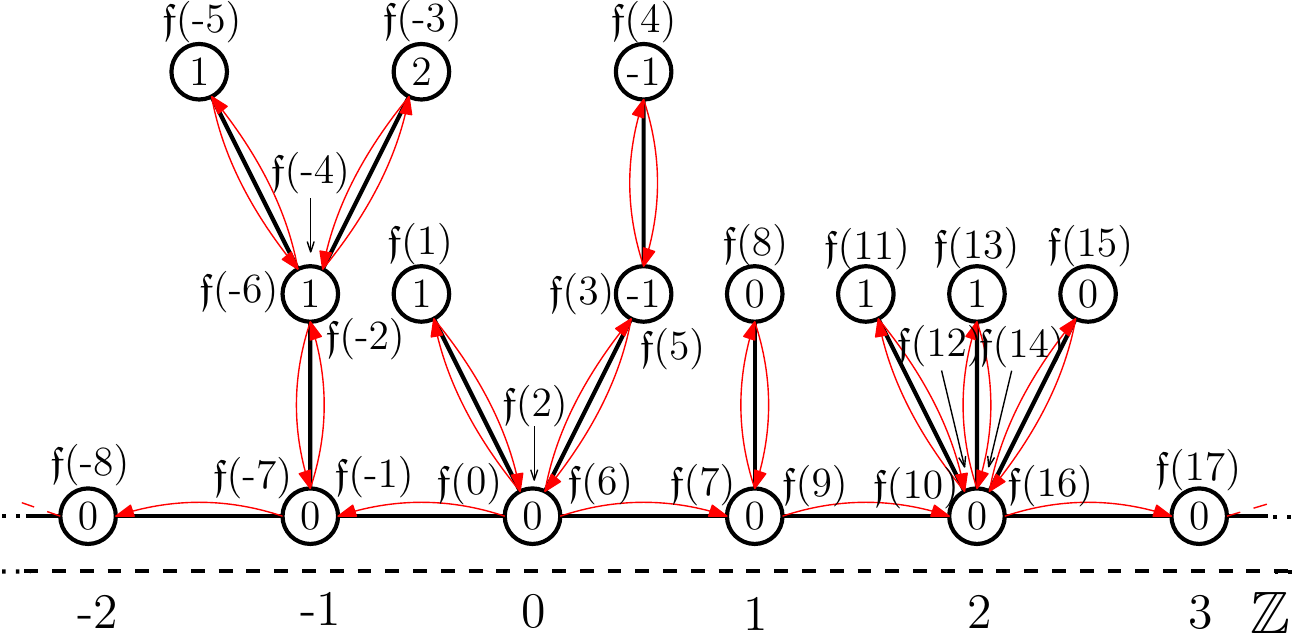}}
\parbox{5cm}{\includegraphics[width=0.44\textwidth]{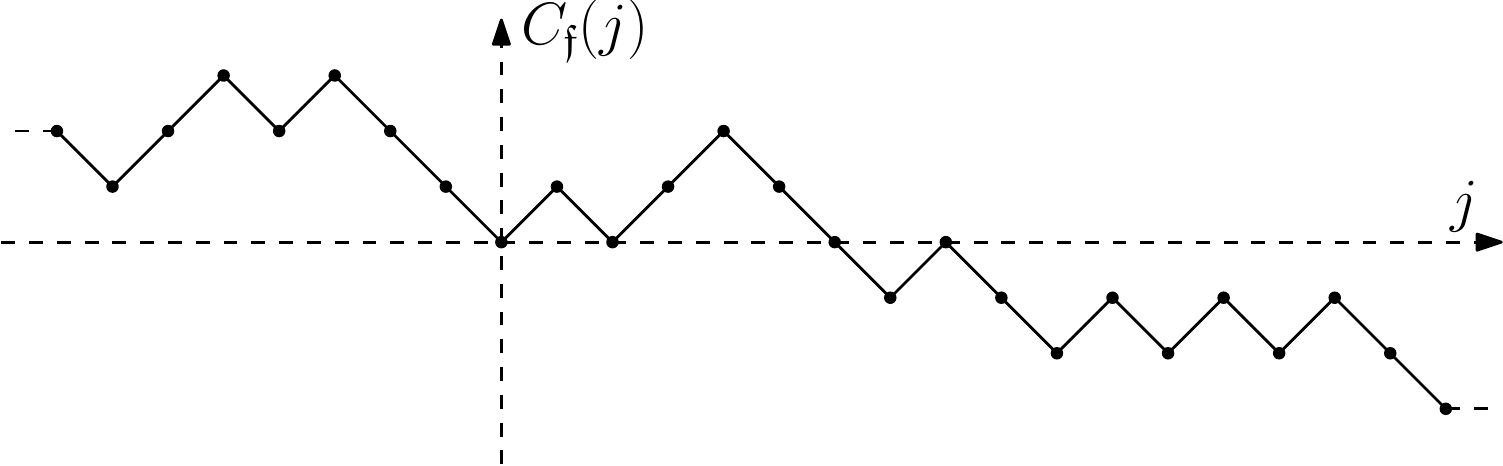}}
\end{minipage}
\end{center}
\caption{On the left: A proper representation of an infinite
  forest $\f$, together with its facial
  sequence. On the right: Its contour function.}
  \label{fig:infiniteforest}
\end{figure}

In analogy to the finite case, given a well-labeled infinite tree $(\f,\la)$, its {\it
  contour pair} $(C_{\f},L_{\f})$ is a tuple functions defined {\it via}  
$$
C_{\f}(j) =d_{\f}(\f(j),\an(\f(j)))-\an(\f(j)), \quad L_{\f}(j) =
\la(\f(j)),\quad j\in\Z,
$$
where $d_{\f}$ is the graph distance on the embedding of $\f$, and
$\an(\f(j))$ denotes the root of the tree $\f(j)$ belongs to. Be aware of
the small abuse of notation: In the expression for $C_{\f}$, $\an(\f(j))$ is first
viewed as a vertex and then as an integer.

Note that $\lim_{j\rightarrow\infty}C_\f(j)\rightarrow -\infty$ and
$\lim_{j\rightarrow -\infty}C_\f(j)\rightarrow +\infty$ for every infinite
forest. As for a finite forest, we call $C_{\f}$ the {\it contour function}
of $\f$.

If additionally $\br\in \Br_\infty$, we define the {\it label function} associated to
$((\f,\la),\br)$ by
$$\La_\f(j)=
L_\f(j) + \br(\uC_\f(j)),\quad j\in\Z,\quad
\La_\f(\partial)=\br(\partial),$$ where $\uC_\f(j)=\inf_{(-\infty,j]}C_\f$
for $j<0$ and $\uC_\f(j)=\min_{[0,j]}C_\f$
for $j\geq 0$, as above.

Again by linear interpolation between integers, we view $C_\f,L_\f$ and
$\La_\f$ as continuous functions on $\R$.

\subsection{Bouttier-Di Francesco-Guitter bijection}
\label{sec:BDG-bijection}
Recall that a rooted quadrangulation with a
boundary comes with a distinguished edge along the boundary, the root edge,
whose origin is the root vertex.  We write $\cQ_n^{\sigma}$ for the set of
all rooted quadrangulations with $n$ inner faces and a boundary of size
$2\sigma$.

A {\it pointed quadrangulation with a boundary} is a pair $(\q,\vd)$, where
$\q$ is a rooted quadrangulation with a boundary and $\vd\in V(\q)$ is a
distinguished vertex. The set of all rooted pointed quadrangulations with
$n$ internal faces and $2\sigma$ boundary edges is denoted by
$$\cQ_{n,\sigma}^\bullet=\left\{(\q,\vd) :
  \q\in\cQ_n^{\sigma}, \vd\in V(\q)\right\}.$$

\subsubsection{The finite case}
The Bouttier-Di
Francesco-Guitter bijection~\cite{BoDFGu} provides us with a
bijection 
$$\Phi_n:\Fo_\sigma^n \times \Br_\sigma\longrightarrow \cQ_{n,\sigma}^\bullet.$$ We shall here
content ourselves with the description of the mapping from the encoding
objects to the quadrangulations. We follow largely the presentation
in~\cite{Be3}, where also a description of the reverse direction can be found.

In this regard, let $((\f,\br),\la)\in\Fo_\sigma^n\times\Br_{\sigma}$. Out
of this triplet, we will now construct a rooted pointed quadrangulation
$(\q,\vd)\in\cQ_{n,\sigma}^\bullet$. Recall the facial sequence $\f(0),\ldots,\f(2n+\sigma)$ of $\f$ obtained
from exploring the trees of $\f$ in the contour order, as well as the
associated label function $\La_\f$. We view $\f$ as embedded in the plane
(as explained above) and add an additional vertex $\vd$ inside the only
face of $\f$, with label $\La_\f(\vd)=-\infty$. 

The vertex set of $\q$ is given by $V(\f)\cup\{\vd\}$. Note that by
definition, the additional vertex $(\sigma)$ which forms part of the
embedding of $\f$ is not an element of $V(\f)$.  In order to specify the
edges between the vertices of $\q$, we define for $i=0,\ldots,2n+\sigma-1$
the {\it successor} $\suc(i)\in\{0,\ldots,2n+\sigma-1\}\cup\{\infty\}$ of
$i$ to be the first number $k$ in the list
$(i+1,\ldots,2n+\sigma-1,0,\ldots,i-1)$ with the property that
$\La_{\f}(k)=\La_{\f}(i)-1$, with $\suc(i)=\infty$ if there is no such
number. Letting $\f(\infty)=\vd$, we now follow the facial sequence of $\f$
and draw for every $i=0,\ldots,2n+\sigma-1$ an arc between $\f(i)$ and
$\f(\suc(i))$, in such a way that it neither crosses arcs that were
previously drawn, nor edges of the embedding of $\f$. Since any vertex of
$\f$ which is not a leaf is visited at least twice in the contour
exploration, there can be several arcs connecting $\f(i)$ and
$\f(\suc(i))$. By a small abuse of language, we therefore speak of the arc
connecting $i$ to $\suc(i)$ and write
$$
i\arcr\suc(i)\quad\hbox{or}\quad i\arcl\suc(i)
$$
for the oriented arc from $i$ towards $\suc(i)$ or from $\suc(i)$ towards $i$, respectively.
\begin{figure}[ht]
\begin{center}
\begin{minipage}{1\linewidth}
  \center\parbox{7.7cm}{\includegraphics[width=0.45\textwidth]{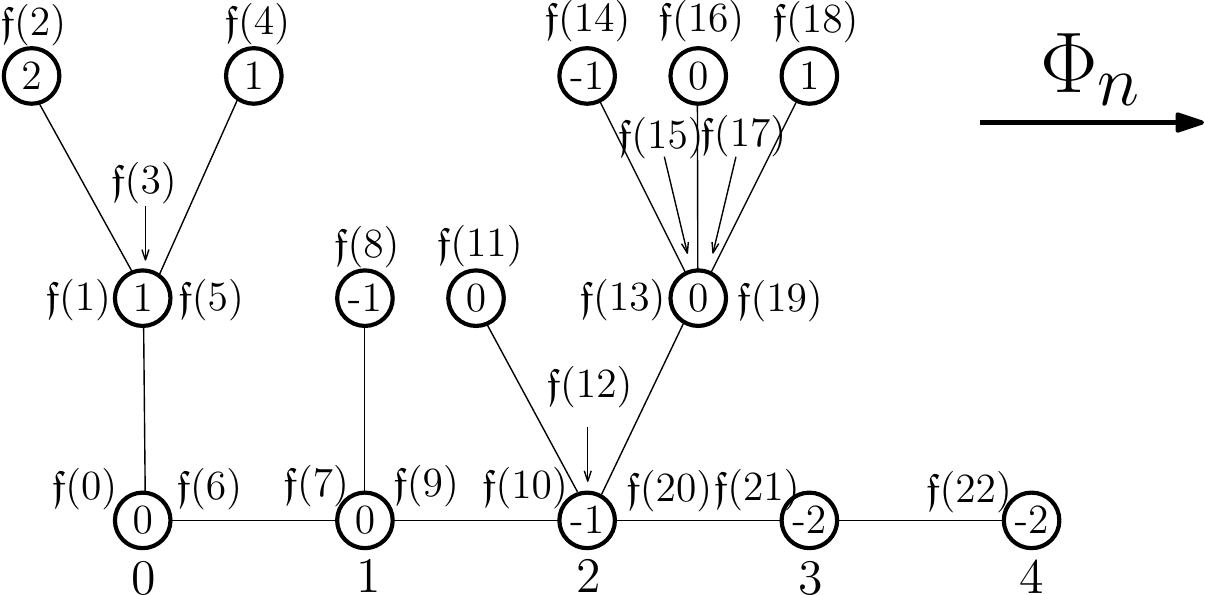}}
\parbox{6.5cm}{\includegraphics[width=0.45\textwidth]{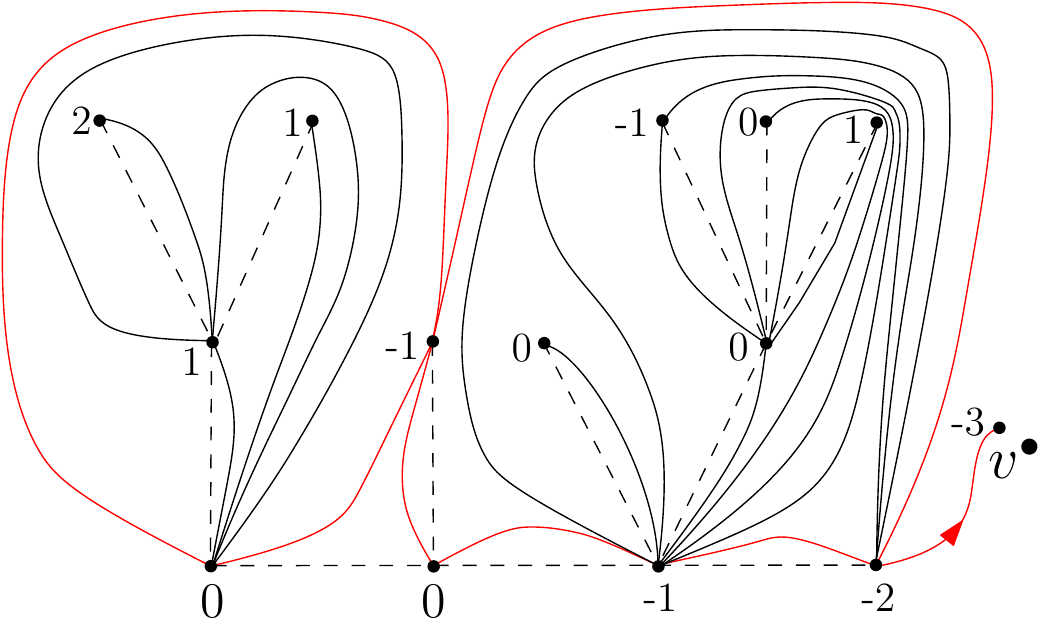}}
\end{minipage}
\end{center}
\caption{The Bouttier-Di Francesco-Guitter bijection $\Phi_n$ applied to an
  element $((\f,\la),\br)\in\Fo_{\sigma}^n\times\Br_\sigma$. The forest
  $\f$ is the same as in Figure~\ref{fig:finiteforest}, but the labels are
  shifted by the values of the bridge $\br$. The (non-simple) boundary of
  the associated quadrangulation on the left is represented in red. Note
  that the extra vertex $\vd$ is in this example a boundary vertex. The
  rightmost vertex $\f(22)=(4)$ on the left is not a vertex of the
  quadrangulation. Its label $-2$ captures the information where to find
  the root edge, which is indicated by an arrow.}
\label{fig:BDFG}
\end{figure}
The arcs between the vertices $V(\f)\cup\{\vd\}$ form the edges of
$\q$, and it remains to specify the root edge of $\q$: The root vertex is
given by $\f(\suc^{-\br(\sigma)}(0))$, and the root edge is in case
$\br(\sigma)>\br(\sigma-1)-1$ given by
$\suc^{-\br(\sigma)}(0)\arcr\suc^{-\br(\sigma)+1}(0)$, and in case
$\br(\sigma)=\br(\sigma-1)-1$ by $2n+\sigma-1\arcl\suc(2n+\sigma-1)$.
Note that in the second case, we have indeed
$\f(\suc(2n+\sigma-1))=\f(\suc^{-\br(\sigma)}(0))$,
i.e., $\suc(2n+\sigma-1)$ is the root vertex.

\subsubsection{The infinite case}
Let $\cQ$ denote the completion of the space of all rooted finite
quadrangulations with a boundary with respect to $\dmap$. We extend
$\Phi_n$ to a mapping
$$\Phi:\left({\cup}_{n,\sigma\in\N}\Fo_{\sigma}^n\times\Br_\sigma\right)\cup\left(\Fo_\infty\times\Br_\infty\right)\longrightarrow \cQ$$
as follows. For elements $((\f,\la),\br)\in
\Fo_{\sigma}^n\times\Br_\sigma$, we let
$\Phi(((\f,\la),\br))=\Phi_n((\f,\la),\br)$, where we view the latter as an
element in $\cQ_n^{\sigma_n}$, by simply forgetting its distinguished
vertex.

Now let $((\f,\la),\br)\in\Fo_\infty\times\Br_\infty$. For $i\in\Z$, we
define the {\it successor} $\suc_\infty(i)$ to be the smallest number $k$
greater than $i$ such that $\La_{\f}(k)=\La_{\f}(i)-1$. Note that since
$\inf_{i\in\N}\br(i)=-\infty$, the definition make sense. We consider a
proper embedding of $\f$ in the plane as described above and draw an arc
between $\f(i)$ and $\f(\suc_\infty(i))$, for any $i\in\Z$, as indicated by
Figure~\ref{fig:uihpq}. Again we can do this in a way such that arcs do not
cross. The vertex set of $\Phi(((\f,\la),\br))$ is given by $V(\f)$,
and the edges are the arcs we constructed. Finally, we follow a rooting
convention which is analogous to the finite case (we adapt the notion
$i\arcr\suc_\infty(i)$ in the obvious way): The root vertex is given by
$\f(\suc_\infty^{-\br(\partial)}(0))$, and the root edge is in case
$\br(\partial)>\br(-1)-1$ given by
$\suc_\infty^{-\br(\partial)}(0)\arcr\suc^{-\br(\partial)+1}(0)$, and in
case $\br(\partial)=\br(-1)-1$ by $-1\arcl\suc_\infty(-1)$.
\begin{figure}[ht]
  \centering
  \includegraphics[width=0.8\textwidth]{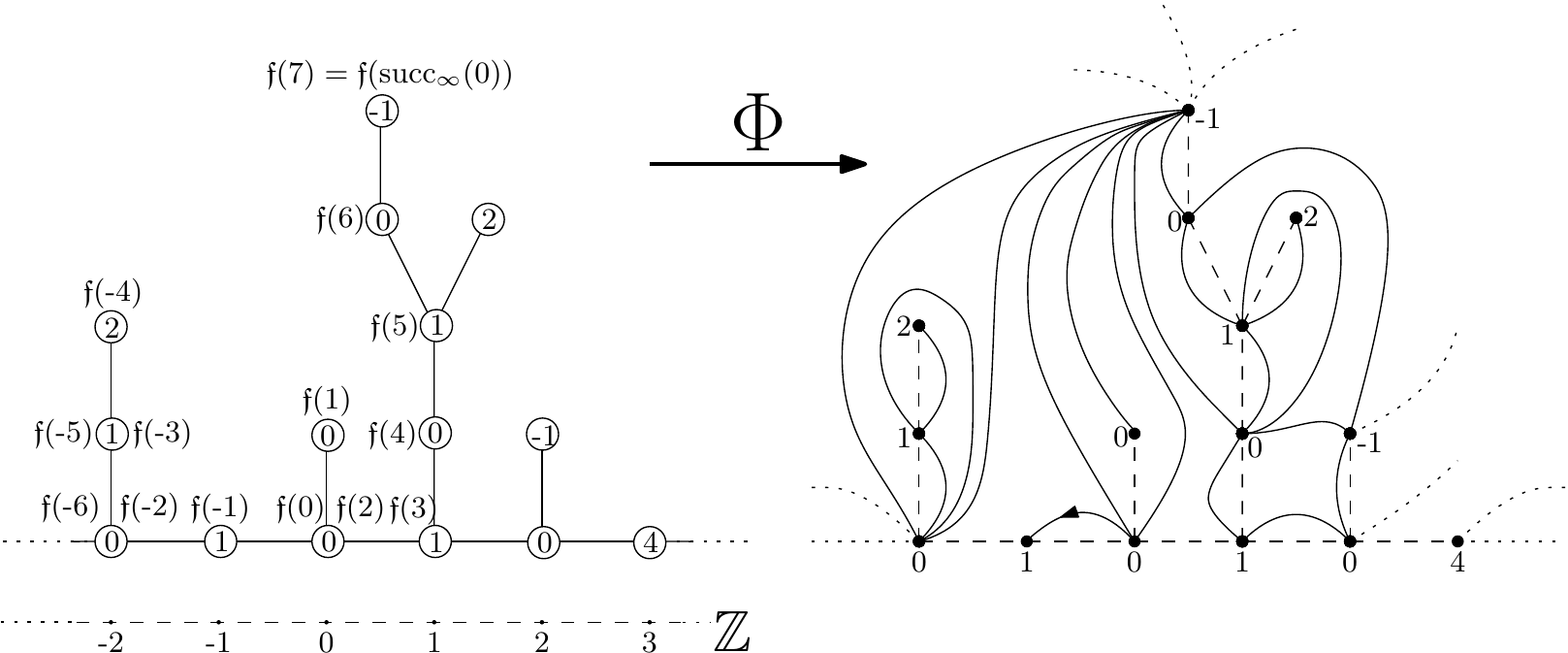}
  \caption{The Bouttier-Di Francesco-Guitter mapping applied to an element
    $((\f,\la),\br)\in\Fo_\infty\times\Br_\infty$. The successor of $0$ is
    $7$, which is also the successor of $-6,-2,1,2,4,6$. The vertex labels
    are given by $\La_{\f}$, as in Figure~\ref{fig:BDFG}. The root edge of
    the map is the oriented arc $-1\arcl\suc_\infty(-1)$ indicated by an
    arrow.}
  \label{fig:uihpq}
\end{figure}
\begin{remark}
  Notice that a triplet $((\f,\br),\la)$ in
  $\Fo_\sigma^n\times\Br_{\sigma}$ or in $\Fo_\infty\times\Br_{\infty}$ is
  uniquely determined by its associated contour and label functions
  $(C_\f,\La_{\f})$. In particular, it makes sense to speak of the
  quadrangulation associated to $(C_\f,\La_{\f})$.  The distinguished
  vertex $\vd$ in the finite case will play no particular role in our
  statements, since we view quadrangulations as metric spaces pointed at
  their root vertices.
\end{remark}

\subsection{Construction of the \normalfont{\UIHPQ}}
\label{sec:constr-UIHPQ}
We first introduce a $\Fo_\infty$-valued random element
$(\f_\infty,\la_\infty)$ together with a $\Br_\infty$-valued random element
$\br_\infty$, which will encode the $\UIHPQ$. 

\subsubsection{Uniformly labeled critical infinite forest}
 Let $\tau$ be a finite random plane tree. Conditionally on $\tau$,
 we assign a sequence of i.i.d random variables with the uniform
 distribution on $\{-1,0,1\}$ to the edges of $\tau$. The label
 $\ell(u)$ of a vertex $u$ of $\tau$ is defined to be the sum of the random
 variables along the edges of the (unique) path from the root to $u$. Such
 a random labeling $\ell:V(\tau)\rightarrow\mathbb{Z}$ is referred to as a
 {\it uniform labeling}. If the tree $\tau$ is a Galton-Watson tree with a 
 geometric offspring distribution of parameter $1/2$, we say that $\tau$ is
 a {\it critical geometric Galton-Watson tree}.  If $\ell$ is a uniform labeling of
 $\tau$, we refer to the pair $(\tau,(\ell(u))_{u\in\tau})$ as a {\it
   uniformly labeled critical geometric Galton-Watson tree}.

 A {\it uniformly labeled critical infinite forest} is a random element 
  $(\f_\infty,\la_\infty)$ taking values in $\Fo_\infty$ such that the pairs
  $(\tau_i,\la_\infty\restriction V(\tau_i))$, $i\in\Z$, are
 independent uniformly labeled critical geometric Galton-Watson trees.
 
 \subsubsection{Uniform infinite bridge}
 \label{sec:infinitebridge}
 Let $\br_\infty=(\br_\infty(i),i\in\mathbb{Z})$ be a two-sided random walk
 starting from $0$ at time $0$, i.e., $\br_\infty(0)=0$, which has independent increments given by
 $$
 \P(\br_\infty(i)-\br_\infty(i-1)=k)=2^{-k-2},\quad k\in\mathbb{N}_0\cup\{-1\},\quad\hbox{for
 }i\in\mathbb{Z}\setminus\{0\},
 $$
 and 
 $$
 \P(-\br_\infty(-1)=k)=(k+2)2^{-(k+3)},\quad k\in\mathbb{N}_0\cup\{-1\}.
 $$
 Note that $-\br_\infty(-1)$ has same law as $G+G'-1$ for $G$ and $G'$ two independent geometric random variables of parameter
 $1/2$. This follows from the well-known fact that $G+G'+1$ is distributed
 as a size-biased geometric random variable. We refer to
 Section~\ref{sec:bridges} for more explanations. Next, given $\br_\infty(-1)$, we let $\br_\infty(\partial)$ be a
 uniformly distributed random variable in $\{\br_\infty(-1)-1,\ldots,0\}$,
 independent of everything else.
 
 We call the random element $\br_\infty=(\br_\infty(i),
 i\in\Z\cup\{\partial\})$ with values in $\Br_\infty$ the {\it uniform
   infinite bridge}.

 We review now the construction of the $\UIHPQ$ given in~\cite{CuMi}. Note
 that there, the encoding is defined in a slightly different (but
 equivalent) manner, and the root edge is oriented in the opposite
 direction.  The following definition is justified by
 Proposition~\ref{prop:Qn-UIHPQ}.
 \begin{defn}
   Let $(\f_\infty,\la_\infty)$ be a uniformly labeled critical infinite
   forest, and let $\br_\infty$ be a uniform infinite bridge independent of
   $(\f_\infty,\la_\infty)$. The uniform infinite half-planar
   quadrangulation $\UIHPQ$ is the (rooted) random infinite quadrangulation
   $Q_{\infty}^{\infty}=(V(Q_{\infty}^{\infty}),\dgr,\rho)$ with an
   infinite boundary obtained from applying the Bouttier-Di
   Francesco-Guitter mapping $\Phi$ to
   $((\f_\infty,\la_\infty),\br_\infty)$.
\end{defn}

In~\cite{CuMi}, it was shown that in the sense of $\dmap$, there are the
weak convergences
$$
Q_n^{\sigma_n}\xrightarrow[]{n \to \infty}Q_{\infty}^{\sigma},\quad
Q_{\infty}^{\sigma}\xrightarrow[]{\sigma \to
\infty}Q_{\infty}^{\infty},$$ where $Q_{\infty}^{\sigma}$ is the
so-called (rooted) uniform infinite planar quadrangulation with a boundary
of perimeter $2\sigma$. We also point at the recent work~\cite{CaCu}, where
a construction of the $\UIHPQ$ with a positivity constraint on labels is
given, similarly to the Chassaing-Durhuus construction~\cite{ChDu} of the
$\UIPQ$.

 \begin{remark}
   We stress that while we use the notation $(\f,\la)$ for both a finite or
   infinite (deterministic) well-labeled forest, and similarly, $\br$
   represents a finite or infinite bridge,
   $(\f_\infty,\la_\infty)\in\Fo_\infty$ and $\br_\infty\in\Br_\infty$ will
   always stand for {\it random} elements with the particular law just
   described. We will implicitly assume that $\br_\infty$ is independent of
   $(\f_\infty,\la_\infty)$. Similarly, for given $\sigma_n$,
   $((\f_n,\la_n),\br_n)$ will denote a random element with the uniform
   distribution on $\Fo_n^{\sigma_n}\times\Br_{\sigma_n}$, see
   Section~\ref{sec:usualsetting}.
\end{remark}

\subsection{Some ramifications}
We gather here some consequences and remarks which we will
tacitly use in the following. We begin with some observations concerning the
Bouttier-Di Francesco-Guitter bijection.  
\subsubsection{Distances}
\label{sec:distances}
Let $(\q,\vd)\in\mathcal{Q}_{n,\sigma}^\bullet$ be a (rooted) pointed
quadrangulations of size $n$ with a boundary of size $2\sigma$.  Then
$(\q,\vd)$ corresponds to a pair $((\f,\la),\br)\in\Fo_\sigma^n
\times\Br_\sigma$ {\it via} the Bouttier-Di Francesco-Guitter
bijection, and the sets $V(\q)\setminus\{\vd\}$ and $V(\f)$ are identified
through this bijection. Recall that the label function $\La=\La_f$
represents the labels in the forest shifted tree by tree according to the
values of the bridge $\br$.  By a slight abuse of notation, we will view
$\La$ also as a function on $V(\q)\setminus\{\vd\}$ (or $V(\f)$): If $v\in
V(\q)\setminus\{\vd\}$, there is at least one
$i\in\{0,\ldots,2n+\sigma-1\}$ such that $v$ is visited in the $i$th step
of the contour exploration, and we let $\La(v)=\La(i)$. Note that this
definition makes sense, since $\La(i) = \La(j)$ if $\f(i)=\f(j)$.

Write $\dq$ for the graph distance on $\bq$.  From the description of the
bijection above, we deduce that
\begin{equation}
\label{eq:distance-vdot}
\dq(u,\vd)=\La(u)-\min\La +1.
\end{equation}
Moreover, if $v_0$ is the root vertex of $\q$, we know that its distance to vertex $\f(0)=(0)$ is 
\begin{equation}
\label{eq:distance-root-0}
\dq(v_0,(0))= -\br(\sigma).
\end{equation}
In general, there is no simple formula for distances in $\q$. However, as
we explain next, there exist lower and upper bounds in terms of $\La$. 

We first discuss a lower bound. If $u,v\in V(\f)$ are vertices of the same
tree $\tau$ of $\f$, i.e., $\an(u)=\an(v)$, we let $[[u,v]]$ be the vertex
set of the unique injective path in $\tau$ connecting $u$ to $v$. If $(i)$,
$(j)$ are two tree roots of $\f$ with $i<j$, we let $[[(i),(j)]]$ denote
the sequence of root vertices $(i),(i+1),\ldots,(j)$. For the remaining
cases, if $\an(u)< \an(v)$, we put
$$[[u,v]] = [[u,\an(u)]]\cup [[
\an(u),\an(v)]] \cup[[v,\an(v)]],$$
whereas if $\an(v)<\an(u)$, we let 
$$[[u,v]] = [[u,\an(u)]]\cup [[
\an(u),(\sigma-1)]]\cup [[(0),\an(v)]] \cup
[[v,\an(v)]].$$  

Now let $u,v\in V(\q)\setminus\{\vd\}$. 
The so-called {\it cactus bound}
states that
\begin{equation}
\label{eq:cactus1}
\dq(u,v) \geq \La(u)+\La(v)-2\max\left\{\min_{[[
    u,v]]}\La, \min_{[[ v,u]]}\La\right\}.
\end{equation}
See~\cite[Proposition 2.3.8]{Mi3} for a proof in a slightly different
context, which is readily adapted to our setting. Since vertex $(0)$ has
label $\La(0)=0$ and $\La$ coincides with the values of the bridge along
the floor of $\f$, the distance $\dq((0),u)$ for $u\in
V(\q)\setminus\{\vd\}$ is lower bounded by
\begin{equation}
\label{eq:cactus2}
\dq((0),u) \geq -\max\left\{\min_{[0,\an(u)]}\br, \min_{[\an(u),\sigma-1]}\br\right\}.
\end{equation}

For an upper bound of $\dq(u,v)$ when $u,v\in V(\q)\setminus\{\vd\}$,
choose $i,j\in\{0,\ldots,2n+\sigma-1\}$ such that $\f(i)= u$ and
$\f(j)=v$. Define
$$
\overrightarrow{[i,j]} = 
\left\{\begin{array}{l@{\quad\mbox{if }}l}
      \{i,\ldots, j\} & i\leq j\\
\{i,\ldots,2n+\sigma-1\} \cup \{ 0,\ldots,j\}
& i>j
\end{array}\right..
$$
Then there is the upper bound (see~\cite[Lemma 3]{Mi1} for a proof)
\begin{equation}
\label{eq:dist-upperbound}
\dq(u,v) \leq
\La(u)+\La(v)-2\max\left\{\min_{\overrightarrow{[ i,j]}}\La(\f),\,\min_{\overrightarrow{[
      j,i]}}\La(\f)\right\}+2.
\end{equation}

Bounds similar to~\eqref{eq:cactus1},~\eqref{eq:cactus2}
and~\eqref{eq:dist-upperbound} can be formulated for infinite
quadrangulations $\q_\infty$ constructed from triplets
$((\f,\br),\la)\in\Fo_\infty\times \Br_\infty$. For example, if $u,v\in
V(\f)$ with $\an(u)\leq \an(v)$, the cactus bound~\eqref{eq:cactus1} reads
\begin{equation}
\label{eq:cactus3}
d_{{\bf q}_\infty}(u,v) \geq \La(u)+\La(v)-2\min_{[[
    u,v]]}\La.
\end{equation}

\subsubsection{Bridges}
\label{sec:bridges}
We will need some properties of elements in $\Br_\sigma$.  Firstly, as
it is shown in~\cite[Lemma 6]{Be3}, by identifying a bridge
$(\br(i),0\leq i\leq \sigma)\in\Br_\sigma$ with the sequence
\begin{equation}
\label{eq:correspondence-bridge}
\big(\underbrace{+1,+1,\dots,+1}_{\br(0)-\br(\sigma) \textup{
    times}},-1,\underbrace{+1,+1,\dots,+1}_{\br(1)-\br(0)+1 \textup{
    times}},-1,\underbrace{+1,+1,\dots,+1}_{\br(2)-\br(1)+1 \textup{
    times}},\dots, -1,
\hspace*{-2mm}\underbrace{+1,+1,\dots,+1}_{\br(\sigma)-\br(\sigma-1)+1
  \textup{ times}}\big),
\end{equation}
one obtains a one-to-one correspondence between $\Br_\sigma$ and the set of
sequences in $\{-1,+1\}^{2\sigma}$ counting exactly~$\sigma$ times the
number $-1$. As a consequence, $|\Br_\sigma|={2\sigma \choose \sigma}$.

It is helpful to adopt the following point of view. Imagine that we mark
$\sigma$ points on the discrete circle $\mathbb{Z}/\textup{mod}\, 2\sigma$
uniformly at random. Marked points obtain label $-1$, unmarked points label
$+1$.  Now choose uniformly at random one of the $2\sigma$ circle points as
the origin. By walking around the circle in the clockwise order starting
from the chosen origin, one observes a sequence of consecutive $+1$ and
$-1$, which is distributed as~\eqref{eq:correspondence-bridge} when $\br$
is chosen uniformly at random in $\Br_\sigma$. In particular,
$(\br(\sigma)-\br(\sigma-1)+1)+(\br(0)-\br(\sigma)+1)=- \br(\sigma-1)+2$
has the law of a size-biased pick among all $\sigma$ consecutive segments
of the form $(+1,+1,\ldots,+1,-1)$. When $\sigma$ tends to infinity, it is
readily seen that $-\br(\sigma-1)$ converges in distribution to $G+G'-1$,
where $G$ and $G'$ are two independent geometric random variables of
parameter $1/2$. This explains the particular law of the increment
$-\br_\infty(-1)$ of a uniform infinite bridge $\br_\infty$ that forms part
of the encoding of the $\UIHPQ$.

Next, let
$(X_i,i\in\N)$ be a sequence of i.i.d. random variables with distribution
$$
\P(X_1=k)=2^{-k-2},\quad k\geq -1.
$$
Put $\Sigma_j = \sum_{i=1}^jX_i$, with $\Sigma_0=0$. Fix $0\leq k\leq
\sigma$, and denote by $S^{(k)}=(S^{(k)}(j),j=0,\ldots,\sigma)$ the discrete
bridge distributed as $(\Sigma_j,j=0,\ldots,\sigma)$ conditioned on
$\{\Sigma_\sigma =-k\}$. Then the above considerations imply that
$S^{(k)}$ is uniformly distributed over the set $\{\br\in\Br_\sigma:
\br(\sigma)=-k\}$.  Secondly, we can compute
\begin{equation}
\label{eq:law-bsigma}\P\left(\br(\sigma)=-k\right)=\frac{1}{2}\frac{(2\sigma-k-1)!}{(2\sigma-1)!}\frac{\sigma!}{(\sigma-k)!}\leq
2^{-k}, 
\end{equation} and $\P\left(\br(\sigma)=-k\right)\rightarrow 2^{-k-1}$ as
$\sigma\rightarrow\infty$. See~\cite[Proof of Proposition 7]{Be3} for a complete
argument.

\subsubsection{Forests}
\label{sec:forests}
In the rest of this paper, we will often use the following well-known fact
(see, e.g.,~\cite[Section 2]{LGMi}): If
$\f=(\tau_0,\ldots,\tau_{\sigma-1})$ is chosen uniformly at random among
all forests with $\sigma$ trees and $n$ edges, then the corresponding
discrete contour path $(C_{\f}(j),j=0,\ldots,\sigma)$, is distributed as a
simple random walk path starting at $0$ and conditioned to end at $-\sigma$
at time $2n+\sigma$. As a consequence, we have for
$j\in\{0,\ldots,\sigma\}$ and positive integers $k_i$,
\begin{equation}
\label{eq:forest-hittingtime}
\P\left(|\tau_0|=k_0,\ldots,
  |\tau_{j-1}|=k_{j-1}\right)=\P\left(T_{-j}=2(k_0+\ldots+k_{j-1})+j\,|\,T_{-\sigma}=2n+\sigma\right),
\end{equation}
where $T_{-i}$ denotes the first hitting time of $-i$ of a simple random
walk started at $0$. Also note that the joint law of the trees
$(\tau_0,\ldots,\tau_{\sigma-1})$ is invariant under permutation of its
components. Moreover, the sequence of trees
$(\tau_0,\ldots,\tau_{\sigma-1})$ has the law of $\sigma$ independent
critical geometric Galton-Watson trees conditioned to have total size
$n$. In this context, we recall (see, e.g.,~\cite[Section 2.2]{LGMi}) that
if $\P_{\textup{GW}}$ is the law of critical geometric Galton-Watson tree
and $\tau$ a given finite tree, then
\begin{equation}
\label{eq:criticalGW}
\P_{\textup{GW}}\left(\tau\right)=(1/2)\,4^{-|\tau|}.
\end{equation}

Probabilities as in~\eqref{eq:forest-hittingtime} can be computed using
Kemperman's formula (see, e.g.,~\cite[Chapter 6]{Pi}). It tells us that if
$(S_i,i\in \N_0)$ is a simple random walk started at $0$, then
\begin{equation}
\label{eq:Kemperman}
\P\left(T_{j}=k\right)=\frac{|j|}{k}\P(S_k=j),\quad j\in\mathbb{Z},\,k\in\N.
\end{equation}

By applying Kemperman's formula to $\P(T_{-\sigma}=2n+\sigma)$ and counting
paths, we obtain
$$
|\Fo_\sigma^n| = 3^n\frac{\sigma}{2n+\sigma}{2n+\sigma\choose n}.
$$
Note that the factor $3^n$ accounts for the $3^n$ possible labelings of a
forest with $n$ tree edges.

For estimating $\P(S_k=j)$ when $k$ and $j$ are large, one typically applies a local central limit
theorem. Setting
$$
\overline{p}(k,j)=\frac{2}{\sqrt{2\pi
    k}}\exp\left(-\frac{j^2}{2k}\right),\quad j\in\mathbb{Z},\,k\in\N, 
$$
and $\overline{p}(0,j)=\delta_0(j)$, one
has (see, e.g.,~\cite[Theorem 1.2.1]{La})
\begin{equation}
\label{eq:localCLT}
\P(S_k=j)=\overline{p}(k,j) + O\left(1/k^{3/2}\right)
\end{equation}
if $k+j$ is even, and $\P(S_k=j)=0$ otherwise.
For us, it will mostly be sufficient to record that
$\P(S_k=j)\leq Ck^{-1/2}$ for some $C>0$ uniformly in $j$ and $k$.

However, in the boundary regime $\sigma_n\gg \sqrt{n}$, we will sometimes find ourselves in an
atypical regime for simple random walk, where the control provided
by~\eqref{eq:localCLT} is not good enough. In this case, we use the following
asymptotic expression due to Bene\v{s}~\cite[Theorem 1.3, first
case]{Be}. For $x\ll m$ such that $x+m$ is even,
\begin{equation}
\label{eq:localCLT2}
  \P(S_m = x) = \sqrt{\frac{2}{\pi m}}\exp\left(-\sum_{\ell = 1}^\infty \frac{1}{2\ell (2 \ell
    -1)} \frac{x^{2 \ell}}{ m^{2 \ell  -1}}\right) \left(1 +
    O\left(\frac{x^2}{m^2} + \frac{1}{m}\right) \right).
\end{equation}
Note that as it is remarked in~\cite{Be}, this expression can also be
obtained from~\cite[Theorem 6.1.6]{BoBo} by an explicit calculation of the
rate function.

\subsubsection{Remarks on notation} 
\label{sec:usualsetting}
We always let $\N=\{1,2,\ldots\}$, $\N_0=\N\cup\{0\}$.  Recall that for
real sequences $(a_n,n\in\N),(b_n,n\in\N)$, $a_n \ll b_n$ or $b_n \gg a_n$
means that $a_n/b_n \to 0$ as $n\to\infty$, and $a_n\sim b_n$ means
$a_n/b_n\to 1$.  Moreover, we write $a_n\lesssim b_n$ if $a_n\leq C b_n$
for some constant $C>0$ independent of $n$. Sometimes, we also use the 
Landau Big-O and Little-o notation, in a way that will be clear from the
context.

Given a random variable (or sequence) $U$ and an event $\mathcal{E}$, we
write $\mathcal{L}(U)$ and $\mathcal{L}(U|\mathcal{E})$ for the law of $U$
and the conditional law of $U$ given $\mathcal{E}$, respectively. The total
variation norm of a probability measure is denoted by
$\|\cdot\|_{\textup{TV}}$.

We now specify a (notational) framework in which we will often work.
\begin{mdframed}
{\bf The usual setting.} For each $n\in\N$, we let $Q_n^{\sigma_n}=(V(Q_n^{\sigma_n}),\rho_n,\dgr)$ be 
uniformly distributed over the set $\mathcal Q_n^{\sigma_n}$ of rooted
quadrangulations with $n$ internal faces and $2\sigma_n$ boundary edges. Given $Q_n^{\sigma_n}$, we choose
$\vd_n$ uniformly at random among the elements of $V(Q_n^{\sigma_n})$,
and then $(Q_n^{\sigma_n},\vd_n)$ is uniformly distributed over
$Q_{n,\sigma_n}^\bullet$ and corresponds through the Bouttier-Di
Francesco-Guitter bijection to a triplet $((\f_n,\la_n),\br_n)$ uniformly
distributed over the set $\Fo_{\sigma_n}^n \times \Br_{\sigma_n}$. We let
$(C_n,L_n)$ be the contour pair corresponding to $(\f_n,\la_n)$ and write
$$
\La_n = \left(L_n(t) + \br_n(-\uC_n(t)), 0\leq t\leq 2n+\sigma_n\right)
$$
for the label function associated to $((\f_n,\la_n),\br_n)$.

The random triplet $((\f_\infty,\la_\infty),\br_\infty)$ represents a
uniformly labeled critical infinite forest and an independent uniform
infinite bridge and encodes the $\UIHPQ$
$Q_\infty^\infty=(V(Q_\infty^{\infty}),\rho,\dgr)$. We write
$(C_\infty,L_\infty)$ for the corresponding contour pair and $\La_\infty$
for the label function.

While $B_r(Q_n^{\sigma_n})$ denotes the closed ball of radius $r$ around
the root $\rho_n$ in $Q_n^{\sigma_n}$, we will also consider the ball
$B_r^{(0)}(Q_n^{\sigma_n})$ around the vertex $\f_n(0)=(0)$, and similarly
for the $\UIHPQ$.
\end{mdframed}

\section{Auxiliary results}
\label{sec:auxiliaryresults}
In this part we collect general results and observations which will be
useful later on. Our statements on Galton-Watson trees might be of some
interest on its own.
\subsection{Convergence of forests}
The first two lemmas in this section provide the necessary control over the
trees of a forest $\f_n$ chosen uniformly at random in $\Fo^{\sigma_n}_n$
in the regime $\sigma_n\ll\sqrt{n}$.
\begin{lemma}
\label{lem:GW1}
  Assume $\sigma_n\ll\sqrt{n}$. Denote by $(\tau_i)_{1\leq i\leq
    \sigma_n}$ a family of $\sigma_n$ independent critical geometric Galton-Watson trees. Then
$$
\liminf_{\delta\downarrow
  0}\liminf_{n\rightarrow\infty}\P\left(\exists !\,
  i\in\{1,\ldots,\sigma_n\}\hbox{ with }|\tau_i|\geq (1-\delta) n
  \Big| \sum_{i=1}^{\sigma_n}|\tau_i|=n\right) = 1.
$$
\end{lemma}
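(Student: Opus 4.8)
The plan is to reduce the statement to a local-limit-theorem estimate for the tail of a critical geometric Galton--Watson tree conditioned on the total size of the forest. Recall that a critical geometric GW tree $\tau$ has $\P(|\tau| = k)$ behaving like $c\, k^{-3/2}$ for large $k$ (this follows from Kemperman's formula~\eqref{eq:Kemperman} applied to the offspring random walk, together with the local CLT~\eqref{eq:localCLT}, since $|\tau|$ has the law of the first hitting time of $-1$). Writing $N_i = |\tau_i|$, the $N_i$ are i.i.d.\ with this heavy tail, and we condition on $\{\sum_{i=1}^{\sigma_n} N_i = n\}$ in the regime $\sigma_n \ll \sqrt{n}$. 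Since the tail exponent is $3/2 \in (1,2)$, the sum of $\sigma_n$ such variables is, unconditionally, dominated by its largest term as soon as $\sigma_n = o(\sqrt n)$ forces $n$ to be far in the large-deviation regime of a single summand; conditioning on the sum being exactly $n$ should then pin essentially all the mass of $n$ onto one tree. This is the classical ``condensation''/``one big jump'' phenomenon for subexponential (here, regularly varying) summands.

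\textbf{Key steps.} First I would establish the sharp tail asymptotics $\P(|\tau| = k) \sim c\, k^{-3/2}$ and the renewal-type estimate $\P(N_1 + \dots + N_{\sigma_n} = m)$: for $m$ of order $n$ with $\sigma_n \ll \sqrt n$ this probability is asymptotically $\sigma_n \, \P(|\tau| = m)$ up to the contribution of the ``bulk'' of the other $\sigma_n - 1$ trees, i.e.\ $\P\big(\sum_{i=1}^{\sigma_n} N_i = n\big) \sim \sigma_n \, \P(|\tau| = n)$ — this is a standard single-big-jump estimate, and a convenient route is to use the explicit count $|\Fo_\sigma^n| = 3^n \frac{\sigma}{2n+\sigma}\binom{2n+\sigma}{n}$ already recorded in the excerpt together with~\eqref{eq:forest-hittingtime} and Kemperman's formula. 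Second, fix $\delta > 0$ and estimate $\P\big(\exists!\, i: N_i \ge (1-\delta)n \,\big|\, \sum N_i = n\big)$ from below: by a union bound and exchangeability this is $\sigma_n \, \P(N_1 \ge (1-\delta)n, \; \sum_{i\ge 2} N_i = n - N_1)/\P(\sum N_i = n)$ minus the (negligible) probability that two trees are large; summing over the value of $N_1 \in [(1-\delta)n, n]$ and using that $\sum_{i \ge 2} N_i$ then has to equal something in $[0, \delta n]$, whose probability is bounded below by a constant (it is a sum of $\sigma_n - 1$ i.i.d.\ terms with mean... — actually critical GW trees have infinite mean size, so here I would instead observe that $\P(\sum_{i\ge 2} N_i \le \delta n)$ is bounded below, e.g.\ by $\P(\text{all } N_i = 1)^{\dots}$ is too small, so better: $\P(\sum_{i \ge 2} N_i \in [0,\delta n])$ is at least $\P(\sum_{i\ge2} N_i \le \delta n) \ge 1 - \sigma_n \E[N_1 \wedge (\delta n / \sigma_n)]/(\delta n/\sigma_n)$-type truncation bound, which tends to $1$ since $\sigma_n \ll \sqrt n$ makes the truncated mean $O(\sqrt{\delta n/\sigma_n}) \cdot \sigma_n / (\delta n) \to 0$). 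Third, show the complementary events are negligible: $\P(N_i < (1-\delta)n \text{ for all } i \,|\, \sum N_i = n)$ and $\P(\text{at least two } N_i \ge (1-\delta)n \,|\, \sum N_i = n)$; the latter is $O(\sigma_n^2 \P(|\tau|\ge(1-\delta)n/2)^2 / \P(\sum N_i = n)) = O(\sigma_n \, n^{-3/2} \cdot n^{3/2}/\sigma_n) \cdot \sigma_n n^{-3/2}\to 0$, and the former is handled by noting that without a big tree the sum of $\sigma_n$ truncated-at-$(1-\delta)n$ variables is too small to reach $n$, again a large-deviation/truncation estimate exploiting $\sigma_n \ll \sqrt n$ and the regularly-varying tail.

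\textbf{Main obstacle.} The delicate point is the precise asymptotic $\P\big(\sum_{i=1}^{\sigma_n} N_i = n\big) \sim \sigma_n\, \P(|\tau| = n)$ uniformly over the relevant range of $\sigma_n = o(\sqrt n)$: one must show both that the single-big-jump term dominates and that the residual sum of $\sigma_n - 1$ trees, constrained to be $o(n)$, contributes a factor tending to $1$. Because individual tree sizes have a tail index $3/2 < 2$ the sum $N_1 + \dots + N_{\sigma_n}$ is itself in the domain of attraction of a stable law, and $\{\sum N_i = n\}$ with $n \gg \sigma_n^2$ is a large-deviation event whose asymptotics are governed exactly by the ``one big jump'' principle (Doney / Nagaev-type estimates); making this rigorous with the local (lattice) version and the parity constraint $N_i \ge 1$ is the technical heart. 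I would either invoke a lattice single-big-jump theorem directly or, preferring self-containedness, derive everything from Kemperman's formula~\eqref{eq:Kemperman} and the two local CLTs~\eqref{eq:localCLT} and~\eqref{eq:localCLT2}, which together give exact enough control on $\P(T_{-\sigma_n} = 2n + \sigma_n)$ and on the conditional law of the individual hitting-time increments.
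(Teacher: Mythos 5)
Your approach is correct in outline but genuinely different from the paper's. The paper establishes a functional limit: using the cyclic-shift / Vervaat representation of the conditioned walk from Bertoin--Chaumont--Pitman, it shows that the rescaled contour function $(S_{(2n+\sigma_n)t}/\sqrt{2n})_{0\le t\le 1}$ converges to the normalized Brownian excursion $\mathbbm{e}$, and then reads off the statement from the fact that the tree sizes $|\tau_i|$ are the excursion lengths of $S$ above its running infimum. Your proposal instead works at the level of local (point-mass) estimates and the one-big-jump / condensation mechanism: $\P(|\tau|=k)\sim c\,k^{-3/2}$, $\P(\sum_i |\tau_i|=n)\sim\sigma_n\,\P(|\tau|=n)$ via Kemperman's formula and the local CLT, and then a union bound over which tree carries the mass. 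Both routes are valid. The paper's gives a stronger by-product (a functional scaling limit for the contour) and a slick derivation, at the cost of importing the Vervaat transform; yours is more elementary and, notably, is exactly the technology the paper itself deploys to prove the companion Lemma~\ref{lem:GW2} (where one computes $p(\sigma_n,n)=\P(\sum_i|\tau_i|=n)$ explicitly and compares local CLT densities), so a student preferring a unified local-estimate treatment of both lemmas could do so along your lines.

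Two small simplifications to your write-up. First, for $\delta<1/2$ the events $\{|\tau_i|\ge(1-\delta)n\}$ are automatically pairwise incompatible on $\{\sum|\tau_i|=n\}$, so uniqueness is free and the ``at least two large'' estimate is unnecessary; by exchangeability the target probability is exactly $\sigma_n\,\P\big(|\tau_1|\ge(1-\delta)n\,\big|\,\sum|\tau_i|=n\big)$. Second, you do not need a subtle truncation inequality to control $\P(\sum_{i\ge2}|\tau_i|\le\delta n)$: via Kemperman's formula $\sum_{i\ge2}|\tau_i|$ is (up to a shift) a first passage time $T_{-(\sigma_n-1)}$ of a simple random walk, which by stable scaling is of order $\sigma_n^2\ll n$, so the probability tends to $1$ for every fixed $\delta>0$ directly. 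The genuine technical heart you correctly flag — the uniform single-big-jump asymptotic — does go through in the regime $\sigma_n\ll\sqrt n$: $\P(\sum_i|\tau_i|=n)=\frac{\sigma_n}{2n+\sigma_n}\P(S_{2n+\sigma_n}=-\sigma_n)$ by Kemperman, and the local CLT~\eqref{eq:localCLT} makes $\P(S_{2n+\sigma_n}=-\sigma_n)\sim\P(S_{2n+1}=-1)$ precisely because $\sigma_n^2/n\to 0$.
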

\begin{proof}
  We use the contour function representation of the forest
  $\f_n$ as a simple random walk, conditioned on first hitting
  $-\sigma_n$ at time $2n+\sigma_n$ (and interpolated linearly between
  integer times). We let $(S_i,0\leq i\leq 2n+\sigma_n)$ denote such a
  conditioned random walk. Under our assumptions, it holds that
  \begin{equation}
    \label{eq:1}
    \left(\frac{S_{(2n+\sigma_n)
          t}}{\sqrt{2n}}\right)_{0\leq t\leq 1}\xrightarrow[n\to\infty]{(d)}\mathbbm{e}\, ,
\end{equation}
in distribution in $\mathcal{C}([0,1],\R)$, where $\mathbbm{e}$ is the
normalized Brownian excursion. This ``folklore'' result is implicit in
\cite{Be2}, so we recall quickly how to obtain it, omitting some
details. First, by~\cite{BeChPi}, one can represent the conditioned
random walk as a cyclic shift of a simple random walk that is
conditioned to hit $-\sigma_n$ at time $2n+\sigma_n$, but not
necessarily for the first time. More precisely, calling $S'$ this new
random walk, we let $\nu_n$ be a uniform random variable in
$\{0,1,\ldots,\sigma_n-1\}$, and we let
$$A_n=\inf\left\{i\geq 0: S'_i=\min\{ S'_j: 0\leq j \leq
2n+\sigma_n\}+\nu_n\right\}\, .$$
Then, the sequence $(S''_i,0\leq i\leq 2n+\sigma_n)$ defined by 
$$S''_i=\left\{\begin{array}{lll}S'_{A_n+i}-S'_{A_n} & \mbox{ if }&
    0\leq i\leq 2n+\sigma_n-A_n\\
-\sigma_n+S'_{i-2n-\sigma_n+A_n} -S'_{A_n}& \mbox{ if }&2n+\sigma_n-A_n<i\leq 2n+\sigma_n
  \end{array}\right.
  $$
  has same distribution as $(S_i,0\leq i\leq 2n+\sigma_n)$. Now it is
  classical that under the assumption that $\sigma_n=o(\sqrt{n})$,
$$\left(\frac{S'_{(2n+\sigma_n)t}}{\sqrt{2n}}\right)_{0\leq t\leq 1}\xrightarrow[n\to\infty]{(d)}\mathbbm{b}\, ,$$
where $\mathbbm{b}$ is a standard Brownian bridge. From this, one
deduces that 
$$\left(\frac{S_{(2n+\sigma_n)t}}{\sqrt{2n}}\right)_{0\leq t\leq
  1}=_d\left(\frac{S''_{(2n+\sigma_n)t}}{\sqrt{2n}}\right)_{0\leq
t\leq 1}\xrightarrow[n\to\infty]{(d)}V\mathbbm{b}\, ,$$ where
$V\mathbbm{b}=(\mathbbm{b}_{s+s_*}-\mathbbm{b}_{s_*},0\leq s\leq 1)$ is
the Vervaat transform of $\mathbbm{b}$, that is the cyclic shift of
$\mathbbm{b}$ at the a.s.\ unique time $s_*$ where it attains its
overall minimum. Here, the bridge $\mathbbm{b}$ is extended
periodically by $\mathbbm{b}_{s+1}=\mathbbm{b}_{s}$ for $s\in
[0,1]$. Finally, we use the well-known fact that $\mathbbm{e}$ and
$V\mathbbm{b}$ have the same distribution.

Now notice that the quantities $\tau_1,\ldots,\tau_{\sigma_n}$ are
equal to (half) the lengths of the excursions of $S$ above its infimum
process, in the order in which they appear. Hence, the convergence
\eqref{eq:1} clearly implies that the largest of these quantities
satisfies
$$\frac{\max\{\tau_i: 1\leq
  i\leq\sigma_n\}}{n}\longrightarrow 1$$
in probability as $n\to\infty$, while all the other quantities are
negligible compared to $n$ in probability. 
\end{proof}

\begin{lemma}
\label{lem:GW2}
  Assume $\sigma_n\ll\sqrt{n}$. Denote by $(\tau_i)_{1\leq i\leq
    \sigma_n}$ a family of $\sigma_n$ independent critical geometric Galton-Watson trees. Write $i_\ast$ for
  the smallest index such that $|\tau_{i_\ast}|\geq \max_{1\leq i\leq
    \sigma_n, i\neq i_\ast}|\tau_j|$. Then
$$
\lim_{n\rightarrow\infty}\left\|\mathcal{L}\Big((\tau_i)_{1\leq i\leq
    \sigma_n, i\neq i_\ast}\,\Big|\,\sum_{i=1}^{\sigma_n}|\tau_i|=n\Big) -\mathcal{L}\left((\tau_i)_{1\leq i\leq
    \sigma_n-1}\right)\right\|_{\textup{TV}}=0.
$$
\end{lemma}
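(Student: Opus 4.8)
The plan is to compare the conditioned law of the ``small trees'' with the unconditioned law of $\sigma_n-1$ independent critical geometric Galton-Watson trees by a careful accounting of the combinatorial weights, leveraging Lemma~\ref{lem:GW1} to ensure that exactly one tree carries almost all of the mass. First I would recall from Section~\ref{sec:forests} that conditioning $\sigma_n$ independent critical geometric $\textup{GW}$ trees to have total size $n$ is the same as picking a uniform element of $\Fo_{\sigma_n}^n$ (forgetting labels), and that by~\eqref{eq:criticalGW} the weight of a finite tree $\tau$ under $\P_{\textup{GW}}$ is $(1/2)4^{-|\tau|}$, so the weight of a forest $(\tau_0,\ldots,\tau_{\sigma_n-1})$ is $2^{-\sigma_n}4^{-n}$, i.e.\ uniform on forests with prescribed total size $n$. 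The key identity to exploit is~\eqref{eq:forest-hittingtime}, which expresses the law of the tree sizes via first-hitting times of simple random walk, together with Kemperman's formula~\eqref{eq:Kemperman}: for a vector $(k_0,\ldots,k_{\sigma_n-1})$ of positive integers summing to $n$,
$$
\P\bigl(|\tau_i|=k_i,\ 0\leq i\leq \sigma_n-1\ \bigm|\ \textstyle\sum |\tau_i|=n\bigr)
= \frac{\P_{\textup{GW}}^{\otimes\sigma_n}\bigl(|\tau_i|=k_i\bigr)}{\P\bigl(\sum|\tau_i|=n\bigr)}\, ,
$$
and the denominator can be computed (via $|\Fo_{\sigma_n}^n|$ as in Section~\ref{sec:forests}, or equivalently via Kemperman) as $\frac{\sigma_n}{2n+\sigma_n}\binom{2n+\sigma_n}{n}(3/4)^{-n}2^{-\sigma_n}$ up to the obvious adjustments.

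Next I would set up the coupling/total-variation estimate explicitly. Fix $\delta>0$ small. On the event $G_\delta$ that there is a unique index $i_\ast$ with $|\tau_{i_\ast}|\geq(1-\delta)n$ — which by Lemma~\ref{lem:GW1} has conditional probability $\to 1$ as first $n\to\infty$ then $\delta\downarrow 0$ — the complementary collection $(\tau_i)_{i\neq i_\ast}$ is a forest of $\sigma_n-1$ trees with total size at most $\delta n$, hence each of size at most $\delta n$. Its law is obtained by: (i) choosing $i_\ast$; (ii) choosing the sizes; (iii) choosing the trees given the sizes, which by the product structure of $\P_{\textup{GW}}^{\otimes\sigma_n}$ is just a product of $\P_{\textup{GW}}(\cdot\,|\,|\tau|=k_i)$ over the small trees and $\P_{\textup{GW}}(\cdot\,|\,|\tau|=k_{i_\ast})$ for the big one — and step (iii) for the small trees is exactly the same conditional law as under $\P_{\textup{GW}}^{\otimes(\sigma_n-1)}$. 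So the whole discrepancy is concentrated in the law of the \emph{size vector} $(|\tau_i|)_{i\neq i_\ast}$. I would therefore reduce the problem to showing
$$
\Bigl\|\mathcal{L}\bigl((|\tau_i|)_{i\neq i_\ast}\ \bigm|\ \textstyle\sum|\tau_i|=n\bigr)
- \mathcal{L}\bigl((|\tau_i|)_{1\leq i\leq \sigma_n-1}\bigr)\Bigr\|_{\textup{TV}}\longrightarrow 0 .
$$
For a fixed size vector $(k_1,\ldots,k_{\sigma_n-1})$ of positive integers with $m:=\sum k_i\leq \delta n$, the conditioned probability equals (sum over the choice of $i_\ast\in\{1,\ldots,\sigma_n\}$, i.e.\ $\sigma_n$ choices, times the probability that the big tree has size $n-m$)
$$
\sigma_n\cdot \P_{\textup{GW}}\bigl(|\tau|=n-m\bigr)\cdot \frac{\prod_{i}\P_{\textup{GW}}(|\tau_i|=k_i)}{\P(\sum_{i=1}^{\sigma_n}|\tau_i|=n)}\, ,
$$
and the point is that the ratio $\sigma_n\,\P_{\textup{GW}}(|\tau|=n-m)/\P(\sum_{i=1}^{\sigma_n}|\tau_i|=n)$ converges to $1$ uniformly in $m\leq \delta n$ once $\delta$ is small. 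Indeed $\P_{\textup{GW}}(|\tau|=\ell)\sim c\,\ell^{-3/2}$ (the standard asymptotic for the critical geometric offspring law, which follows from~\eqref{eq:Kemperman} and~\eqref{eq:localCLT}), so $\sigma_n\P_{\textup{GW}}(|\tau|=n-m) = \sigma_n\P_{\textup{GW}}(|\tau|=n)(1+o(1))$ uniformly in $m\leq\delta n$, while $\P(\sum_{i=1}^{\sigma_n}|\tau_i|=n)$ is, again by~\eqref{eq:forest-hittingtime}--\eqref{eq:Kemperman}, asymptotically $\sigma_n\P_{\textup{GW}}(|\tau|=n)$ up to a factor $\to 1$ when $\sigma_n\ll\sqrt n$ (this is precisely the statement that a single tree dominates). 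Summing over size vectors with $\sum k_i\leq\delta n$ then gives that the conditioned law of $(|\tau_i|)_{i\neq i_\ast}$, restricted to the event $\{\sum_{i\neq i_\ast}|\tau_i|\leq\delta n\}$, is within $o(1)+O(\delta)$ in total variation of the unconditioned law of $(|\tau_i|)_{1\leq i\leq\sigma_n-1}$ restricted to the same event; and both the conditioned and unconditioned laws assign mass $1-o(1)-O(\delta)$ to that event (the conditioned one by Lemma~\ref{lem:GW1}, the unconditioned one because $\sum_{i=1}^{\sigma_n-1}|\tau_i|$ has mean $\sigma_n-1 \ll \sqrt n\ll n$ — here one needs a tail bound, e.g.\ via the $3/2$-stable behaviour of GW total progeny, but a crude first-moment or truncation argument suffices since $\sigma_n=o(\sqrt n)$). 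Letting $n\to\infty$ and then $\delta\downarrow 0$ yields the claim.

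The main obstacle I anticipate is the uniformity in $m$ of the ratio estimate for $\sigma_n \P_{\textup{GW}}(|\tau|=n-m)/\P(\sum_{i=1}^{\sigma_n}|\tau_i|=n)$ — one must control $\P_{\textup{GW}}(|\tau|=\ell)$ and the convolution $\P(\sum_{i=1}^{\sigma_n}|\tau_i|=n)$ with enough precision simultaneously for all $\ell\in[(1-\delta)n,n]$, and check that the correction from the $O(k^{-3/2})$ error term in the local CLT~\eqref{eq:localCLT} (after passing through Kemperman) does not accumulate when summed over the $O(\sigma_n^{m}/\ldots)$ many size vectors. This is where the assumption $\sigma_n\ll\sqrt n$ is essential: it forces $\sigma_n/\sqrt n\to 0$ so that the ``bulk'' of the forest is a single giant tree and the $\sigma_n-1$ remaining trees are, in aggregate, a negligible perturbation, which is exactly what makes the conditioning asymptotically harmless. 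A clean way to organize this, and the route I would actually take, is to write everything through the random-walk contour picture of~\eqref{eq:1}: conditionally on the excursion decomposition of the conditioned walk $S$ above its running minimum, the non-maximal excursion lengths are precisely $2|\tau_i|$ for $i\neq i_\ast$, and one compares the joint law of these lengths to that of $\sigma_n-1$ independent (unconditioned, critical) excursion lengths, using that removing one macroscopic excursion from a near-excursion-type walk changes the law of the remainder only by an $o(1)$ amount in total variation — this is in the same spirit as (and can be deduced along the lines of) the invariance arguments already used in the proof of Lemma~\ref{lem:GW1}.
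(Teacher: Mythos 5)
Your approach is essentially the same as the paper's: reduce via Lemma~\ref{lem:GW1} to the event of a unique macroscopic tree, use exchangeability and the product structure to reduce the comparison to the \emph{size vector} of the non-maximal trees, and control the resulting Radon--Nikodym factor $\sigma_n\,\P_{\textup{GW}}(|\tau|=n-m)/\P(\sum_i|\tau_i|=n)$ via Kemperman's formula~\eqref{eq:Kemperman} and the local CLT~\eqref{eq:localCLT}. The organization differs in one respect: you work throughout on the event $\{\sum_{i\neq i_\ast}|\tau_i|\leq\delta n\}$ and absorb the resulting $(1-\delta)^{-3/2}$-type correction as an $O(\delta)$ term that is killed only in the final $\delta\downarrow 0$ limit, whereas the paper splits at the natural scale $\{m\leq K\sigma_n^2\}$ (where the ratio tends to $1$ uniformly, since $\sigma_n^2\ll n$) and controls the complement by hitting-time tail estimates, obtaining $\E[|Z_n-1|]\to 0$ for each fixed $\delta$. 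Both routes are valid.

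One claim you make is incorrect, though fortunately it is not load-bearing once you correct the justification: you assert that $\sum_{i=1}^{\sigma_n-1}|\tau_i|$ ``has mean $\sigma_n-1$.'' In fact the total progeny of a \emph{critical} Galton--Watson tree has infinite mean (the tail is $\P(|\tau|\geq k)\asymp k^{-1/2}$, consistent with the $\P(|\tau|=\ell)\sim c\ell^{-3/2}$ asymptotic you cite), so a first-moment bound cannot work. The correct reason the unconditioned sum is $\ll n$ with high probability is exactly the one the paper uses: the sum has the law of $(T_{-(\sigma_n-1)}-(\sigma_n-1))/2$ for a simple random walk, which is of order $\sigma_n^2$ by $1/2$-stable scaling (Kemperman plus~\eqref{eq:localCLT}), and $\sigma_n^2\ll n$. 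Your parenthetical about a ``truncation argument'' points in the right direction, but the finite-mean heuristic preceding it should be discarded.
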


\begin{proof}
  For $\delta>0$, and $F$ a bounded and
  measurable function,
\begin{align*}
\lefteqn{\E\left[F((\tau_i)_{i\neq i_\ast})\,\Big|\,\sum_{i=1}^{\sigma_n}|\tau_i|=n\right]}\\
&=\E\left[F((\tau_i)_{i\neq i_\ast})\1_{\{|\tau_{i_\ast}|\geq \delta
    n\}\cap\{|\tau_{i_\ast}|>|\tau_i|\mbox{\small{ for all }}i\neq i_\ast\}}\,\Big|\,\sum_{i=1}^{\sigma_n}|\tau_i|=n\right] +
\|F\|_\infty R_n^{(\delta)},
\end{align*}
where by Lemma~\ref{lem:GW1}, the error term $R_n^{(\delta)}$ satisfies
$\limsup_{\delta\downarrow
  0}\limsup_{n\rightarrow\infty}R_n^{(\delta)}=0$. Therefore it remains to
consider the expectation in the last display for small but fixed
$\delta$. Put $p(k,m)= \P(\sum_{i=1}^{k}|\tau_i|=m)$, and write $\tau$
instead of $\tau_\sigma$. Using exchangeability of the
trees, the expectation becomes
\begin{align*}
\lefteqn{\frac{\sigma_n}{p(\sigma_n,n)}\E\left[F((\tau_i)_{1\leq i\leq
        \sigma_n-1})\1_{\{|\tau_{\sigma_n}|>\max_{1\leq i\leq
        \sigma_n-1}|\tau_i|\}\cap\{|\tau_{\sigma_n}|\geq \delta
      n\}\cap\{\sum_{i=1}^{\sigma_n}|\tau_i|=n\}}\right] = } \\
&\E\Bigg[F((\tau_i)_{1\leq i\leq
        \sigma_n-1})\underbrace{\frac{\sigma_n}{p(\sigma_n,n)}\P\Big(|\tau_{\sigma_n}|>\max_{1\leq i\leq
        \sigma_n-1}|\tau_i|; |\tau_{\sigma_n}|\geq \delta
      n; \sum_{i=1}^{\sigma_n}|\tau_i|=n\,\Big|\,(\tau_i)_{1\leq
        i\leq \sigma_n-1}\Big)}_{=Z_n}\Bigg].
\end{align*}
In order to conclude, it suffices to show that
\begin{equation}
\label{eq:GW2-1}
\limsup_{n\rightarrow\infty}\E\left[\left|Z_n-1\right|\right]=0.
\end{equation}
Let $K\in\N$. We split into
\begin{equation}
\label{eq:GW2-2}
\E[|Z_n-1|]=\E\left[|Z_n-1|\1_{\{\sum_{i=1}^{\sigma_n-1}|\tau_i|\leq K\sigma_n^2\}}\right] +\E\left[|Z_n-1|\1_{\{\sum_{i=1}^{\sigma_n-1}|\tau_i|> K\sigma_n^2\}}\right].
\end{equation}
We first show that
\begin{equation}
\label{eq:GW2-3}
\limsup_{K\rightarrow\infty}\limsup_{n\rightarrow\infty}\E\left[|Z_n-1|\1_{\{\sum_{i=1}^{\sigma_n-1}|\tau_i|> K\sigma_n^2\}}\right]=0.
\end{equation}
We estimate
$$
\E\left[|Z_n-1|\1_{\{\sum_{i=1}^{\sigma_n-1}|\tau_i|>
    K\sigma_n^2\}}\right]\leq
\E\left[Z_n\1_{\{\sum_{i=1}^{\sigma_n-1}|\tau_i|>
    K\sigma_n^2\}}\right]+\P\left(\sum_{i=1}^{\sigma_n-1}|\tau_i|>
  K\sigma_n^2\right).
$$
Recall that the last term is equal to $\P(T_{-\sigma_n+1}>2K\sigma_n^2+\sigma_n-1)$,
where $T_{k}$ is as above the first hitting time of $k$ of a simple random
walk $(S_i,i\in\N_0)$ started at zero. Standard random walk estimates (e.g., Kemperman's
formula~\eqref{eq:Kemperman} together with~\eqref{eq:localCLT}) entail that
$$
\limsup_{K\rightarrow\infty}\limsup_{n\rightarrow\infty}\P\left(T_{-\sigma_n+1}>
  2K\sigma_n^2+\sigma_n-1\right)=0.
$$
The first term on the right hand side in the next to
last display is estimated by
\begin{align*}
\E\left[Z_n\1_{\{\sum_{i=1}^{\sigma_n-1}|\tau_i|> K\sigma_n^2\}}\right]&\leq\frac{\sigma_n}{p(\sigma_n,n)}
\P\left(\sum_{i=1}^{\sigma_n-1}|\tau_i|>
  K\sigma_n^2;\,\sum_{i=1}^{\sigma_n}|\tau_i|=n;\,|\tau_{\sigma_n}|\geq \delta
  n\right)\\
&\leq \frac{\sigma_n}{p(\sigma_n,n)}\sum_{m=
  K\sigma_n^2}^{n-\lceil\delta n\rceil}p(\sigma_n-1,m)p(1,n-m)\\
&\lesssim n^{3/2}\sum_{m=
  K\sigma_n^2}^{n-\lceil\delta
  n\rceil}\frac{\sigma_n}{(2m+\sigma_n-1)^{3/2}}\frac{1}{(2(n-m)+1)^{3/2}}\\
&\lesssim \sigma_n\sum_{m=
  K\sigma_n^2}^{\lceil n/2\rceil}\frac{1}{m^{3/2}} +  \sigma_n
\sum_{m=\lceil n/2\rceil+1}^{n-\lceil\delta n\rceil}\frac{1}{(n-m)^{3/2}}\,\lesssim\,
\frac{1}{K^{1/2}}+ \frac{\sigma_n}{\sqrt{\delta n}}.
\end{align*}
Recalling that $\sigma_n\ll \sqrt{n}$, this finishes the proof
of~\eqref{eq:GW2-3}. We turn to the first term on the right hand side
of~\eqref{eq:GW2-2}. First note that on the event 
$$
\left\{\sum_{i=1}^{\sigma_n-1}|\tau_i|\leq K\sigma_n^2;\,\sum_{i=1}^{\sigma_n}|\tau_i|=n\right\},
$$
we have $|\tau_{\sigma_n}|\geq \delta n$ and $|\tau_{\sigma_n}|>\max_{1\leq i\leq
        \sigma_n-1}|\tau_i|$ almost surely, provided $n$ is large
      enough. Therefore,
\begin{align*}
  \lefteqn{\E\left[|Z_n-1|\1_{\{\sum_{i=1}^{\sigma_n-1}|\tau_i|\leq
        K\sigma_n^2\}}\right]}\\
  &=
  \E\left[\left|\frac{\sigma_n}{p(\sigma_n,n)}\P\Big(|\tau_{\sigma_n}|=n-\sum_{i=1}^{\sigma_n-1}|\tau_i|\,\Big|\,(\tau_i)_{1\leq
      i\leq
      \sigma_n-1}\Big)-1\right|\1_{\{\sum_{i=1}^{\sigma_n-1}|\tau_i|\leq
      K\sigma_n^2\}}\right]\\
  &=\sum_{m=0}^{K\sigma_n^2}\left|\frac{\sigma_np(1,n-m)}{p(\sigma_n,n)}-1\right|p(\sigma_n-1,m).
\end{align*}
We now show that the terms inside the absolute value in the last display
are of order $o(1)$ as $n$ tends to infinity, uniformly in $m$ with $m\leq
K\sigma_n^2$. First, by Kemperman's formula~\eqref{eq:Kemperman},
$$
\frac{\sigma_np(1,n-m)}{p(\sigma_n,n)}=
\frac{2n+\sigma_n}{(2(n-m)+1)}\frac{\P\left(S_{2(n-m)+1}=1\right)}{\P\left(S_{2n+\sigma_n}=\sigma_n\right)}.
$$
Since $\sigma_n^2\ll n$, we have $2n+\sigma_n/(2(n-m)+1)\sim 1$. For the
fraction of the two probabilities involving simple random walk, we apply
the local central limit theorem~\eqref{eq:localCLT} and obtain  
$$
\limsup_{n\rightarrow\infty}\sup_{m\leq K\sigma_n^2}\left|\frac{\P\left(S_{2(n-m)+1}=1\right)}{\P\left(S_{2n+\sigma_n}=\sigma_n\right)}-1\right|=0.
$$
This shows
$$
\limsup_{n\rightarrow\infty}\E\left[|Z_n-1|\1_{\{\sum_{i=1}^{\sigma_n-1}|\tau_i|\leq
    K\sigma_n^2\}}\right]=0.
$$
With~\eqref{eq:GW2-3}, we have proved that~\eqref{eq:GW2-1}
holds, completing thereby the proof of the lemma.
\end{proof}
The next statement will prove useful for the regimes
$1\ll\sigma_n\ll\sqrt{n}$ and $\sigma_n\sim \sigma\sqrt{2n}$,
$\sigma\in(0,\infty)$, as well as for the local convergence of
$Q_n^{\sigma_n}$ towards the $\UIHPQ$ when $1\ll\sigma_n\ll n$.
We stress that if $\sigma_n\ll \sqrt{n}$, the following lemma is already a
corollary of Lemmas~\ref{lem:GW1} and~\ref{lem:GW2}.
\begin{lemma}
\label{lem:GW3}
  Assume $1\ll\sigma_n\ll n$. Denote by $(\tau_i)_{1\leq i\leq
    \sigma_n}$ a family of $\sigma_n$ independent critical geometric Galton-Watson trees. If $k_n$ is a
  sequence of positive integers with $k_n\leq \sigma_n$ and
  $k_n=o(\sigma_n\wedge (n/\sigma_n))$ as $n\rightarrow\infty$, then
$$
\limsup_{n\rightarrow\infty}\left\|\mathcal{L}\Big((\tau_i)_{1\leq i\leq
    k_n}\,\Big|\,\sum_{i=1}^{\sigma_n}|\tau_i|=n\Big)
  -\mathcal{L}\left((\tau_i)_{1\leq i\leq
      k_n}\right)\right\|_{\textup{TV}}=0.
$$
\end{lemma}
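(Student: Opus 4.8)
The plan is to identify the conditional law of $(\tau_i)_{1\le i\le k_n}$ explicitly and to show that its Radon--Nikodym density with respect to the unconditioned law tends to $1$ in $L^1$. Write $p(k,m)=\P\big(\sum_{i=1}^{k}|\tau_i|=m\big)$ and $\Sigma_{k_n}=\sum_{i=1}^{k_n}|\tau_i|$. Since the trees are i.i.d., $\Sigma_{k_n}$ is independent of $\sum_{i=k_n+1}^{\sigma_n}|\tau_i|$, which has law $p(\sigma_n-k_n,\cdot)$; hence, on the event $\{\sum_{i=1}^{\sigma_n}|\tau_i|=n\}$, the law of $(\tau_i)_{1\le i\le k_n}$ is absolutely continuous with respect to its unconditioned law, with density
$$
D_n=\frac{p(\sigma_n-k_n,\,n-\Sigma_{k_n})}{p(\sigma_n,n)},
$$
and the total variation distance in the statement equals $\tfrac12\E[|D_n-1|]$. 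I would fix $K>0$ and split
$$
\E[|D_n-1|]=\E\big[|D_n-1|\,\1_{\{\Sigma_{k_n}\le Kk_n^2\}}\big]+\E\big[|D_n-1|\,\1_{\{\Sigma_{k_n}> Kk_n^2\}}\big],
$$
the scale $k_n^2$ being the natural order of $\Sigma_{k_n}$, since $|\tau_1|$ has a heavy tail, $\P(|\tau_1|>j)\sim c\,j^{-1/2}$, by Kemperman's formula~\eqref{eq:Kemperman} and~\eqref{eq:localCLT}.

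The core estimate, which controls the first summand, is that for every fixed $K$,
$$
\eta_n(K):=\sup_{0\le m\le Kk_n^2}\left|\frac{p(\sigma_n-k_n,\,n-m)}{p(\sigma_n,n)}-1\right|\xrightarrow[n\to\infty]{}0.
$$
By~\eqref{eq:forest-hittingtime} and Kemperman's formula, $p(k,m)=\frac{k}{2m+k}\,\P(S_{2m+k}=-k)$, so this ratio is an explicit product of polynomial prefactors and a ratio of simple random walk local probabilities. The hypotheses $k_n\le\sigma_n$ and $k_n=o(\sigma_n\wedge(n/\sigma_n))$ give $k_n=o(\sigma_n)$, $k_n\sigma_n=o(n)$ and $k_n^2=o(n)$; hence for $m\le Kk_n^2$ one has $n-m\sim n$ and all prefactors tend to $1$ uniformly. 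The delicate point is the ratio $\P\big(S_{2(n-m)+\sigma_n-k_n}=-(\sigma_n-k_n)\big)/\P\big(S_{2n+\sigma_n}=-\sigma_n\big)$: when $\sigma_n\gg\sqrt n$ the Gaussian factor is exponentially small and the additive error in~\eqref{eq:localCLT} is too crude, so one must invoke the sharp expansion~\eqref{eq:localCLT2} of Bene\v{s} (applicable since $\sigma_n\ll n$, with multiplicative error $1+O(\sigma_n^2/n^2+1/n)=1+o(1)$). It then remains to check that
$$
\sum_{\ell\ge 1}\frac{1}{2\ell(2\ell-1)}\left(\frac{\sigma_n^{2\ell}}{(2n+\sigma_n)^{2\ell-1}}-\frac{(\sigma_n-k_n)^{2\ell}}{(2(n-m)+\sigma_n-k_n)^{2\ell-1}}\right)\xrightarrow[n\to\infty]{}0
$$
uniformly over $m\le Kk_n^2$; this is a telescoping/Taylor estimate, the $\ell=1$ term being bounded via $k_n\sigma_n/n\to 0$ and $Kk_n^2\sigma_n^2/n^2\to 0$, and the tail of the series being summable against the geometric decay of $(\sigma_n/n)^{2\ell}$. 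Granting this, the first summand is at most $\eta_n(K)\to 0$ for each fixed $K$.

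For the second summand I would bound $\E[|D_n-1|\,\1_{\{\Sigma_{k_n}>Kk_n^2\}}]\le \E[D_n\,\1_{\{\Sigma_{k_n}>Kk_n^2\}}]+\P(\Sigma_{k_n}>Kk_n^2)$. The unconditioned tail is uniform in $n$: in distribution $\Sigma_{k_n}=(T_{-k_n}-k_n)/2$ for $T$ the first hitting time of a simple random walk, and Kemperman's formula together with $\P(S_t=-k_n)\lesssim t^{-1/2}$ yields $\P(T_{-k_n}=t)\lesssim k_n t^{-3/2}$, hence $\P(\Sigma_{k_n}>Kk_n^2)\lesssim K^{-1/2}$ uniformly in $n$ and $k_n$. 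As for $\E[D_n\,\1_{\{\Sigma_{k_n}>Kk_n^2\}}]$, since $D_n$ is precisely the conditional density this equals $\P\big(\Sigma_{k_n}>Kk_n^2\mid\sum_{i=1}^{\sigma_n}|\tau_i|=n\big)$; using the exact convolution identity $\sum_{m}p(k_n,m)p(\sigma_n-k_n,n-m)=p(\sigma_n,n)$ it equals $1-\sum_{m\le Kk_n^2}p(k_n,m)\frac{p(\sigma_n-k_n,n-m)}{p(\sigma_n,n)}$, which by the core estimate is $\P(\Sigma_{k_n}>Kk_n^2)+O(\eta_n(K))=\P(\Sigma_{k_n}>Kk_n^2)+o(1)$. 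Thus $\limsup_{n}\E[|D_n-1|\,\1_{\{\Sigma_{k_n}>Kk_n^2\}}]\lesssim K^{-1/2}$, and letting $n\to\infty$ and then $K\to\infty$ in the split gives $\E[|D_n-1|]\to 0$, which is the claim.

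The main obstacle is the uniform comparison of the two simple random walk local probabilities in the atypical regime $\sqrt n\ll\sigma_n\ll n$: there the naive local limit theorem is useless and one genuinely needs the Bene\v{s} expansion~\eqref{eq:localCLT2}, after which one must carefully verify that every correction to its exponent vanishes uniformly over $m\le Kk_n^2$. Everything else is standard random-walk bookkeeping, the convolution identity, and a truncation argument of the same flavour as in the proof of Lemma~\ref{lem:GW2}.
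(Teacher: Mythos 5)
Your proof is correct and follows the same strategy as the paper: compute the explicit Radon--Nikodym density $D_n$ of the conditional law with respect to the unconditioned one, truncate at scale $Kk_n^2$, and control the ratio of random-walk local probabilities uniformly over $m\le Kk_n^2$ via the Bene\v{s} expansion~\eqref{eq:localCLT2}, using $k_n\sigma_n=o(n)$ and $k_n^2=o(n)$ to kill the $\ell=1$ term and geometric decay in $\ell$ for dominated convergence. The only cosmetic difference is in the second (tail) term: you bound $\E[D_n\1_{\{\Sigma_{k_n}>Kk_n^2\}}]$ by recycling the convolution identity $\sum_m p(k_n,m)p(\sigma_n-k_n,n-m)=p(\sigma_n,n)$ together with the already-established core estimate $\eta_n(K)\to 0$, whereas the paper performs a direct Kemperman/local-CLT sum estimate giving $\lesssim K^{-1/2}+k_n/\sigma_n$; both rely on the same hypotheses and yield the same conclusion.
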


\begin{proof}
  The arguments are similar to those in the proof of
  Lemma~\ref{lem:GW2}. We set again 
  $p(k,m)= \P(\sum_{i=1}^{k}|\tau_i|=m)$. We have for bounded and
  measurable $F$
\begin{align*}
  &\E\left[F((\tau_i)_{1\leq i\leq k_n})\,\Big|\,\sum_{i=1}^{\sigma_n}|\tau_i|=n\right]\\
  &= \E\Bigg[F((\tau_i)_{1\leq i\leq k_n})\underbrace{\frac{1}{p(\sigma_n,n)}\P\left(\sum_{i=1}^{\sigma_n}|\tau_i|=n\,
      \,\Big|\,(\tau_i)_{1\leq i\leq k_n}\right)}_{=Z_n}\Bigg],
\end{align*}
and the claim follows if we show that 
$$\limsup_{n\rightarrow\infty}\E\left[|Z_n-1|\right]=0.$$
We argue now similarly to Lemma~\ref{lem:GW2}. With $K\in\N$, we split into
$$
  \E[|Z_n-1|]=\E\left[|Z_n-1|\1_{\{\sum_{i=1}^{k_n}|\tau_i|\leq Kk_n^2\}}\right] +\E\left[|Z_n-1|\1_{\{\sum_{i=1}^{k_n}|\tau_i|> Kk_n^2\}}\right]
$$
and bound the second term by
$$
\E\left[|Z_n-1|\1_{\{\sum_{i=1}^{k_n}|\tau_i|>
    Kk_n^2\}}\right]\leq
\E\left[Z_n\1_{\{\sum_{i=1}^{k_n}|\tau_i|>
    Kk_n^2\}}\right]+\P\left(\sum_{i=1}^{k_n}|\tau_i|>
  Kk_n^2\right).
$$
The last term in the above display is estimated in the same way as the analogous term in
Lemma~\ref{lem:GW2}. For the first term, we have 
\begin{align*}
\lefteqn{\E\left[Z_n\1_{\{\sum_{i=1}^{k_n}|\tau_i|> Kk_n^2\}}\right]
\leq \frac{1}{p(\sigma_n,n)}\sum_{m=
  Kk_n^2}^{n}p(k_n,m)p(\sigma_n-k_n,n-m)}\\
&\lesssim \frac{n^{3/2}}{\sigma_n}\sum_{m=
  Kk_n^2}^{\lceil
  n/2\rceil}\frac{k_n}{m^{3/2}}\frac{\sigma_n-k_n}{(n-m)^{3/2}} +  
\frac{k_n}{\sigma_n}\sum_{m=\lceil n/2\rceil+1}^{n}p(\sigma_n-k_n,n-m)\lesssim \frac{1}{\sqrt{K}}+ \frac{k_n}{\sigma_n}.
\end{align*}
Recalling that $k_n\ll\sigma_n$, we obtain
$$
\limsup_{K\rightarrow\infty}\limsup_{n\rightarrow\infty}\E\left[|Z_n-1|\1_{\{\sum_{i=1}^{k_n}|\tau_i|> Kk_n^2\}}\right]=0.
$$
It remains to show that for fixed $K$,
\begin{equation}
\label{eq:GW3-1}
\limsup_{n\rightarrow\infty}\E\left[|Z_n-1|\1_{\{\sum_{i=1}^{k_n}|\tau_i|\leq
    K k_n^2\}}\right]=0.
\end{equation}
We write 
\begin{align*}
  \lefteqn{\E\left[|Z_{n}-1|\1_{\{\sum_{i=1}^{k_n}|\tau_i|\leq Kk_n^2\}}\right]}\\
  &= \E\left[\left|\frac{1}{p(\sigma_n,n)}\P\Big(\sum_{i=
      k_n+1}^{\sigma_n}|\tau_i|=n-\sum_{i=1}^{k_n}|\tau_i|\,\Big|\,(\tau_i)_{1\leq
      i\leq k_n}\Big)-1\right|\1_{\{\sum_{i=1}^{k_n}|\tau_i|\leq K k_n^2\}}\right]\\
  &=\sum_{m=0}^{K k_n^2}\left|\frac{p(\sigma_n-k_n,n-m)}{p(\sigma_n,n)}-1\right|p(k_n,m).
\end{align*}
Again, our proof will be complete if we show that the terms inside the
absolute value are of order $o(1)$, uniformly in $m$ with $m\leq K
k_n^2$. For such $m$, let
$$x_n=\sigma_n -  k_n, \quad y_n = 2(n-m)+ \sigma_n -k_n.$$
Since the case where $\sigma_n$ is much larger than $\sqrt{n}$ is also
included in our statement, we apply the refined
version~\eqref{eq:localCLT2} (and first Kemperman's formula), which gives
\begin{align}
\frac{p(\sigma_n-k_n,n-m)}{p(\sigma_n,n)}&\sim\frac{\P\left(S_{y_n}=x_n\right)}{\P\left(S_{2n+\sigma_n}=\sigma_n\right)}\nonumber\\
&\sim \exp\left(-\sum_{\ell = 1}^\infty \frac{1}{2\ell (2 \ell
    -1)} \left(\frac{x_n^{2 \ell}}{ {y_n}^{2 \ell  -1}}
  -\frac{\sigma_n^{2\ell}}{ (2n+\sigma_n)^{2\ell-1}}\right)\right),
\label{eq:GW3-2}
\end{align}
everything uniformly in $m$ with $m\leq K k_n^2$. By Taylor's expansion, we
obtain
\begin{multline*}
  \frac{x_n^{2 \ell}}{ {y_n}^{2 \ell -1}}
  -\frac{\sigma_n^{2\ell}}{ (2n+\sigma_n)^{2\ell-1}} = \\
  \frac{\sigma_n^{2\ell}}{(2n+\sigma_n)^{2 \ell -1}}\left[ -2 \ell
    \frac{k_n}{\sigma_n} + (2 \ell -1) \frac{2(m+k_n)}{2n+\sigma_n} + O\left(\left(\frac{k_n}{\sigma_n}\right)^2 \right
    ) + O\left(\left(\frac{m+k_n}{2n+\sigma_n}\right)^2\right)
  \right].
\end{multline*}
Since $k_n=o(\sigma_n\wedge (n/\sigma_n))$, we have for $m\leq K k_n^2$
\begin{align*}
  -2\ell\frac{\sigma_n^{2\ell}}{(2n+\sigma_n)^{2 \ell -1}} \frac{ 
k_n}{\sigma_n} 
  & =  \frac{\sigma_n^{2(\ell-1)}}{(2n+\sigma_n)^{2 
(\ell -1)}}\,o(1) ,\quad\hbox{ and}\\
(2\ell-1)  \frac{\sigma_n^{2\ell}}{(2n+\sigma_n)^{2 \ell -1}}  \frac{2(m+k_n)}{2n+\sigma_n}  &  = \frac{\sigma_n^{2(\ell-1)}}{(2n+\sigma_n)^{2 
(\ell -1)}}\,o(1).
\end{align*}
In particular, all terms of the sum inside the exponential in~\eqref{eq:GW3-2} tend to zero as
$n\rightarrow\infty$. Moreover, for $n$ large, each term is bounded by $2^{-2(\ell-1)}$,
which is summable. We finish the proof of the lemma by an application of 
dominated convergence, giving
$$
\limsup_{n\rightarrow\infty}\sup_{m\leq Kk_n^2}\left|\frac{p(\sigma_n-k_n,n-m)}{p(\sigma_n,n)}-1\right|=0.
$$
\end{proof}

\subsection{Convergence of bridges}
Here, we collect two convergence results of a bridge $\br_n$ uniformly
distributed in $\Br_{\sigma_n}$ which are valid in all regimes $\sigma_n\gg 1$. The
first lemma follows from~\cite[Lemma 10]{Be1} (recall the remarks above on
the distribution of $\br_n$).
\begin{lemma}
\label{lem:bridge0}
Assume $\sigma_n\rightarrow\infty$, and let $\br_n$ be a bridge of length $\sigma_n$ uniformly distributed in
$\Br_{\sigma_n}$.  Then $(\br_n(\sigma_n s)/\sqrt{2\sigma_n},0\leq s\leq 1)$ converges as $n\rightarrow\infty$ to a
standard Brownian bridge $\mathbbm{b}$,
and the convergence holds in distribution in the space $\mathcal{C}([0,1],\mathbb{R})$.
\end{lemma}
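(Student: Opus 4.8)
The plan is to condition on the terminal value $\br_n(\sigma_n)$, which is tight, thereby reducing to a genuine random walk bridge with a \emph{fixed} endpoint, and then to invoke a standard invariance principle. Recall from Section~\ref{sec:bridges} that if $(X_i,i\in\N)$ are i.i.d.\ with $\P(X_1=k)=2^{-k-2}$ for $k\geq-1$ and $\Sigma_j=X_1+\cdots+X_j$, then the conditional law of $(\Sigma_j,0\leq j\leq\sigma_n)$ given $\{\Sigma_{\sigma_n}=-k\}$ is exactly the uniform law on $\{\br\in\Br_{\sigma_n}:\br(\sigma_n)=-k\}$. Since $\br_n$ is uniform on $\Br_{\sigma_n}$, this means that, conditionally on $\{\br_n(\sigma_n)=-k\}$, the path $\br_n$ has the law of the bridge $\Sigma^{(n,k)}$ of the walk $(\Sigma_j)$ pinned at $-k$ at time $\sigma_n$. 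A direct computation gives $\E[X_1]=0$ and $\E[X_1^2]=2$ (note that the atom at $-1$ already contributes $\tfrac12$ to the second moment), so $(\Sigma_j)$ is centered with variance exactly $2$; this is what produces the normalization $\sqrt{2\sigma_n}$ in the statement.

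The first step is the fixed-endpoint statement: for every fixed $k\in\N_0$,
$$
\left(\frac{\Sigma^{(n,k)}_{\lfloor\sigma_n s\rfloor}}{\sqrt{2\sigma_n}},\ 0\leq s\leq 1\right)\ \xrightarrow[n\to\infty]{(d)}\ \mathbbm{b}\qquad\text{in }\mathcal{C}([0,1],\R).
$$
This is the invariance principle for a random walk conditioned to be a bridge: the walk $(\Sigma_j)$ is centered with variance $2$, has finite exponential moments and span $1$, so the diffusively rescaled unconditioned walk converges to a Brownian motion by Donsker's theorem, and pinning the endpoint to $-k$ — which rescales to $0$ — turns the limit into the standard Brownian bridge. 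The standard way to make this rigorous is to compare the pinned and unpinned laws separately on $[0,\lfloor\sigma_n/2\rfloor]$ and, after time reversal (which preserves the step law), on the complementary half, where the Radon--Nikodym densities are bounded in probability by a local central limit theorem, and then to patch the two halves; I would not spell this out.

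The second step removes the conditioning. By~\eqref{eq:law-bsigma} we have $\P(\br_n(\sigma_n)=-k)\leq 2^{-k}$ for all $n$ and all $k\in\N_0$, and $\P(\br_n(\sigma_n)=-k)\to 2^{-k-1}$ as $n\to\infty$. Writing $\widehat\br_n=(\br_n(\sigma_n s)/\sqrt{2\sigma_n},0\leq s\leq 1)$ and $\widehat\Sigma^{(n,k)}$ for the analogous rescaling of $\Sigma^{(n,k)}$, we have, for any bounded continuous $F$ on $\mathcal{C}([0,1],\R)$,
$$
\E\bigl[F(\widehat\br_n)\bigr]=\sum_{k\geq 0}\P(\br_n(\sigma_n)=-k)\,\E\bigl[F(\widehat\Sigma^{(n,k)})\bigr],
$$
and each summand is dominated by $2^{-k}\|F\|_\infty$ uniformly in $n$, while converging to $2^{-k-1}\E[F(\mathbbm{b})]$ by the first step. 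Dominated convergence and $\sum_{k\geq 0}2^{-k-1}=1$ then give $\E[F(\widehat\br_n)]\to\E[F(\mathbbm{b})]$, which is the asserted convergence in distribution in $\mathcal{C}([0,1],\R)$.

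I expect the only genuine obstacle to be the invariance principle of the first step; the point requiring a little care in the second step is the uniform-in-$n$ domination that licenses the interchange of limit and sum over the terminal value. Both are classical. An alternative route avoids the conditioning altogether: via the bijection~\eqref{eq:correspondence-bridge}, $\br_n$ is encoded by a uniform $\pm 1$ string of length $2\sigma_n$ with exactly $\sigma_n$ down-steps, i.e.\ by a simple random walk bridge $(W_t,0\leq t\leq 2\sigma_n)$ from $0$ to $0$, and one checks that $\br_n(j)=W_{T_{j+1}}-W_{T_1}$, where $T_i$ is the time of the $i$-th down-step and, identically, $T_i=2i+W_{T_i}$. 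Donsker's theorem for $W$, together with $\sup_i|T_i-2i|\leq\sup_t|W_t|=O_P(\sqrt{\sigma_n})$ and the modulus of continuity of $W$, then yields the same conclusion, the random time change being negligible after division by $\sqrt{2\sigma_n}$.
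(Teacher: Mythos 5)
Your argument is correct and supplies a full derivation where the paper simply invokes \cite[Lemma 10]{Be1} (together with the representation of $\br_n$ recalled in Section~\ref{sec:bridges}). The disintegration over the tight terminal value $\br_n(\sigma_n)$ is clean: conditionally on $\{\br_n(\sigma_n)=-k\}$, the bridge has the law of the lattice walk $(\Sigma_j)$ pinned at $-k$, and since this walk is aperiodic, centered, of variance $2$ and with geometric tails, the fixed-endpoint diffusive limit is a standard bridge invariance principle; the moment computations $\E[X_1]=0$, $\E[X_1^2]=2$ are correct and explain the $\sqrt{2\sigma_n}$ normalization. The uniform-in-$n$ domination $\P(\br_n(\sigma_n)=-k)\leq 2^{-k}$ from~\eqref{eq:law-bsigma}, together with the pointwise limit $\P(\br_n(\sigma_n)=-k)\to 2^{-k-1}$, does indeed license the passage to the unconditioned law by dominated convergence. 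Your alternative route via the $\pm 1$ encoding~\eqref{eq:correspondence-bridge} and a random time change on a simple random walk bridge is also valid — indeed the identity $T_i=2i+W_{T_i}$ with $\br_n(j)=W_{T_{j+1}}-W_{T_1}$ is correct — and is likely closer in spirit to the argument of~\cite{Be1}, since that encoding is exactly the one the paper sets up; the time change being $O_P(\sqrt{\sigma_n})$ renders its effect $o_P(\sqrt{\sigma_n})$ after invoking Donsker and the modulus of continuity of the SRW bridge. The first route reduces to a textbook conditioned-walk statement at the cost of an extra dominated-convergence layer; the second avoids conditioning at the cost of a time-change argument. Both are sound.
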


The next lemma provides a finer convergence without normalization for the
bridge restricted to the first and last $k_n$ values when
$k_n=o(\sigma_n)$.
\begin{lemma}
\label{lem:bridge1}
Assume $\sigma_n\rightarrow\infty$. Let $\br_n$ be uniformly distributed in
$\Br_{\sigma_n}$, and let $\br_\infty$ be a uniform infinite bridge as
defined under Section~\ref{sec:infinitebridge}. Then, if $k_n$ is a
sequence of positive integers with $k_n\leq \sigma_n$ and $k_n=o(\sigma_n)$
as $n\rightarrow\infty$,
\begin{equation*}
\begin{split}\limsup_{n\rightarrow\infty}&\left\|\mathcal{L}((\br_n(\sigma_n-k_n),\ldots,\br_n(\sigma_n-1),\br_n(0),\br_n(1),\ldots,\br_n(k_n)))\right.\\
&\left. -\,\mathcal{L}((\br_\infty(-k_n),\ldots,\br_\infty(-1),\br_\infty(0),\br_\infty(1),\ldots,\br_\infty(k_n)))\right\|_{\textup{TV}}=0.\end{split}
\end{equation*}
\end{lemma}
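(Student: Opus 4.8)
\emph{Strategy.} The plan is to reduce the statement to the standard fact that conditioning a sum of many i.i.d.\ variables to take a value near its mean barely affects a sublinear number of the summands. The mechanism is that, read near each of its two endpoints, the uniform bridge $\br_n$ is governed by i.i.d.\ geometric block sizes, and that these two i.i.d.\ pictures glue together across the endpoints in exactly the manner prescribed by the law of $\br_\infty$.

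\emph{Block decomposition of $\br_n$.} By the correspondence \eqref{eq:correspondence-bridge} recalled in Section~\ref{sec:bridges}, a uniform $\br_n\in\Br_{\sigma_n}$ is the same datum as a uniformly chosen word $(+1)^{b_0}(-1)(+1)^{b_1}(-1)\cdots(-1)(+1)^{b_{\sigma_n}}$ among the $\binom{2\sigma_n}{\sigma_n}$ words in $\{-1,+1\}^{2\sigma_n}$ with exactly $\sigma_n$ letters $-1$; moreover $b_0=-\br_n(\sigma_n)$, $b_j=\br_n(j)-\br_n(j-1)+1$ for $1\le j\le \sigma_n-1$, and $b_{\sigma_n}=\br_n(\sigma_n)-\br_n(\sigma_n-1)+1$. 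Hence $(b_0,\dots,b_{\sigma_n})$ is a uniformly random composition of $\sigma_n$ into $\sigma_n+1$ nonnegative parts, equivalently a family $(Y_0,\dots,Y_{\sigma_n})$ of i.i.d.\ geometric$(1/2)$ variables on $\{0,1,2,\dots\}$ conditioned on $\sum_{i=0}^{\sigma_n}Y_i=\sigma_n$; in particular it is exchangeable. Inverting these relations gives, for $1\le j\le k_n$,
$$\br_n(j)=\sum_{i=1}^{j}b_i-j,\qquad \br_n(\sigma_n-j)=-(b_0+b_{\sigma_n})+j-\sum_{i=1}^{j-1}b_{\sigma_n-i},$$
so the vector $V_n=(\br_n(\sigma_n-k_n),\dots,\br_n(\sigma_n-1),\br_n(0),\dots,\br_n(k_n))$ is a fixed measurable function $\Psi$ of the $2k_n+1$ block sizes $(b_1,\dots,b_{k_n},b_0,b_{\sigma_n},b_{\sigma_n-1},\dots,b_{\sigma_n-k_n+1})$, in which $b_0$ and $b_{\sigma_n}$ occur only through the sum $b_0+b_{\sigma_n}$.

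\emph{The infinite bridge and reduction.} From the definition of $\br_\infty$ in Section~\ref{sec:infinitebridge}: the increments $\br_\infty(j)-\br_\infty(j-1)$, $j\ge1$, are i.i.d.\ with the law of $G-1$, $G$ geometric$(1/2)$; the increments $\br_\infty(-j+1)-\br_\infty(-j)$, $j\ge2$, are i.i.d.\ with that same law and independent of the former; and $-\br_\infty(-1)=_d G+G'-1$ with $G,G'$ independent geometric$(1/2)$ and independent of everything else (the size-bias identity recalled in the text). Running the computation of the previous paragraph with these increments shows that $(\br_\infty(-k_n),\dots,\br_\infty(-1),\br_\infty(0),\dots,\br_\infty(k_n))$ equals the \emph{same} function $\Psi$ evaluated at $2k_n+1$ i.i.d.\ geometric$(1/2)$ variables, the two variables in the ``$b_0,b_{\sigma_n}$'' slots being two i.i.d.\ geometrics whose sum realizes $G+G'$. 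Since applying a fixed map does not increase total variation distance, and since by the exchangeability above the law of $(b_1,\dots,b_{k_n},b_0,b_{\sigma_n},b_{\sigma_n-1},\dots,b_{\sigma_n-k_n+1})$ equals $\mathcal{L}\big((Y_1,\dots,Y_{2k_n+1})\mid\sum_{i=1}^{\sigma_n+1}Y_i=\sigma_n\big)$, the lemma reduces to
$$\Big\|\,\mathcal{L}\big((Y_1,\dots,Y_{2k_n+1})\mid\textstyle\sum_{i=1}^{\sigma_n+1}Y_i=\sigma_n\big)-\mathcal{L}(Y_1,\dots,Y_{2k_n+1})\,\Big\|_{\textup{TV}}\longrightarrow 0$$
for $Y_i$ i.i.d.\ geometric$(1/2)$.

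\emph{The conditioning estimate and main obstacle.} This last convergence is routine, a simpler instance of the estimate carried out in the proof of Lemma~\ref{lem:GW3}: with $m=2k_n+1=o(\sigma_n)$, $N=\sigma_n+1$, $S_\ell=Y_1+\dots+Y_\ell$, the density of the conditioned law with respect to the product law at $(y_1,\dots,y_m)$ is $\P(S_{N-m}=\sigma_n-\sum_i y_i)/\P(S_N=\sigma_n)$, so the total variation distance equals $\tfrac12\,\E\big[\,|\,\P(S_{N-m}=\sigma_n-S_m)/\P(S_N=\sigma_n)-1\,|\,\big]$. Since geometric$(1/2)$ has mean $1$, finite variance and is aperiodic, the local central limit theorem yields $\P(S_N=\sigma_n)=\P(S_N=N-1)\ge cN^{-1/2}$ and $\sup_x\P(S_\ell=x)\le C\ell^{-1/2}$, so the ratio is bounded by a constant once $m\le N/2$; and on the event $\{|S_m-m|\le (mN)^{1/4}\}$, of probability $1-O(\sqrt{m/N})\to1$ by Chebyshev, the same local limit theorem applied to numerator and denominator forces the ratio to $1$ uniformly, because $m=o(N)$. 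Dominated convergence then concludes. The only genuinely delicate step is the bookkeeping in the two middle paragraphs: one has to check that the finite bridge seen near \emph{both} endpoints and the infinite bridge seen near $0$ are pushforwards of i.i.d.-geometric vectors under one and the same map $\Psi$, the two extremal blocks $b_0$ and $b_{\sigma_n}$ of $\br_n$ (which are adjacent on the underlying circle once its marked origin is deleted) merging into the single size-biased increment $-\br_\infty(-1)$ of $\br_\infty$.
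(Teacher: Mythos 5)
Your proof is correct and takes a genuinely different route from the paper's. The paper works directly with exact probabilities: for a fixed admissible sequence $b$ it computes $\P(\br_\infty(i)=b(i),\,-k_n\le i\le k_n)=(-b(-1)+2)\,2^{-(b(k_n)-b(-k_n))-4k_n-1}$ and the corresponding $\br_n$-probability as $(-b(-1)+2)$ times a ratio of binomial coefficients, then bounds that ratio uniformly by Stirling's formula on the high-probability window $|b(k_n)-b(-k_n)|\leq K\sqrt{k_n}$. You instead repackage the correspondence~\eqref{eq:correspondence-bridge} structurally: the block sizes $(b_0,\ldots,b_{\sigma_n})$ are i.i.d.\ geometric$(1/2)$ conditioned on their total, hence exchangeable, so the $2k_n+1$ relevant blocks have the law of a sublinear window of i.i.d.\ geometrics conditioned on a slightly off-center sum. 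After verifying that both the two-ended finite-bridge window and the infinite-bridge window near $0$ are the same deterministic pushforward $\Psi$ (with $b_0$ and $b_{\sigma_n}$ merging into the size-biased increment $-\br_\infty(-1)$ --- the one index/sign check worth writing out explicitly), the lemma reduces to a standard local-CLT estimate for a geometric random walk, exactly the mechanism behind Lemmas~\ref{lem:GW2} and~\ref{lem:GW3}. The paper's Stirling computation is more self-contained and produces an explicit uniform bound on the binomial ratio; your route is less computational and makes the parallel with the forest lemmas transparent.
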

\begin{proof}
Let $(b(i) : -k_n\leq i\leq k_n)$ be a $\mathbb{Z}$-valued
  sequence with $b(0)=0$ and $b(i+1)-b(i)\in\mathbb{N}_0\cup\{-1\}$. Note
  that by definition, both
  $(\br_n(\sigma_n-k_n),\ldots,\br_n(0),\ldots,\br_n(k_n))$ and
  $(\br_\infty (-k_n),\ldots,\br_\infty (0),\ldots,\br_\infty (k_n))$ are
  only supported on such sequences. By
  definition of $\br_\infty$, we obtain
$$
\P\left(\br_\infty(i)=b(i),\, -k_n\leq i\leq k_n\right)=(-b(-1)+2)2^{-(b(k_n)-b(-k_n))-4k_n-1}.
$$
Next recall the interpretation of the increments of $\br_n$ explained in
Section~\ref{sec:bridges}. We get
\begin{align*}
  \lefteqn{\P\left(\br_n(i)=b(i),\,\br_n(\sigma_n-i)=b(-i),\,1\leq i\leq
      k_n\right)}\\
  &=\sum_{j=b(-1)-1}^0\P\left(\br_n(i)=b(i),\,\br_n(\sigma_n-i)=b(-i),\,1\leq i\leq
      k_n,\,\br_n(\sigma_n)=j\right)\\
    &=(-b(-1)+2)\P\left(\br_n(i)=b(i),\,\br_n(\sigma_n-i)=b(-i),\,1\leq
      i\leq k_n,\,\br_n(\sigma_n)=0\right)\\
    &=(-b(-1)+2){2\sigma_n-(b(k_n)-b(-k_n))-4k_n-1\choose
        \sigma_n-2k_n-1}\Big/{2\sigma_n\choose\sigma_n}.
\end{align*} 
Here, the next to last line follows from the fact that $\br_n$ is uniformly
distributed in $\Br_{\sigma_n}$, and the last line follows from counting
the possibilities to put $(\sigma_n-2k_n-1)$ times the number $-1$ in the
remaining  $(2\sigma_n-(b(k_n)-b(-k_n))-4k_n-1)$ spots.

We now concentrate on $b$ such that $|b(k_n)-b(-k_n)|\leq K\sqrt{k_n}$ for
some fixed constant $K>0$.  We put $B_n=b(k_n)-b(-k_n)+1$.
An application of Stirling's formula shows that
$$
\frac{{2\sigma_n-4k_n-B_n\choose
    \sigma_n-2k_n-1}}{{2\sigma_n\choose\sigma_n}}\sim
2^{-4k_n-B_n}\left(\frac{\sigma_n-2k_n-B_n/2}{\sigma_n-2k_n-B_n}\right)^{\sigma_n-2k_n-B_n}\left(\frac{\sigma_n-2k_n-B_n/2}{\sigma_n-2k_n}\right)^{\sigma_n-2k_n}
$$
as $n\rightarrow\infty$, uniformly in $b$ with $|b(k_n)-b(-k_n)|\leq
K\sqrt{k_n}$. Next, observe that
$$
\left(\frac{\sigma_n-2k_n-B_n/2}{\sigma_n-2k_n}\right)^{\sigma_n-2k_n}=\left(1-\frac{B_n/2}{\sigma_n-k_n}\right)^{\sigma_n-k_n}
\sim\exp\left(-\frac{B_n}{2}(1+O\left(B_n/\sigma_n)\right)\right),
$$
and similarly
$$
\left(\frac{\sigma_n-2k_n-B_n/2}{\sigma_n-2k_n-B_n}\right)^{\sigma_n-2k_n-B_n}\sim\exp\left(\frac{B_n}{2}\left(1-O(B_n/\sigma_n)\right)\right).
$$
Note that $B^2_n/\sigma_n=o(1)$ as $n\rightarrow\infty$ uniformly in the sequences $b$ under
consideration. Now let $0<\eps<1$. Putting the above estimates
together, we deduce that there exist $n'=n'(K,\eps)$ sufficiently large such that
for all $n\geq n'$,
\begin{align*}
(1-\eps)(2^{-(b(k_n)-b(-k_n))-4k_n-1})&\leq{2\sigma_n-(b(k_n)-b(-k_n))-4k_n-1\choose
  \sigma_n-2k_n-1}\Big/{2\sigma_n\choose\sigma_n}\nonumber\\
&\leq (1+\eps)(2^{-(b(k_n)-b(-k_n))-4k_n-1}).
\end{align*}
Using the aforementioned interpretation of $\br_n$ (or
Lemma~\ref{lem:bridge0}), it is immediate to check that both
$\br_n(-k_n)$ and $\br_n(k_n)$ are of order $\sqrt{k_n}$ for large $n$,
i.e., we find $K>0$ such that $|\br_n(k_n)-\br_n(-k_n)|\leq K\sqrt{k_n}$
with probability at least $1-\eps$, provided $n$ is sufficiently
large. By Donsker's invariance principle, we see that a
similar bound holds for $\br_\infty$. For any set
$\mathcal{E}_n$ of $\mathbb{Z}$-valued sequences of length $2k_n+1$, we thus
obtain
\begin{align*}
  \begin{split}&|\,\P\left(\left(\br_n(\sigma_n-k_n),\ldots,\br_n(0),\ldots,\br_n(k_n)\right)\in
      \mathcal{E}_n\right)\\
    &\,-\,\P\left(\left(\br_\infty (-k_n),\ldots,\br_\infty
        (0),\ldots,\br_\infty(k_n)\right)\in
      \mathcal{E}_n\right)|\end{split}\\
  \quad &\leq \P\left(|\br_n(k_n)-\br_n(-k_n)|\geq K\sqrt{k_n}\right) + \P\left(|\br_\infty(k_n)-\br_\infty(-k_n)|\geq K\sqrt{k_n}\right)\\
  &\quad + \sum_{b\in\mathcal{E}_n:\atop |b(k_n)-b(-k_n)|\leq
    K\sqrt{k_n}}\Bigg[\Big|\P\left(\br_\infty(i)=b(i),\, -k_n\leq i\leq k_n\right)\cdot\\
  &\quad\quad\quad
  \frac{\P\left(\br_n(i)=b(i),\,\br_n(\sigma_n-i)=b(-i),\,1\leq i\leq k_n\right)}{\P\left(\br_\infty(i)=b(i),\,
      -k_n\leq i\leq k_n\right)}-1\Big|\Bigg]\\
  &\quad \leq 2\eps + (1+\eps)-1\leq 3\eps.
\end{align*}
This finishes the proof.
\end{proof}

\subsection{Root issues}
We work in the usual setting introduced in Section~\ref{sec:usualsetting}.
As the next lemma shows, instead of showing  distributional convergence of
balls in $Q_n^{\sigma_n}$ or $Q_\infty^\infty$ around
the roots, we can as well consider the corresponding balls around $(0)$. 
 
\begin{lemma}
\label{lem:ball0}
  Let $(a_n)_{n\geq 1}$ be a sequence of reals with
  $a_n\rightarrow\infty$ as $n\rightarrow\infty$. Let $r\geq 0$. Then, in the notation
  from above, we have the following convergences in probability as $n\rightarrow\infty$.
\begin{enumerate}
\item
$\dgh\left(B_r\left((a_n^{-1}\cdot
    Q_n^{\sigma_n}\right),B_r^{(0)}\left((a_n^{-1}\cdot
    Q_n^{\sigma_n}\right)\right)\rightarrow 0,
$
\item $\dgh\left(B_r\left((a_n^{-1}\cdot
      Q_\infty^{\infty})\right),B_r^{(0)}\left((a_n^{-1}\cdot
      Q_\infty^{\infty}\right)\right)\rightarrow 0.
$
\end{enumerate}
\end{lemma}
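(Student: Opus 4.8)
The statement asserts that, after rescaling by $a_n^{-1}$ with $a_n\to\infty$, the ball of radius $r$ around the root vertex $\rho_n$ and the ball of radius $r$ around the canonical vertex $(0)=\f_n(0)$ become Gromov--Hausdorff close, with high probability, in both the finite case $Q_n^{\sigma_n}$ and the $\UIHPQ$. The natural strategy is to show directly that the \emph{graph distance} between $\rho_n$ and $(0)$ is $o(a_n)$ in probability; once this is established, the two balls $B_r$ and $B_r^{(0)}$ of the rescaled space differ only near their boundary, and a standard comparison (using the identity correspondence on $B_{ra_n-d_n}(Q_n^{\sigma_n})\cap B^{(0)}_{ra_n-d_n}(Q_n^{\sigma_n})$ where $d_n=\dgr(\rho_n,(0))$) gives $\dgh(B_r(a_n^{-1}\cdot Q_n^{\sigma_n}),B_r^{(0)}(a_n^{-1}\cdot Q_n^{\sigma_n}))\le a_n^{-1}d_n\to 0$ in probability. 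So the whole lemma reduces to the estimate $\dgr(\rho_n,(0))=o_{\mathbb P}(a_n)$.

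First I would recall from~\eqref{eq:distance-root-0} that in the finite case $\dgr(v_0,(0))=-\br_n(\sigma_n)$, where $v_0=\rho_n$ is the root vertex and $\br_n$ is the uniform bridge of length $\sigma_n$. By~\eqref{eq:law-bsigma} we have $\P(\br_n(\sigma_n)=-k)\le 2^{-k}$ for every $k\ge 0$, uniformly in $n$; in particular $-\br_n(\sigma_n)$ is stochastically dominated by a fixed geometric random variable, hence tight. Since $a_n\to\infty$, this immediately yields $a_n^{-1}\dgr(\rho_n,(0))\to 0$ in probability. The infinite case is entirely analogous: the rooting convention for $\Phi$ applied to $((\f_\infty,\la_\infty),\br_\infty)$ places the root vertex at $\f_\infty(\suc_\infty^{-\br_\infty(\partial)}(0))$, and a short argument with the successor chain shows $\dgr(\rho,(0))\le |\br_\infty(\partial)|+1$ (each application of $\suc_\infty$ moves one step along an edge of the map, and there are $-\br_\infty(\partial)$ of them), while $-\br_\infty(\partial)$ is bounded by $-\br_\infty(-1)+1$ which has the explicit finite-mean law recalled in Section~\ref{sec:infinitebridge} (distributed as $G+G'-1$ for independent geometric variables). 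Again tightness of this quantity together with $a_n\to\infty$ gives part (ii).

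Let me be a bit more careful about the reduction step, since it is where one must be slightly cautious. Fix $r\ge 0$ and set $d_n=\dgr(\rho_n,(0))$. On the event $\{d_n\le \eta a_n\}$ for small $\eta>0$, any vertex at $\dgr$-distance $\le ra_n-d_n$ from $(0)$ is also at distance $\le ra_n$ from $\rho_n$, and conversely with the roles of the two centers swapped; restricting the identity map to $B^{(0)}_{ra_n-d_n}(Q_n^{\sigma_n})$ gives a correspondence between $B_{r}(a_n^{-1}\cdot Q_n^{\sigma_n})$ and $B^{(0)}_r(a_n^{-1}\cdot Q_n^{\sigma_n})$ of distortion $\le 2a_n^{-1}d_n$ (the distances to the two basepoints differ by at most $d_n$, and points lying in one ball but not the other are within $a_n^{-1}d_n$ of a common vertex). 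Hence $\dgh(B_r(a_n^{-1}\cdot Q_n^{\sigma_n}),B^{(0)}_r(a_n^{-1}\cdot Q_n^{\sigma_n}))\le C a_n^{-1}d_n$ on this event, and since $\P(d_n>\eta a_n)\to 0$ for every $\eta>0$ by the tightness above, the left-hand side tends to $0$ in probability. The same argument works verbatim for the $\UIHPQ$. The only genuine subtlety—the ``main obstacle''—is verifying cleanly that $\dgr(\rho,(0))$ in the infinite case is controlled by $|\br_\infty(\partial)|$; this is a purely combinatorial check on the BDFG rooting convention in Section~\ref{sec:BDG-bijection}, using that consecutive successors $\suc_\infty^j(0)$ are joined by arcs (hence edges of the map), so $\dgr(\f_\infty(0),\f_\infty(\suc_\infty^{-\br_\infty(\partial)}(0)))\le -\br_\infty(\partial)$, and this bound is uniform once one knows $-\br_\infty(\partial)\le -\br_\infty(-1)+1$ has finite exponential moments.
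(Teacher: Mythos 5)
Your proposal is correct and follows essentially the same approach as the paper: identify $\dgr(\rho_n,(0))=-\br_n(\sigma_n)$ via~\eqref{eq:distance-root-0}, note that this is stochastically bounded by~\eqref{eq:law-bsigma}, and conclude that the Gromov--Hausdorff distance between the two rescaled balls is $O(a_n^{-1}\dgr(\rho_n,(0)))\to 0$ in probability. The one place where the paper is slightly cleaner is the reduction step: rather than hand-building a correspondence between $B_r$ and $B_r^{(0)}$ (which, as you acknowledge, requires a little care about points lying in one ball but not the other, and is most naturally done after passing to the associated length space), the paper simply invokes Lemma~\ref{lem:localGHconv} with the near-trivial set $\cR_n=\{(\rho_n,(0))\}\cup\{(x,x):x\in E_n\}$, whose distortion is at once bounded by $d(\rho_n,(0))$. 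Your explicit combinatorial argument for part (b), namely that successive successors $\suc_\infty^j(0)$ are joined by arcs so $\dgr((0),\rho)\le -\br_\infty(\partial)\le -\br_\infty(-1)+1$, is exactly the analog of~\eqref{eq:distance-root-0} that the paper leaves implicit when it says the proof of (b) is similar.
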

The proof will be a consequence of the following general lemma.
\begin{lemma}
\label{lem:localGHconv}
  Let $r\geq 0$, and let $\mathbf{E}=(E,d,\rho)$ and
  $\mathbf{E}'=(E',d',\rho')$ be two pointed complete and locally compact length
  spaces. Let $\cR\subset E\times E'$ be a subset with the following
  properties:
\begin{itemize}
\item $(\rho,\rho')\in\cR$,
\item for all $x \in B_r(\mathbf{E})$, there exists $x'\in E'$ such that
  $(x,x')\in\cR$,
\item for all $y' \in B_r(\mathbf{E}')$, there exists $y\in E$ such that $(y,y')\in\cR$.
\end{itemize}
Then, $\dgh(B_r(\mathbf{E}),B_r(\mathbf{E}'))\leq (3/2)\dis(\cR)$.
\end{lemma}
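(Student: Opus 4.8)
The plan is to convert $\cR$ into a genuine correspondence between the balls $B_r(\mathbf{E})$ and $B_r(\mathbf{E}')$, at the price of a controlled increase of the distortion, and then invoke the correspondence formula $\dgh=\tfrac12\inf_\cR\dis(\cR)$ recalled in Section~\ref{S-notionconvergence}. We may assume $\dis(\cR)=:\eta<\infty$, otherwise there is nothing to prove.

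The point is that a partner $x'$ of some $x\in B_r(\mathbf{E})$ need not lie in $B_r(\mathbf{E}')$; but since $(\rho,\rho')\in\cR$ we have $d'(\rho',x')\le d(\rho,x)+\eta\le r+\eta$, so $x'$ sits at most $\eta$ outside the ball. I would then project it back along a geodesic: using that $\mathbf{E}'$ is a complete, locally compact length space, pick a unit-speed shortest path $\gamma$ from $\rho'$ to $x'$ (which exists by \cite[Theorem~2.5.23]{BuBuIv}) and set $\phi(x):=\gamma(\min\{r,d'(\rho',x')\})\in B_r(\mathbf{E}')$. Since sub-arcs of shortest paths are shortest, $d'(\phi(x),x')=\max\{0,d'(\rho',x')-r\}\le\eta$. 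Symmetrically, for $y'\in B_r(\mathbf{E}')$ choose a partner $y\in E$ with $(y,y')\in\cR$ and a point $\psi(y')\in B_r(\mathbf{E})$ on a shortest path from $\rho$ to $y$ with $d(\psi(y'),y)\le\eta$. (Making a global choice of partners and of geodesics uses the axiom of choice, which is harmless here.) One then sets
$$
\cR_r=\{(\rho,\rho')\}\cup\{(x,\phi(x)):x\in B_r(\mathbf{E})\}\cup\{(\psi(y'),y'):y'\in B_r(\mathbf{E}')\},
$$
which is a correspondence between $B_r(\mathbf{E})$ and $B_r(\mathbf{E}')$ by construction.

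It then remains to check $\dis(\cR_r)\le 3\eta$, after which $\dgh(B_r(\mathbf{E}),B_r(\mathbf{E}'))\le\tfrac12\dis(\cR_r)\le\tfrac32\dis(\cR)$, as claimed. This is a short case analysis over the two legs of each pair of $\cR_r$: for two pairs $(x,\phi(x))$, $(z,\phi(z))$ one bounds $d'(\phi(x),\phi(z))\le d'(\phi(x),x')+d'(x',z')+d'(z',\phi(z))\le\eta+(d(x,z)+\eta)+\eta$ and symmetrically, giving $|d(x,z)-d'(\phi(x),\phi(z))|\le 3\eta$; the mixed case $(x,\phi(x))$ versus $(\psi(w'),w')$ is treated the same way, inserting the partner $x'$ of $x$ on the $E'$ side and the partner $w$ of $w'$ on the $E$ side and using $d'(\phi(x),x')\le\eta$, $d(\psi(w'),w)\le\eta$ and $|d(x,w)-d'(x',w')|\le\eta$; and the pairs involving $(\rho,\rho')$ follow at once from $|d(\rho,x)-d'(\rho',x')|\le\eta$ together with the definition of $\phi$ (and the analogous bound for $\psi$). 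I do not anticipate a real obstacle: the only thing to be careful with is bookkeeping — keeping track of which of the two $\eta$-bounds (the $\cR$-distortion bound and the geodesic-projection bound) is used at each triangle-inequality step, and checking that the projected points indeed land in the correct balls, which is exactly what the length-space hypothesis guarantees.
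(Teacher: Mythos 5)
Your proof is correct and follows essentially the same route as the paper: convert $\cR$ into a genuine correspondence between the closed balls $B_r(\mathbf{E})$, $B_r(\mathbf{E}')$ by replacing each out-of-ball partner $x'$ with a point of $B_r(\mathbf{E}')$ at distance at most $\dis(\cR)$ from $x'$, and then bound the distortion of the resulting correspondence by $3\dis(\cR)$ via the triangle inequality. The only cosmetic difference is that the paper picks the nearest point of $B_r(\mathbf{E}')$ to $x'$ (whose existence it cites from completeness and local compactness), whereas you realize that point explicitly as an intermediate point on a geodesic from $\rho'$ to $x'$; both arguments invoke the length-space hypothesis for the same reason, namely that $x'$ lies within $\dis(\cR)$ of the ball.
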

\begin{remark}
  Note that $\cR$ is not necessarily a correspondence; nonetheless,
  the definition of the distortion $\dis(\cR)$ from
  Section~\ref{S-notionconvergence} makes sense (we allow it to take the
  value $+\infty$).
\end{remark} 
\begin{proof}[Proof of Lemma~\ref{lem:localGHconv}]
   We construct a correspondence $\tilde{\cR}$ between $B_r(\mathbf{E})$
   and $B_r(\mathbf{E}')$. For each $x\in B_r(\mathbf{E})$, there exists by
   assumption $x'=x'_{x}\in E'$ such that $(x,x')\in\cR$. Since
   $d'(x',\rho')\leq d(x,\rho) + \dis(\cR)$, we see that in fact $x'\in
   B_{r+\dis(\cR)}(\mathbf{E}')$.  We choose $z'=z'(x)\in B_r(\mathbf{E}')$
   that minimizes $d'(x',z')$. Note that such a $z'$ exists in a complete
   and locally compact length space. Then $d'(x',z')\leq \dis(\cR)$. In an
   entirely similar way, using the third property of $\cR$ instead of the
   second, we assign to each $y'\in B_r(\mathbf{E}')$ an element
   $z=z(y')\in B_r(\mathbf{E})$. In this notation, we now define
$$
\tilde{\cR}=\left\{(x,z'(x)) : x\in B_r(\mathbf{E})\right\}\cup
\left\{(z(y'),y') : y'\in B_r(\mathbf{E}')\right\}.
$$
Clearly, $\tilde{\cR}$ is a correspondence between $B_r(\mathbf{E})$ and
$B_r(\mathbf{E}')$, and a straightforward application of the triangle
inequality shows that in fact $\dis(\tilde{\cR}) \leq 3\dis(\cR)$. This
proves our claim and hence the lemma.
\end{proof}

\begin{proof}[Proof of Lemma~\ref{lem:ball0}]
  We show only (a), the proof of (b) is similar. We apply
  Lemma~\ref{lem:localGHconv} as follows. Instead of considering the pointed
  quadrangulation $(V(Q_n^{\sigma_n}),\dgr,\rho_n)$, we may work with the
  corresponding pointed length space $\mathbf{E}_n=(E_n,d,(0))$ obtained
  from replacing edges by Euclidean segments of length one, as explained
  in Section~\ref{sec:locGH} (the distance $d$ between two points is given
  by the length of a shortest path between them). Similarly,
  we replace $(V(Q_n^{\sigma_n}),\dgr,(0))$ by
  $\mathbf{E}'_n=(E_n,d,\rho_n)$. Define
$$
\cR_n=\{(\rho_n,(0))\}\cup\{(x,x) : x\in E_n\}.
$$
Then $\cR_n$ fulfills trivially the properties of
Lemma~\ref{lem:localGHconv}, and we have dis$(\cR_n)\leq d(\rho_n,(0))
=-\br_n(\sigma_n)$ by~\eqref{eq:distance-root-0}. Since $\br_n(\sigma_n)$ is
stochastically bounded, see~\eqref{eq:law-bsigma}, the claim follows.
\end{proof}

\section{Main proofs}
\label{sec:proofs}
We start now with the proofs of the main results.  To facilitate the
reading, we will sometimes include a paragraph ``Idea of the proof'', where
we informally explain the basic strategy.
\subsection{Brownian plane (Theorem~\ref{thm:BP})}
\label{sec:proof-thmBP}
Recall that Theorem~\ref{thm:BP} deals with the regime
$\sigma_n\ll\sqrt{n}$ and $\sqrt{\sigma_n} \ll
a_n$.
\begin{mdframed}
{\bf Idea of the proof.} Let $((\f_n,\la_n),\br_n)$ be uniformly distributed over the set
$\Fo_{\sigma_n}^n \times \Br_{\sigma_n}$, and let $\La_n$ be the associated
label function. Thanks to Lemmas~\ref{lem:GW1} and~\ref{lem:GW2}, we know
that for large $n$, $\f_n$ has a unique largest tree $\tau$ of a size of
order $n$, and all the other $\sigma_n-1$ trees behave as independent
critical geometric Galton-Watson trees. As a consequence, both the maximal
and minimal values of the label function $\La_n$ restricted to these
$\sigma_n-1$ non-largest trees are of order $\sqrt{\sigma_n}$, see the
proof of Lemma~\ref{lem:BP2}. Under a rescaling of distances by the factor
$a_n^{-1}$, this implies by a result of Bettinelli~\cite[Lemma 23]{Be3}
that the part of the quadrangulation encoded by the forest without its
largest tree $\tau$ is negligible in the limit $n\rightarrow\infty$ for the
local Gromov-Hausdorff topology. Conditionally on its size, $\tau$ is
uniformly distributed among all plane trees, and (up to the removal of a
single edge) so is the associated quadrangulation among all
quadrangulations with $|\tau|$ faces and no boundary. This allows us to
apply the second part of~\cite[Theorem 3]{CuLG}, which states that the
Brownian plane appears as the scaling limit $m\rightarrow\infty$ of uniform
quadrangulations with $m$ faces when the scaling factor approaches zero
slower than $m^{-1/4}$.
\end{mdframed}

To make things precise, we recall
\begin{lemma}[Lemma 23 of~\cite{Be3}]
\label{lem:BP1}
Let $\sigma\in\N$. Let $((\f,\la),\br)\in \Fo_{\sigma}^n \times \mathcal
B_{\sigma}$. Fix any tree $\tau$ of $\f$. Let $b\in\{-1,0\}$. We view
$(\tau,\la_{|\tau})$ as an element of $\Fo_1^{|\tau|}$ and denote by
$\q_\f\in\mathcal{Q}_n^\sigma$ and $\q_{\tau}\in\mathcal{Q}_{|\tau|}^1$ the
quadrangulations associated to $((\f,\la),\br)$ and
$((\tau,\la_{|\tau}),(0,b))$, respectively, through the Bouttier-Di
Francesco-Guitter bijection (the distinguished vertices are omitted). Then
$$
\dgh\left(V(\q_\f),V(\q_\tau)\right)\leq 2\left
(\max_{\f\setminus \mathring{\tau}}\La_\f-\min_{\f\setminus \mathring{\tau}}\La_\f +1\right),
$$
where $\mathring{\tau}$ stand for the tree $\tau$ without its root vertex, and
$\La_\f$ is the label function associated to $((\f,\la),\br)$ as
defined in Section~\ref{sec:contourlabel-finite}.
\end{lemma}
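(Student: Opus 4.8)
The plan is to obtain the bound from an explicit correspondence between the vertex sets of $\q_\f$ and $\q_\tau$ together with the identity $\dgh=\tfrac12\inf_\cR\dis(\cR)$ recalled in Section~\ref{S-notionconvergence}. Write $\tau=\tau_j$, let $v_0$ be its root, let $\vd_\f,\vd_\tau$ be the distinguished vertices, and set $A=\max_{\f\setminus\mathring\tau}\La_\f$, $B=\min_{\f\setminus\mathring\tau}\La_\f$, $\delta=A-B+1$. Since the BDFG mapping does not rename vertices and $V(\tau)\subseteq V(\f)$, we regard $V(\q_\tau)=V(\tau)\cup\{\vd_\tau\}$ as the subset $V(\tau)\cup\{\vd_\f\}$ of $V(\q_\f)=V(\f)\cup\{\vd_\f\}$, identifying $\vd_\tau$ with $\vd_\f$, and we put
$$\cR=\bigl\{(x,x):x\in V(\tau)\cup\{\vd_\f\}\bigr\}\cup\bigl\{(u,v_0):u\in V(\f)\setminus V(\mathring\tau)\bigr\},$$
a correspondence containing $(\vd_\f,\vd_\tau)$; it then suffices to show $\dis(\cR)\le 4\delta$. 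Two facts are used throughout: along every BDFG-arc the label changes by exactly $\pm1$, so by~\eqref{eq:distance-vdot}, $d_{\q_\f}(u,\vd_\f)=\La_\f(u)-\min_{V(\f)}\La_\f+1$ (and likewise in $\q_\tau$); and because $\tau$ is contoured as a single sub-interval of the facial sequence of $\f$, on which the running infimum of $C_\f$ is constantly equal to $-j$, one has $\La_\f(w)=\la(w)+c$ for every $w\in V(\tau)$ with $c:=\La_\f(v_0)$, so that $\La_\f-c$ and $\La_\tau$ coincide on $V(\tau)$ and, $v_0$ being a floor vertex, $c$ is a value of the bridge $\br$ and $c\in[B,A]$.

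\emph{Step 1: any two vertices of $V(\f)\setminus V(\mathring\tau)$ are at $\q_\f$-distance at most $2\delta$.} For such $u,u'$ choose contour positions $i,i'$ lying outside the open contour-interval of $\mathring\tau$; this is possible for every vertex of $V(\f)\setminus V(\mathring\tau)$, since $v_0$ is contoured at the two endpoints of that interval. The facial cycle is split by $\{i,i'\}$ into the arcs $\overrightarrow{[i,i']}$ and $\overrightarrow{[i',i]}$, and a short case analysis shows that one of them avoids the contour-interval of $\mathring\tau$ entirely, so the values of $\La_\f$ along it are labels of vertices of $\f\setminus\mathring\tau$, hence all $\ge B$. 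Feeding this arc into the upper bound~\eqref{eq:dist-upperbound} yields $d_{\q_\f}(u,u')\le \La_\f(u)+\La_\f(u')-2B+2\le 2(A-B+1)=2\delta$. In particular $d_{\q_\f}(u,v_0)\le 2\delta$ for every $u\in V(\f)\setminus V(\mathring\tau)$.

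\emph{Step 2: $|d_{\q_\f}(x,y)-d_{\q_\tau}(x,y)|\le 2\delta$ for $x,y\in V(\tau)\cup\{\vd\}$.} If one of $x,y$ equals $\vd$, this follows from the explicit distance formula above together with $\min_{V(\f)}\La_\f=\min\{B,\,c+\min_{\mathring\tau}\la\}$ and $c\in[B,A]$, which after a short computation give $|d_{\q_\f}(x,\vd_\f)-d_{\q_\tau}(x,\vd_\tau)|\le A-B$. For $x,y\in V(\tau)$ the point is that the two maps carry essentially the same arcs among the vertices of $\tau$: the successor of a $\tau$-vertex computed in $\q_\tau$ and in $\q_\f$ agree whenever the search stays inside the contour-interval of $\tau$ (a uniform shift of all labels by $c$ does not change the successor function there), and the only $\tau$-vertices $w$ whose $\q_\f$-successor leaves that interval while its $\q_\tau$-successor does not are those for which $\La_\f(w)-1$ occurs in $\f\setminus\tau$, forcing $\La_\f(w)\in[B+1,A+1]$; that successor then lies in $V(\f)\setminus V(\mathring\tau)$, so by Step~1 such $w$ are within $\q_\f$-distance $2\delta+1$ of $v_0$. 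Hence a geodesic of one map between $x$ and $y$ can be converted into a path of the other by re-routing the finitely many ``escaping'' arcs through $v_0$, each re-routing costing $O(\delta)$ by Step~1; keeping track of the constants gives the bound $2\delta$.

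\emph{Conclusion.} A pair $(x,x),(y,y)$ with $x,y\in V(\tau)\cup\{\vd\}$ contributes at most $2\delta$ to $\dis(\cR)$ by Step~2; a pair $(u,v_0),(u',v_0)$ with $u,u'\in V(\f)\setminus V(\mathring\tau)$ contributes $d_{\q_\f}(u,u')\le 2\delta$ by Step~1; and a mixed pair $(u,v_0),(x,x)$ contributes $|d_{\q_\f}(u,x)-d_{\q_\tau}(v_0,x)|\le d_{\q_\f}(u,v_0)+|d_{\q_\f}(v_0,x)-d_{\q_\tau}(v_0,x)|\le 2\delta+2\delta=4\delta$, using $v_0\in V(\tau)$ and Steps~1--2. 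Thus $\dis(\cR)\le 4\delta$ and $\dgh(V(\q_\f),V(\q_\tau))\le 2\delta=2\bigl(\max_{\f\setminus\mathring\tau}\La_\f-\min_{\f\setminus\mathring\tau}\La_\f+1\bigr)$. The main obstacle is Step~2 for two vertices of $\tau$: it requires a careful bookkeeping of exactly which successor arcs of $\q_\f$ (resp.\ $\q_\tau$) cross the boundary of the contour-interval of $\tau$ and of the detour cost they incur. The contour-cycle geometry behind Step~1 and the triangle-inequality assembly of the conclusion are routine by comparison.
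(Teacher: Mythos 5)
The paper does not prove this statement: it is cited verbatim from Bettinelli~\cite[Lemma~23]{Be3}, with only a remark that the (unpointed) GH argument carries over to the pointed setting. So there is no ``paper proof'' to compare against, and your proposal must be judged on its own merits.

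Your overall strategy --- build the explicit correspondence $\cR$ that collapses $V(\f)\setminus V(\mathring\tau)$ to $v_0$ and bound $\dis(\cR)$ via the identity $\dgh=\tfrac12\inf\dis$ --- is natural and in the spirit of how such estimates are typically proved. Step~1 (diameter $\le 2\delta$ of $V(\f)\setminus V(\mathring\tau)$ in $\q_\f$, via the upper bound~\eqref{eq:dist-upperbound} applied along the contour arc that avoids $\mathring\tau$) is sound, as is the case of Step~2 involving the distinguished vertex, where~\eqref{eq:distance-vdot} plus the elementary computation $|\min_{V(\tau)}\La_\f-\min_{V(\f)}\La_\f|\le A-B$ does the job.

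The genuine gap is the remaining case of Step~2, the claim $|d_{\q_\f}(x,y)-d_{\q_\tau}(x,y)|\le 2\delta$ for $x,y\in V(\tau)$, which is exactly where the $4\delta$ you need for $\dis(\cR)$ comes from (the mixed-pair computation uses it with no room to spare). Your argument --- convert a geodesic of one map into a path of the other by rerouting the escaping arcs through $v_0$, ``each re-routing costing $O(\delta)$'' --- does not give a $2\delta$ bound, because a single geodesic can cross the boundary region many times, and each crossing must be rerouted. As written the cost is $O(m\delta)$ where $m$ is the number of escaping arcs the geodesic uses, not $O(\delta)$; ``keeping track of the constants'' does not fix this without a further structural observation. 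What is actually needed is an argument that at most a bounded (say single) detour through the boundary region suffices --- for instance, by showing that after the geodesic first enters the $(2\delta{+}1)$-neighbourhood of $v_0$ (in one map), it may be replaced, at a one-time cost $O(\delta)$, by the concatenation of a path in $\tau$ from $x$ to $v_0$ and one from $v_0$ to $y$ that then behaves identically in both maps --- and none of this is in your write-up. You flag this step yourself as the main obstacle, which is the correct diagnosis; as it stands, though, the step is asserted rather than proved, so the distortion bound $\dis(\cR)\le 4\delta$ is not established.

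One smaller point to watch: in the case $x,y\in V(\tau)$ you also need to handle paths in $\q_\tau$ that pass through $\vd_\tau$, and paths in $\q_\f$ that pass through $\vd_\f$ or through vertices of $\f\setminus\mathring\tau$ other than $v_0$; your discussion of ``escaping successor arcs'' only covers arcs of the form $w\to\suc(w)$ with $w\in V(\tau)$, and the re-routing cost through $\vd$ is not controlled by Step~1 (which only speaks of $V(\f)\setminus V(\mathring\tau)$, not $\vd_\f$). These cases must be folded into the same bookkeeping.
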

\begin{remark}
  As always, we interpret $V(\q_\f)$ and $V(\q_\tau)$ as pointed metric
  spaces (pointed at their root vertices and endowed with the graph
  distance). Note that~\cite[Lemma 23]{Be3} is formulated in terms of the
  unpointed Gromov-Hausdorff distance, but the proof carries over to the
  pointed version used here.
\end{remark}
Let $r\geq 0$. For the balls $B_r(\q_\f)$ and $B_r(\q_\tau)$ around
the root vertices, we claim that 
\begin{equation}
\label{eq:BP-1}
\dgh\left(B_r(\q_\f,B_r(\q_\tau\right) \leq 3
\dgh\left(V(\q_\f),V(\q_\tau)\right) +8.
\end{equation}
Indeed, we may first replace both $V(\q_\f)$ and $V(\q_\tau)$ by the
corresponding length spaces ${\mathbf Q}_\f$ and ${\mathbf Q}_\tau$ as
explained in Section~\ref{sec:locGH}. We obtain
$$
\left|\dgh\left(B_r(\q_\f),B_r(\q_\tau)\right)- \dgh\left(B_r({\mathbf
    Q}_\f),B_r({\mathbf Q}_\tau)\right)\right|\leq 2.
$$
For estimating the Gromov-Hausdorff distance on the right, we note that
every correspondence between ${\mathbf Q_\f}$ and
${\mathbf Q}_\tau$ satisfies the requirements of Lemma~\ref{lem:localGHconv}, so that by this lemma
$$
\dgh\left(B_r({\mathbf Q}_\f),B_r({\mathbf Q}_\tau)\right)\leq
(3/2)\inf_\cR\dis(\cR) = 3\dgh\left({\mathbf Q_\f},{\mathbf
    Q}_\tau\right)\leq 3\dgh\left(V(\q_\f),V(\q_\tau)\right)+ 6,
$$
where the infimum is taken over all
correspondences between ${\mathbf Q_\f}$ and ${\mathbf Q}_\tau$, and the
equality follows from the alternative description of the Gromov-Hausdorff distance
in terms of correspondences.
We are now in position to prove Theorem~\ref{thm:BP}.
\begin{proof}[Proof of Theorem~\ref{thm:BP}]
Recall from\cite[Theorem 4]{Be3} that 
$$
(V(Q_n^{\sigma_n}),(8/9)^{-1/4}n^{-1/4} \dgr,\rho_n) \xrightarrow[n \to \infty]{(d)}
\BM
$$  
in the Gromov-Hausdorff topology, where $\BM$ is the Brownian map. This
result immediately implies that when $a_n\gg n^{1/4}$, then
$(V(Q_n^{\sigma_n}),a_n^{-1}\dgr,\rho_n)$ converges to the trivial metric
space consisting of a single point, which proves the second part of
the theorem.

For the first part and the rest of this proof, we assume $\sqrt{\sigma_n}
\ll a_n \ll n^{1/4}$. We have to show that for each
  $r\geq 0$,
\begin{equation}
\label{eq:BP-2}
B_r\left(a_n^{-1}\cdot Q_n^{\sigma_n}\right)\xrightarrow[n \to \infty]{(d)}B_r(\BP)
\end{equation}
in distribution in $\mathbb{K}$. Let $((\f_n,\la_n),\br_n)$ be uniformly distributed in
$\Fo_{\sigma_n}^n \times \Br_{\sigma_n}$, and write $Q_n^{\sigma_n}$
for the (rooted and pointed) quadrangulation associated through the Bouttier-Di
Francesco-Guitter bijection, as usual. We denote by $\tau_\ast^{(n)}$
the largest tree of $\f_n$ (we take that with the smallest index if several trees attain
the largest size). We let $b_n\in\{-1,0\}$ be uniformly distributed and
independent of everything else and denote by $\hat{Q}_n$ the quadrangulation
encoded by $((\tau_\ast^{(n)},{\la_n}_{|\tau_\ast^{(n)}}),(0,b_n))$, in the same way as in Lemma~\ref{lem:BP1}.

We obtain from~\eqref{eq:BP-1} together with
Lemma~\ref{lem:BP1} that
\begin{equation}
\label{eq:BP-3}
\dgh\left(B_r(a_n^{-1}\cdot Q_n^{\sigma_n}),B_r(a_n^{-1}\cdot
        \hat{Q}_n)\right)\leq \frac{6}{a_n}\left(\max_{\f_n\setminus
      \mathring{\tau}_{\ast}^{(n)}}\La_{n}-\min_{\f_n\setminus
      \mathring{\tau}_{\ast}^{(n)}}\La_n\right)+o(1)
\end{equation}
as $n\rightarrow\infty$, where in the notation of Lemma~\ref{lem:BP1},
$\mathring{\tau}_{\ast}^{(n)}$ stands for the tree ${\tau}_{\ast}^{(n)}$
without its root, and $\La_n$ is the label function of
$((\f_n,\la_n),\br_n)$. We claim that the right hand side in the last
display converges to zero in probability.  In this regard, recall that
$$\La_n = \left(L_n(t) + \br_n(-\uC_n(t)),\,0\leq t\leq 2n+\sigma_n\right).$$
By Lemma~\ref{lem:bridge0}, the values of $\br_n$ are of order
$\sqrt{\sigma_n}\ll a_n$, so that we may replace $\La_n$ by $L_n$
in~\eqref{eq:BP-3}.  Denote by $\f'_n=\f_n\setminus \tau_{\ast}^{(n)}$ the
forest obtained from $\f_n$ by removing $\tau_{\ast}^{(n)}$, i.e., if
$\tau_{\ast}^{(n)}$ is the tree of $\f_n$ with index $i$, then
$\f'_n=(\tau_0^{(n)},\ldots,\tau_{i-1}^{(n)},\tau_{i+1}^{(n)},\ldots,\tau_{\sigma_n-1}^{(n)})$. We
let $\la'$ be the labeling of $\f_n$ restricted to $\f'_n$, and write
$(C'_n,L'_n)$ for the contour pair corresponding to $(\f'_n,\la'_n)$. We
view both $C'_n$ and $L'_n$ as continuous functions on $[0,\infty)$ by
letting $C'_n(s)=C'_n(s\wedge (2(n-|\tau_{\ast}^{(n)}|)+\sigma_n-1))$, and
similarly with $L'_n$. The convergence to zero of the right hand side
in~\eqref{eq:BP-3} is now a consequence of the following lemma.
\begin{lemma}
\label{lem:BP2}
In the notation from above, we have for sequences $a_n$ satisfying  $a_n\gg \sqrt{\sigma_n}$,
$$
\left(\frac{1}{a^2_n}C'_n,\frac{1}{a_n}L'_n\right)\xrightarrow[n\to
  \infty]{(p)} (0,0)\quad\textup{in }\mathcal{C}([0,\infty),\mathbb{R})^2.
$$
\end{lemma}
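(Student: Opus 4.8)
The plan is to reduce, using Lemmas~\ref{lem:GW1} and~\ref{lem:GW2}, to a forest of i.i.d.\ critical geometric Galton--Watson trees, and then to control the \emph{global} oscillations of its contour and label functions by elementary random walk and Brownian snake estimates. In fact the proof will establish the stronger statements $\sup_{s\ge 0}|C'_n(s)|=O_{\mathbb P}(\sigma_n)$ and $\sup_{s\ge 0}|L'_n(s)|=O_{\mathbb P}(\sqrt{\sigma_n})$, which both imply the convergence in $\mathcal C([0,\infty),\mathbb R)^2$ claimed in the statement and are exactly what is needed to bound the right-hand side of~\eqref{eq:BP-3}. By Lemma~\ref{lem:GW1}, with probability tending to one $\f_n$ has a unique largest tree, so $\f'_n$ is well defined; by Lemma~\ref{lem:GW2}, the law of the sequence of the remaining $\sigma_n-1$ trees is total-variation close to that of a sequence $(\tau_1,\dots,\tau_{\sigma_n-1})$ of independent critical geometric Galton--Watson trees. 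Since the uniform labelling of $\f_n$ consists of i.i.d.\ uniform $\{-1,0,1\}$ increments on the edges, independent of the tree structure (Section~\ref{sec:encoding-finite}), it restricts to such a labelling of $\f'_n$, so that the law of $(C'_n,L'_n)$ is total-variation close to that of the contour pair $(C,L)$ of $(\tau_1,\dots,\tau_{\sigma_n-1})$ equipped with a uniform labelling (extended to $[0,\infty)$ in the same way). It thus suffices to prove the two displayed bounds with $(C,L)$ in place of $(C'_n,L'_n)$.

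For the contour function, $C$ is distributed as a simple random walk started at $0$ and stopped at its first hitting time $T_{-(\sigma_n-1)}$ of $-(\sigma_n-1)$ (the unconditioned analogue of the fact recalled in Section~\ref{sec:forests}). Hence $\inf_{s\ge0}C(s)=-(\sigma_n-1)$ deterministically, while by the gambler's ruin identity $\mathbb P\big(\sup_{s\ge0}C(s)\ge a\big)=(\sigma_n-1)/(a+\sigma_n-1)$, so that $\sup_{s\ge0}|C(s)|=O_{\mathbb P}(\sigma_n)$. Since $a_n\gg\sqrt{\sigma_n}$ forces $\sigma_n/a_n^2\to0$, this gives $a_n^{-2}\sup_{s\ge 0}|C(s)|\to0$ in probability, hence $a_n^{-2}C\to0$ in $\mathcal C([0,\infty),\mathbb R)$.

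For the label function, $L$ is the discrete Brownian snake driven by the reflected process $C-\underline C$, where $\underline C(s)=\min_{[0,s]}C$ records the floor level (so $C-\underline C$ returns to $0$ at the root of every tree and $L$ restarts at $0$ there, consistently with the labels vanishing on the floor and being independent from tree to tree); in particular $\mathbb E[(L(j)-L(j'))^2\mid C]=\tfrac23\,d_C(j,j')$. By the scaling limits for forests of critical Galton--Watson trees (the forest version of~\cite[Section~2]{LGMi}), the rescaled pair $\big(\sigma_n^{-1}C(\lfloor\sigma_n^2\,\cdot\rfloor),\,\sigma_n^{-1/2}L(\lfloor\sigma_n^2\,\cdot\rfloor)\big)$ is tight in $\mathcal C([0,\infty),\mathbb R)^2$, its only possible limit being a reflected Brownian motion killed when its running infimum reaches $-1$ together with the Brownian snake it drives; moreover the rescaled length $T_{-(\sigma_n-1)}/\sigma_n^2$ of the support is tight and converges to an a.s.\ finite limit. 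Since rescaling time does not change the value of a supremum, $\sigma_n^{-1/2}\sup_{s\ge0}|L(s)|$ equals the supremum of the rescaled label process, which is therefore $O_{\mathbb P}(1)$ (split according to whether the rescaled support length exceeds a large constant $T$, and on the complementary event use that the supremum over $[0,T]$ converges to that of a process with a.s.\ finite supremum). Hence $\sup_{s\ge0}|L(s)|=O_{\mathbb P}(\sqrt{\sigma_n})$, and as $a_n\gg\sqrt{\sigma_n}$ we obtain $a_n^{-1}\sup_{s\ge0}|L(s)|\to0$ in probability, so $a_n^{-1}L\to0$ in $\mathcal C([0,\infty),\mathbb R)$. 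Transferring both bounds back to $(C'_n,L'_n)$ through the total-variation estimate finishes the proof.

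The main obstacle is the label bound: one must rule out that a few atypically tall trees among the $\sigma_n-1$ remaining ones carry labels much larger than $\sqrt{\sigma_n}$. The tightness argument above handles the contribution of all the trees simultaneously and is the cleanest route; a hands-on alternative --- estimating $\mathbb P\big(\sup_{v\in\tau_i}|\ell(v)|\ge x\big)$ for a single critical geometric Galton--Watson tree, balancing the tail $\mathbb P\big(\mathrm{height}(\tau_i)\ge x^2\big)\asymp x^{-2}$ against the Gaussian fluctuations of the labels along a deep branch, and taking a union bound over the trees --- also works, but requires care to avoid the spurious logarithmic factor that a crude union bound over the vertices of a tall tree would produce.
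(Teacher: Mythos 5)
Your proof is correct and follows essentially the same route as the paper: reduce to a forest of i.i.d.\ critical geometric Galton--Watson trees via Lemmas~\ref{lem:GW1} and~\ref{lem:GW2}, then invoke the known scaling limit of the contour pair of such a forest (Donsker for the contour, the Brownian-snake limit of~\cite{LGMi} for the labels) and conclude from $a_n\gg\sqrt{\sigma_n}$. Two small refinements you make are genuinely clean: you use the gambler's ruin identity to bound $\sup C$ directly instead of routing through Donsker, and you explicitly extract the global $\sup$-bounds $\sup|C'_n|=O_{\mathbb P}(\sigma_n)$, $\sup|L'_n|=O_{\mathbb P}(\sqrt{\sigma_n})$ (rather than mere compact-open convergence), which is what is actually consumed in~\eqref{eq:BP-3} and makes that deduction airtight; you correctly handle the passage from compact-open convergence to a global $\sup$ by combining it with tightness of $T_{-(\sigma_n-1)}/\sigma_n^2$. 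One notational slip: since the label process restarts at $0$ at each tree root, its conditional covariance is $\tfrac23\,d_{C-\underline C}(j,j')$ rather than $\tfrac23\,d_C(j,j')$ (you say as much in words, so this is cosmetic).
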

\begin{proof}
  Let $(\tilde{\tau}_i,(\tilde{\ell}_i(u))_{u\in\tau_i})$,
  $i=0,\ldots,\sigma_n-2$, be a sequence of $\sigma_n-1$ uniformly labeled
  critical geometric Galton-Watson trees. Consider the forest
  $\tilde{\f}_n=(\tilde{\tau}_0,\ldots,\tilde{\tau}_{\sigma_n-2})$ together
  with the labeling $\tilde{\la}_n$ given by $\tilde{\la}_n\restriction
  V(\tilde{\tau_i})=\tilde{\ell}_i$, for all $i$.  Let
  $(\tilde{C}_n,\tilde{L}_n)$ denote the contour pair associated to
  $(\tilde{\f}_n,\tilde{\la}_n)$, continuously extended to $[0,\infty)$
  outside $[0,2\sum_{i=0}^{\sigma_n-2}|\tilde{\tau}_i|+\sigma_n-1]$ as
  described above.

By Lemma~\ref{lem:GW2}, we can for each $\eps>0$ couple the pairs
$(C_n',L_n')$ and $(\tilde{C}_n,\tilde{L}_n)$ on the same probability space
such that with probability at least $1-\eps$, we have the equality
$$
(C'_n,L'_n)=(\tilde{C}_n,\tilde{L}_n)
$$
as elements of $\mathcal{C}([0,\infty),\mathbb{R})^2$, provided $n$ is
sufficiently large. Our claim therefore follows if 
\begin{equation}
\label{eq:BP-3a}
\left(\frac{1}{a^2_n}\tilde{C}_n,\frac{1}{a_n}\tilde{L}_n\right)\xrightarrow[n\to
  \infty]{(p)} (0,0).
\end{equation}
From Section~\ref{sec:forests}, we know that the law of $\tilde{C}_n$
agrees with that of a simple random walk started from $0$ and stopped upon
hitting $-(\sigma_n-1)$, with linear interpolation between integer
values. Donsker's invariance principle thus shows that
$((1/\sigma_n)\tilde{C}_n(\sigma_n^2t),t\geq 0)$ converges in distribution
to a standard Brownian motion $(B_{t\wedge T_{-1}}, t\geq 0)$ stopped upon
hitting $-1$. Arguments like in~\cite[Proof of Theorem 4.3]{LGMi} then
imply convergence of the finite-dimensional laws on
$\mathcal{C}([0,\infty),\R)^2$ of the tuple
$((1/\sigma_n)\tilde{C}_n(\sigma_n^2\cdot),(1/\sqrt{\sigma_n})\tilde{L}_n(\sigma_n^2\cdot))$,
and tightness of the second component follows {\it via} Kolmogorov's
criterion from moment bounds on $\tilde{C}_n$ as in~\cite[Lemma
2.3.1]{LGMi} (in our case, these bounds are in fact easier to establish,
since we consider an unconditioned random walk). We do not repeat the
arguments here, but refer the reader to~\cite{LGMi} or~\cite[Section
5]{Be1} for more details. We obtain the convergence in distribution
$$
\left(\frac{1}{\sigma_n}\tilde{C}_n(\sigma^2_n\cdot),\frac{1}{\sqrt{\sigma_n}}\tilde{L}_n(\sigma^2_n\cdot)\right)
\xrightarrow[n \to \infty]{(d)} \left(B_{\cdot\wedge T_{-1}}, Z\right)\quad\textup{in }\mathcal{C}([0,\infty),\mathbb{R})^2,
$$
where $Z=(Z_t,t\geq 0)$ is the Brownian snake driven by $(B_{t\wedge
  T_{-1}}, t\geq 0)$. Since $a^2_n\gg \sigma_n$, this last result implies
clearly~\eqref{eq:BP-3a} and hence the assertion of the lemma.
\end{proof}

Going back to~\eqref{eq:BP-3}, it remains to show that for
$\eps>0$, $F:\mathbb{K}\rightarrow\mathbb{R}$ continuous and bounded and $n\geq n_0$,
\begin{equation}
\label{eq:BP-4}
\left|\E\left[F\left(B_r\left(a_n^{-1}\cdot
        \hat{Q}_n\right)\right)\right]-\E\left[F(B_r(\BP))\right]\right|\leq \eps.
\end{equation}
Let $\delta>0$. We estimate
\begin{align*}
\lefteqn{\left|\E\left[F\left(B_r\left(a_n^{-1}\cdot
          \hat{Q}_n\right)\right)\right]-\E\left[F(B_r(\BP))\right]\right|
\leq 2\sup|F|\,\P\left(|\tau_{\ast}^{(n)}|\leq \delta n\right)}\\
&\quad + \sum_{k=\lceil
  \delta n\rceil}^n\P\left(|\tau_{\ast}^{(n)}|=k\right)\left|\E\left[F\left(B_r\left(a_n^{-1}\cdot
          \hat{Q}_n\right)\right)\,\Big|\,|\tau_{\ast}^{(n)}|=k\right]-\E\left[F(B_r(\BP))\right]\right|.
\end{align*}
For $\delta=\delta(F,\eps)>0$ small and $n=n(\delta,\eps)\in\N$
sufficiently large, we have by Lemma~\ref{lem:GW1}
$$
2\sup|F|\,\P\left(|\tau_{\ast}^{(n)}|\leq \delta n\right) \leq \eps/2.
$$
Concerning the summands in the second term, we note that conditionally on
$|\tau_{\ast}^{(n)}|=k$, the quadrangulation $\hat{Q}_n^{\sigma_n}$ is
uniformly distributed among all quadrangulations in $\cQ_k^{1}$, i.e., those
with $k$ inner faces and a boundary of size $2$. Removing the only edge of the
boundary which is not the root edge, we obtain a quadrangulation uniformly
distributed among all quadrangulations with $k$
faces and no boundary. Clearly, the removal of this edge does not change
the underlying metric space. By~\cite[Theorem
2]{CuLG}, we therefore get for $k\geq \lceil \delta n\rceil$ and
$n$ sufficiently large, recalling that $a_n \ll n^{1/4}$, 
$$\left|\E\left[F\left(B_r\left(a_n^{-1}\cdot
          \hat{Q}_n)\right)\right)\,\Big|\,|\tau_{\ast}^{(n)}|=k\right]-\E\left[F(B_r(\BP))\right]\right|\leq
  \eps/2.
$$
This shows~\eqref{eq:BP-4} and hence~\eqref{eq:BP-2}.
\end{proof}

\subsection{Coupling Brownian disk \& half-planes
  (Theorem~\ref{thm:coupling-BD-BHP} and Corollary~\ref{cor:topology-BHP})}
\label{sec:proof-coupling-BD-BHP}
Let us first show how Corollary~\ref{cor:topology-BHP} follows from
Theorem~\ref{thm:coupling-BD-BHP}.
\begin{proof}[Proof of Corollary~\ref{cor:topology-BHP}]
  Theorem~\ref{thm:coupling-BD-BHP} implies that with probability $1$, for
  every $r>0$, the ball $B_r(\BHP_\theta)$ is included in an open set of
  $\BHP_\theta$ homeomorphic to $\overline{\mathbb{H}}$. This shows that
  $\BHP_\theta$ is a simply connected topological surface with a boundary,
  and that this boundary is connected and non-compact: it must therefore be
  homeomorphic to $\R$. We construct a surface $S$ without boundary by
  gluing a copy $H$ of the closed half-plane $\overline{\mathbb{H}}$ to
  $\BHP_\theta$ along the boundary. This non-compact surface is still
  simply connected by van Kampens' Theorem, and in particular, it is
  one-ended. Therefore, it must be homeomorphic to $\R^2$,
  see~\cite{Ri}. Now if $\phi$ is a homeomorphism from the boundary of
  $\BHP_\theta$ to $\R$, then the Jordan-Schoenflies Theorem (in fact a
  simple variation of the latter) implies that $\phi$ can be extended to a
  homeomorphism $\overline{\phi}$ from $S$ to $\R^2$, and the two halves
  $\BHP_\theta$ and $H$ of $S$ must be sent {\it via} $\overline{\phi}$ to
  the two half-spaces $\overline{\mathbb{H}}$ and
  $-\overline{\mathbb{H}}$. In particular, $\overline{\phi}$ induces a
  homeomorphism from $\BHP_\theta$ to a closed half-plane, as wanted.
 \end{proof}
 We turn to Theorem~\ref{thm:coupling-BD-BHP}, and in this regard, we begin
 with proving the following weaker statement (compare with Proposition 4
 of~\cite{CuLG} for the Brownian map and plane).
\begin{prop}
  \label{prop:isometry-BD-BHP}
  Let $\eps>0$, $r\geq 0$. Let
  $\sigma(\cdot):(0,\infty)\rightarrow(0,\infty)$ be a function satisfying
  $\lim_{T\rightarrow\infty}\sigma(T)/T=\theta\in[0,\infty)$ and
  $\liminf_{T\rightarrow\infty}\sigma(T)/\sqrt{T}>0$. Then there exists
  $T_0=T_0(\eps,r,\sigma)$ such that for all $T\geq T_0$, one can construct
  copies of $\BD_{T,\sigma(T)}$ and $\BHP_\theta$ on the same probability
  space such that with probability at least $1-\eps$, the balls
  $B_r(\BD_{T,\sigma(T)})$ and $B_r(\BHP_\theta)$ of radius $r$ around the
  respective roots are isometric.
\end{prop}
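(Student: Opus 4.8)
The plan is to adapt to the boundary setting the strategy used by Curien and Le Gall for the Brownian map and the Brownian plane (\cite[Proposition 4]{CuLG}). Under either of the two laws the space is $(M_{X,W},D_{X,W},\rho)$ with $\rho=p_{X,W}(0)$ and $W=\sqrt 3\,\gamma_{-\underline X}+Z^{X-\underline X}$; the only differences are that for $\BD_{T,\sigma(T)}$ the process $X$ is a first-passage Brownian bridge from $0$ to $-\sigma(T)$ of duration $T$ and $\gamma$ is a Brownian bridge of duration $\sigma(T)$, whereas for $\BHP_\theta$ we have $(X_t,t\ge 0)$ a Brownian motion with drift $-\theta$, $(X_{-t},t\ge 0)=\Pi(X')$ for an independent copy $X'$ of it, and $\gamma$ a two-sided Brownian motion. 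Recall that in $\BD_{T,\sigma(T)}$ one has $D_{X,W}(0,T)=0$, so the boundary coordinate $-\underline X$ is naturally read on the circle $\R/\sigma(T)\Z$ with $\rho$ at coordinate $0$, and the part of the encoding ``near $\rho$'' is $(X,W)$ on $[0,\tau_\Lambda]\cup[\tau_{\sigma(T)-\Lambda},T]$ where $\tau_x=\inf\{s:X_s=-x\}$.

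\emph{Step 1: the ball is a function of a bounded window.} The main task is to produce a deterministic $\Lambda=\Lambda(\eps,r)$, independent of $T$, such that for every $T\ge T_0$ the ball $B_r(\BD_{T,\sigma(T)})$ is, with probability at least $1-\eps/3$, isometric to a fixed measurable functional of $(X,W)$ restricted to $[0,\tau_\Lambda]\cup[\tau_{\sigma(T)-\Lambda},T]$, and similarly $B_r(\BHP_\theta)$ is with probability at least $1-\eps/3$ isometric to the same functional of $(X,W)$ restricted to $\{s:-\underline X_s\le\Lambda\}$. This needs two estimates: a lower bound in the spirit of the continuum cactus bound and of \cite[Lemma 23]{Be3} (compare \eqref{eq:cactus1}--\eqref{eq:cactus2}), showing that $p_{X,W}(s)\in B_r$ forces $-\underline X_s$ to be within $\Lambda$ of $0$ or of $\sigma(T)$ outside an event of probability $<\eps/6$ uniformly in $T$ — here one uses that along the boundary the label process equals $\sqrt3\,\gamma$, whose oscillation over a boundary-window of fixed length is tight uniformly in $T$, and that the oscillation of the snake over the trees grafted there is controlled as well; and a geodesic-confinement statement ensuring that the intrinsic metric of $B_r$ is recovered from that window (no geodesic from $\rho$ to a point of $B_r$ needs to leave it). This is the continuum analogue of \cite[Proposition 4]{CuLG} and I expect it to be the crux.

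\emph{Step 2: coupling the encodings on the window.} With $\Lambda$ fixed it is enough to couple the two encoding pairs so that they coincide on (a neighbourhood of) $\{-\Lambda\le\text{boundary coordinate}\le\Lambda\}$, read cyclically for the disk, with probability $\to1$ as $T\to\infty$. I would treat three pieces. (i) Near coordinate $0$: on the event $\{\min_{[0,A]}X>-\sigma(T)\}$, of probability $\to1$, the restriction to $[0,A]$ of the first-passage bridge from $0$ to $-\sigma(T)$ of duration $T$ has density $f_{T-A}(X_A+\sigma(T))/f_T(\sigma(T))$ with respect to Brownian motion, where $f_s(y)=y(2\pi s^3)^{-1/2}e^{-y^2/2s}$; a short Gaussian estimate using $\sigma(T)/T\to\theta$ shows this converges, uniformly on compact sets of paths, to $\exp(-\theta X_A-\tfrac12\theta^2A)$, the Girsanov density of drift $-\theta$, so by Scheffé's lemma the laws converge in total variation and can be coupled to agree on $[0,A]$ with probability $\to1$. (ii) Near coordinate $\sigma(T)$: time-reversal of first-passage bridges identifies $(X_{T-t}+\sigma(T),0\le t\le T)$ with a three-dimensional Bessel bridge from $0$ to $\sigma(T)$ of duration $T$, whose restriction to $[0,A]$ converges in total variation, by a similar density-ratio computation using $\sigma(T)/T\to\theta$ and $\sigma(T)\to\infty$, to the law of $\Pi(X')$ on $[0,A]$, matching the left half of $\BHP_\theta$. (iii) The process $\gamma$: since $\sigma(T)\to\infty$, a Brownian bridge of duration $\sigma(T)$ observed near its two endpoints, concatenated across $0\equiv\sigma(T)$, converges to a two-sided Brownian motion and can be coupled to agree with the $\gamma$ of $\BHP_\theta$ on $[-\Lambda,\Lambda]$ with probability $\to1$. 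Finally, conditionally on the $X$'s agreeing on the relevant parameter window, $X-\underline X$ agrees there too (in both spaces the infimum over the complement of the window is attained far away), so the snakes $Z^{X-\underline X}$ can be built equal on the window, and hence so can $W$.

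\emph{Step 3 and main obstacle.} On the intersection of the events of Steps 1 and 2, which has probability at least $1-\eps$ once $T\ge T_0(\eps,r,\sigma)$, the encoding pairs of $\BD_{T,\sigma(T)}$ and $\BHP_\theta$ coincide on a window carrying all the data that determine $B_r$ in each space, so $B_r(\BD_{T,\sigma(T)})$ and $B_r(\BHP_\theta)$ are isometric. The hard part is Step 1: whereas the estimates of Step 2 are soft once the correct local limits are pinned down, showing that the ball of radius $r$ about a boundary point is a measurable function of the encoding on a window bounded uniformly in $T$, together with the geodesic-confinement, requires real work with the metric $D_{X,W}$; a secondary delicate point is the time-reversal and Pitman identification underlying piece (ii) of Step 2.
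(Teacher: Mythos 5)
Your plan follows the same route as the paper: establish a coupling of the encoding processes on a fixed window near the boundary via an absolute-continuity computation, then use cactus-type bounds to show that $B_r$ in each space is determined by that window and is the same measurable functional of the coupled data. Your density calculations in Step~2 are correct as far as they go, and the identification of the time-reversed side with a Bessel bridge and then with the Pitman transform (via~\cite{PiRo}) is exactly what the paper uses; Step~1 is carried out in the paper's Lemmas~\ref{lem:DBD} and~\ref{lem:DH} in essentially the form you anticipate.

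There is, however, a gap in Step~2 that is not as ``soft'' as you describe. You treat pieces~(i) and~(ii) as two separate total-variation convergences, but in $\BD_{T,\sigma(T)}$ the two ends $\left((F_t)_{0\leq t\leq A}\right)$ and $\left((F_{T-t})_{0\leq t\leq A}\right)$ of the first-passage bridge are \emph{not} independent, whereas in $\BHP_\theta$ the two halves $(X_t)_{t\geq 0}$ and $(X_{-t})_{t\geq 0}=\Pi(X')$ are independent by definition. Two marginal couplings do not combine into a joint coupling: you must in addition show that the \emph{joint} law of the two bridge-ends converges to the product of the two limits. The paper resolves this by computing the joint density $\varphi_{T,\alpha,\beta}$ of the pair with respect to the product of (drifted Brownian motion, Pitman transform) --- see Lemma~\ref{lem:abs-cont-F} --- and this density contains, beyond the two marginal factors you identify, a middle transition-kernel ratio $p^\ast_{T-(\alpha+\beta)}(\cdot,\cdot)/g_T(\sigma(T))$ that must also be shown to tend to~$1$ uniformly on a high-probability set of paths. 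An analogous remark applies to piece~(iii): the two ends of the Brownian bridge $\gamma$ of duration $\sigma(T)$ have to be coupled jointly with a two-sided Brownian motion, which is the content of Lemma~\ref{lem:abs-cont-b}. Both additions are mechanical but they are genuinely needed before the conclusion of Step~3 is justified.
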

Before proving Proposition~\ref{prop:isometry-BD-BHP}, we recapitulate for the
reader's convenience in the next section the definitions of $\BD_{T,\sigma}$
and $\BHP_\theta$.

\subsubsection{Brownian half-plane and disk}
\label{sec:recapBHPBD}
In order to ease the reading of the proofs which follow, we use a notation
which differs slightly from that in Section~\ref{sec:def}.  Let
$\sigma(\cdot):(0,\infty)\rightarrow(0,\infty)$ be a perimeter function as
given in the statement of Proposition~\ref{prop:isometry-BD-BHP}. Recall
that $\sigma(\cdot)$ is a function of the volume $T$ of the disk.

For defining the {\bf Brownian disk }$\BD_{T,\sigma(T)}$ of volume $T$ and boundary
length $\sigma(T)$, we consider a contour function $F=(F_t,0\leq t\leq T)$
and a label function $W=(W_t,0\leq t\leq T)$ given as follows.
\begin{itemize}
\item $F$ has the law of a first-passage Brownian
bridge on $[0,T]$ from $0$ to $-\sigma(T)$.
\item Given $F$, the function $W$ has same distribution as 
$(b_{-\underline{F}_t}+Z_t,0\leq t\leq T)$, where
\begin{itemize}
\item $(Z_t,t\in\R)=Z^{F-\underline{F}}$ is
      a continuous modification of the centered Gaussian process with covariances given by
      $$
      \E\left[Z_sZ_t\right]=\min_{[s\wedge t,s\vee t]}F-\underline{F},
      $$  
      with $\underline{F}_t=\inf_{[0,t]}F$.
\item $(b_x,0\leq x\leq \sigma(T))$ is a standard Brownian bridge
  with duration $\sigma(T)$ and scaled by the factor $\sqrt{3}$,
  independent of $Z^{F-\underline{F}}$.
\end{itemize}
\end{itemize}
The pseudo-metrics $d_{F}$ and $d_{W}$
on $[0,T]$ are given by
$$
d_{F}(s,t)=F_s+F_t-2\min_{[s\wedge t,s\vee t]}F,
$$
and
$$ 
d_{W}(s,t)=W_s+W_t-2\max\left(\min_{[s\wedge
    t,s\vee t]}
  W,\,\min_{[0,s\wedge t]\cup[s\vee t,T]}W\right).
$$
We shall write $D$ instead of $D_{F,W}$, i.e.,
$$
D(s,t)=\inf\left\{\sum_{i=1}^kd_{W}(s_i,t_i):\begin{array}{l}
k\geq 1, \, s_1,\ldots,s_k,t_1,\ldots,t_k\in I,s_1=s,t_k=t,\\ 
d_{F}(t_i,s_{i+1})=0\mbox{ for every }i\in \{1,\ldots,k-1\}
    \end{array}
    \right\}\, .
$$
The Brownian disk $\BD_{T,\sigma(T)}$ has the law of the pointed metric
space $([0,T]/\{D=0\},D,\rho)$, with $\rho$ being the equivalence class of
$0$.

\paragraph{}
The {\bf Brownian half-plane }$\BHP_{\theta}$, $\theta\in[0,\infty)$ is
given in terms of contour and label processes $\Xha=(\Xha_t,t\in\R)$ and
$\Wha=(\Wha_t,t\in\R)$ specified as follows:
\begin{itemize}
\item $(\Xha_t,t\geq 0)$ has the law of a one-dimensional Brownian motion
  $B=(B_t,t\geq 0)$ with
  drift $-\theta$ and $B_0=0$, and $(\Xha_{-t},t\geq 0)$ has the law of the Pitman
  transform of an independent copy of $B$.
\item Given $\Xha$, the (label) function $\Wha$ has same distribution as
    $(\gamma_{-\underline{X}_t^{\theta}}+\Zha_t,t\in \R)$, where
    \begin{itemize}
   \item given $\Xha$, $\Zha=(\Zha_t,t\in\R)=Z^{\Xha-\underline{X}^{\theta}}$ is a continuous
     modification of the centered Gaussian process with covariances given by
      $$
      \E\left[\Zha_s\Zha_t\right]=\min_{[s\wedge t,s\vee t]}\Xha-\underline{X}^{\theta},
      $$  
     with $\underline{X}_t^{\theta}=\inf_{[0,t]}\Xha$ for $t\geq 0$, and
     $\underline{X}_t^{\theta}=\inf_{(-\infty,t]}\Xha$ for $t<0$.
   \item $(\gamma_x,x\in \R)$ is a two-sided Brownian motion with
     $\gamma_0=0$ and scaled by the factor $\sqrt{3}$, independent of $\Zha$. 
     \end{itemize}
\end{itemize}
For notational simplicity, we include here the scaling factor
$\sqrt{3}$ already in the definition of $\gamma$. 
The pseudo-metrics
$d_{\Xha}$ and $d_{\Wha}$
on $\mathbb{R}$ are given by
$$
d_{\Xha}(s,t)=\Xha_s+\Xha_t-2\min_{[s\wedge t,s\vee t]}\Xha\quad d_{\Wha}(s,t)=\Wha_s+\Wha_t-2\min_{[s\wedge t,s\vee t]}\Wha,
$$
and we write $D_{\theta}$ instead of $D_{\Xha,\Wha}$, cf.~\eqref{eq:Dfg}, i.e.,
$$
D_{\theta}(s,t)=\inf\left\{\sum_{i=1}^kd_{\Wha}(s_i,t_i):\begin{array}{l}
k\geq 1, \, s_1,\ldots,s_k,t_1,\ldots,t_k\in I,s_1=s,t_k=t,\\ 
d_{\Xha}(t_i,s_{i+1})=0\mbox{ for every }i\in \{1,\ldots,k-1\}
    \end{array}
    \right\}\, .
$$
Then the Brownian half-plane $\BHP_\theta$ has the law of the pointed
metric space $(\R/\{D_{\theta}=0\},D_{\theta},\rho_{\theta})$, with 
$\rho_{\theta}$ being the equivalence class of $0$.

\begin{remark}
  Be aware that all the quantities in the definition of $\BD_{T,\sigma(T)}$
  depend on $T$ or $\sigma(T)$ (like $F,b,W,Z$ or the pseudo-metric $D$). The real
  $T$ measuring the volume will be chosen sufficiently large later on, but
  for the ease of reading, we mostly suppress $T$ from the
  notation.
\end{remark}

\subsubsection{Absolute continuity relation between contour functions}
A key step in proving Proposition~\ref{prop:isometry-BD-BHP} is to relate
the contour function $\Xha$ for $\BHP_\theta$ to the contour function $F$ for
$\BD_{T,\sigma(T)}$, in spirit of~\cite[Proposition 3]{CuLG}.

We fix once for all a perimeter function
$\sigma(\cdot):(0,\infty)\rightarrow(0,\infty)$ as given in the statement of
Proposition~\ref{prop:isometry-BD-BHP}, and let $\theta=\lim_{T\rightarrow\infty}\sigma(T)/T\in[0,\infty)$.

For given $T>0$, which we will choose large enough later on, we let $F$ be
a first-passage Brownian bridge on $[0,T]$ from $0$ to $-\sigma(T)$, $B$ a
one-dimensional Brownian motion on $[0,\infty)$ with drift $-\theta$ and
$B_0=0$, and, by a small abuse of notation, $\Pi$ the Pitman transform of
an independent copy of $B$.

Now assume $\alpha,\beta>0$ with $\alpha+\beta<T$. We consider the pair
$((F_t)_{0\leq t\leq \alpha},(F_{T-t})_{0\leq t\leq \beta})$ as an
element of the space $\mathcal{C}([0,\alpha],\mathbb{R})\times
\mathcal{C}([0,\beta],\mathbb{R})$. We write $(\omega,\omega')$ for the
generic element of this space.

We next introduce some probability kernels. Let $t>0$. For $x\in\mathbb{R}$, the
heat kernel is denoted
$$
p_t(x)=\frac{1}{\sqrt{2\pi t}}\exp\left(-\frac{x^2}{2t}\right).
$$
For $x,y>0$, the transition density of Brownian motion killed upon hitting
$0$ is given by 
$$
p_t^\ast(x,y) = p_t(y-x)-p_t(y+x).
$$
The density of the first hitting time of level $x>0$ of Brownian motion
started at $0$ is
$$
g_t(x) = \frac{x}{t}p_t(x).
$$
The transition density of a three-dimensional Bessel process takes the form
\begin{equation}
\label{eq:kernel-bessel}
r_t(x,y)=\left\{\begin{array}{l@{\quad\mbox{if }}l}
2yg_t(y)&  x=0\\
x^{-1}p_t^{\ast}(x,y)y& x,y>0
\end{array}\right..
\end{equation}
In~\cite[Theorem 1]{PiRo}, Pitman and Rogers show that the Pitman transform
of a one-dimensional Brownian motion with drift $-\theta$ has the law of the
radial part of a three-dimensional Brownian motion with a drift of
magnitude $\theta$. In particular, if $\theta=0$, it has the law of a
three-dimensional Bessel process, and for all $\theta \geq 0$, it is a
transient process. In~\cite[Theorem 3]{PiRo}, it is moreover shown that its
transition density is given by
\begin{equation}
\label{eq:kernel-qttheta}
q_t^{(\theta)}(x,y)= \exp\left(-(t/2)\theta^2\right)h^{-1}(x\theta)r_t(x,y)h(y\theta),
\end{equation}
where 
$$
h(x)=\left\{\begin{array}{l@{\quad\mbox{if }}l}
x^{-1}\sinh x&  x>0\\
1& x=0
\end{array}\right..
$$

Recall that $\sigma(\cdot)$ satisfies
$\lim_{T\rightarrow\infty}\sigma(T)/T=\theta\in[0,\infty)$,
$\liminf_{T\rightarrow\infty}\sigma(T)/\sqrt{T}>0$.
\begin{lemma}
\label{lem:abs-cont-F}
In the notation from above, the law of 
$$
\left((F_t)_{0\leq t\leq \alpha},(F_{T-t})_{0\leq t\leq \beta}\right)
$$
is absolutely continuous with respect to the law of
$$
\left((B_t)_{0\leq t\leq \alpha},(\Pi_t-\sigma(T))_{0\leq t\leq \beta}\right),
$$
with density given by the function
$$\varphi_{T,\alpha,\beta}(\omega,\omega') = \mathbbm{1}_{\{\omega_s >
  -\sigma(T)\textup{ for
  }s\in[0,\alpha]\}}(\omega)\,\,\frac{p^\ast_{T-(\alpha+\beta)}(\omega_\alpha
  +\sigma(T),
  \omega'_\beta+\sigma(T))}{2(\omega'_\beta+\sigma(T))g_T(\sigma(T))}\,\frac{\exp\left(\omega_\alpha\theta+\frac{\alpha+\beta}{2}\theta^2\right)}{h((\omega'_\beta+\sigma(T))\theta)}.
$$
Moreover, with $\P_{\alpha,\beta}$ denoting the joint (product) law of
$((B_t)_{0\leq t\leq \alpha},(\Pi_t)_{0\leq t\leq \beta})$, the following
holds true: For each $\eps >0$, there exists $T_0>0$ and a
measurable set $E=E(\eps,T_0)\subset
\mathcal{C}([0,\alpha],\mathbb{R})\times \mathcal{C}([0,\beta],\mathbb{R})$
with $\P_{\alpha,\beta}(E)\geq 1-\eps$ such that for $T\geq
T_0$,
$$
\sup_{(\omega,\omega'+\sigma(T))\in E}\left|\varphi_{T,\alpha,\beta}(\omega,\omega')-1\right| \leq \eps.
$$
\end{lemma}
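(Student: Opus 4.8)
The plan is to first derive the density formula by a Markov-property decomposition of the first-passage bridge, and then to expand $\varphi_{T,\alpha,\beta}$ factor by factor to get the asymptotics. Throughout I work with the shifted process $\tilde F=F+\sigma(T)$, which is a Brownian motion started at $\sigma(T)$ conditioned to hit $0$ for the first time exactly at time $T$; its finite-dimensional densities are the usual products of the killed kernel $p^\ast_t$ with a terminal first-hitting factor $g_{T-t_k}$. Fix $T$ with $\alpha+\beta<T$ and pin $\tilde F$ at time $\alpha$, with value $u=\omega_\alpha+\sigma(T)$. By the (time-inhomogeneous) Markov property, conditionally on $\{\tilde F_\alpha=u\}$ the segment $\tilde F|_{[0,\alpha]}$ is a Brownian bridge from $\sigma(T)$ to $u$ killed at $0$, independent of $\tilde F|_{[\alpha,T]}$, which is a first-passage bridge from $u$ to $0$ of length $T-\alpha$; by the classical time reversal of first-passage bridges onto three-dimensional Bessel bridges, the reversed right segment $(\tilde F_{T-t})_{0\le t\le\beta}$ is then the restriction to $[0,\beta]$ of a Bessel bridge from $0$ to $u$ of length $T-\alpha$. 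Finally, the density of the pinning value is $\phi_F(u)=p^\ast_\alpha(\sigma(T),u)\,g_{T-\alpha}(u)/g_T(\sigma(T))$.

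Next I would write $\P_{\alpha,\beta}$ in the same disintegrated form. Conditioning on the endpoint removes the drift of $B$, so its left marginal pinned at $u=\omega_\alpha+\sigma(T)$ is again a Brownian bridge from $\sigma(T)$ to $u$, with endpoint density $\phi^\ell(u)=p_\alpha(u-\sigma(T))\,e^{-\theta(u-\sigma(T))-\theta^2\alpha/2}$ by Girsanov; and its right marginal $\Pi|_{[0,\beta]}$ is independent of $u$ and, by the Pitman–Rogers kernel~\eqref{eq:kernel-qttheta} (with $h(0)=1$), has the law of a plain three-dimensional Bessel process started at $0$ weighted by the martingale density $e^{-\theta^2\beta/2}h(v\theta)$, where $v=\omega'_\beta+\sigma(T)$ is the value at time $\beta$. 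Putting these together, the Radon–Nikodym derivative equals $\phi_F(u)/\phi^\ell(u)$, times the killing correction $\mathbbm 1_{\{\tilde F_s>0,\,0\le s\le\alpha\}}\,p_\alpha(u-\sigma(T))/p^\ast_\alpha(\sigma(T),u)$, times the density of the restricted Bessel bridge with respect to $\Pi|_{[0,\beta]}$, namely $\big(r_{T-\alpha-\beta}(v,u)/r_{T-\alpha}(0,u)\big)\,e^{\theta^2\beta/2}/h(v\theta)$. Using $r_t(0,w)=2wg_t(w)$ and $r_t(w,w')=(w'/w)p^\ast_t(w,w')$ from~\eqref{eq:kernel-bessel}, the factors $p^\ast_\alpha$, $p_\alpha$, $g_{T-\alpha}$ all cancel, and after substituting $u=\omega_\alpha+\sigma(T)$, $v=\omega'_\beta+\sigma(T)$ (so that $\{\tilde F_s>0,\,s\le\alpha\}$ becomes $\{\omega_s>-\sigma(T),\,s\le\alpha\}$) one is left with exactly $\varphi_{T,\alpha,\beta}$; integrating this identity against $\P_{\alpha,\beta}$ confirms that $\varphi_{T,\alpha,\beta}$ is a genuine probability density.

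For the asymptotic statement, the crucial point is that under $\P_{\alpha,\beta}$ the quantities $\omega_\alpha$, $\inf_{[0,\alpha]}\omega$ and $\omega'_\beta+\sigma(T)$ (which equals $\Pi_\beta$) are $T$-independent random variables, so one may take $E=\{|\omega_\alpha|\le M,\ \inf_{[0,\alpha]}\omega\ge -M,\ \omega'_\beta+\sigma(T)\in[\delta,M]\}$ with $M$ large and $\delta\in(0,1)$ small, which has $\P_{\alpha,\beta}$-probability at least $1-\eps$. On such $(\omega,\omega')$ the indicator in $\varphi$ is $1$ once $\sigma(T)>M$, and $x:=\omega_\alpha+\sigma(T)\sim\sigma(T)\to\infty$ while $y:=\omega'_\beta+\sigma(T)$ stays in the compact interval $[\delta,M]$. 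Writing $p^\ast_{T-\alpha-\beta}(x,y)=\tfrac{2}{\sqrt{2\pi(T-\alpha-\beta)}}\,e^{-(x^2+y^2)/2(T-\alpha-\beta)}\,\sinh\!\big(xy/(T-\alpha-\beta)\big)$, expanding $g_T(\sigma(T))=\tfrac{\sigma(T)}{T}p_T(\sigma(T))$, and using $\sigma(T)/T\to\theta$, one checks (treating $\theta>0$ and $\theta=0$ separately for the last factor) that $\sqrt{T/(T-\alpha-\beta)}\to 1$, that $\tfrac{T}{y\,\sigma(T)}\cdot\sinh\!\big(xy/(T-\alpha-\beta)\big)/h(y\theta)\to 1$, and that the leftover Gaussian exponent $\sigma(T)^2/2T-(x^2+y^2)/2(T-\alpha-\beta)$ tends to $-\theta\omega_\alpha-\theta^2(\alpha+\beta)/2$; this limiting exponential is cancelled precisely by the factor $\exp(\omega_\alpha\theta+\tfrac{\alpha+\beta}{2}\theta^2)$ in $\varphi$. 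All error terms are $o(1)$ uniformly over $E$, whence $\sup_E|\varphi_{T,\alpha,\beta}-1|\le\eps$ for $T$ large.

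The step I expect to be the main obstacle is the identification of the time-reversed right segment of $F$ with a Bessel bridge and its matching with the law of $\Pi$ through Pitman–Rogers, together with the ensuing bookkeeping needed for the product of $\phi_F$, $\phi^\ell$, the killing correction and the kernels $h,g,p^\ast,r$ to collapse to the stated $\varphi$. A secondary point that is easy to get wrong is the asymmetry in the asymptotics: the pinning value $x$ grows like $\sigma(T)$ while $y=\omega'_\beta+\sigma(T)=\Pi_\beta$ stays bounded, so $y$ must not be treated as comparable to $\sigma(T)$, and it is this that forces the restriction $\omega'_\beta+\sigma(T)\in[\delta,M]$ in the choice of $E$.
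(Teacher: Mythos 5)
Your proof is correct, and it reaches the stated density by a route that is genuinely different from the paper's, so a brief comparison is in order. The paper writes out the finite-dimensional densities of $(F_{t_1},\dots,F_{t_p},F_{T-t'_q},\dots,F_{T-t'_1})$ directly from the change-of-measure formula~\eqref{eq:fpb} for the first-passage bridge (yielding the kernels $G$, $H$ and the coupling factor $p^\ast_{T-(\alpha+\beta)}/g_T$), and similarly writes the finite-dimensional densities of $(B,\Pi)$ from Girsanov and the Pitman--Rogers kernel~\eqref{eq:kernel-qttheta}; the RN derivative is then read off by dividing. You instead pin $\tilde F=F+\sigma(T)$ at time $\alpha$, use the Markov property to split into a killed Brownian bridge on $[0,\alpha]$ and a first-passage bridge on $[\alpha,T]$, invoke the Bertoin--Chaumont--Pitman time reversal to identify the reversed right segment as a Bessel bridge, and then form the RN derivative as a product of the pinning-density ratio $\phi_F/\phi^\ell$, the killing correction $\mathbbm 1\cdot p_\alpha/p^\ast_\alpha$, and the Bessel-bridge/Pitman-kernel ratio. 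I verified that your algebra collapses to the stated $\varphi_{T,\alpha,\beta}$ using $r_t(0,w)=2wg_t(w)$, $r_t(w,w')=(w'/w)p^\ast_t(w,w')$ and the symmetry $p^\ast_t(u,v)=p^\ast_t(v,u)$. Your route makes the structural ingredients (Markov pinning, first-passage/Bessel time reversal, Girsanov, Pitman--Rogers) explicit at the cost of slightly more bookkeeping; the paper's is more elementary and avoids conditioning. For the asymptotic statement both proofs proceed the same way --- restrict to a set $E$ of paths with $\omega_\alpha$ and $\omega'_\beta+\sigma(T)=\Pi_\beta$ bounded (and bounded away from $0$ if you like, though $p^\ast_t(x,y)/y$ is already regular as $y\to0$ so the lower bound $\delta$ is not strictly necessary), then expand $p^\ast$ via the $\sinh$ identity and $g_T$ via the explicit heat kernel; your calculation of the surviving Gaussian exponent $\sigma(T)^2/2T-(x^2+y^2)/2(T-\alpha-\beta)\to -\theta\omega_\alpha-(\alpha+\beta)\theta^2/2$ matches the cancellation the paper describes (without spelling it out) as a ``straightforward but somewhat tedious calculation.''
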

Note that $\varphi_{T,\alpha,\beta}$ depends on the second
  coordinate $\omega'$ only through its endpoint $\omega'_\beta$. 
\begin{proof}
  We show that the finite-dimensional distributions of $F$ agree with
  those
  of $$\varphi_{T,\alpha,\beta}(\omega,\omega')\P_{\alpha,\beta}(\textup{d}\omega,\textup{d}\omega').$$
  Note that the law of the first-passage Brownian bridge $F$ is
  specified by $F_T=-\sigma(T)$ and
\begin{equation}
\label{eq:fpb}
\E\left[f\left((F_t)_{0\leq t\leq T'}\right)\right] 
= \E\left[f\left((\tilde{\gamma})_{0\leq t\leq
      T'}\right)\mathbbm{1}_{\{\underline{\tilde{\gamma}}_{T'}>-\sigma(T)\}}\frac{g_{T-T'}(\tilde{\gamma}_{T'}+\sigma(T))}{g_{T}(\sigma(T))}\right]
\end{equation}
for all $0\leq T'<T$ and all functions $f\in
\mathcal{C}([0,T'],\mathbb{R})$, where $\tilde{\gamma}$ is a standard
one-dimensional Brownian motion started from zero (without drift).

Let us next simplify notation. For $x\in\mathbb{R}$, write
$\tilde{x}=x+\sigma(T)$. For $0<t_1<t_2<\dots<t_p$ and $x_1,\dots,x_p>-\sigma(T)$, let
$$
G_{t_1,\dots,t_p}(x_1,\dots,x_p)=p_{t_1}^\ast(\sigma(T),\tilde{x}_1)p_{t_2-t_1}^\ast(\tilde{x}_1,\tilde{x}_2)\dots
p_{t_p-t_{p-1}}^\ast(\tilde{x}_{p-1},\tilde{x}_p).
$$
For $0<t'_1<t'_2<\dots<t'_q$ and $x_{p+1},\dots,x_{p+q}>-\sigma(T)$, let
$$
H_{t'_1,\dots,t'_q}(x_{p+q},\dots,x_{p+1})=g_{t'_1}(\tilde{x}_{p+q})p_{t'_2-t'_1}^\ast(\tilde{x}_{p+q},\tilde{x}_{p+q-1})\dots
p_{t'_q-t'_{q-1}}^\ast(\tilde{x}_{p+2},\tilde{x}_{p+1}).
$$
Now fix $0<t_1<t_2<\dots<t_p=\alpha$ and $0<t'_1<t'_2<\dots<t'_q=\beta$.
We infer from~\eqref{eq:fpb} that the density of the $(p+q)$-tuple
$(F_{t_1},\dots,F_{t_p},F_{T-t'_q},\dots,F_{T-t'_1})$ is given
by the function
\begin{equation}
\label{eq:abs-cont-F-density}
f_{t_1,\dots,t_p,t'_1,\dots,t'_q}(x_1,\dots,x_{p+q})=
G_{t_1,\dots,t_p}(x_1,\dots,x_p)H_{t'_1,\dots,t'_q}(x_{p+q},\dots,x_{p+1})\cdot
\frac{p^\ast_{T-(\alpha+\beta)}(\tilde{x}_p,\tilde{x}_{p+1})}{g_T(\sigma(T))}.
\end{equation}
From Girsanov's theorem, we know that the finite-dimensional laws 
$(B_{t_1},\dots,B_{t_p})$ of a one-dimensional Brownian motion $B$ with drift
$-\theta$ are absolutely continuous with respect to those of a standard
Brownian motion $\gamma$ without drift, with a density given by
$$
\exp\left(-\theta \gamma_{t_p}-\alpha\theta^2/2\right).
$$
Next, we see from~\eqref{eq:kernel-qttheta} that the
finite-dimensional laws of $(\Pi_{t_1'}-\sigma(T),\dots, \Pi_{t_q'}-\sigma(T))$ have
density
$$
\pi_{t'_1,\dots,t'_q}(x_{p+q},\dots,x_{p+1})=
2\tilde{x}_{p+1}\exp\left(-(\beta/2)\theta^2\right)h(\tilde{x}_{p+1}\theta)H_{t'_1,\dots,t'_q}(x_{p+q},\dots,x_{p+1}),
$$
for $x_{p+q},\dots,x_{p+1}>-\sigma(T)$. By~\eqref{eq:abs-cont-F-density} and
the last two observations, the first claim of the
statement follows. 

For the second, for every $\delta>0$, by continuity of $B$ and $\Pi$, we
can find a constant $K=K(\delta,\alpha,\beta)>0$ such that
\begin{equation}
\label{eq:Aalphabeta}
\P_{\alpha,\beta}\left(\min_{[0,\alpha]}B>-K,\,\max_{[0,\beta]}\Pi<K\right)\geq
1-\delta.
\end{equation}
The second claim now follows from~\eqref{eq:Aalphabeta} and the fact that
for every $\delta'>0$, if $T$ is large enough, we have
\begin{equation}
\label{eq:density1}
\left|\frac{p^\ast_{T-(\alpha+\beta)}(x+\sigma(T),
    y+\sigma(T))}{2(y+\sigma(T))g_T(\sigma(T))}\,\frac{\exp\left(x\theta+\frac{\alpha+\beta}{2}\theta^2\right)}{h((y+\sigma(T))\theta)}
-1\right|\leq \delta'
\end{equation}
uniformly in $x\in\mathbb{R}$ with $|x| \leq K$ and
$y\geq -\sigma(T)$ with $|y+\sigma(T)|\leq K$. The last display in turn
follows from a straightforward but somewhat tedious calculation; we give
some indication for the case
$\lim_{T\rightarrow\infty}\sigma(T)/T=\theta>0$. First, as $T\rightarrow\infty$,
\begin{align*}
  \lefteqn{\frac{p^\ast_{T-(\alpha+\beta)}(x+\sigma(T),
      y+\sigma(T))}{2(y+\sigma(T))g_T(\sigma(T))}\,\frac{\exp\left(x\theta+\frac{\alpha+\beta}{2}\theta^2\right)}{h((y+\sigma(T))\theta)}}\\
  &\sim\left(\frac{\exp\left(-\frac{(y-x)^2}{2(T-(\alpha+\beta))}\right)-\exp\left(-\frac{(x+y+2\sigma(T))^2}{2(T-(\alpha+\beta))}\right)}{\exp\left(-\theta^2T/2\right)}\right)\frac{\exp\left(x\theta+\frac{\alpha+\beta}{2}\theta^2\right)}{2\sinh\left(\theta(y+\sigma(T))\right)}.
\end{align*}
Then, uniformly in $x$ and $y$ as specified above, we find 
$$
\exp\left(-\frac{(y-x)^2}{2(T-(\alpha+\beta))}+\theta^2T/2\right)\sim \exp\left((-x+y+\sigma(T))\theta-(\alpha+\beta)\theta^2/2\right),
$$
and
$$
\exp\left(-\frac{(x+y+2\sigma(T))^2}{2(T-(\alpha+\beta))}+\theta^2T/2\right)
\sim \exp\left((-x-y-\sigma(T))\theta-(\alpha+\beta)\theta^2/2\right).
$$
Putting these three estimates together,~\eqref{eq:density1} follows. The
case $\lim_{T\rightarrow\infty}\sigma(T)/T=0$ with $\liminf_{T\rightarrow\infty}\sigma(T)/\sqrt{T}>0$
is similar but easier (note that the expression for
$\varphi_{T,\alpha,\beta}$ simplifies when $\theta=0$).
\end{proof}

We need a similar absolute continuity property for the Brownian bridge
$b$ on $[0,\sigma(T)]$ from $0$ to $0$ with respect to two
independent linear Brownian motions $\gamma$ and $\gamma'$ scaled by $\sqrt{3}$.  Let $\alpha,\beta>0$
such that $\alpha+\beta<\sigma(T)$.
\begin{lemma}
\label{lem:abs-cont-b}
The law of 
$$
\left((b_t)_{0\leq t\leq \alpha},(b_{L-t})_{0\leq t\leq \beta}\right)
$$
is absolutely continuous with respect to the law of
$$
\left((\gamma_t)_{0\leq t\leq \alpha},(\gamma'_{t})_{0\leq t\leq \beta}\right),
$$
with density given by the function
$$\tilde{\varphi}_{T,\alpha,\beta}(\omega,\omega') = \frac{p_{\sigma(T)-(\alpha+\beta)}(\omega'_\beta-\omega_\alpha)}{p_{\sigma(T)}(0)}.
$$
Moreover, with $\P_{\alpha,\beta}$ denoting the joint law of
$((\gamma_t)_{0\leq t\leq \alpha},(\gamma'_{t})_{0\leq t\leq \beta})$, the
following holds true: For each $\eps >0$, there is $T_0>0$ and a
measurable set $E=E(\eps,T_0)\subset \mathcal{C}([0,\alpha],\mathbb{R})\times
\mathcal{C}([0,\beta],\mathbb{R})$ with $\P_{\alpha,\beta}(E)\geq 1-\eps$ such that
for $T\geq T_0$,
$$
\sup_{(\omega,\omega')\in E}\left|\tilde{\varphi}_{T,\alpha,\beta}(\omega,\omega')-1\right| \leq \eps.
$$
\end{lemma}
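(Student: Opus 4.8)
This statement is of the same nature as Lemma~\ref{lem:abs-cont-F}, but considerably simpler: there is no positivity constraint, no Bessel/Pitman transform and no Girsanov drift to track, so the Radon--Nikodym derivative will come out of an elementary Gaussian computation. The plan is to write down the finite-dimensional densities of the bridge $b$ "read simultaneously from its two ends", match them against the product of the finite-dimensional densities of the two Brownian motions $\gamma,\gamma'$, read off $\tilde\varphi_{T,\alpha,\beta}$, and then deduce the uniform convergence to $1$ from the continuity of $\gamma,\gamma'$ together with the fact that $\sigma(T)\to\infty$.

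First I would note that the common scaling by $\sqrt3$ is irrelevant: if $b^0,\gamma^0,(\gamma^0)'$ denote the corresponding standard (unscaled) bridge and Brownian motions, the laws in the statement are the images of the laws of $b^0,\gamma^0,(\gamma^0)'$ under multiplication by $\sqrt3$, so the desired density at $(\omega,\omega')$ equals the unscaled one at $(\omega/\sqrt3,\omega'/\sqrt3)$; since only ratios of heat kernels occur this is a harmless rescaling of variances, and I will carry out the computation directly with $b,\gamma,\gamma'$. Fix $0<t_1<\dots<t_p=\alpha$ and $0<t_1'<\dots<t_q'=\beta$; since $\alpha+\beta<\sigma(T)$ the instants $t_1<\dots<t_p<\sigma(T)-t_q'<\dots<\sigma(T)-t_1'$ are distinct and ordered. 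By the Markov property of the bridge and the standard expression for the finite-dimensional law of a Brownian bridge pinned at $0$ at times $0$ and $\sigma(T)$, the joint density of $\big(b_{t_1},\dots,b_{t_p},b_{\sigma(T)-t_q'},\dots,b_{\sigma(T)-t_1'}\big)$ at $(x_1,\dots,x_p,y_q,\dots,y_1)$ equals
$$
\frac{1}{p_{\sigma(T)}(0)}\Big[p_{t_1}(x_1)\prod_{i=1}^{p-1}p_{t_{i+1}-t_i}(x_{i+1}-x_i)\Big]\,p_{\sigma(T)-(\alpha+\beta)}(y_q-x_p)\,\Big[p_{t_1'}(y_1)\prod_{j=2}^{q}p_{t_j'-t_{j-1}'}(y_{j-1}-y_j)\Big].
$$
The left bracket is precisely the density of $(\gamma_{t_1},\dots,\gamma_{t_p})$ at $(x_1,\dots,x_p)$, and — using that $p_t$ is even, which is exactly what makes the time-reversed increments $t_j'-t_{j-1}'$ match those of a forward Brownian motion — the right bracket is precisely the density of $(\gamma'_{t_1'},\dots,\gamma'_{t_q'})$ at $(y_1,\dots,y_q)$. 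Dividing, the Radon--Nikodym derivative is $p_{\sigma(T)-(\alpha+\beta)}(y_q-x_p)/p_{\sigma(T)}(0)$; since $y_q=b_{\sigma(T)-\beta}$ corresponds to $\omega'_\beta$ and $x_p=b_\alpha$ to $\omega_\alpha$, this is $\tilde\varphi_{T,\alpha,\beta}(\omega,\omega')$, which indeed depends on $\omega'$ only through $\omega'_\beta$. A routine projective-limit/monotone-class argument then promotes this identity of finite-dimensional densities to the asserted absolute continuity of the path laws.

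For the uniform estimate I would use the explicit form
$$
\tilde\varphi_{T,\alpha,\beta}(\omega,\omega')=\Big(\frac{\sigma(T)}{\sigma(T)-(\alpha+\beta)}\Big)^{1/2}\exp\!\Big(-\frac{(\omega'_\beta-\omega_\alpha)^2}{2(\sigma(T)-(\alpha+\beta))}\Big).
$$
Since $\liminf_{T\to\infty}\sigma(T)/\sqrt T>0$ and $T\to\infty$, we have $\sigma(T)\to\infty$, so the prefactor tends to $1$. Given $\eps>0$, continuity of Brownian motion yields $K=K(\eps,\alpha,\beta)$ with
$$
E:=\Big\{\sup_{[0,\alpha]}|\gamma|\le K\Big\}\cap\Big\{\sup_{[0,\beta]}|\gamma'|\le K\Big\},\qquad \P_{\alpha,\beta}(E)\ge 1-\eps;
$$
on $E$ one has $|\omega'_\beta-\omega_\alpha|\le 2K$, so the exponential factor lies in $[\exp(-2K^2/(\sigma(T)-(\alpha+\beta))),1]$, and taking $T_0=T_0(\eps,K,\alpha,\beta)$ large enough forces both factors within $\eps$ of $1$ uniformly on $E$. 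The only points that need any care are the bookkeeping of time increments in the reversed part of the bridge (handled by evenness of $p_t$) and keeping $\omega'_\beta-\omega_\alpha$ bounded on the good set, which is what makes $\tilde\varphi\to1$ uniform; compared with Lemma~\ref{lem:abs-cont-F} there is no genuine obstacle here, as there is no killing term, no $h$-transform coming from the Pitman transform, and no Girsanov exponential to control.
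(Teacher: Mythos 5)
Your proof is correct and takes the same approach as the paper: express the finite-dimensional law of the bridge read from both ends as a product of heat kernels over a reference measure $p_{\sigma(T)}(0)$, recognize the left and right factors as the densities of $\gamma$ and $\gamma'$, and identify the middle factor $p_{\sigma(T)-(\alpha+\beta)}(\omega'_\beta-\omega_\alpha)/p_{\sigma(T)}(0)$ as the Radon--Nikodym derivative, then pass to the uniform estimate on a good set where the endpoints are bounded and $\sigma(T)\to\infty$. The paper merely sketches this (citing the one-sided bridge identity and saying the second part is ``very similar'' to Lemma~\ref{lem:abs-cont-F}), so your version supplies the details, including the clean remark that the common $\sqrt3$ scaling is immaterial; everything checks out.
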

\begin{proof}
The first part is immediate from the fact that the law of the Brownian bridge $b$ is
  specified by $b_{\sigma(T)}=0$ and
\begin{equation}
\E\left[f\left((b_t)_{0\leq t\leq T'}\right)\right] 
= \E\left[f\left((\gamma_t)_{0\leq t\leq
      T'}\right)\frac{p_{\sigma(T)-T'}(\gamma_{T'})}{p_{\sigma(T)}(0)}\right]
\end{equation}
for all $0\leq T'<\sigma(T)$ and all $f\in
\mathcal{C}([0,T'],\mathbb{R})$. The proof of the second part is very similar to that
of Lemma~\ref{lem:abs-cont-F}. We omit the details.
\end{proof}

\subsubsection{Isometry of balls in {\normalfont$\
    \BD_{T,\sigma(T)}$} and {\normalfont $\BHP_\theta$}}
Recall the definition of $\sigma(T)$ and $\theta$ from the statement of the
proposition. In the proof that follows, we will choose $T>0$ sufficiently
large. We work with the following processes:
\begin{itemize}
\item $F$ a first-passage
Brownian bridge on $[0,T]$ from $0$ to $-\sigma(T)$;
\item $b$ a Brownian bridge on
  $[0,\sigma(T)]$ from $0$ to $0$, multiplied by $\sqrt{3}$, independent of $F$;
\item $B$ a Brownian motion on
  $[0,\infty)$ with drift $-\theta$, started from $B_0=0$;
\item $\Pi$ the Pitman transform of an independent copy of $B$;
\item $\gamma$ a two-sided Brownian motion on $\mathbb{R}$ with
  $\gamma_0=0$, scaled by the factor $\sqrt{3}$, independent of $(B,\Pi)$;
\item $Z,W$ and $\Zha,\Wha$ the random processes associated with 
  $F,b$ and $B,\Pi,\gamma$ as described in Section~\ref{sec:recapBHPBD}. 
\end{itemize}

\begin{proof}[Proof of Proposition~\ref{prop:isometry-BD-BHP}]
For $x\in\mathbb{R}$, let
$$\eta_{\textup{l}}(x)=\inf\{t\geq 0:B_t\leq -x\},\quad\eta_{\textup{r}}(x)=\sup\{t\geq
0:\Pi_t=x\}.$$
We fix $\eps>0$ and $r\geq 0$ and first introduce some auxiliary events.
For $A>0$, define 
\begin{equation*}
\mathcal{E}^1(A)=\left\{\begin{split}&\min_{[0,A]}\gamma <
    -6r,\,\min_{[A,A^2]}\gamma <
    -6r,\,\min_{[A^2,A^3]}\gamma < -6r,\\
    &\min_{[-A,0]}\gamma < -6r,\,\min_{[-A^2,-A]}\gamma <
    -6r,\,\min_{[-A^3,-A^2]}\gamma < -6r\end{split}\right\}.
\end{equation*}
Next, for $u_0>0$, $A>0$, let
$$\mathcal{E}^2(A,u_0)=\left\{\eta_{\textup{l}}(A^3)\leq
  u_0\right\},\quad \mathcal{E}^3(A,u_0)=\left\{\eta_{\textup{r}}(A^3)\leq u_0\right\}.
$$
For $u_2\geq u_1>0$, let
$$
\mathcal{E}^4(u_1,u_2)=\left\{\inf_{[u_2,\infty)}\Pi>\min_{[u_1,u_2]}\Pi\right\}.
$$
For $u_4\geq u_3>0$ and $T\geq u_4$, let
$$
\mathcal{E}^5(u_3,u_4,T)=\left\{\min_{[0,T-u_4]}F>\min_{[T-u_4,T-u_3]}F\right\}.
$$
Standard properties of Brownian motion imply that there exist $A>0$
such that $\P(\mathcal{E}^1)\geq 1-\eps/10$, and we fix $A$
accordingly. Then, we can find $u_0>0$ such that
$\P(\mathcal{E}^2)\geq 1-\eps/10$ and $\P(\mathcal{E}^3)\geq
1-\eps/10$, due to the fact that $\Pi$ is
transient.  Then, we can find $u_1$ and $u_2$ with $u_2\geq u_1\geq
u_0$ such that $\P(\mathcal{E}^4)\geq 1-\eps/10$. 

At last,we claim that we can find $u_4$, $u_3$ satisfying $u_4\geq u_3\geq
u_2$ and $T'_0$ with $T'_0\geq 2u_4$ such that for $T\geq T'_0$,
$\P(\mathcal{E}^5)\geq 1-\eps/10$. To see this, one can use the fact that
$F+\sigma(T)$ is a bridge of a three-dimensional Bessel process from
$\sigma(T)$ to $0$ with duration $T$, see, e.g.,~\cite{BeChPi}.  Its
time-reversal $\tilde{F}=F(T-\cdot)+\sigma(T)$ is then a Bessel bridge from
$0$ to $\sigma(T)$ with duration $T$, see~\cite[Chapter XI $\S 3$]{ReYo}. Write
$\tilde{F}^1$ for a Bessel bridge from $0$ to $\sigma(T)/\sqrt{T}$ with
duration $1$. Using this representation we have
\begin{align}
\label{eq:E5}
1-\P(\mathcal{E}^5(u_3,u_4,T))&=\P\left(\min_{[u_3,u_4]}\tilde{F}\geq\min_{[u_4,T]}\tilde{F}\right)
=  \P\left(\min_{[u_3/T,u_4/T]}\tilde{F}^1\geq\min_{[u_4/T,1]}\tilde{F}^1\right)\nonumber\\
&=\P\left(\min_{[u_3/T,u_4/T]}\tilde{F}^1\geq\min_{[u_4/T,1/2]}\tilde{F}^1\right)+o(1)\end{align}
as $T\rightarrow\infty$, where we used scaling at the second step and for
the last equality that $u_4/T\rightarrow 0$ for a fixed $u_4$ as
$T\to\infty$. Note that the law of
$(\tilde{F}^1_{t_1},\ldots,\tilde{F}^1_{t_k})$ for $0<t_1<\dots<t_k<1$ has
density
$$r_{t_1}(0,x_1)r_{t_2-t_1}(x_1,x_2)\dots
r_{t-t_k}\left(x_k,\sigma(T)/\sqrt{T}\right)\Big/r_1\left(0,\sigma(T)/\sqrt{T}\right),$$
see again~\cite[Chapter XI $\S 3$]{ReYo}. For a moment, let us denote by
$\tilde{\Pi}^1$ the Pitman transform of a Brownian motion with drift
$-\sigma(T)/\sqrt{T}$. Its transition kernel is given by
$q_t^{(\sigma(T)/\sqrt{T})}$, see~\eqref{eq:kernel-qttheta} above. A small
calculation involving the last display and the explicit form of
$q_t^{(\sigma(T)/\sqrt{T})}$ shows that the Bessel bridge $\tilde{F}^1$
restricted to $[0,1/2]$ is absolutely continuous with respect to
$\tilde{\Pi}^1$, with a density that can be written as
$\Xi(\tilde{\Pi}_{1/2}^1)$ for some measurable and bounded function
$\Xi$. The probability on the right hand side in~\eqref{eq:E5} is therefore
bounded from above by
\begin{align*}
  \|\Xi\|_\infty\P\left(\min_{[u_3/T,u_4/T]}\tilde{\Pi}^1\geq\min_{[u_4/T,1/2]}\tilde{\Pi}^1\right)+o(1)
&=\|\Xi\|_\infty\P\left(\min_{[u_3,u_4]}\Pi\geq\min_{[u_4,T/2]}\Pi\right)+o(1)\\
&=\|\Xi\|_\infty\P\left(\min_{[u_3,u_4]}\Pi\geq\inf_{[u_4,\infty)}\Pi\right)+o(1),
\end{align*}
where for the first equality, we used scaling again and the fact that
$\sigma(T)/T\rightarrow\theta$, and for the second equality transience of $\Pi$. Identically to the event
$\mathcal{E}^4$, transience of $\Pi$ implies that the last probability on the right can be made as small as we wish if we
choose $u_4$ large enough. This shows that for each choice of $u_3\geq
u_2$, we find $u_4\geq u_3$ and $T'_0$ such that $\mathcal{E}^5(u_3,u_4,T)\geq 1-\eps$
for all $T\geq T'_0$.

We now fix numbers $A,u_4\geq u_3\geq\dots\geq u_0$ and
$T'_0$ as discussed above. By Lemmas~\ref{lem:abs-cont-F} and~\ref{lem:abs-cont-b},
we deduce that we can find $T_0>T'_0$ such that every for
$T\geq T_0$, the processes $F,b,B,\Pi,\gamma$ can be
coupled on the same probability space such that the event
$$
\mathcal{E}^6(T)=\left\{
  \begin{split}&
    F_t=B_t,\,F_{T-t}=\Pi_t-\sigma(T) &\hbox{
  for }t\in[0,u_4]\\
&b_x =
\gamma_x,\,b_{L-x}=\gamma_{-x} &\hbox{ for }x\in [0,A^3]
\end{split}\right\}.
$$
has probability at least $1-\eps/2$, $F$ is independent of $b$,
$B$ is independent of $\Pi$, and $\gamma$ is independent of $(B,\Pi)$.

Now fix $T\geq T_0$. We assume
that $F,b,B,\Pi,\gamma$ have been coupled as above. Recall that the
snake $(Z_t,0\leq t\leq T)$ and the label function $(W_t,0\leq t\leq
T)$ of the Brownian disk $\BD_{T,\sigma(T)}$ are defined in terms of $F$ and
$b$, see Section~\ref{sec:recapBHPBD}. We put
$$
\mathbb{W}_t=\left\{\begin{array}{l@{\quad\mbox{if }}l}
    W_t& t\in[0,u_1]\\
    W_{T+t} & t\in[-u_1,0]
\end{array}\right..
$$
Given $F$, the process
$(\mathbb{W}_t)_{t\in[-u_1,u_1]}$ is Gaussian; moreover, if we restrict
ourselves to the event $\mathcal{E}^5$, we have
$${\underline F}_{T-t}=\min_{[T-u_4,T-t]}F\quad\hbox{for }t\in [0,u_1].$$
Hence the covariance of $(\mathbb{W}_t)_{t\in[-u_1,u_1]}$
is on $\mathcal{E}^5$ a function of the tuple
\begin{equation}
\label{eq:tuple1}
\left((F_t)_{0\leq t\leq u_4},(F_{T-t})_{0\leq t\leq u_4}\right).
\end{equation}
We turn to the Brownian half-plane and its label function
$\Wha=(\Wha_t,t\in\mathbb{R})$, which we define in terms of $B$, $\Pi$ and
$\gamma$, see again Section~\ref{sec:recapBHPBD}. Conditionally on $(B,
\Pi)$, $\Wha$ is a Gaussian process. Moreover, since on the event
$\mathcal{E}^4$,
$$
\inf_{[t,\infty)}\Pi = \min_{[t,u_4]}\Pi\quad\hbox{for }t\in[0,u_1],
$$
the covariance of the restriction of $\Wha$ to $[-u_1,u_1]$ is on
$\mathcal{E}^4$ given by exactly the same 
function of the tuple $((B_t)_{0\leq t\leq u_4},(\Pi_t)_{0\leq t\leq
  u_4})$ as the covariance of
$(\mathbb{W}_t)_{t\in[-u_1,u_1]}$. Since a shift of $\Pi$ does not
affect the covariance, we can instead consider the tuple
\begin{equation}
\label{eq:tuple2}
\left((B_t)_{0\leq t\leq u_4},(\Pi_t-\sigma(T))_{0\leq t\leq u_4}\right).
\end{equation}
On the event $\mathcal{E}^5$, both tuples of processes~\eqref{eq:tuple1}
and~\eqref{eq:tuple2} coincide. On the event
$\mathcal{E}^4\cap\mathcal{E}^5\cap\mathcal{E}^6$, we can therefore
construct $W$ and $\Wha$ such that
\begin{equation}
\label{eq:idWWla}
W_t = \Wha_t,\quad W_{T-t} = \Wha_{-t}\quad\hbox{ for all }t\in[0,u_1],\quad
\end{equation}
We shall now work on the event
$\mathcal{F}=\mathcal{E}^1\cap\mathcal{E}^2\cap\mathcal{E}^3\cap\mathcal{E}^4\cap\mathcal{E}^5\cap\mathcal{E}^6$,
which has probability at least $1-\eps$, and assume that the
identity~\eqref{eq:idWWla} holds true.

We follow a strategy similar to~\cite{CuLG}. Let $s,t\in [0,T]$. If
either $s,t\in[0,T/2]$, or $s,t\in[T/2,T]$, we let
$$
\tilde{d}_{W}(s,t) = W_s +W_t -2\min_{[s\wedge t,
  s\vee t]}W.
$$
Otherwise, we set
$$
\tilde{d}_{W}(s,t) = W_s +W_t -2\min_{[0,s\wedge
  t]\cup[s\vee t,T]}W.
$$
We shall need the continuous analog of the cactus bound~\eqref{eq:cactus1}
for the pseudo-metric $D$ belonging to the Brownian disk
$\BD_{T,\sigma(T)}$. It reads
\begin{equation}
\label{eq:cactusDBD}
D(s,t) \geq W_s+W_t-2\max\left\{\min_{[s\wedge t,s\vee
t]}W,\min_{[0,s\wedge t]\cup[s\vee t,T]}W\right\},\quad{s,t\in[0,T]}.
\end{equation}
See, for example,~\cite{CuLG} for a proof of the corresponding bound in the
context of the Brownian map, which can easily be adapted to the Brownian
disk. In the notation from above, we have the following
\begin{lemma}
\label{lem:DBD}
 Assume $\mathcal{F}$ holds.  
\begin{enumerate}
\item For every $t\in [\eta_{\textup{l}}(A),T-\eta_{\textup{r}}(A)]$, $D(0,t) >
  r$.
\item For every $s,t\in [0,\eta_{\textup{l}}(A)]\cup[0,T-\eta_{\textup{r}}(A)]$ with
  $\max\{D(0,s), D(0,t)\}\leq r$, it holds that
\begin{equation}
\label{eq:DBD-rhs}
D(s,t) =\inf_{s_1,t_1,\dots,s_k,t_k}\sum_{i=1}^k \tilde{d}_{W}(s_i,t_i),
\end{equation}
where the infimum is over all possible choices of $k\in\N$ and reals
$s_1,\dots,s_k,t_1,\dots,t_k\in
[0,\eta_{\textup{l}}(A^2)]\cup[T-\eta_{\textup{r}}(A^2),T]$ such that
$s_1=s,t_k=t$, and $d_{F}(t_i,s_{i+1})=0$ for $1\leq i\leq k-1$.
\end{enumerate}
\end{lemma}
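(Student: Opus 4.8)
The plan is to adapt the argument of Curien and Le Gall~\cite{CuLG} for the comparison of the Brownian map with the Brownian plane; I work on the event $\mathcal{F}$ throughout, and I use that $W_0=W_T=0$ and that the snake $Z$ vanishes at every running-infimum time $t$ of $F$ (there $\mathbb{E}[Z_t^2]=(F-\underline{F})_t=0$), so that $W_t=b_{-\underline{F}_t}$ at such $t$. For part (a) I would invoke the cactus bound~\eqref{eq:cactusDBD}: since $W_0=W_T=0$ and $t$ lies in both $[0,t]$ and $[t,T]$, it gives $D(0,t)\ge-\max\{\min_{[0,t]}W,\min_{[t,T]}W\}$, so it is enough to prove $\min_{[0,t]}W<-6r$ and $\min_{[t,T]}W<-6r$ whenever $\eta_{\textup{l}}(A)\le t\le T-\eta_{\textup{r}}(A)$. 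For the first bound, on $\mathcal{E}^6$ one has $F=B$ on $[0,u_4]$, so for each $a\in[0,A]$ the hitting time $\eta_{\textup{l}}(a)$ of $-a$ by $B$ (which is $\le u_0\le u_4$ by $\mathcal{E}^2$) is a running-infimum time of $F$ with $-\underline{F}_{\eta_{\textup{l}}(a)}=a$, whence $W_{\eta_{\textup{l}}(a)}=b_a=\gamma_a$ (again by $\mathcal{E}^6$, as $a\le A^3$); thus $\min_{[0,t]}W\le\min_{[0,A]}\gamma<-6r$ by $\mathcal{E}^1$. Near $T$ one argues symmetrically: on $\mathcal{E}^5$ one first checks $\underline{F}_{T-u}=\min_{[u,u_4]}\Pi-\sigma(T)$ for $u\in[0,u_3]$ (using that $F+\sigma(T)$ reversed in time is a three-dimensional Bessel bridge and that $F_{T-\cdot}=\Pi-\sigma(T)$ on $[0,u_4]$ by $\mathcal{E}^6$), and then that $T-\eta_{\textup{r}}(a)$ is a running-infimum time of $F$ for each $a\in[0,A]$ (here $\eta_{\textup{r}}(a)\le u_0\le u_4$ by $\mathcal{E}^3$) with $-\underline{F}_{T-\eta_{\textup{r}}(a)}=\sigma(T)-a$, so $W_{T-\eta_{\textup{r}}(a)}=b_{\sigma(T)-a}=\gamma_{-a}$ by $\mathcal{E}^6$; hence $\min_{[t,T]}W\le\min_{[-A,0]}\gamma<-6r$. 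Therefore $D(0,t)>6r\ge r$.

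For part (b), the inequality ``right-hand side $\ge D(s,t)$'' in~\eqref{eq:DBD-rhs} is immediate, since $\tilde{d}_{W}(s',t')\ge d_{W}(s',t')\ge D(s',t')$ pointwise and every chain admissible on the right is admissible for $D(s,t)$. For the converse, fix a small $\eps>0$ and a chain $(s_i,t_i)_{1\le i\le k}$ with $s_1=s$, $t_k=t$, $d_{F}(t_i,s_{i+1})=0$ and $\sum_i d_{W}(s_i,t_i)\le D(s,t)+\eps\le 2r+\eps$. Since, by construction of $D$, one has $D(x,y)=0$ whenever $d_{F}(x,y)=0$, the triangle inequality gives $D(0,s_i)\le D(0,s)+\sum_{j<i}d_{W}(s_j,t_j)\le 3r+\eps$ and then $D(0,t_i)\le 5r+2\eps$. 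Applying part (a) with $A^2$ in place of $A$ — this is where the parts of $\mathcal{E}^1$ concerning the intervals $[A,A^2]$ and $[-A^2,-A]$ enter, and it yields $D(0,\cdot)>6r$ on $[\eta_{\textup{l}}(A^2),T-\eta_{\textup{r}}(A^2)]$ — one concludes, for $\eps$ small enough, that every $s_i$ and $t_i$ lies in $[0,\eta_{\textup{l}}(A^2)]\cup[T-\eta_{\textup{r}}(A^2),T]$; so the chain is already confined to the admissible set of~\eqref{eq:DBD-rhs}.

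It then suffices to check that $d_{W}(s_i,t_i)=\tilde{d}_{W}(s_i,t_i)$ for each such confined pair: the chain then satisfies $\sum_i\tilde{d}_{W}(s_i,t_i)=\sum_i d_{W}(s_i,t_i)\le D(s,t)+\eps$, and letting $\eps\downarrow 0$ gives ``right-hand side $\le D(s,t)$'', finishing the proof. Note that $\eta_{\textup{l}}(A^2),\eta_{\textup{r}}(A^2)\le u_4\le T/2$, so two points in the same ``flap'' are in the same half $[0,T/2]$ or $[T/2,T]$; for a same-flap pair the interval $[s_i\wedge t_i,s_i\vee t_i]$ lies in that flap while its complement contains the ``deep middle'' $[\eta_{\textup{l}}(A^2),T-\eta_{\textup{r}}(A^2)]$, and for a straddling pair the roles are swapped. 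In either case the desired equality $d_{W}=\tilde{d}_{W}$ reduces to the single statement that $\inf_{[0,T]}W$ is attained inside the deep middle. This is the main obstacle. It holds with probability $\ge 1-\eps$ once $T$ is large, because $\inf_{[\eta_{\textup{l}}(A^2),T-\eta_{\textup{r}}(A^2)]}W\to-\infty$ — it is the infimum of a Brownian-type process over a time-window of length of order $T$, and $\sigma(T)\to\infty$ — whereas $\inf W$ over the flaps stays tight, since the flaps lie in the fixed windows $[0,u_4]$ and $[T-u_4,T]$, on which $W$ is a bounded functional of $b$ (resp.\ $\gamma$), of the snake, and of $\min_{[\cdot,u_4]}\Pi$. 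This event concerns only the disk side, so it can be adjoined to $\mathcal{F}$ (or folded into the choice of $T_0$) without affecting the surrounding argument. The remaining verifications — that the relevant infima reduce as claimed on $\mathcal{E}^5$, and the bookkeeping of the nested scales $1\le A\le A^2\le A^3$ against the radii $r,3r,6r$ — are routine.
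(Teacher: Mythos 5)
Your proof of part (a) follows the paper's own argument, and the first half of your part (b) — confining a near-optimal chain to $[0,\eta_{\textup{l}}(A^2)]\cup[T-\eta_{\textup{r}}(A^2),T]$ by propagating the bounds $D(0,s_i)\leq 3r+\eps$, $D(0,t_i)\leq 5r+2\eps$ along the chain and then re-applying part (a) with $A^2$ in place of $A$ — is a valid and arguably cleaner route than the paper's, which instead bounds $D(s,t_i)$ from below directly via the cactus bound.

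The final step of (b), however, has a genuine gap. You reduce the pairwise identity $d_W(s_i,t_i)=\tilde{d}_{W}(s_i,t_i)$ to the statement that $\inf_{[0,T]}W$ is attained in $[\eta_{\textup{l}}(A^2),T-\eta_{\textup{r}}(A^2)]$, and you acknowledge that $\mathcal{F}$ does not imply this: $\mathcal{E}^1$--$\mathcal{E}^6$ constrain $\gamma,b,F,B,\Pi$ only on the fixed windows $[0,u_4]$, $[T-u_4,T]$, $[0,A^3]$, $[\sigma(T)-A^3,\sigma(T)]$, so they impose neither a lower bound on $W$ in the flaps nor an upper bound on $W$ in the deep middle. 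Proposing to adjoin the extra event to $\mathcal{F}$ is not a proof of the lemma as stated, and it is not harmless bookkeeping either: $\mathcal{F}$ is reused verbatim in Lemma~\ref{lem:DH}, Corollary~\ref{cor:DBD-DH}, and again at the end of the proof of Theorem~\ref{thm:coupling-BD-BHP}, and an event concerning only the disk-side label process $W$ would need to be checked to interact correctly with the coupling and with $\Wha$ in all those places.

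The fix is to use information already at your disposal that you discard. Each term of your near-optimal chain satisfies $d_W(s_i,t_i)\leq 2r+\eps$ (not merely the sum), and $W_{s_i},W_{t_i}\geq -(3r+\eps)$ since $|W_x|\leq D(0,x)$ (cactus bound together with $W_0=W_T=0$). Then write $d_W(s_i,t_i)=W_{s_i}+W_{t_i}-2\max\{m_{\mathrm{in}},m_{\mathrm{out}}\}$ with $m_{\mathrm{in}}=\min_{[s_i\wedge t_i,s_i\vee t_i]}W$ and $m_{\mathrm{out}}=\min_{[0,s_i\wedge t_i]\cup[s_i\vee t_i,T]}W$, and observe that, for the pair under consideration, whichever of $[s_i\wedge t_i,s_i\vee t_i]$ or its complement contains the deep middle in fact already contains $[\eta_{\textup{l}}(A^2),\eta_{\textup{l}}(A^3)]$ or $[T-\eta_{\textup{r}}(A^3),T-\eta_{\textup{r}}(A^2)]$, on which $W\leq -6r$ by $\mathcal{E}^1$ (via the windows $[A^2,A^3]$, $[-A^3,-A^2]$) together with $\mathcal{E}^2,\mathcal{E}^3,\mathcal{E}^5,\mathcal{E}^6$. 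Combined with $d_W(s_i,t_i)\leq 2r+\eps$ and $W_{s_i}+W_{t_i}\geq -2(3r+\eps)$, this forces the maximum to be attained at the other argument, which is precisely $d_W(s_i,t_i)=\tilde{d}_{W}(s_i,t_i)$. This is the paper's closing argument, and it needs nothing beyond $\mathcal{F}$.
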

\begin{proof}
  (a) If $t\in [\eta_{\textup{l}}(A),T-\eta_{\textup{r}}(A)]$, then by the cactus
  bound~\eqref{eq:cactusDBD} in the first inequality,
\begin{equation}
\label{eq:bound-DBD1}
D(0,t)\geq W_t-2\max\left\{\min_{[0,t]}W,\min_{[t,T]}W\right\}\geq
  -\max\left\{\min_{[0,\eta_{\textup{l}}(A)]}W,\min_{[T-\eta_{\textup{r}}(A),T]}W\right\}.
\end{equation}
Let us show how to bound the first minimum on the right hand side. On the
event $\mathcal{E}^6$, $b_x=\gamma_x$ for $x\in[0,A^3]$ and $F_t=B_t$ for $t\in[0,u_4]$. On
the event $\mathcal{E}^2$, we know that $\eta_{\textup{l}}(A)$, the first
instant when $B$ attains the value $-A$, is bounded from above by $u_0$,
which satisfies $u_0 \leq u_1\leq u_4$. Moreover, on $\mathcal{E}^1$,
$\min_{[0,A]}\gamma < -6r$. Going back to the definition of $W$ (and using
the fact that $Z_t$ equals zero if $\underline{F}$ attains a new minimum at
$t$), we obtain that the first minimum on the right hand side is bounded
from above by $-6r$.

For the second minimum on the right of~\eqref{eq:bound-DBD1}, we first
observe that on the event $\mathcal{E}^6$, we have also
$b_{L-x}=\gamma_{-x}$ for $x\in[0,A^3]$ and $F_{T-t}=\Pi_t-\sigma(T)$
for $t\in [0,u_4]$. Now on $\mathcal{E}^5\cap\mathcal{E}^6$, we have
$$\underline{F}_{T-t}=\min_{[T-u_4,T-t]}F=\min_{[t,u_4]}(\Pi-\sigma(T))\quad\hbox{
  for } t\in[0,u_1].$$ But on $\mathcal{E}^3$, $\eta_{\textup{r}}(A)\leq u_0\leq
u_1$, so that in particular 
$$
\underline{F}_{T-\eta_{\textup{r}}(A)}
=\min_{[\eta_{\textup{r}}(A),u_4]}(\Pi-\sigma(T))\geq A-\sigma(T),$$ where
for the last inequality we used the fact that $\Pi_t\geq A$ for $t\geq
\eta_{\textup{r}}(A)$. Since on $\mathcal{E}^1$, also $\min_{[-A,0]}\gamma <
-6r$, the second minimum is bounded above again by $-6r$. This proves
$D(0,t)\geq 6r$ whenever $t\in [\eta_{\textup{l}}(A),T-\eta_{\textup{r}}(A)]$,
which is more than we claimed.\\
(b) Recall that
\begin{equation}
\label{eq:dist-DBD}
D(s,t)=\inf\left\{\sum_{i=1}^k d_{W}(s_i,t_i) :\begin{split}& k\geq 1,
    s_1,\dots,s_k,t_1,\dots,t_k\in[0,T],s_1=s, t_k=t,\\
    & d_{F}(t_i,s_{i+1})=0\hbox{ for every
    }i\in\{1,\ldots,k-1\}\end{split}\right\}.
\end{equation}
Since $D(s,t)\leq D(0,s)+D(0,t)\leq 2r$ for
$s,t$ as in the statement, it suffices to look at
$s_1,\dots,s_k,t_1,\dots,t_k\in[0,T]$ with
\begin{equation}
\label{eq:bound-Dcirc}
\sum_{i=1}^k d_W(s_i,t_i) \leq 3r.
\end{equation}
We now argue that on the right hand side of~\eqref{eq:dist-DBD}, we can
restrict ourselves to reals
$s_1,\dots,s_k,t_1,\dots,t_k\in[0,\eta_\ell(A^2)]\cup[T-\eta_{\textup{r}}(A^2),T].$ Suppose that there is
$i\in\{1,\ldots,k\}$ such that $t_i$ is not included in $[0,\eta_\ell(A^2)]\cup[T-\eta_{\textup{r}}(A^2),T]$.
Note that from the cactus bound and the fact that $W_0=W_T=0$, we have 
$|W_s|\leq r$ whenever $D(0,s)\leq r$. Therefore, the cactus bound gives
$$
D(s,t_i) \geq -r -\max\left\{\min_{[s\wedge {t_i},s\vee
t_i]}W,\min_{[0,s\wedge t_i]\cup[s\vee t_i,T]}W\right\}.
$$
Recall that by assumption $s \in [0,\eta_\ell(A)]\cup[T-\eta_{\textup{r}}(A),T]$. If 
$t_i\in[0,T]\setminus([0,\eta_\ell(A^2)]\cup[T-\eta_{\textup{r}}(A^2),T])$, then both minima
on the right hand side are taken over subsets which include either
$[\eta_\ell(A),\eta_\ell(A^2)]$ or $[T-\eta_{\textup{r}}(A^2),T-\eta_{\textup{r}}(A)]$. Therefore,
\begin{equation}
\label{eq:bound-DBD2}
D(s,t_i) \geq -r -
\max\left\{\min_{[\eta_\ell(A),\eta_\ell(A^2)]}W,\min_{[T-\eta_{\textup{r}}(A^2),T-\eta_{\textup{r}}(A)]}W\right\}.
\end{equation}
We can now argue similarly to (a) to show that both minima are bounded
from above by $-6r$. Concerning the first minimum, we know on
$\mathcal{E}^6$ that $b_x=\gamma_x$ for $x\in[0,A^3]$ and $F_t=B_t$ for $t\in[0,u_4]$. On
$\mathcal{E}^2$, we have $\eta_{\textup{l}}(A^2)\leq u_0$. Since
$[-A^2,-A]\subset B([\eta_{\textup{l}}(A),\eta_{\textup{l}}(A^2)]$, the
bound on the first minimum follows from the fact that on $\mathcal{E}^1$,
$\min_{[A,A^2]}\gamma < -6r$. The second minimum is treated similarly and
left to the reader.

With these bounds, we obtain $D(s,t_i) \geq 5r$. On the other hand,
we know from~\eqref{eq:bound-Dcirc} that $D(s,t_i) \leq 3r$, a
contradiction. The case where $s_i$ is not included in
$[0,\eta_\ell(A^2)]\cup[T-\eta_{\textup{r}}(A^2),T]$ for some $i\in\{1,\ldots,k\}$ is analogous.

Therefore, we can restrict ourselves in~\eqref{eq:dist-DBD} to reals
$s_1,\dots,s_k,t_1,\dots,t_k\in[0,\eta_\ell(A^2)]\cup[T-\eta_{\textup{r}}(A^2),T]$. We
still have to show that we can replace $d_{W}$
in~\eqref{eq:dist-DBD} by $\tilde{d}_{W}$.  Let
$s_1,\dots,s_k,t_1,\dots,t_k\in[0,\eta_\ell(A^2)]\cup[T-\eta_{\textup{r}}(A^2),T]$
with $s_1=s$, $t_k=t$ and such that~\eqref{eq:bound-Dcirc} holds. Assume first
that there is $i\in\{1,\ldots,k\}$ such that $s_i\in[0,\eta_\ell(A^2)]$ and
$t_i\in[T-\eta_{\textup{r}}(A^2),T]$, and let us show that then
$d_{W}(s_i,t_i)=\tilde{d}_{W}(s_i,t_i)$. First,
by~\eqref{eq:bound-Dcirc} in the first inequality,
$$
 3r\geq d_{W}(s,s_i)\geq W_s-W_{s_i}.
$$
Since $W_s\geq -r$, this shows $W_{s_i}\geq -4r$, and identically one
obtains $W_{t_i}\geq -4r$. Using again~\eqref{eq:bound-Dcirc}, 
\begin{align*}
3r\geq d_{W}(s_i,t_i)&=W_{s_i} +W_{t_i} - 2\max\left\{\min_{[s_i,t_i]}
  W,\,\min_{[0,s_i]\cup[t_i,T]}W\right\}\\
&\geq -8r-2\max\left\{\min_{[s_i,t_i]}
  W,\,\min_{[0,s_i]\cup[t_i,T]}W\right\}
\end{align*}
We claim that this last inequality can only hold if the maximum is attained
at the second minimum (which means precisely
$d_{W}(s_i,t_i)=\tilde{d}_{W}(s_i,t_i)$). Indeed, if
$s_i\in[0,\eta_\ell(A^2)]$ and $t_i\in[T-\eta_{\textup{r}}(A^2),T]$, then
$[s_i,t_i]$ contains the interval
$[\eta_\ell(A^2),\eta_\ell(A^3)]$. Arguing in the same way as for
the first minimum in~\eqref{eq:bound-DBD2}, we deduce that
$\min_{[s_i,t_i]}W\leq -6r$, which proves our claim.

The case where $t_i\in[0,\eta_\ell(A^2)]$ and
$s_i\in[T-\eta_{\textup{r}}(A^2),T]$ is treated by symmetry. Assume now
both $s_i,t_i$ lie in $[0,\eta_\ell(A^2)]$. Then the interval $[s_i\vee
t_i,T]$ contains the interval $[\eta_\ell(A^2),\eta_\ell(A^3)]$, so that
$\min_{[s_i\vee t_i,T]}W\leq -6r$ by the same reasoning, which gives again
$d_{W}(s_i,t_i)=\tilde{d}_{W}(s_i,t_i)$. If both $s_i,t_i$ lie in
$[T-\eta_{\textup{r}}(A^2),T]$, then $[0,s_i\wedge t_i]$ contains
$[T-\eta_{\textup{r}}(A^3),T-\eta_{\textup{r}}(A^2)]$, and the minimum of
$W$ over this interval is again bounded from above by $-6r$, using
arguments as for the second minimum in~\eqref{eq:bound-DBD1}
(or~\eqref{eq:bound-DBD2}). This leads to
$d_{W}(s_i,t_i)=\tilde{d}_{W}(s_i,t_i)$ also in this case, which completes
the proof of (b).
\end{proof}

We turn to the analogous statement for the pseudo-distance function
$D_{\theta}$ of the Brownian half-plane. Recall the definition of
$(\Xha,\Wha)$ for the case of the Brownian half-plane $\BHP_\theta$,
cf. Section~\ref{sec:recapBHPBD}.
\begin{lemma} 
\label{lem:DH}
Assume $\mathcal{F}$ holds.
\begin{enumerate}
\item For every $t'\in (-\infty,-\eta_{\textup{r}}(A)]\cup[\eta_{\textup{l}}(A),\infty)$,
  $D_{\theta}(0,t') > r$.
\item For every $s',t'\in [-\eta_{\textup{r}}(A),\eta_{\textup{l}}(A)]$ with
  $\max\{D_{\theta}(0,s'), D_{\theta}(0,t')\}\leq r$, it holds that
\begin{equation}
\label{eq:DH-rhs}
D_{\theta}(s',t') =\inf_{s'_1,t'_1,\dots,s'_k,t'_k}\sum_{i=1}^k d_{\Wha}(s'_i,t'_i),
\end{equation}
where the infimum is over all possible choices of $k\in\N$ and reals
$s'_1,\dots,s'_k,t'_1,\dots,t'_k\in [-\eta_{\textup{r}}(A^2),\eta_{\textup{l}}(A^2)]$ such that
$s'_1=s',t'_k=t'$, and $d_{\Xha}(t'_i,s'_{i+1})=0$ for $1\leq i\leq k-1$.
\end{enumerate}
\end{lemma}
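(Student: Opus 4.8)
The plan is to mimic the proof of Lemma~\ref{lem:DBD} line by line, substituting the Brownian half-plane processes $\Xha,\Wha,\Zha$ for the Brownian disk processes $F,W,Z$, with the times $\eta_{\textup{l}},\eta_{\textup{r}}$ playing the same role as before. The argument is in fact lighter than in the disk case: since $\BHP_\theta$ is encoded with the ``one-sided'' gluing rule, $d_{\Wha}$ already plays the role of $\tilde d_{W}$, so in part (b) there is no final step replacing $d_{\Wha}$ by a truncated variant. The only external input is the continuous cactus bound for $D_\theta$,
$$
D_\theta(s,t)\geq \Wha_s+\Wha_t-2\min_{[s\wedge t,\,s\vee t]}\Wha,\qquad s,t\in\R,
$$
the half-plane analogue of~\eqref{eq:cactusDBD}, proved exactly as in~\cite{CuLG} using that $\mathcal{T}_{\Xha}$ is an $\R$-tree with no cycle. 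Throughout we work on $\mathcal{F}$; only the sub-events $\mathcal{E}^1,\mathcal{E}^2,\mathcal{E}^3$ will actually be used, as everything here is intrinsic to $\BHP_\theta$. Write $X'$ for the independent copy of $B$ of which $\Pi$ is the Pitman transform.

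For (a), I first record, as in the disk case, that $|\Wha_{s'}|\leq D_\theta(0,s')$ for every $s'$ (immediate from the cactus bound and $\Wha_0=0$). The key point is to translate the level structure of the contour process into the space parameter of $\gamma$. On the positive side, $\underline{X}^{\theta}_t=\inf_{[0,t]}B$ for $t\geq 0$, and at each running-minimum time of $B$ the snake $\Zha$ vanishes, so $\Wha$ there equals $\gamma$ evaluated at $-\underline{X}^{\theta}_{\cdot}$; since $\mathcal{E}^2$ gives $\eta_{\textup{l}}(A^3)\leq u_0<\infty$, the running-minimum values of $B$ over $[0,\eta_{\textup{l}}(A)]$ fill $[-A,0]$, whence $\min_{[0,\eta_{\textup{l}}(A)]}\Wha\leq \min_{[0,A]}\gamma$. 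On the negative side, for $t<0$ one has $\underline{X}^{\theta}_t=\inf_{[-t,\infty)}\Pi$, which by the Pitman identity $\inf_{[s,\infty)}\Pi=-\inf_{[0,s]}X'$ and transience of $\Pi$ increases continuously from $0$ to $A$ as $-t$ runs over $[0,\eta_{\textup{r}}(A)]$ (using $\Pi_{\eta_{\textup{r}}(A)}=A$), and at the corresponding future-minimum times of $\Pi$ the variance of $\Zha$ vanishes, so $\min_{[-\eta_{\textup{r}}(A),0]}\Wha\leq \min_{[-A,0]}\gamma$. On $\mathcal{E}^1$ both of these minima are $<-6r$; plugging into the cactus bound yields, for $t'\geq \eta_{\textup{l}}(A)$ (resp.\ $t'\leq -\eta_{\textup{r}}(A)$), $D_\theta(0,t')\geq -\min_{[0,t']}\Wha>6r$ (resp.\ the symmetric bound), which is stronger than the claim.

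For (b), since $D_\theta(s',t')\leq D_\theta(0,s')+D_\theta(0,t')\leq 2r$, the infimum defining $D_\theta(s',t')$ may be restricted to chains with $\sum_i d_{\Wha}(s'_i,t'_i)\leq 3r$; in particular all the $|\Wha_{s'_i}|,|\Wha_{t'_i}|$ are $\leq 4r$, by the triangle inequality and the bound from (a). Suppose some intermediate point of such a chain, say $t'_i$, lies outside $[-\eta_{\textup{r}}(A^2),\eta_{\textup{l}}(A^2)]$. As $s'\in[-\eta_{\textup{r}}(A),\eta_{\textup{l}}(A)]$, the interval spanned by $s'$ and $t'_i$ contains $[\eta_{\textup{l}}(A),\eta_{\textup{l}}(A^2)]$ or $[-\eta_{\textup{r}}(A^2),-\eta_{\textup{r}}(A)]$; arguing as in (a) but with the level band $[A,A^2]$ (resp.\ $[-A^2,-A]$) and the bound $\min_{[A,A^2]}\gamma<-6r$ (resp.\ $\min_{[-A^2,-A]}\gamma<-6r$) from $\mathcal{E}^1$, the running minimum of $\Wha$ over that interval is $<-6r$, so the cactus bound gives $D_\theta(s',t'_i)\geq 5r$. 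But the initial segment of the chain witnesses $D_\theta(s',t'_i)\leq \sum_{j\leq i}d_{\Wha}(s'_j,t'_j)\leq 3r$, a contradiction; the case of an interior $s'_i$ is symmetric. Hence all intermediate points may be taken in $[-\eta_{\textup{r}}(A^2),\eta_{\textup{l}}(A^2)]$, and since $d_{\Wha}$ is unaltered this is exactly~\eqref{eq:DH-rhs}.

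The only delicate point, and the step on which I would spend most care, is the translation between the contour-time parametrization of $B$ and $\Pi$ and the space parametrization of $\gamma$ used throughout (a): one must identify the times at which $\Zha$ vanishes with the running-minimum (for $t\geq 0$) and future-minimum (for $t<0$) record times of the contour process, and verify, via transience of $\Pi$ and the identity $\inf_{[s,\infty)}\Pi=-\inf_{[0,s]}X'$, that the associated record values exhaust the intervals $[0,A]$, $[A,A^2]$ and $[A^2,A^3]$ on the relevant time ranges. Everything else is a routine transcription of the Brownian disk argument of Lemma~\ref{lem:DBD}.
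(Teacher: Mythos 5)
Your proof is correct and takes essentially the same route as the paper: the cactus bound for $D_\theta$ combined with the dips of $\gamma$ on $\mathcal{E}^1$, translated from the space parametrization of $\gamma$ to contour time via the running-minimum times of $B$ (resp.\ the future-minimum times of $\Pi$ and the identity $\inf_{[s,\infty)}\Pi=-\inf_{[0,s]}X'$). The paper's own proof is terser, relying on the coupling identity~\eqref{eq:idWWla} to transfer the computations from Lemma~\ref{lem:DBD}; your self-contained intrinsic argument is a fleshed-out version of the same calculation, and your observation that the one-sided gluing makes $d_{\Wha}$ already play the role of $\tilde d_W$ (so that (b) has no truncation step) is a correct and useful simplification.
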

\begin{proof}
Essentially, one can rely on the identity~\eqref{eq:idWWla} and then follow
the proof of Lemma~\ref{lem:DBD}. The starting point is the cactus
bound for $D_{\theta}$, which reads
$$
D_{\theta}(s',t')\geq \Wha_{s'}+\Wha_{t'}-2\min_{[s'\wedge t',s'\vee
  t']}\Wha,\quad s',t'\in\mathbb{R}.
$$
We refer again to the proof in~\cite{CuLG}, which can be transferred to
this setting.

Let us now sketch how to prove (a); the proof of (b) is left to the
reader. If $t'\in (-\infty,-\eta_{\textup{r}}(A)]$, the cactus bound gives
$$
D_{\theta}(0,t')\geq -\min_{[t',0]}\Wha.
$$
The very definitions of $\Wha$ and $\eta_{\textup{r}}(A)$ together with the fact that on
$\mathcal{E}^1$, $\min_{[-A,0]}\gamma<-6r$, entail that the minimum is
bounded from above by $-6r$. The same bound holds if $t'\in
[\eta_{\textup{l}}(A),\infty)$, which proves (a).
\end{proof}
Combining Lemmas~\ref{lem:DBD} and~\ref{lem:DH}, we obtain the
following corollary. For the rest of the proof of
Proposition~\ref{prop:isometry-BD-BHP}, we put for a point $u\in[0,T]$
$$
I(u)= \left\{\begin{array}{l@{\quad\mbox{if }}l}
u&  u\in[0,T/2]\\
u-T& u\in[T/2,T]
\end{array}\right..
$$
\begin{corollary}
\label{cor:DBD-DH}
Assume $\mathcal{F}$ holds. Let $s,t\in
[0,\eta_\ell(A)]\cup[T-\eta_{\textup{r}}(A),T]$. Then it holds that
$\max\{D(0,s),D(0,t)\}\leq r$ if and only if
$\max\{D_{\theta}(0,I(s)),D_{\theta}(0,I(t))\}\leq r$. Under these conditions,
$$
D(s,t)=D_{\theta}(I(s),I(t)).
$$
\end{corollary}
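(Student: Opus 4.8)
The plan is to push everything onto the chaining characterizations of $D$ and $D_\theta$ provided by Lemmas~\ref{lem:DBD} and~\ref{lem:DH}, and to show that, after applying the piecewise map $I$, these two (restricted) chaining problems are literally the same problem. Write $\hat D(s,t)$ for the right-hand side of~\eqref{eq:DBD-rhs}, i.e.\ the infimum of $\sum_i\tilde d_W(s_i,t_i)$ over chains with $s_1=s$, $t_k=t$, $d_F(t_i,s_{i+1})=0$ and all $s_i,t_i$ in $J:=[0,\eta_\ell(A^2)]\cup[T-\eta_{\textup{r}}(A^2),T]$, and write $\hat D_\theta(s',t')$ for the right-hand side of~\eqref{eq:DH-rhs}, over chains in $J':=[-\eta_{\textup{r}}(A^2),\eta_\ell(A^2)]$ with $d_{\Xha}(t'_i,s'_{i+1})=0$. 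On $\mathcal F$ one has $\eta_\ell(A^2),\eta_{\textup{r}}(A^2)\le u_0\le u_1\le u_4$ (from $\mathcal E^2\cap\mathcal E^3$), so $I$ restricts to a surjection $J\to J'$ that is the identity on $[0,\eta_\ell(A^2)]$ and $u\mapsto u-T$ on $[T-\eta_{\textup{r}}(A^2),T]$; moreover~\eqref{eq:idWWla} gives $W_u=\Wha_{I(u)}$ for $u\in J$, and on $\mathcal E^6$ one has $F_u=\Xha_{I(u)}$ for $u\in[0,\eta_\ell(A^2)]$ and $F_u=\Xha_{I(u)}-\sigma(T)$ for $u\in[T-\eta_{\textup{r}}(A^2),T]$ (using $F_{T-t}=\Pi_t-\sigma(T)$ and $\Xha_{-t}=\Pi_t$). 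The proof then has two steps: (i) $\hat D(s,t)=\hat D_\theta(I(s),I(t))$ on $J$; (ii) pass from $\hat D,\hat D_\theta$ to $D,D_\theta$ via parts (a)--(b) of the two lemmas.

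For step (i) I would transport costs and jump conditions along $I$. \emph{Costs.} If $s_i,t_i$ lie on the same side of $J$ then $\tilde d_W(s_i,t_i)=W_{s_i}+W_{t_i}-2\min_{[s_i\wedge t_i,s_i\vee t_i]}W$, which equals $d_{\Wha}(I(s_i),I(t_i))$ by $W=\Wha\circ I$ (an additive shift of $\Pi$ being irrelevant). If $s_i,t_i$ lie on opposite sides --- the case that motivated the truncated $\tilde d_W$ --- the minimum defining $\tilde d_W(s_i,t_i)$ is over $[0,s_i\wedge t_i]\cup[s_i\vee t_i,T]$, and under $I$ this subset of $[0,T]$ corresponds exactly to the interval of $\R$ joining $I(t_i)$ and $I(s_i)$ (one $\le 0$, the other $\ge 0$), so again $\tilde d_W(s_i,t_i)=d_{\Wha}(I(s_i),I(t_i))$. \emph{Jumps.} On the same side, $d_F(t_i,s_{i+1})=0\iff d_{\Xha}(I(t_i),I(s_{i+1}))=0$ since $F=\Xha\circ I$ up to the constant $\sigma(T)$, which cancels in $d_F$. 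A ``crossing'' jump cannot occur for $T$ large: if $t_i\in[0,\eta_\ell(A^2)]$ and $s_{i+1}\in[T-\eta_{\textup{r}}(A^2),T]$ then $T-\eta_{\textup{r}}(A^3)$ lies between them and $F_{T-\eta_{\textup{r}}(A^3)}=A^3-\sigma(T)$ on $\mathcal E^3\cap\mathcal E^6$, whereas $F_{t_i}\ge -A^2$ (because $F_t=B_t$ has not yet reached $-A^2$ for $t\le\eta_\ell(A^2)$); as $\sigma(T)\to\infty$, $d_F(t_i,s_{i+1})=0$ would force $F_{t_i}=\min_{[t_i,s_{i+1}]}F\le A^3-\sigma(T)<-A^2$, absurd (the reversed orientation is symmetric). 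On the half-plane side, for $s'<0<t'$ one has $\min_{[s',t']}\Xha=\underline{X}_{t'}^{\theta}$ because $\Xha\ge 0$ on $(-\infty,0]$ with $\Xha_0=0$, so $d_{\Xha}(s',t')=\Xha_{s'}+(B_{t'}-\underline{X}_{t'}^{\theta})+(-\underline{X}_{t'}^{\theta})$ is a sum of three nonnegative terms, and $=0$ forces $\Xha_{s'}=0$ and $B_{t'}=\underline{X}_{t'}^{\theta}=0$, which a.s.\ means $s'=t'=0$: a degenerate step that may be deleted from any chain. After deleting such steps every admissible chain breaks into maximal runs on a single side, the side-wise correspondence of chains preserves total cost, and $\hat D(s,t)=\hat D_\theta(I(s),I(t))$ follows (the fibre $I^{-1}(0)=\{0,T\}$ is harmless since the cost of, and the degenerate jumps at, $0$ and $T$ agree).

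For step (ii), since $\hat D$ (resp.\ $\hat D_\theta$) is an infimum over a subclass of the chains defining $D$ (resp.\ $D_\theta$), one has $D\le\hat D$ and $D_\theta\le\hat D_\theta$, with equality on the radius-$r$ ball by Lemmas~\ref{lem:DBD}(b) and~\ref{lem:DH}(b). Hence for $u\in[0,\eta_\ell(A)]\cup[T-\eta_{\textup{r}}(A),T]$ one gets $D(0,u)\le r\iff\hat D(0,u)\le r$ (forward via Lemma~\ref{lem:DBD}(b), backward via $D\le\hat D$), and likewise for $D_\theta$ at $I(u)$; combined with $\hat D(0,u)=\hat D_\theta(0,I(u))$ from step (i) (as $I(0)=0$), this yields $D(0,u)\le r\iff D_\theta(0,I(u))\le r$. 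If instead $u\notin[0,\eta_\ell(A)]\cup[T-\eta_{\textup{r}}(A),T]$, then Lemma~\ref{lem:DBD}(a) gives $D(0,u)>r$ while $I(u)\notin[-\eta_{\textup{r}}(A),\eta_\ell(A)]$, so Lemma~\ref{lem:DH}(a) gives $D_\theta(0,I(u))>r$; thus $\max\{D(0,s),D(0,t)\}\le r\iff\max\{D_\theta(0,I(s)),D_\theta(0,I(t))\}\le r$ in all cases. Under this condition $s,t\in[0,\eta_\ell(A)]\cup[T-\eta_{\textup{r}}(A),T]$, and Lemma~\ref{lem:DBD}(b), step (i) and Lemma~\ref{lem:DH}(b) chain to $D(s,t)=\hat D(s,t)=\hat D_\theta(I(s),I(t))=D_\theta(I(s),I(t))$, which is the assertion.

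The main obstacle is step (i), and within it the analysis of chaining steps across the ``middle'' of $[0,T]$ (equivalently, across the origin of $\R$): one must see both that $\tilde d_W$ --- and not $d_W$ --- is exactly what matches $d_{\Wha}$ there, and that no genuine crossing jump survives, the former via the large-$T$ bound $F_{T-\eta_{\textup{r}}(A^3)}=A^3-\sigma(T)\to-\infty$ against $F\ge-A^2$ near $0$, the latter via a.s.\ properties of the Pitman transform at the origin. This is where the particular choice of the events $\mathcal E^1$--$\mathcal E^6$ and the limit $T\to\infty$ really enter; everything else is bookkeeping about the map $I$.
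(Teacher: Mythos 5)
Your proof is correct and follows the paper's own route: define $\hat D$, $\hat D_\theta$ as the restricted chaining infima from Lemmas~\ref{lem:DBD}(b) and~\ref{lem:DH}(b), prove step (i) $\hat D(s,t)=\hat D_\theta(I(s),I(t))$ via the identifications $W=\Wha\circ I$ and $F=\Xha\circ I$ (up to a side-dependent constant), and then deduce the equivalence and the distance equality by combining step (i) with those lemmas and the trivial inequalities $D\le\hat D$, $D_\theta\le\hat D_\theta$. You are in fact more careful than the paper's terse assertion that $d_F(t_i,s_{i+1})=0\iff d_{\Xha}(I(t_i),I(s_{i+1}))=0$: ruling out crossing jumps on the disk side via $A^3-\sigma(T)\to-\infty$, observing that a crossing jump on the half-plane side forces a degenerate step at the origin (so it can be merged away by the triangle inequality for $d_{\Wha}$), and noting that the non-injectivity of $I$ on the fibre $\{0,T\}$ is harmless since $W_0=W_T=0=\Wha_0$ — this extra care is warranted, as the literal iff would fail for $(t_i,s_{i+1})=(0,T)$, but the core argument is the same as in the paper.
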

\begin{proof}
  Take $s,t\in [0,\eta_\ell(A)]\cup[T-\eta_{\textup{r}}(A),T]$ with
  $\max\{D(0,s),D(0,t)\}\leq r$. We claim that the right
  hand side of the expression~\eqref{eq:DBD-rhs} for $D(s,t)$
  agrees with the right hand side of~\eqref{eq:DH-rhs} for $s'_1=I(s),
  t'_k=I(t)$. First note that we have
  $\max\{\eta_{\textup{l}}(A^2),\eta_{\textup{r}}(A^2)\}\leq T/2$ and thus
  $u\in[0,\eta_{\textup{l}}(A^2)]\cup[T-\eta_{\textup{r}}(A^2),T]$ if and only if
  $I(u)\in[-\eta_{\textup{r}}(A^2),\eta_{\textup{l}}(A^2)]$.  Now let
  $s_1,\dots,s_k,t_1,\dots,t_k\in[0,\eta_{\textup{l}}(A^2)]\cup[T-\eta_{\textup{r}}(A^2),T]$
  such that $s_1=s$ and $t_k=t$.  On $\mathcal{F}$, we have
  $d_{F}(t_i,s_{i+1}) = 0$ if and only if
  $d_{\Xha}(I(t_i),I(s_{i+1}))=0$, and $D(s_i,t_i)=
  D_{\theta}(I(s_i),I(t_{i+1}))$ for each $i\in\{1,\ldots,k\}$, which proves
  our claim.  

  In order to see that the right hand side of~\eqref{eq:DH-rhs} for
  $s'_1=I(s), t'_k=I(t)$ agrees with $D_{\theta}(I(s),I(t))$, we still have to
  show that 
  \begin{equation}
  \label{eq:condI}
   I(s),I(t)\in[-\eta_{\textup{r}}(A),\eta_{\textup{l}}(A)]\quad\hbox{and}\quad
  \max\{D_{\theta}(0,I(s)),D_{\theta}(0,I(t))\}\leq r.
  \end{equation}
  The first statement is clear since $s,t\in
  [0,\eta_\ell(A)]\cup[T-\eta_{\textup{r}}(A),T]$. For the second statement, the right
  hand side of~\eqref{eq:DH-rhs} specialized to $s'_1=I(s), t'_k=0$ yields an upper
  bound on $D_{\theta}(0,I(s))$, and then the equality of the right hand
  sides~\eqref{eq:DBD-rhs} and~\eqref{eq:DH-rhs} shows
  $D_{\theta}(0,I(s))\leq D(0,s)\leq r$. Entirely similar, one sees
  $D_{\theta}(0,I(t))\leq r$. A symmetry argument shows that~\eqref{eq:condI}
  implies 
  $\max\{D(0,s),D(0,t)\}\leq r$. Finally, invoking again the equality of the
  right hand sides~\eqref{eq:DBD-rhs} and~\eqref{eq:DH-rhs} shows
  $D(s,t)=D_{\theta}(I(s),I(t))$.
\end{proof}
We finish now the proof of Proposition~\ref{prop:isometry-BD-BHP} by
showing that the balls $B_r(\BD_{T,\sigma(T)})$ and $B_r(\BHP_\theta)$ are
isometric on the event~$\mathcal{F}$. Write $\Y$ for the pointed metric
space $([0,T]/\{D=0\},D,\rho)$, so that $\Y$ has the law of
$\BD_{T,\sigma(T)}$. By Lemma~\ref{lem:DBD} (a), points in $B_r(\Y)$ are on
$\mathcal{F}$ equivalence classes of the form $[s]$ for
$s\in[-\eta_{\textup{r}}(A),\eta_{\textup{l}}(A)]$. By the last statement
of Corollary~\ref{cor:DBD-DH}, we deduce that the map $I$ from above can be
viewed as an isometric map from $B_r(\Y)$ to the quotient
$\Zsf=(\R/\{D_\theta=0\},D_\theta,\rho_\theta)$, which has the law of
$\BHP_\theta$. Moreover, from Lemma~\ref{lem:DH} (a) we see that $I$ maps
$B_r(\Y)$ onto $B_r(\Zsf)$, and it sends $\rho$, the equivalence
class of $0$ in $\Y$, to $\rho_{\theta}$, the equivalence class of $0$ in
$\Zsf$. This completes the proof of the proposition.
\end{proof}

We end this section by improving Proposition~\ref{prop:isometry-BD-BHP} to
the statement of Theorem~\ref{thm:coupling-BD-BHP}.
\subsubsection{Proof of Theorem~\ref{thm:coupling-BD-BHP}}
We will need some known facts about the Brownian disks of finite volume,
mostly from Bettinelli~\cite{Be3,Be4}. With
$\Y=([0,T]/\{D=0\},D,\rho)$ as in the previous section, we let
$p_\Y:[0,T]\to \Y$ be the canonical projection.

\begin{lemma}[Proposition 17 in~\cite{Be4}]
  \label{lem:proof-prop-refpr-1}
Let $s,t\in [0,T]$ with $s\neq t$ be such that
$p_\Y(s)=p_\Y(t)$ (equivalently $D(s,t)=0$). Then either 
$d_{F}(s,t)=0$ or $d_{W}(s,t)=0$. Moreover, the
topology of $\Y$ is equal to the quotient topology of
$[0,T]/\{D=0\}$. 
\end{lemma}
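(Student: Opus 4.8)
\emph{Proof strategy.} The statement is \cite[Proposition 17]{Be4}; the plan is to recall how it is obtained, following the route Le Gall used for the Brownian map. Write $\sim_F=\{d_F=0\}$ and $\sim_W=\{d_W=0\}$ for the two pseudo-metric equivalence relations on $[0,T]$. The first assertion is equivalent to the set identity $\{D=0\}=\sim_F\cup\sim_W$ in $[0,T]^2$, which is also the combinatorial input behind the fact that $\BD_{T,\sigma(T)}$ is a topological disk. The inclusion $\supseteq$ is immediate from the definition of $D$: the chain $(s_1,t_1)=(s,t)$ gives $D(s,t)\le d_W(s,t)$, while if $d_F(s,t)=0$ the chain $(s_1,t_1)=(s,s)$, $(s_2,t_2)=(t,t)$ is admissible (one has $d_F(t_1,s_2)=d_F(s,t)=0$) and its total $d_W$-length is $d_W(s,s)+d_W(t,t)=0$ since $\min_{[0,T]}W\le W_u$ forces $d_W(u,u)=0$; either way $D(s,t)=0$.

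For the reverse inclusion I would show that if $d_F(s,t)>0$ \emph{and} $d_W(s,t)>0$, then $D(s,t)>0$. The starting point is the cactus bound \eqref{eq:cactusDBD} for $D$ together with the structure of the label process: conditionally on $F$, $W$ is (a continuous modification of) the sum of a Brownian snake over the tree coded by $F-\underline F$ and an independent boundary-bridge term, so one may invoke the regularity and range properties of the Brownian snake. Applying \eqref{eq:cactusDBD} first to the pairs $(0,s)$ and $(0,t)$ controls $W_s$, $W_t$ and the relevant arc-minima of $W$; applying it along an arbitrary chain $s=s_1,t_1,\dots,s_k,t_k=t$ with $t_i\sim_F s_{i+1}$, one shows that the total $d_W$-length stays bounded away from $0$ unless the identifications $t_i\sim_F s_{i+1}$ already force $s\sim_F t$. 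Equivalently — and this is how \cite{Be4} organizes the argument — one classifies the pairs of points of $[0,T]$ that are glued by $D$, i.e.\ analyses the ``special'' (multiple-representative) points of the Brownian disk, in the spirit of Le Gall's analysis of the Brownian map. I expect this step to be the main obstacle: it requires the sharp oscillation/H\"older estimates on $W$ of \cite{Be4} and careful control of how the boundary-bridge term interacts with the snake near the boundary of the disk.

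The second assertion is soft. By construction $(\Y,D)$ is a finite metric space, hence Hausdorff. The canonical projection $p_\Y:[0,T]\to\Y$ is continuous: from $D\le d_W$ and the continuity of $W$ one gets $D(p_\Y(s),p_\Y(s'))\le d_W(s,s')\to 0$ as $s'\to s$. Hence the identity map from $[0,T]/\{D=0\}$ equipped with the quotient topology onto $(\Y,D)$ is continuous. Since $[0,T]$ is compact, this quotient is compact, and a continuous bijection from a compact space onto a Hausdorff space is a homeomorphism; therefore the quotient topology and the metric topology on $\Y$ coincide, which is the claim.
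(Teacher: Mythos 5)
This lemma is quoted from \cite[Proposition 17]{Be4} without proof in the present paper, so there is no in-paper argument to compare against; I can only assess whether your sketch gives a viable route. Your soft arguments are correct and complete: the two-term chain $(s,s),(t,t)$ together with the observation $d_W(u,u)=0$ gives the inclusion $\{d_F=0\}\cup\{d_W=0\}\subseteq\{D=0\}$, and the compactness-plus-Hausdorff argument for the coincidence of the quotient topology and the metric topology is exactly right (the phrase ``finite metric space'' should simply read ``metric space'').

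The genuine gap is the reverse inclusion, which is the actual content of Bettinelli's proposition and which you flag yourself. One concrete warning about the starting point you propose: read at face value, \eqref{eq:cactusDBD} says $D(s,t)\ge d_W(s,t)$, which combined with the trivial $D(s,t)\le d_W(s,t)$ (take $k=1$ in the defining infimum) would force $D=d_W$ and collapse $\Y$ to the $\R$-tree $\cT_W$, contradicting the disk topology; so the bound in that form cannot be applied directly. The usable cactus bound replaces the two contour-arc minima by the minimum of $W$ over the geodesic in $\cT_F$ from $p_F(s)$ to $p_F(t)$ (compare the discrete statement \eqref{eq:cactus1}, whose minima run over the tree paths $[[u,v]]$, not over contour intervals); that geodesic minimum dominates both arc minima, so the resulting bound is strictly weaker and consistent with $D\le d_W$. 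Beyond this, the reverse inclusion is not obtained by estimating $d_W$-sums along chains: what \cite{Be4} actually does is classify the pairs $(s,t)$ with $D(s,t)=0$ through a detailed study of increase points and local minima of $F$, $W$ and the snake (precisely the kind of input recorded in the lemmas quoted right after this one), and then checks that each such pair falls into $\{d_F=0\}$ or $\{d_W=0\}$. Your plan --- classifying the identified points in the spirit of Le Gall's Brownian-map argument --- is the correct one, but as you anticipate, the key step is exactly the one your sketch does not supply.
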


\begin{lemma}[Theorem 2 and Proposition 21 in~\cite{Be3}] 
  \label{lem:proof-prop-refpr}Almost surely, the space
  $\Y$ is homeomorphic to the closed unit disk
  $\overline{\mathbb{D}}$, and the boundary of $\Y$ as a
  topological surface is determined by 
$$p_\Y^{-1}(\partial\Y)=\{s\in [0,T]:F_s=\underline{F}_s\}\,
.$$
\end{lemma}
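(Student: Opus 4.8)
The plan is to realise $\Y=[0,T]/\{D=0\}$ as the image of a closed topological disk under a continuous surjection whose point–preimages are cellular continua, and then to invoke a theorem of Moore type; the description of $\partial\Y$ is read off from the same picture. This follows the scheme of Bettinelli~\cite{Be3} and is, in the setting with a boundary, the analogue of the proof that the Brownian map is homeomorphic to the sphere (see~\cite{Mi3}). The first step is to understand the tree $\mathcal T_F=[0,T]/\{d_F=0\}$. Since $F$ is a first-passage bridge from $0$ to $-\sigma(T)$, one has $F_s>-\sigma(T)=\min_{[0,T]}F$ for every $s<T$, hence $\underline F(t,s)=-\sigma(T)$ whenever $s<t<T$; consequently $d_F(s,t)=F_s+F_t-2\min_{[s\wedge t,s\vee t]}F$ is the usual coding pseudo-metric, so $\mathcal T_F$ is the rooted real tree coded by $F$, with planar structure given by the natural order on $[0,T]$. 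Its geodesic from $\rho=[0]$ to $[T]$ is exactly $p_{\mathcal T_F}(\{s:F_s=\underline F_s\})$, an arc of length $\sigma(T)$ along which a Brownian forest is grafted according to the excursions of $F$ above its running infimum.

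Using the planarity of $\mathcal T_F$ one builds a closed topological disk $\Delta$, a continuous surjection $\pi:\Delta\to\mathcal T_F$ whose fibres are points or arcs, and a parametrisation of $\partial\Delta$ under which $\pi$ restricts to a degree-one ``contour sweep'' of $\mathcal T_F$; write $\gamma:=\pi^{-1}(\text{spine})\subset\partial\Delta$ for the arc lying over the geodesic from $\rho$ to $[T]$. By Lemma~\ref{lem:proof-prop-refpr-1}, $\{D=0\}$ is precisely the equivalence relation generated by $\{d_F=0\}\cup\{d_W=0\}$, and $\Y$ carries the quotient topology; composing $\Delta\to\mathcal T_F\to\Y$ therefore yields a continuous surjection $\Phi:\Delta\to\Y$ whose fibres are the classes of the relation $\approx$ on $\Delta$ generated by $\pi$ together with the chords realising $\{d_W=0\}$. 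Each such fibre is compact and connected, being a chain of arcs and points glued end to end through $\{d_F=0\}$ and $\{d_W=0\}$ identifications.

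The crucial point is that the fibres do not separate $\Delta$. The two families of chords in $\Delta$ — the tree-contour chords coming from $\{d_F=0\}$ and the equal-label ``visibility'' chords coming from $\{d_W=0\}$ — can be drawn without mutual crossings, so each class, together with the chords realising it, is a dendrite whose complement in $\Delta$ is connected; one also checks, using Lemma~\ref{lem:proof-prop-refpr-1} and the continuity of $F$ and $W$, that $\approx$ is a closed subset of $\Delta\times\Delta$, i.e.\ $\Delta/\!\approx$ is Hausdorff. Moore's theorem in its version for the closed disk (equivalently: double $\Delta$ along $\partial\Delta$, extend $\approx$ symmetrically, apply Moore's theorem on $S^2$, and note that the boundary circle is preserved) then gives $\Delta/\!\approx\,\cong\overline{\mathbb D}$, and since $\Phi$ descends to a homeomorphism $\Delta/\!\approx\,\to\Y$ we obtain $\Y\cong\overline{\mathbb D}$ almost surely. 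For the boundary statement, $\partial\Y$ is the image of $\partial\Delta$ in $\Delta/\!\approx$; one verifies that a $\approx$-class either lies in the interior of $\Delta$ or meets $\partial\Delta$ only along $\gamma$, whence $\partial\Y=p_\Y(\{s:F_s=\underline F_s\})$, the two endpoints $\rho$ and $[T]$ of $\gamma$ being identified in $\Y$ so that the spine closes up into the boundary circle.

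I expect the main obstacle to be the non-separation property above: it demands simultaneous control of the two identification relations — essentially the confluence information encapsulated in Lemma~\ref{lem:proof-prop-refpr-1} — together with the combinatorial bookkeeping that makes the chords non-crossing in $\Delta$. A secondary but genuine difficulty is the correct treatment of the boundary arc $\gamma$: because $F$ is a bridge rather than an excursion its contour is not a loop, so the spine plays a distinguished role and one needs Brownian-snake estimates (on $Z$ and on $b_{-\underline F}$) of the type developed in~\cite{Be3} to show that the running-infimum times of $F$ exhaust $\partial\Y$ and that no interior identification reaches them.
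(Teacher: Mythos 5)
This lemma is not proved in the paper at all: it is quoted from Bettinelli~\cite{Be3} (Theorem 2 and Proposition 21), so there is no in-paper argument to compare yours against. Your sketch does correctly reconstruct the strategy of the cited proof: realise $\Y$ as the quotient of a closed topological disk by an upper semicontinuous decomposition into non-separating continua built from the two non-crossing laminations $\{d_F=0\}$ and $\{d_W=0\}$, and apply a Moore-type theorem (the boundary analogue of the Le Gall--Paulin/Miermont proofs that the Brownian map is a sphere, which is how Bettinelli proceeds).

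As a proof, however, what you have written is a route map rather than an argument, and the two steps you yourself flag as ``the main obstacle'' are exactly where the entire content of the cited result lies. First, the non-separation and cellularity of the $\approx$-classes is not combinatorial bookkeeping: it rests on almost-sure path properties of $(F,W)$ --- in particular the disjointness of the unilateral increase points of $F$ and $W$ (the paper's Lemma~\ref{lem:proof-prop-refpr-3}) and uniqueness of minima of $W$ over the relevant intervals (Lemma~\ref{lem:proof-prop-refpr-2}) --- and none of this probabilistic input appears in your sketch. Second, the boundary claim is the set equality $p_\Y^{-1}(\partial\Y)=\{s:F_s=\underline{F}_s\}$, which is strictly stronger than the identity $\partial\Y=p_\Y(\{s:F_s=\underline{F}_s\})$ you actually argue for: you must also show that no time $s$ with $F_s>\underline{F}_s$ projects to a boundary point, i.e.\ that a time interior to an excursion of $F-\underline{F}$ cannot be $D$-identified with a floor time; this again requires Lemma~\ref{lem:proof-prop-refpr-1} combined with the disjointness of increase points, and you do not supply it. A final caveat: you invoke Lemma~\ref{lem:proof-prop-refpr-1}, itself quoted from the later paper~\cite{Be4}, so your sketch is not self-contained either --- mathematically harmless, but worth noting if the goal was to reconstruct the original proof.
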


Let a real-valued function $f$ be defined on an interval $J$, and
$t\in J$. We say that $t$ is a right-increase point of $f$ if there
exists $\eps>0$ such that $[t,t+\eps]\subset J$ and $f(s)\geq f(t)$
for every $s\in [t,t+\eps]$. Left-increase points are defined similarly,
and a unilateral increase point is a time $t$ which is either a
left-increase point or a right increase point.  Note for instance that
the preceding lemma implies that a point of $\partial \Y$ is
necessarily of the form $p_\Y(s)$ where $s$ is a unilateral increase point of $F$.

\begin{lemma}[Lemma 12 in~\cite{Be3}]
  \label{lem:proof-prop-refpr-3}
Almost surely, the sets of unilateral increase points of $F$
and $W$ are disjoint. 
\end{lemma}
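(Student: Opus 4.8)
The plan is to reduce, via rationalization, to countably many disjointness statements, and to establish the crucial one by a multi-scale covering estimate carried out after conditioning on $F$.

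\textbf{Step 1 (Rationalization).} A point $t\in[0,T]$ is a right-increase point of $F$ if and only if there exists a rational $q\in(t,T]$ with $F_t=\min_{[t,q]}F$, and symmetrically for left-increase points (with a rational $q<t$); the same applies to $W$. Hence the set of unilateral increase points of $F$ is contained in $\bigcup_{q\in\Q\cap[0,T]}\mathcal R^F_q$, where $\mathcal R^F_q=\{t\in[0,q]:F_t=\min_{[t,q]}F\}$ together with its left-sided analogue, and likewise $\mathcal R^W_{q'}$ for $W$. Being countable unions, it suffices to prove that for each fixed pair of rationals $(q,q')$ one has $\mathcal R^F_q\cap\mathcal R^W_{q'}=\emptyset$ almost surely, and by the symmetry between future- and past-records we may restrict attention to future-record sets with $q\le q'$.

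\textbf{Step 2 (Structure; conditioning on $F$).} Reversing time on $[0,q]$ and using that, on compact subintervals of $(0,q)$, the law of the first-passage Brownian bridge $F$ is mutually absolutely continuous with that of standard Brownian motion, the record set $\mathcal R^F_q$ inherits the fine structure of the range of a stable index-$1/2$ subordinator: almost surely it is closed, perfect, Lebesgue-null, and for each $n$ the number of dyadic intervals of length $2^{-n}$ that it meets is $O(2^{n(1/2+o(1))})$ (and $O(2^{n/2})$ in expectation). Conditioning now on the entire path $F$, the set $\mathcal R^F_q=:K$ becomes fixed, while $W=\sqrt3\,b_{-\underline F}+Z$ is conditionally a centered Gaussian process whose increments are controlled by the oscillation of $F-\underline F$, so that $\E[(W_s-W_t)^2\mid F]\lesssim|s-t|^{1/2}$. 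The statement reduces to: conditionally on $F$, almost surely $W$ has no right-increase point inside $K$.

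\textbf{Step 3 (Core estimate and the main obstacle).} Fix a macroscopic scale $2^{-n_0}$. If $s_0\in\mathcal R^F_q\cap\mathcal R^W_{q'}$, then for every $n\ge n_0$ the dyadic cell of length $2^{-n}$ containing $s_0$ carries a point at which both the defining inequalities of $\mathcal R^F_q$ and those of a right-increase of $W$ hold simultaneously at all dyadic scales between $2^{-n}$ and $2^{-n_0}$. One therefore estimates, jointly in $F$ and $W$ and uniformly in the cell, the probability of this event and shows it is $o(2^{-n})$; summing over the $O(2^n)$ dyadic cells, this probability tends to $0$ as $n\to\infty$, and since a genuine common point would be witnessed at every scale this forces $\mathcal R^F_q\cap\mathcal R^W_{q'}=\emptyset$ almost surely. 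To prove the $o(2^{-n})$ bound one exploits the self-similarity of $\mathcal R^F_q$ (which already supplies an exponent $1/2$), Gaussian small-ball and anticoncentration estimates for $W$ given $F$ together with a scale-mixing argument, and---decisively---the interaction between the two families of constraints through the snake representation $W=\sqrt3\,b_{-\underline F}+Z$. I expect this joint multi-scale estimate to be the main obstacle: neither the $F$-constraint nor the $W$-constraint alone decays fast enough, since both the future-minimum record set of $F$ and the set of one-sided increase points of $W$ have strictly positive Hausdorff dimension, so the required gain must come from their joint law --- exactly as in Le Gall's Brownian-snake estimates underlying the topology of the Brownian map, and as carried out in~\cite[Lemma 12]{Be3}, whose argument we follow.
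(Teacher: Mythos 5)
Your Step 1 (rationalization) is sound, and Step 2 correctly identifies the relevant conditional structure. But Step 3 is where the argument breaks: you set up a multi-scale covering estimate, acknowledge that neither constraint alone produces enough decay (both sets have dimension $1/2$, so $O(2^{n/2})$ cells times a $2^{-n/2}$ per-cell bound gives $O(1)$, not $o(1)$), and then defer the decisive ``joint'' estimate without carrying it out. As written, this is a plan, not a proof, and moreover the plan points in the wrong direction: the claim that the gain ``must come from a joint multi-scale estimate'' mischaracterizes Bettinelli's (and Le Gall's) argument, which is soft and does not involve any covering estimate at all.

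The idea you are missing is the following structural reduction. Fix a rational $q$ and set $A_q=\{t\le q: F_t=\min_{[t,q]}F\}$, so that every right-increase point of $F$ in $[0,q]$ lies in $A_q$ for some rational $q$. Write $t_0=\min A_q$; then $\underline F$ is constant on $[t_0,q]$, so on $A_q$ the term $\sqrt3\,\gamma_{-\underline F}$ in $W$ is constant and $W$ differs from $Z$ by a constant. Moreover, for $s<s'$ both in $A_q$ one has $\min_{[s,s']}F=F_s$, hence $d_{F-\underline F}(s,s')=F_{s'}-F_s$. Parameterizing $A_q$ by $x=F_t-F_{t_0}$ therefore turns $Z\restriction A_q$ (conditionally on $F$) into a standard Brownian motion $\beta$ in the variable $x$. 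Since $A_q$ is a.s.\ a perfect set (the range of the ladder-time set of a path locally equivalent to Brownian motion) and $F\restriction A_q$ is continuous nondecreasing with full interval range, a right-increase point $t\in A_q$ of $W$ produces a right-increase point $x_0=F_t-F_{t_0}$ of $\beta$. But a Brownian motion a.s.\ has no one-sided increase points (Dvoretzky--Erd\H{o}s--Kakutani), applied under a further union over rational witness intervals. This gives the disjointness directly, with no quantitative estimate, no anticoncentration bound, and no scale-mixing; the only inputs are the perfectness of $A_q$, the identity $d_{F-\underline F}(s,s')=F_{s'}-F_s$ on $A_q$, and the classical Brownian increase-point theorem. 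Your proposal would need either to be replaced by this argument or to supply, in full, the joint estimate you currently only gesture at --- and it is far from clear the latter can be made to close, precisely because of the dimension-$1/2$ obstruction you yourself flag.
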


The following lemma is not strictly needed but useful. See also~\cite{LGWe}. 

\begin{lemma}[Lemma 11 in~\cite{Be3}]
  \label{lem:proof-prop-refpr-2}
Almost surely, there exists a unique $s_\ast\in (0,T)$ such that
$W_{s_\ast}=\min_{[0,T]}W$. 
\end{lemma}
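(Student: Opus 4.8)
The plan is to condition on the first-passage bridge $F$ and combine the Gaussian structure of $W$ given $F$ with classical uniqueness properties of the Brownian snake. Given $F$, the label process $W=(b_{-\underline{F}_t}+Z_t)_{0\le t\le T}$ is a centered Gaussian process with a.s.\ continuous sample paths (Section~\ref{sec:random-snakes}), so it attains its minimum, and since $W_0=W_T=0$ while $\min_{[0,T]}W<0$ a.s.\ (the bridge $b$ is a.s.\ negative somewhere), the minimum is attained in $(0,T)$. First I would compute the ``degeneracy classes'' of $W$: as $b$ and $Z$ are independent given $F$, $\E[(W_s-W_t)^2\mid F]=\E[(b_{-\underline{F}_s}-b_{-\underline{F}_t})^2]+d_{F-\underline{F}}(s,t)$, and since $\E[(b_u-b_v)^2]=3|u-v|(1-|u-v|/\sigma(T))$ this vanishes, for $s\ne t$ other than the pair $\{0,T\}$, exactly when $\underline{F}_s=\underline{F}_t$ and $d_{F-\underline{F}}(s,t)=0$ --- that is, precisely when $s$ and $t$ lie in the closure of one excursion interval of $F-\underline{F}$ above $0$ and code the same point of the tree coded by that excursion. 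A classical fact about continuous Gaussian processes on a compact metric space (of the kind used throughout the Brownian snake / Brownian map literature) then gives that, a.s., $\{t:W_t=\min_{[0,T]}W\}$ is contained in a single degeneracy class.

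Next I would locate this class. On the set of boundary times $\{F_t=\underline{F}_t\}$ (closed, of zero Lebesgue measure) one has $W_t=b_{-\underline{F}_t}$, and since $t\mapsto-\underline{F}_t$ is a continuous nondecreasing surjection onto $[0,\sigma(T)]$, the values of $W$ there are exactly those of $b$, with minimum $b_{x_\ast}$ at the a.s.\ unique argmin $x_\ast\in(0,\sigma(T))$ of $b$. On each excursion interval $(g_i,d_i)$ of $F-\underline{F}$, the process $Z$ restricts (given $F$) to an independent Brownian snake driven by the deterministic excursion $e_i=(F-\underline{F})|_{[g_i,d_i]}$, with $Z\equiv0$ at the endpoints, and $W=b_{x_i}+Z|_{\mathcal{T}_{e_i}}$ where $-x_i=\underline{F}_{g_i}$. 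Since the levels $-\underline{F}_{g_i}$ are dense in $[-\sigma(T),0]$, one gets $\min_{[0,T]}W=\inf_i(b_{x_i}+\min_{\mathcal{T}_{e_i}}Z)$. Because $\sum_i(d_i-g_i)\le T$ and $Z$ is uniformly continuous on $[0,T]$, only finitely many trees can realize a value below any fixed level; hence if this infimum is strictly below $\min b$ it is attained at a tree $\mathcal{T}_{e_{i_\ast}}$, a.s.\ unique (the finitely many competing values have non-degenerate differences given $F$), whereas if it equals $\min b$ then with probability one no tree attains it (given $F$ and $b$, each $\min_{\mathcal{T}_{e_i}}Z$ has a non-atomic law) and the minimum is attained only on the boundary.

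It then remains to see that the minimizing class is a single time. If the minimum is attained on the boundary, it is attained on $\{t:\underline{F}_t=-x_\ast\}$, which is a non-degenerate interval only when $x_\ast$ is a starting level of an excursion of $F-\underline{F}$ --- an event of probability zero, since $x_\ast$ is independent of $F$ with a continuous law and there are only countably many such levels; hence this set is the single time $\tau_{-x_\ast}=\inf\{t:F_t\le-x_\ast\}\in(0,T)$. If the minimum is attained inside $\mathcal{T}_{e_{i_\ast}}$, I would invoke the classical property of the Brownian snake that, conditionally on the driving excursion, it attains its minimum over the coded tree at a unique point which is a leaf (uniqueness again from the Gaussian argument applied to $Z^{e_{i_\ast}}$; the leaf property from the spatial Markov property of the snake, since at an interior point the snake conditionally undershoots its value there in each of the $\ge2$ adjacent non-degenerate subtrees). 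Such a leaf is coded by exactly one time, lying in $(0,T)$ because $0$ and $T$ are boundary times. In all cases $W$ attains its minimum at a unique $s_\ast\in(0,T)$. I expect the main obstacle to be precisely this last step together with its bookkeeping: the snake uniqueness/leaf statement (and checking that it applies to the, given $F$, deterministic excursions $e_i$), plus the care required to glue the countably many trees and the boundary into a single minimizer and to rule out ties --- this is essentially where a full proof, like the one in \cite{Be3}, does most of its work.
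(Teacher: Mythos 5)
The paper does not actually prove this lemma: it is imported verbatim as Lemma 11 of \cite{Be3} (with a pointer to \cite{LGWe}), so there is no in-paper argument to measure yours against. Your architecture is nonetheless the standard one and essentially the route taken in those references: condition on $F$, use the uniqueness-of-the-argmax theorem for continuous Gaussian processes (Kim--Pollard/Lifshits) to reduce the argmin set to a single degeneracy class of the conditional intrinsic pseudo-metric, and then show that this class is a singleton by separating the floor (where $W=b_{-\underline{F}}$, and the a.s.\ unique argmin of $b$ a.s.\ avoids the countably many excursion levels of $F$) from the trees (where one needs the snake's argmin to be a leaf, hence coded by a single time). Your non-atomicity argument ruling out ties between the floor and the trees, and the uniform-continuity argument showing only finitely many trees compete below any level, are both correct.

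One step is not sound if taken as a self-contained argument: justifying the leaf property by saying that ``at an interior point the snake conditionally undershoots its value in each adjacent subtree'' is circular, because the argmin is itself a functional of $Z$, so you cannot condition on $Z$ outside the subtree above that point and treat the restriction as an independent snake started from the value there. This is precisely the difficulty that the re-rooting and excursion arguments of \cite{LGWe} are designed to overcome, so citing that result is the right move --- but you should then apply it under the joint law rather than conditionally on $F$. The statement ``the argmin of $Z^{f}$ is a.s.\ a unique leaf of $\mathcal{T}_f$'' is proved in the literature for $f$ a Brownian excursion, not for an arbitrary deterministic driving function; since, conditionally on their durations, the excursions of $F-\underline{F}$ above $0$ are independent Brownian excursions (the It\^o-synthesis structure used in the proof of Proposition~\ref{prop:coupl-brown-disks-2}), the classical result applies verbatim to each tree. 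With that substitution the proof goes through.
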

For $t\in[0,T)$, define 
$$\Phi_t(r)=\inf\{s\geq_\circ t:W_s=W_t-r\}\, ,\qquad
0\leq r\leq W_t-W_{s_\ast}\, ,$$ where in the notation $\geq _\circ$, it
should be understood that we consider the cyclic order in $[0,T]$ when $T$
and $0$ are identified.  More precisely, identifying $[0,T)$ with $\R/T\Z$,
for $s,t\in [0,T)$, let $[s,t]_\circ$ be the cyclic interval from $s$ to
$t$, namely, $[s,t]_\circ=[s,t]$ if $s\leq t$ and
$[s,t]_\circ=[s,T)\cup[0,t]$ if $t<s$. Then $\Phi_t(r)=s$ if and only if
$W_u>W_t-r$ for every $u\in [t,s]_\circ\setminus\{s\}$, and $W_s=W_t-r$.
The properties that we will need are summarized in the following
statement. For the rest of the proof, the time $s_\ast\in (0,T)$ is
specified as in Lemma~\ref{lem:proof-prop-refpr-2}.

\begin{lemma}
  \label{lem:proof-prop-refpr-4}
The following properties hold almost surely. 
\begin{enumerate}
\item For every $t\in [0,T]$, the path $\Gamma_t=p_\Y\circ \Phi_t$ is a
  geodesic path from $p_\Y(t)$ to $x_\ast=p_\Y(s_\ast)$.
  \item For every geodesic path $\Gamma$ to $x_\ast$ in $\Y$,
    there exists a unique $t\in [0,T)$ such that $\Gamma_t=\Gamma$.
\item For every $t\in [0,T]$, the path $\Gamma_t$ intersects $\partial
  \Y$, if at all, only at its origin $\Gamma_t(0)$.
  \item Let $s,t\in [0,T]$. Then the intersection of the images
    $\{\Gamma_s(r):0<r<D(s_\ast,s)\}$ and $\{\Gamma_t(r):0<r\leq
    D(s_\ast,t)$ (excluding the starting point) is the set 
    $$\left\{\Gamma_s(D(s_\ast,s)-r):0\leq r\leq \max\left\{\inf_{[s,t]_\circ}
    W,\inf_{[t,s]_\circ}W\right\}-W_{s_\ast}\right\}.$$
In particular, as soon as $s,t\in [0,T)$ and $s \neq t$, there exists
$\eps>0$ such that $\{\Gamma_s(r):0<r\leq \eps\}$ and
$\{\Gamma_t(r):0<r\leq \eps\}$ are disjoint. 
\end{enumerate}
\end{lemma}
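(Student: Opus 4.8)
The plan is to carry over to the Brownian disk $\BD_{T,\sigma(T)}$ Le Gall's description of the geodesics towards the distinguished point of the Brownian map, in the form adapted to disks by Bettinelli~\cite{Be3,Be4} (compare also~\cite{LGWe}). Everything hinges on one identity, which also gives (a). First I would combine the cactus bound~\eqref{eq:cactusDBD}, applied with second argument $s_\ast$ and using $W_{s_\ast}=\min_{[0,T]}W$ (Lemma~\ref{lem:proof-prop-refpr-2}), which gives $D(s,s_\ast)\ge W_s-W_{s_\ast}$, with the reverse bound furnished by the path $\Gamma_t$ itself: writing $a=\Phi_t(r)$, $a'=\Phi_t(r')$ with $0\le r<r'\le W_t-W_{s_\ast}$, the defining property of $\Phi_t$ forces $W>W_t-r'$ strictly along one of the two cyclic arcs joining $a$ and $a'$, whence $d_W(a,a')\le W_a+W_{a'}-2(W_t-r')=r'-r$ and a fortiori $D(\Phi_t(r),\Phi_t(r'))\le|r-r'|$. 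Since $\Phi_t$ is non-decreasing in $r$ and each of its jumps links two times carrying the same $W$-value across an arc on which $W$ does not drop below that value — i.e.\ two times at $d_W$-distance $0$ — the map $\Gamma_t=p_\Y\circ\Phi_t$ is a continuous path; by the previous display it has length $\le W_t-W_{s_\ast}$, and it joins $\Gamma_t(0)=p_\Y(t)$ to $\Gamma_t(W_t-W_{s_\ast})=p_\Y(s_\ast)=x_\ast$ (the last equality because $s_\ast$ is the unique time realizing $\min W$). Together with the cactus lower bound this proves (a) and yields the identity $D(p_\Y(v),x_\ast)=W_v-W_{s_\ast}$ for every representative $v$.

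For (c) I would invoke Lemmas~\ref{lem:proof-prop-refpr} and~\ref{lem:proof-prop-refpr-3}. By the first, $p_\Y^{-1}(\partial\Y)=\{s:F_s=\underline F_s\}$, and every such $s$ is a left-increase point of $F$ since $F_{s-}\ge\underline F_{s-}\ge\underline F_s=F_s$. On the other hand, for $r>0$ the time $v=\Phi_t(r)$ satisfies $W>W_v$ on the cyclic arc from $t$ to $v$, in particular just to the left of $v$, so $v$ is a left-increase point of $W$; by Lemma~\ref{lem:proof-prop-refpr-3} it is then not a unilateral increase point of $F$, whence $F_v>\underline F_v$ and $\Gamma_t(r)=p_\Y(v)\notin\partial\Y$. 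This is exactly (c).

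For (d) I would parametrise the point of $\Gamma_s$ at distance $\rho\in[0,D(s_\ast,s)]$ from $x_\ast$ as $p_\Y(\phi_s(\rho))$, where $\phi_s(\rho)$ is the first time, in the cyclic order started at $s$, at which $W=W_{s_\ast}+\rho$ (and likewise for $t$). Put $m=\max\{\inf_{[s,t]_\circ}W,\inf_{[t,s]_\circ}W\}$. If $W_{s_\ast}+\rho<m$ and, say, $\inf_{[s,t]_\circ}W\ge W_{s_\ast}+\rho$, then $W$ stays strictly above level $W_{s_\ast}+\rho$ all along the arc $[s,t]_\circ$, so the first passage of that level from $s$ and from $t$ coincide: $\phi_s(\rho)=\phi_t(\rho)$. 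Hence $\Gamma_s$ and $\Gamma_t$ share the final segment $\{\Gamma_s(D(s_\ast,s)-r):0\le r\le m-W_{s_\ast}\}$, which is the inclusion ``$\supseteq$''. For ``$\subseteq$'' I would assume $p_\Y(\phi_s(\rho))=p_\Y(\phi_t(\rho))$ with $\rho>0$, i.e.\ $D(\phi_s(\rho),\phi_t(\rho))=0$, and show $W_{s_\ast}+\rho\le m$. By Lemma~\ref{lem:proof-prop-refpr-1}, either $d_W(\phi_s(\rho),\phi_t(\rho))=0$ or $d_F(\phi_s(\rho),\phi_t(\rho))=0$. If $W_{s_\ast}+\rho>m$, then $\phi_s(\rho)$ and $\phi_t(\rho)$ fall in the interiors of the complementary arcs $[s,t]_\circ$ and $[t,s]_\circ$, and on each of the two arcs joining them the infimum of $W$ is strictly below $W_{s_\ast}+\rho$, which rules out $d_W=0$. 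The possibility $d_F(\phi_s(\rho),\phi_t(\rho))=0$ is excluded as in Le Gall's and Bettinelli's analysis, again through Lemma~\ref{lem:proof-prop-refpr-3}: a common ancestor of $\phi_s(\rho),\phi_t(\rho)$ in $\mathcal T_F$ would produce a time that is simultaneously an increase point of $F$ and of $W$. Granting this, the set equality follows, and the ``in particular'' assertion is read off from it, the only delicate point being that this final common segment does not reach the origin $\Gamma_s(0)$ when $s\neq t$, which is again handled via Lemma~\ref{lem:proof-prop-refpr-3}.

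Finally, for (b): along any geodesic $\Gamma$ to $x_\ast$ the identity $D(\cdot,x_\ast)=W_\cdot-W_{s_\ast}$ shows that $W$ decreases at unit speed, and combining this with the confluence in (d) and the explicit family $\{\Gamma_t:t\in[0,T)\}$, Le Gall's argument shows that $\Gamma$ must be one of the $\Gamma_t$ (every geodesic to $x_\ast$ arises from the $\Phi$-construction); the transfer to the disk is possible because all its ingredients — the cactus bound, uniqueness of $s_\ast$, the confluence of the $\Gamma_t$'s, the $F$-versus-$W$ increase-point dichotomy — are now in hand, and uniqueness of $t$ is then immediate from (d). I expect the main obstacle to be precisely the exclusion of the case $d_F=0$ in the confluence statement (d) together with the classification step in (b): these are the genuinely delicate points in Le Gall's study of geodesics in the Brownian map, and I would carry them to the Brownian disk following~\cite{Be3,Be4}, with Lemma~\ref{lem:proof-prop-refpr-3} as the crucial input.
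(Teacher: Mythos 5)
Your proposal is correct and, for parts (c) and (d), follows essentially the same route as the paper: the cactus bound for the lower bound in (a), the observation that every interior point of $\Gamma_t$ is of the form $p_\Y(v)$ with $v$ a left-increase point of $W$, the dichotomy of Lemma~\ref{lem:proof-prop-refpr-1} ($D=0 \Rightarrow d_F=0$ or $d_W=0$), and Lemma~\ref{lem:proof-prop-refpr-3} to rule out a simultaneous unilateral increase point of $F$ and $W$. The one genuine difference is one of economy rather than substance: the paper simply cites Bettinelli (Proposition 23 of~\cite{Be4}) for (a) and (b), whereas you supply a complete and correct argument for (a) (the Lipschitz bound $D(\Phi_t(r),\Phi_t(r'))\le |r-r'|$ from the first-passage construction, matched against the cactus lower bound, together with uniqueness of $s_\ast$ from Lemma~\ref{lem:proof-prop-refpr-2}) and only a sketch of (b) referring to Le Gall's classification of geodesics to the distinguished point. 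Your treatment of the case $d_F(\phi_s(\rho),\phi_t(\rho))=0$ in (d) is slightly compressed, but the mechanism you invoke is exactly the one the paper uses: if $d_F=0$ between two distinct times that are both left-increase points of $W$, then the earlier of the two is a right-increase point of $F$, contradicting the disjointness in Lemma~\ref{lem:proof-prop-refpr-3}.
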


\begin{proof}
  (a) and (b) are proved in~\cite[Proposition 23]{Be4}.  To prove (c), we
  notice that from the definition of $\Phi_t$, every point in the set
  $\{\Gamma_t(r):0<r\leq D(s_\ast,t)\}$ must be of the form $p_\Y(s)$,
  where $s$ is a left-increase point of $W$. By
  Lemma~\ref{lem:proof-prop-refpr-3} it cannot be a unilateral increase
  point of $F$, and thus $p_\Y(s)$ is not in $\partial \Y$ by
  Lemma~\ref{lem:proof-prop-refpr}.

  To prove (d) we first note that whenever $a<\max\left\{\inf_{[s,t]_\circ}
    W,\,\inf_{[t,s]_\circ}W\right\}$, it must hold that $\inf\{u\geq_\circ
  s:W_u=a\}=\inf\{u\geq_\circ t:W_u=a\}$, and the fact that
  $\Gamma_s(D(s_\ast,s)-r)=\Gamma_t(D(s_\ast,t)-r)$ for $r$ in the range
  given in the statement is a simple rewriting of this property and of the
  fact that $D(s_\ast,s)=W_s-W_{s_\ast}$, which is the length of the
  geodesic $\Gamma_s$. On the other hand, if
  $a>\max\left\{\inf_{[s,t]_\circ} W,\,\inf_{[t,s]_\circ}W\right\}$, then
  it is simple to see that $s_a=\inf\{u\geq_\circ s:W_u=a\}$ and
  $t_a=\inf\{u\geq_\circ t:W_u=a\}$ are such that $d_{W}(s_a,t_a)>0$, and
  since both points are left increase points for $W$, this implies that
  $p_\Y(s_a)\neq p_\Y(t_a)$ by Lemmas~\ref{lem:proof-prop-refpr-3}
  and~\ref{lem:proof-prop-refpr-1}. We leave to the reader to check that
  this implies (d).
\end{proof}

Let $a_0>0$, which will be fixed later on, and let
$O^0_{\BD}=[0,\eta_{\textup{l}}(a_0)]\cup[T-\eta_{\textup{r}}(a_0),T]$,
where $\eta_{\textup{l}}$ and $\eta_{\textup{r}}$ are defined as in the
proof of Proposition~\ref{prop:isometry-BD-BHP}. We reason on the event
that $s_\ast\notin O^0_\BD$, which will later be granted (with high
probability) by the fact that $T$ is bound to go to $\infty$. For now we
only assume that $\sigma(T)>2a_0$ so that by Lemma
\ref{lem:proof-prop-refpr}, the points
$x_{\textup{l}}=p_\Y(\eta_{\textup{l}}(a_0))$ and
$x_{\textup{r}}=p_\Y(T-\eta_{\textup{r}}(a_0))$ are distinct elements of
$\partial\Y$ (outside an event of zero probability). Let $t_\ast\in
O^0_\BD$ be such that $W_{t_\ast}=\min_{O^0_\BD}W$ (this defines $t_\ast$
uniquely a.s.\,, but we are not going to need this fact explicitly.) By (d)
in Lemma~\ref{lem:proof-prop-refpr-4}, together with the fact that
$s_\ast\notin O^0_\BD$, the paths $\Gamma_{\eta_{\textup{l}}(a_0)}$ and
$\Gamma_{T-\eta_{\textup{r}}(a_0)}$ are disjoint until they first meet at
the point $y_\ast=p_\Y(t_\ast)$.  We let $P$ be the union of the segments
of $\Gamma_{\eta_{\textup{l}}(a_0)}$ and
$\Gamma_{T-\eta_{\textup{r}}(a_0)}$ between $x_{\textup{l}},x_{\textup{r}}$
and $y_\ast$.

\begin{lemma}
  \label{lem:proof-prop-refpr-5}
  The set $P$ is a simple curve in $\Y$ from $x_{\textup{l}}$ to $x_{\textup{r}}$,
  that intersects $\partial\Y$ only at $x_{\textup{l}}$ and
  $x_{\textup{r}}$. Letting $O_\BD$ be the connected component of
  $\Y\setminus P$ that contains $p_\Y(0)$, then $O_\BD$
  is a.s. homeomorphic to the closed half-plane $\overline{\mathbb{H}}$,
  and is the interior of the set $p_\Y(O^0_\BD)$ in
  $\Y$. 
\end{lemma}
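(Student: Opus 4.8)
The plan is threefold: (i) show that $P$ is a simple arc joining $x_{\textup{l}}\neq x_{\textup{r}}$ and meeting $\partial\Y$ only at these two points; (ii) cut $\Y\cong\overline{\mathbb{D}}$ along $P$ by a Jordan--Schoenflies argument to see that each complementary piece is homeomorphic to $\overline{\mathbb{H}}$; (iii) match the piece containing $\rho=p_\Y(0)$ with the interior of $p_\Y(O^0_\BD)$. For step (i): by part (a) of Lemma~\ref{lem:proof-prop-refpr-4} the paths $\Gamma_{\eta_{\textup{l}}(a_0)}$ and $\Gamma_{T-\eta_{\textup{r}}(a_0)}$ are geodesics towards $x_\ast$, hence injective, and (as recorded just before the statement, via part (d) of that lemma and the assumption $s_\ast\notin O^0_\BD$) their initial pieces running from $x_{\textup{l}}$, resp.\ from $x_{\textup{r}}$, to $y_\ast=p_\Y(t_\ast)$ are disjoint except at $y_\ast$; since $x_{\textup{l}}\neq x_{\textup{r}}$ and a.s.\ $y_\ast\notin\{x_{\textup{l}},x_{\textup{r}}\}$ (because a.s.\ $\min_{O^0_\BD}W<W_{\eta_{\textup{l}}(a_0)}\wedge W_{T-\eta_{\textup{r}}(a_0)}$), the concatenation $P$ of these two injective arcs sharing only the common endpoint $y_\ast$ is a homeomorphic image of a compact interval, i.e.\ a simple arc. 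By part (c) of Lemma~\ref{lem:proof-prop-refpr-4} each $\Gamma_t$ meets $\partial\Y$ only at its origin $\Gamma_t(0)=p_\Y(t)$, so $P\cap\partial\Y=\{x_{\textup{l}},x_{\textup{r}}\}$, these points lying on $\partial\Y$ by Lemma~\ref{lem:proof-prop-refpr}.

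For step (ii), recall $\Y$ is homeomorphic to $\overline{\mathbb{D}}$ (Lemma~\ref{lem:proof-prop-refpr}), and $P$ is a simple arc with endpoints on $\partial\Y$ and relative interior in the interior of $\Y$. Adjoining to $P$ the closed boundary sub-arc $J$ of $\partial\Y$ from $x_{\textup{l}}$ to $x_{\textup{r}}$ containing $\rho$, the curve $P\cup J$ is a Jordan curve in $\Y$ (the two arcs meet exactly at $x_{\textup{l}},x_{\textup{r}}$), and Jordan--Schoenflies applied inside $\overline{\mathbb{D}}$ shows $\Y\setminus P$ has exactly two connected components, the closure of each (adding back $P$) being a closed topological disk. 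Removing a non-degenerate proper boundary arc from a closed disk yields a simply connected, non-compact, one-ended surface with boundary homeomorphic to $\R$, hence homeomorphic to $\overline{\mathbb{H}}$ (same reasoning as in the proof of Corollary~\ref{cor:topology-BHP}). Thus each component, in particular the one $O_\BD$ containing $\rho$, is homeomorphic to $\overline{\mathbb{H}}$; here $\rho\notin P$ since $\rho\in\partial\Y$ while $P\cap\partial\Y=\{x_{\textup{l}},x_{\textup{r}}\}$ and $\rho\neq x_{\textup{l}},x_{\textup{r}}$ a.s.\ (e.g.\ $d_{F}(0,\eta_{\textup{l}}(a_0))=a_0>0$ and $d_{W}(0,\eta_{\textup{l}}(a_0))\neq 0$ a.s., so $D(0,\eta_{\textup{l}}(a_0))>0$ by Lemmas~\ref{lem:proof-prop-refpr-1} and~\ref{lem:proof-prop-refpr-3}, and similarly at $T-\eta_{\textup{r}}(a_0)$).

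For step (iii), write $O^1_\BD=[\eta_{\textup{l}}(a_0),T-\eta_{\textup{r}}(a_0)]$, so that $[0,T]=O^0_\BD\cup O^1_\BD$ and $\Y=p_\Y(O^0_\BD)\cup p_\Y(O^1_\BD)$ with both sets compact, hence closed. It suffices to prove $p_\Y(O^0_\BD)=\overline{O_\BD}$: indeed $\overline{O_\BD}$ is a closed disk with boundary $P\cup J$ and $P\cap\partial\Y=\{x_{\textup{l}},x_{\textup{r}}\}$, so its interior in $\Y$ equals $\overline{O_\BD}\setminus P=O_\BD$, which is the claim. The inclusion $O_\BD\subseteq p_\Y(O^0_\BD)$ would follow from the fact that the path $s\mapsto p_\Y(s)$, $s$ ranging over the relative interior of $O^0_\BD$, is connected, contains $\rho$, and avoids $P$; the inclusion $P\subseteq p_\Y(O^0_\BD)$ from a bookkeeping of the record times of $W$ defining the two halves of $P$ up to $y_\ast$ (one half being coded directly by times of $O^0_\BD$, the other becoming so after the identifications $\{D=0\}$); and the reverse inclusion $p_\Y(O^0_\BD)\subseteq O_\BD\cup P$ from checking that no point coded by a time of $O^0_\BD$ lands in the other component.

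\textbf{Main obstacle.} The delicate part is exactly step (iii): showing that $p_\Y$ maps the interval $O^0_\BD$ onto \emph{precisely} one closed ``half'' of $\Y$ cut out by $P$. Since $p_\Y$ is not injective and the identifications $\{D=0\}$ a priori mix the two sides of $P$, one has to track carefully which subintervals of $[0,T]$ project into which component --- in particular ruling out that a point coded only by the relative interior of $O^1_\BD$ is identified with a point of $\overline{O_\BD}$, and conversely --- and the only tools available are the explicit record-time description of geodesics to $x_\ast$ (Lemma~\ref{lem:proof-prop-refpr-4}, especially part (d)) together with the structural facts about $\partial\Y$ and the fibres of $p_\Y$ (Lemmas~\ref{lem:proof-prop-refpr},~\ref{lem:proof-prop-refpr-1},~\ref{lem:proof-prop-refpr-3}). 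Throughout, all assertions hold on a full-probability event, and the facts ``granted'' above ($x_{\textup{l}}\neq x_{\textup{r}}\in\partial\Y$, $\rho\notin P$, $\eta_{\textup{l}}(a_0)<T-\eta_{\textup{r}}(a_0)$) also use the standing assumptions $s_\ast\notin O^0_\BD$ and $\sigma(T)>2a_0$, which will later be arranged with high probability by letting $T\to\infty$.
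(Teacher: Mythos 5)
Steps (i) and (ii) of your proposal match the paper's argument closely: the simplicity of $P$ and its intersection with $\partial\Y$ follow from parts (c) and (d) of Lemma~\ref{lem:proof-prop-refpr-4}, and the fact that $O_\BD$ is homeomorphic to $\overline{\mathbb{H}}$ is then immediate from $\Y\cong\overline{\mathbb{D}}$ by cutting along an arc with endpoints on the boundary.

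Step (iii), which you correctly flag as the crux, is where the proposal falls short — you describe what would need to be shown rather than showing it, and the sketch you do give has two problems. First, the claim that $s\mapsto p_\Y(s)$, $s$ in the interior of $O^0_\BD$, avoids $P$ is false: $y_\ast=p_\Y(t_\ast)$ with $t_\ast\in O^0_\BD$ lies on $P$ by construction, and there is no a~priori reason to expect the image of the interior of $O^0_\BD$ under $p_\Y$ to miss $P$. Second, even were that true, the implication is backwards: a connected set containing $\rho$ and avoiding $P$ sits \emph{inside} $O_\BD$, which gives $p_\Y(\text{int}\,O^0_\BD)\subseteq O_\BD$, not the desired inclusion $O_\BD\subseteq p_\Y(O^0_\BD)$. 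Your equality $p_\Y(O^0_\BD)=\overline{O_\BD}$ would require a genuine argument for both directions.

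What is missing is the device the paper actually uses, and it works pointwise rather than via a set-theoretic equality. For every $s\in[0,T]$ define
\[
\Xi_s(r)=\sup\{t\leq s:F_t=F_s-r\}\, ,\qquad 0\leq r\leq F_s-\underline{F}_s\, .
\]
Then $p_\Y\circ\Xi_s$ is a continuous path from $p_\Y(s)$ to the boundary point $\pi(s)=p_\Y(\sup\{t\leq s:F_t=\underline{F}_t\})\in\partial\Y$. The key observation is that for $r>0$ the time $\Xi_s(r)$ is a right-increase point of $F$, hence by Lemma~\ref{lem:proof-prop-refpr-3} it is not a unilateral increase point of $W$; but every point of $P\setminus\{x_{\textup{l}},x_{\textup{r}}\}$ is $p_\Y(t)$ for some unilateral increase point $t$ of $W$. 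Thus $p_\Y\circ\Xi_s$ never meets $P\setminus\{x_{\textup{l}},x_{\textup{r}}\}$. Since $\pi(s)$ lies in the boundary arc $S$ containing $\rho$ when $s\in O^0_\BD$ and in the complementary arc $S'$ when $s\notin O^0_\BD$, this places $p_\Y(s)$ in the component of $\Y\setminus P$ containing $S$ or $S'$ accordingly — resolving exactly the worry about identifications crossing over $P$ that you raised as the ``main obstacle''. That one construction, together with the homeomorphism of $\partial\Y$ given by $\beta:x\mapsto p_\Y(\inf\{s:F_s=-x\})$, is what completes step (iii) in the paper, and it is the ingredient your proposal lacks.
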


\begin{proof}
The fact that $P$ is a simple path follows from the discussion
around its definition, and the fact that it intersects the boundary only at
its extremities follows at once from (c) in Lemma
\ref{lem:proof-prop-refpr-4}. The fact that $O_\BD$ is a.s. homeomorphic to
$\overline{\mathbb{H}}$ follows at once from this and the fact that
$\Y$ is homeomorphic to $\overline{\mathbb{D}}$. It remains to show that
$O_\BD$ is the interior of the set $p_\Y(O^0_\BD)$.

Note that the curve $\beta:x\mapsto p_\Y(\inf\{s\in
[0,T]:F_s=-x\})$ is a continuous curve from
$[0,\sigma(T)]$ to $\partial \Y$ with same starting and
ending point, and taking distinct values otherwise. If we view $\beta$
as defined on a circle $\R/\sigma(T)\Z$, then it realizes a
homeomorphism onto $\partial \Y$. In particular,
$p_\Y(O^0_\BD)$ contains the segment $S$ of $\partial \Y$
between $x_{\textup{l}}$ and $x_{\textup{r}}$ that contains $\rho=p_\Y(0)$
(including $x_{\textup{l}}$, $x_{\textup{r}}$), while
$p_\Y([0,T]\setminus O^0_\BD)$ contains the other segment
which is equal to $S'=\partial \Y\setminus S$. 
For every $s\in [0,T]$, let  
$$\Xi_s(r)=\sup\{t\leq s:F_t=F_s-r\}\, ,\qquad 0\leq
r\leq F_s-\underline{F}_s\, .$$ Then
$p_\Y\circ\Xi_s$ defines a continuous path in $\Y$ from
$p_\Y(s)$ to the point $\pi(s)=p_\Y(\sup\{t\leq
s:F_t=\underline{F}_t\})$ which is in $\partial
\Y$. Moreover, for every $r\in
(0,F_s-\underline{F}_s]$, the point $\Xi_s(r)$ is a
right-increase point of $F$, so by Lemma
\ref{lem:proof-prop-refpr-3} it does not belong to
$P\setminus\{x_{\textup{l}},x_{\textup{r}}\}$, since the latter set contains only points of
the form $p_\Y(t)$ where $t$ is a unilateral increase point of
$W$. Clearly, $\pi(s)\in S$ if $s\in O^0_\BD$, while
$\pi(s)\in S'$ otherwise. We have proved that there exists a
continuous path from $x$ to $p_\Y(0)$ not intersecting $P$ for
every $x\in O_\BD$, while there exists a continuous path from $x$ to
$S'$ not intersecting $P$ for every $x\in \Y\setminus
p_\Y(O^0_\BD)$, and this shows that $O_\BD$ and $\Y\setminus
p_\Y(O^0_\BD)$ are the two connected components of
$\Y\setminus P$. 
\end{proof}
\begin{figure}[ht]
  \centering
  \includegraphics[width=0.6\textwidth]{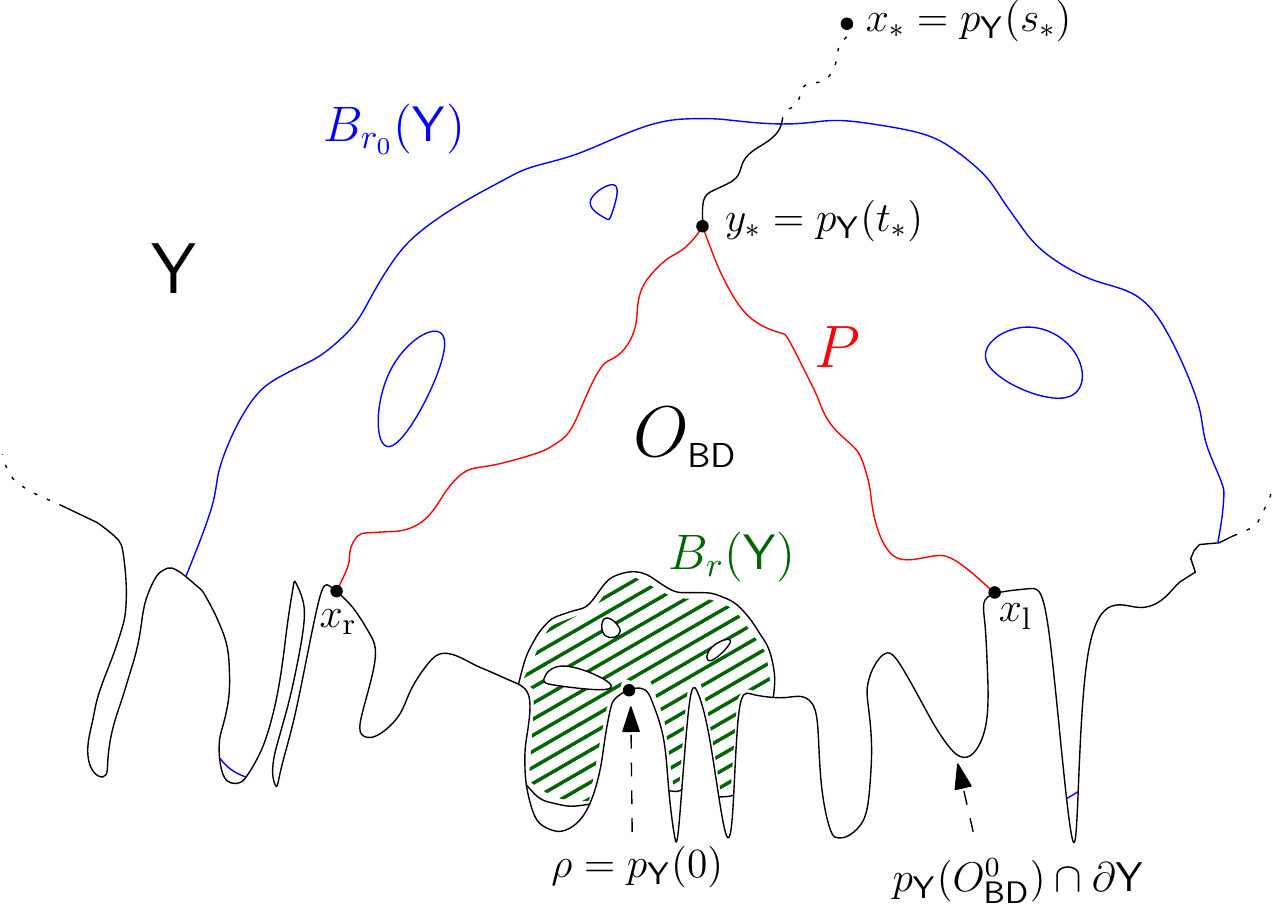}
  \caption{Proof of Theorem~\ref{thm:coupling-BD-BHP}: The ball
    $B_{r}(\Y)$ (shaded in green) itself is not simply
    connected, but it is included in a simply connected and open set
    $O_{\BD}\subset \Y$, which is homeomorphic to the closed half-plane
    $\overline{\mathbb{H}}$. The set $O_{\BD}$ is bordered by the simple
    curve $P$ (in red) in $\Y$, which is the union of two geodesic segments
    starting from $x_{\textup l}$ and $x_{\textup r}$, respectively, and by
    the boundary segment $S=p_\Y(O_{\BD}^0)\cap\partial \Y$. The larger
    ball $B_{r_0}(\Y)$ encompasses $O_{\BD}$.}
  \label{fig:topo-BHP}
\end{figure}

To finish the proof of Theorem~\ref{thm:coupling-BD-BHP}, fix
$r>0$, and let $a_0$ be large enough so that
$$\P\left(\min_{[0,a_0]}\gamma<-2r,\min_{[-a_0,0]}\gamma<-2r\right)>1-\eps/4\, .$$
Then,
we choose $r_0>r$ such that, with $\Wha$ the label function of $\BHP_\theta$,
$$\P(\omega(\Wha,[-\eta_{\textup{r}}(a_0),\eta_{\textup{l}}(a_0)])\leq r_0/2)\geq 1-\eps/4\, ,$$
where $\omega(f,I)=\sup_I f-\inf_I f$ is the modulus of continuity of $f$
over the set $I$. We use this value of $r_0$ to apply Proposition
\ref{prop:isometry-BD-BHP}. Fix $\eps>0$.  Then for every $T\geq
T_0(\eps/2)$ large enough, we can construct copies of $\Y=\BD_{T,\sigma(T)}$
and $\BHP_\theta$ such that $B_{r_0}(\Y)$ and $B_{r_0}(\BHP_\theta)$
are isometric with probability at least $1-\eps/2$. More precisely, we are
going to use the event $\mathcal{F}$ specified in the proof of
Proposition~\ref{prop:isometry-BD-BHP} on which this property holds (in the
definition of $\mathcal{F}$ we have to make sure that $A$ is chosen so that
$A>a_0$), and which implies the property that $s_\ast\notin O^0_\BD$, on
which our analysis so far is based. The probability of
$$\mathcal{F}'=\mathcal{F}\cap
\left\{\min_{[0,a_0]}\gamma<-2r,\min_{[-a_0,0]}\gamma<-2r\right\}
\cap \{\omega(\Wha,[-\eta_{\textup{r}}(a_0),\eta_{\textup{l}}(a_0)])\leq
r_0/2\}$$ is then at least $1-\eps$. On this event we claim that
$$B_{r}(\Y)\subset O_\BD\subset
B_{r_0}(\Y)\, .$$ The second inclusion comes from the
fact that $D(0,s)\leq d_{W}(0,s)\leq
2\omega(\Wha,[-\eta_{\textup{r}}(a_0),\eta_{\textup{l}}(a_0)])$ for every $s\in
[0,\eta_{\textup{l}}(a_0)]\cup [T-\eta_{\textup{r}}(a_0),T]$ (recall that
$\mathbb{W}=\Wha$ on the set $[-\eta_{\textup{r}}(A^3),\eta_{\textup{l}}(A^3)]$ on
$\mathcal{F}$).  The first inclusion comes from the cactus
bound, with the fact that $\min_{[0,a_0]}\gamma<-2r$ and
$\min_{[-a_0,0]}\gamma<-2r$, just as in the proof of (a) in Lemma
\ref{lem:DBD}. To be more precise, this shows that
$B_{2r}(\Y)\subset p_\Y([0,\eta_{\textup{l}}(a_0)]\cup
[T-\eta_{\textup{r}}(a_0),T])$, and since $O_\BD$ is equal to the interior of the
latter set, the wanted inclusion follows. We refer to
Figure~\ref{fig:topo-BHP} for an illustration.

Finally, we prove that $O_\BHP=I(O_\BD)$ is an open subset of $\BHP_\theta$,
where $I$ is the isometry defined before Corollary
\ref{cor:DBD-DH}, or more precisely the induced map on
$\Y$. Let $x\in O_\BHP$, so that $x=I(y)$ for some
$y\in O_\BD\subset B_{r_0}(\Y)$. Let $\delta>0$ be
such that $y+B_\delta(\Y)\subset O_\BD$, and $x'\in
\BHP_\theta$ be such that $D_{\theta}(x,x')<\delta$. Then
$D_{\theta}(0,x')\leq D_{\theta}(0,x)+\delta<r_0$, so that
$x'\in B_{r_0}(\BHP_\theta)$. Therefore, there exists $y'\in
B_{r_0}(\Y)$ with $x'=I(y')$, and one has
$D(y,y')=D_{\theta}(x,x')<\delta$, so that $y'\in O_\BD$,
and thus $x'\in O_\BHP$. This proves that $O_\BHP$ is open and
concludes the proof of Theorem~\ref{thm:coupling-BD-BHP}.

\subsection{Coupling quadrangulations of large volumes
  (Propositions~\ref{prop:Qn-UIHPQ} and~\ref{prop:coupling-Qn-largevol})}
We begin with the proof of Proposition~\ref{prop:Qn-UIHPQ}, which is in spirit
of~\cite[Lemma 8 and Proposition 9]{CuLG}.
\begin{proof}[Proof of Proposition~\ref{prop:Qn-UIHPQ}]
  Assume $1\ll \sigma_n\ll n$. Set $\vartheta_n=\min\{\sigma_n,\,
  n/\sigma_n\}$, and let $\eps>0$.  Let $((\f_n,\la_n),\br_n)$ be uniformly
  distributed over the set $\Fo_{\sigma_n}^n \times \Br_{\sigma_n}$, and
  consider a triplet $((\f_\infty,\la_\infty),\br_{\infty})$ of a uniformly
  labeled critical infinite forest and a uniform infinite bridge.

  We first argue that we can find $\delta>0$ and $n_0$ such that for all
  $n\geq n_0$, we can construct $((\f_n,\la_n),\br_n)$ and
  $((\f_\infty,\la_\infty),\br_{\infty})$ on the same probability space
  such that on an event of probability at least $1-\eps$, the
  corresponding balls of radius $2\delta \sqrt{\vartheta_n}$ around the
  vertices $\f_n(0)$ and $\f_\infty(0)$ in the associated quadrangulations
  are isometric.
  
  For $0\leq k\leq\sigma_n-1$, write $\tau(\f_n,k)$ for the tree of $\f_n$
  rooted at $(k)$, and put $\tau(\f_n,\sigma_n)=\tau(\f_n,0)$.
  Similarly, define $\tau(\f_\infty,k)$ to be the tree of $\f_\infty$
  rooted at $(k)$, where now $k\in\mathbb{Z}$.

  As a consequence of Lemmas~\ref{lem:GW3} and~\ref{lem:bridge1}, there exist $\delta'>0$ and $n'_0\in\N$ such
  that for $n\geq n'_0$, with $A_n=\lfloor \delta'\vartheta_n\rfloor$,
  we can construct $((\f_n,\la_n),\br_n)$ and
  $((\f_\infty,\la_\infty),\br_{\infty})$ on the same probability space
  such that if we let
  \begin{align*}
    \mathcal{E}^1(n,\delta') &= \quad\left\{\tau(\f_n,i)=\tau(\f_\infty,i),\,\tau(\f_n,\sigma_n-i)=\tau(\f_\infty,-i),\,0\leq
      i\leq A_n\right\}\\
    &\,\quad \cap\left\{\br_n(i)=\br_\infty(i),\,\br_n(\sigma_n-i)=\br_\infty(-i),\,1\leq
      i\leq A_n\right\}\\
    &\,\quad\cap\left\{\la_n|_{\tau(\f_n,i)}=
      \la_\infty|_{\tau(\f_\infty,i)},\,\la_n|_{\tau(\f_n,\sigma_n-i)}=\la_\infty|_{\tau(\f_\infty,-i)},\,0\leq i
      \leq A_n\right\},
\end{align*}
then $\mathcal{E}^1(n,\delta')$ has probability at least
$1-\eps/3$. We fix such a $\delta'$. For $\delta>0$ and $n\in\N$, put
 $$\mathcal{E}^2(n,\delta)=\left\{\min_{[0,\,A_n]}\br_\infty
   <-5\delta \sqrt{\vartheta_n},\,
   \min_{[-A_n,\,0]}\br_\infty<-5\delta\sqrt{\vartheta_n}\right\}\cap\left\{-\br_\infty(-1)<\delta^{-1}\right\},$$
 and let
 $$\mathcal{E}^3(n,\delta)=\left\{\min_{[A_n+1,\,\sigma_n-(A_n+1)]}\br_n
  <-5\delta\sqrt{\vartheta_n}\right\}.$$
Donsker's invariance principle applied to
$(\br_{\infty}(i),i\in\mathbb{Z})$ guarantees that we can find
$\delta>0$ such that for all sufficiently large $n$,
$$
\P\left(\mathcal{E}^2(n,\delta)\right)\geq 1-\eps/3.
$$
Moreover, provided $n$ is large enough and $\delta>0$ is
sufficiently small, Lemma~\ref{lem:bridge0} ensures that 
$$
\P\left(\mathcal{E}^3(n,\delta)\right)\geq 1-\eps/3.
$$
We fix $n_0\geq n'_0$ and $\delta>0$ such that for all $n\geq
n_0$, the bounds in the last two displays hold.  

From now on, we work on the event
$\mathcal{E}^1(n,\delta')\cap\mathcal{E}^2(n,\delta)\cap\mathcal{E}^3(n,\delta)$.
Let $(Q_n^{\sigma_n},\vd)=\Phi_n(((\f_n,\la_n),\br_n))$ and
$Q_\infty^\infty=\Phi_{\infty}(((\f_\infty,\la_\infty),\br_{\infty}))$ be
the quadrangulations constructed from the triplets $((\f_n,\la_n),\br_n)$
and $((\f_\infty,\la_\infty),\br_{\infty})$ {\it via} the Bouttier-Di
Francesco-Guitter mapping. We denote by $d_n$ and $d_{\infty}$ the graph
distances on $V(Q_n^{\sigma_n})$ and $V(Q_\infty^\infty)$. We write
$$\f'_n=\left(\tau(\f_n,\sigma_n-A_n),\ldots,\tau(\f_n,\sigma_n-1),\tau(\f_n,0),\ldots,\tau(\f_n,A_n)\right)$$
for the forest obtained from restricting $\f_n$ to 
the last $A_n$ and the first $A_n+1$ trees, and identically
$$\f'_\infty=\left(\tau(\f_{\infty},-A_n),\ldots,\tau(\f_{\infty},-1),\tau(\f_\infty,0),\ldots,\tau(\f_\infty,A_n)\right).$$
Recall the cactus bounds~\eqref{eq:cactus1} and~\eqref{eq:cactus3} for
$Q_n^{\sigma_n}$ and $Q_\infty^\infty$, respectively. For
vertices $v\in V(\f_n)\setminus V(\f'_n)$, we obtain, with $(0)=\f_n(0)$,
$$
d_n((0),v) \geq -\max\left\{\min_{[0,A_n]}\br_n, \min_{[-A_n,0]}\br_n\right\} \geq 5\delta\sqrt{\vartheta_n},
$$
and identically, for vertices $v\in V(\f_\infty)\setminus V(\f'_\infty)$,
writing now $(0)$ for $\f_\infty(0)$, 
$
d_{\infty}((0),v) \geq 5\delta\sqrt{\vartheta_n}.
$
We argue now similarly to the second part in the proof of~\cite[Lemma
8]{CuLG}. Firstly, if $u\in V(\f_n)$ is any vertex with $d_n((0),u)\leq
5\delta\sqrt{\vartheta_n}-1$, then any vertex on a geodesic path from $(0)$
to $u$ in $Q_n^{\sigma_n}$ satisfies the same bound and must therefore
belong to one of the trees in $\f'_n$. From the construction of edges in
the Bouttier-Di Francesco-Guitter mapping, we deduce that any edge of
$Q_n^{\sigma_n}$ on such a geodesic path corresponds to an edge of
$Q_\infty^{\infty}$ with the same endpoints, provided none of these edges
in $Q_n^{\sigma_n}$ connect two vertices $w$ and $w'$ such that the set of
vertices between $w$ and $w'$ in the cyclic contour order around the forest
$\f_n$ contains the vertices of $\f_n\setminus \f_n'$. But on the event
$\mathcal{E}^3(n,\delta)$, the set of vertices between $w$ and $w'$ would
in particular contain a (root) vertex of $\f_n\setminus\f_n'$ with label
less than $-5\delta\sqrt{\vartheta_n}$. This would imply that both vertices
$w$ and $w'$ of such an edge have a label which is smaller than
$-5\delta\sqrt{\vartheta_n}$, too, in contradiction to the fact that
$d_n((0),v)\leq 5\delta\sqrt{\vartheta_n}-1$ for all vertices $v$ on a
geodesic between $(0)$ and $u$. We deduce that if $u\in V(\f_n)$ satisfies
$d_n((0),u)\leq 5\delta\sqrt{\vartheta_n}-1$, then $d_\infty((0),u)\leq
d_n((0),u)$. Since in turn any edge of $Q_\infty^{\infty}$ on a geodesic
between $(0)$ and a vertex $u\in V(\f_\infty)$ with $d_\infty((0),u)\leq
5\delta\sqrt{\vartheta_n}-1$ does correspond to an edge of $Q_n^{\sigma_n}$
with the same endpoints, we obtain also $d_n((0),u)\leq
d_\infty((0),u)$. Therefore, we have that vertices with distance at most
$5\delta\sqrt{\vartheta_n}-1$ from $(0)$ are the same in $Q_n^{\sigma_n}$
and $Q_\infty^{\infty}$.  We claim that
\begin{equation}
\label{eq:prop-BHP1-eq1}
d_n(u,v)=d_{\infty}(u,v)\quad\hbox{whenever }u,v\in B^{(0)}_{2\delta\sqrt{\vartheta_n}}(Q_n^{\sigma_n}).
\end{equation}
Indeed, if $u,v$ are vertices in $B^{(0)}_{2\delta\sqrt{\vartheta_n}}(Q_n^{\sigma_n})$, then any geodesic
connecting $u$ to $v$ in $Q_n^{\sigma_n}$ must lie entirely in
$B^{(0)}_{4\delta\sqrt{\vartheta_n}}(Q_n^{\sigma_n})$, and any edge on such a geodesic
corresponds to an edge in $Q_\infty^\infty$. Since the converse is also
true, we obtain~\eqref{eq:prop-BHP1-eq1}, and with the correspondence of edges
between $Q_n^{\sigma_n}$ and $Q_\infty^\infty$ alluded to above we deduce
that the balls $B^{(0)}_{2\delta\sqrt{\vartheta_n}}(Q_n^{\sigma_n})$ and
$B^{(0)}_{2\delta\sqrt{\vartheta_n}}(Q_\infty^{\infty})$ are isometric on an event of
probability at least $1-\eps$.

Finally, recall from the Bouttier-Di Francesco-Guitter bijection that the
root vertex $\rho_n$ of $Q_n^{\sigma_n}$ is given by
$\f_n(\suc^{-\br_n(\sigma_n)}(0))$, where conditionally on
$\br_n(\sigma_n-1)$, $\br_n(\sigma_n)$ is uniformly distributed on
$\{\br_n(\sigma_n-1)-1,\ldots,0\}$. Similarly, the root vertex $\rho$ of
$Q_\infty^\infty$ is given by
$\f_\infty(\suc^{-\br_\infty(\partial)}(0))$, where conditionally on
$\br_\infty(-1)$, $\br_\infty(\partial)$ is uniformly distributed on
$\{\br_\infty(-1)-1,\ldots,0\}$. On the event
$\mathcal{E}^{1}(n,\delta')\cap \mathcal{E}^{2}(n,\delta)$, we can
couple $\br_n(\sigma_n)$ and $\br_\infty(\partial)$ such that
$\br_n(\sigma_n)=\br_\infty(\partial)$. Moreover, for $n$ large enough, we
have on this event 
$B_{\delta\sqrt{\vartheta_n}}(Q_n^{\sigma_n})\subset
B^{(0)}_{2\delta\sqrt{\vartheta_n}}(Q_n^{\sigma_n})$ and
$B_{\delta\sqrt{\vartheta_n}}(Q_{\infty}^{\infty})\subset
B_{2\delta\sqrt{\vartheta_n}}^{(0)}(Q_{\infty}^{\infty})$. Therefore, we
have equality of $B_{\delta\sqrt{\vartheta_n}}(Q_n^{\sigma_n})$ and
$B_{\delta\sqrt{\vartheta_n}}(Q_{\infty}^{\infty})$ on the event
$\mathcal{E}^{1}(n,\delta')\cap
\mathcal{E}^{2}(n,\delta)\cap\mathcal{E}^{3}(n,\delta)$. Local convergence of $Q_n^{\sigma_n}$
towards $\UIHPQ$ in the sense of $\dmap$ is a direct consequence of
this, and the proposition is proved.
\end{proof}

We now turn to the proof of Proposition~\ref{prop:coupling-Qn-largevol}.
We will adopt the notion of~\cite[Section 4.3.1]{CuLG} concerning pruned
(pointed) trees. More precisely, a (finite) pointed tree consists of a pair
$\boldsymbol{\tau}=(\tau,\xi)$, where $\tau$ is a tree of finite size and
$\xi$ is a distinguished vertex of $\tau$. Given such a pointed tree
$\boldsymbol{\tau}=(\tau,\xi)$ and $h$ an integer with $0\leq h<|\xi|$,
$\mathscr{P}(\boldsymbol{\tau},h)$ represents the subtree of $\tau$
containing all the vertices $u\in V(\tau)$ such that the height of the most
recent common ancestor of $u$ and $\xi$ is strictly less than $h$, together
with the ancestor $\left[\xi\right]_h$ of $\xi$ at height exactly $h$. By
pointing $\mathscr{P}(\boldsymbol{\tau},h)$ at $\left[\xi\right]_h$, this
subtree is itself considered as a pointed tree. If $h\geq |\xi|$, we agree
that $\mathscr{P}(\boldsymbol{\tau},h)=(\{\emptyset\},\emptyset)$, where
$\emptyset$ represents the root vertex of $\tau$. It is straightforward to
see that if $\boldsymbol{\tau}=(\tau,\xi)$ is a pointed tree and $h$ and
$h'$ are two integers with $h'\geq h\geq 0$, then
\begin{equation}
\label{eq:pruningconsistent}
\mathscr{P}\left((\boldsymbol{\tau},h'),h\right)=\mathscr{P}\left(\boldsymbol{\tau},h\right).
\end{equation}

\begin{proof}[Proof of Proposition~\ref{prop:coupling-Qn-largevol}]
  We assume $1\ll \sigma_n\ll \sqrt{n}$ and fix
  $\eps>0$ and $r>0$ for the rest of this proof.  We let
  $((\f_n,\la_n),\br_n)$ be uniformly distributed over the set
  $\Fo_{\sigma_n}^n \times \Br_{\sigma_n}$, and for given $R\in\N$, we let
  $((\f'_n,\la'_n),\br'_n)$ be uniformly distributed over
  $\Fo_{\sigma_n}^{R\sigma_n^2} \times \Br_{\sigma_n}$. Identically to
  the proof of Proposition~\ref{prop:Qn-UIHPQ}, it suffices to show that we can
  find $R_0>0$ and $n_0\in\N$ such that for all integers $R\geq R_0$ and
  all $n\geq n_0$, we can construct $((\f_n,\la_n),\br_n)$ and
  $((\f'_n,\la'_n),\br'_{n})$ on the same probability space such that on an
  event of probability at least $1-\eps$, the corresponding balls of
  radius $2r\sqrt{\sigma_n}$ around the vertices $\f_n(0)$ and $\f'_n(0)$
  in the associated quadrangulations are isometric.
  
  For $0\leq k\leq\sigma_n-1$, we let $\tau(\f_n,k)$ be the tree of $\f_n$
  rooted at $(k)$ and denote by $i_\ast$ the smallest index such that
  $|\tau(\f_n,i_\ast)|\geq |\tau(\f_n,k)|$ for all $0\leq k\leq
  \sigma_n-1$. We shall point the tree $\tau(\f_n,i_\ast)$, by choosing
  conditionally on $\tau(\f_n,i_\ast)$ a vertex $\xi_n\in
  V(\tau(\f_n,i_\ast))$ uniformly at random. We write
  $(\tau(\f_n,i_\ast),\xi_n)$ for the pointed tree obtained in this way,
  and for $h\in\N$, we write
  $\la_n|_{\mathscr{P}((\tau(\f_n,i_\ast),\xi_n),h)}$ for the restriction
  of the labels $\la_n$ of $\f_n$ to the subtree
  $\mathscr{P}((\tau(\f_n,i_\ast),\xi_n),h)$ of $(\tau(\f_n,i_\ast),\xi_n)$
  pruned at height $h$, see the notation above. Finally, we let
  $(\tau_i,\ell_i)$, $0\leq i\leq \sigma_n-1$, be a sequence of independent
  uniformly labeled critical geometric Galton-Watson trees.

  For $H\in\N$, set $H_n=H\sigma_n$. Recall that the law of
  $((\f'_n,\la'_n),\br'_n)$ depends on $R\in\N$. We claim that for each
  fixed integer $H\in\N$, provided $n$ and $R$ are sufficiently large, we
  can construct $((\f_n,\la_n),\br_n)$, $((\f'_n,\la'_n),\br'_n)$, $\xi_n$,
  $\xi'_n$ and $(\tau_i,\ell_i)$ for $0\leq i\leq \sigma_n-1$ on the same
  probability space such that the event
  \begin{align*}
    \mathcal{E}_1(n,R,H) &=
    \left\{i_\ast=i'_\ast\right\}\cap\left\{\tau(\f_n,i)=\tau(\f'_n,i)=\tau_i,\,0\leq
      i\leq \sigma_n-1,\, i\neq i_\ast\right\}\\
    &\,\quad\cap
    \left\{\mathscr{P}\left((\tau(\f_n,i_\ast),\xi_n),H_n\right)=\mathscr{P}\left((\tau(\f'_n,i'_\ast),\xi'_n),H_n\right)\neq
      (\{\emptyset\},\emptyset)\right\}\\
    &\,\quad\cap\left\{\br_n(i)=\br'_n(i),\,0\leq i\leq \sigma_n\right\}\\
    &\,\quad\cap \left\{\la_n|_{\tau(\f_n,i)}=
      \la'_n|_{\tau(\f'_n,i)}=\ell_i,\,0\leq i \leq \sigma_n-1, i\neq
      i_\ast\right\}\\&\,\quad
    \cap\left\{\la_n|_{\mathscr{P}\left((\tau(\f_n,i_\ast),\xi_n),H_n\right)}=\la'_n|_{\mathscr{P}\left((\tau(\f'_n,i'_\ast),\xi'_n),H_n\right)}\right\}
  \end{align*}
  has probability at least $1-\eps/2$. Let us look separately at the
  different sets on the right hand side. Firstly, from Lemma~\ref{lem:GW1}
  we know that $\f_n$ has with high probability a unique largest tree of
  order $\sigma_n^2$, and its index is uniform in
  $\{0,\ldots,\sigma_n-1\}$. Moreover, Lemma~\ref{lem:GW2} asserts that the
  other trees of $\f_n$ are close in total variation to $\sigma_n-1$
  critical geometric Galton-Watson trees. The same holds for $\f'_n$, from
  which we deduce that $\f_n,\f'_n$ and $\tau_i$, $0\leq i\leq \sigma_n-1$,
  can be coupled such that the intersection of the first two events on the
  right hand side has probability at least $1-\eps/3$, say. For the
  event on the second line concerning the pruned trees, we use that fact
  that conditionally on $|\tau(\f_n,i_\ast)|=m_n$,
  $(\tau(\f_n,i_\ast),\xi_n)$ is uniformly distributed over the set of all
  pointed trees of size $m_n$. A similar statement holds for
  $\tau(\f'_n,i'_\ast)$. Now by Lemma~\ref{lem:GW1}, for any $K>0$, the
  probability that $|\tau(\f_n,i_\ast)|\geq K\sigma^2_n$ tends to one with
  increasing $n$, since $n\gg \sigma_n^2$. Similarly, for any given $K>0$,
   by choosing $R$ large enough, we can ensure that
  $|\tau(\f'_n,i'_\ast)|\geq K\sigma^2_n$ holds with a probability as close
  to one as we wish for large $n$. An application of Proposition $7$
  of~\cite{CuLG} therefore shows that both
  $\mathscr{P}\left((\tau(\f_n,i_\ast),\xi_n),H_n\right)$ and
  $\mathscr{P}\left((\tau(\f'_n,i'_\ast),\xi'_n),H_n\right)$ are for large
  $R$ and $n$ close in total variation to the so-called uniform infinite
  tree (or Kesten's tree) pruned at height $H_n$. Applying the
  triangle inequality, we see that the total variation distance between
  $\mathscr{P}\left((\tau(\f_n,i_\ast),\xi_n),H_n\right)$ and
  $\mathscr{P}\left((\tau(\f'_n,i'_\ast),\xi'_n),H_n\right)$ can be made as
  small as we wish, provided $R$ and $n$ are taken sufficiently large.

  Combining the above coupling with this last observation, we infer that we
  can in fact couple $\f_n$, $\f'_n$, $\xi_n$, $\xi_n'$ and $\tau_i$ for
  $0\leq i\leq \sigma_n-1$ such that the intersection of the first three
  events on the right hand side has probability at least $1-\eps/2$
  for large $R$ and $n$. Since the bridges $\br_n$ and $\br'_n$ have both
  the same law and are independent of the trees, we can additionally assume
  that the probability space carries realizations of $\br_n$ and $\br'_n$
  such that $\br_n\equiv \br'_n$. A similar arguments allows us to couple
  the labelings $\la_n$, $\la'_n$, and $\ell_i$ such that the last two
  events on the right hand side in the definition of $\mathcal{E}_1(n,R,H)$
  hold true.  This proves the claim about $\mathcal{E}_1(n,R,H)$.

  We will now work on the event $\mathcal{E}_1(n,R,H)$ and let
  $(Q_n^{\sigma_n},\vd)=\Phi_n(((\f_n,\la_n),\br_n))$ and
  $(Q_{R\sigma_n^2}^{\sigma_n},w^{\bullet})=\Phi_{R\sigma_n^2}(((\f'_n,\la'_n),\br'_n))$
  be the quadrangulations constructed from the triplets
  $((\f_n,\la_n),\br_n)$ and $((\f'_n,\la'_n),\br'_n)$,
  respectively. Recall that $[\xi_n]_{H_n}$ denotes the ancestor of $\xi_n$
  in $\tau(\f_n,i_\ast)$ at height $H_n$. Let
  $$M_n=-\min_{[[\emptyset,[\xi_n]_{H_n}]]}\la_n,$$ 
  where $[[\emptyset,[\xi_n]_{H_n}]]$ is the vertex set of the unique
  injective path in $\tau(\f_n,i_\ast)$ connecting the (tree) root
  $\emptyset$ to $[\xi_n]_{H_n}$. By definition of the labeling $\la_n$,
  conditionally on the tree, $M_n$ has the law of the maximum attained by a
  random walk started at zero and stopped after $H_n$ many steps, with
  increments uniformly distributed in $\{-1,0,1\}$. Setting
$$\mathcal{E}_2(n,H)=\left\{M_n\geq 5r\sqrt{\sigma_n}\right\},
$$
we can ensure by an application of Donsker's invariance principle that for
$H\in\N$ sufficiently large (recall that $r$ was fixed at the beginning,
and $H_n=H\sigma_n$), the event $\mathcal{E}_2(n,H)$ has probability at
least $1-\eps/2$. In particular, by choosing 
$H\in\N$ large enough, we obtain that 
$\mathcal{E}_1(n,R,H)\cap\mathcal{E}_2(n,H)$ has probability at least
$1-\eps$ for all $R$, $n\in\N$ sufficiently large. 

It remains to convince ourselves that on the event
$\mathcal{E}_1(n,R,H)\cap \mathcal{E}_2(n,H)$, the balls
$B^{(0)}_{2r\sqrt{\sigma_n}}(Q_n^{\sigma_n})$ and
$B^{(0)}_{2r\sqrt{\sigma_n}}(Q_{R\sigma_n^2}^{\sigma_n})$ are
isometric. Since the arguments are very close to those provided in the
proofs of Proposition~\ref{prop:Qn-UIHPQ} above and~\cite[Lemma 8]{CuLG}, we
only sketch them in order to avoid too much repetition. Write
$\emptyset=u_0,u_1,\ldots,u_{H_n}=[\xi_n]_{H_n}$ for the vertices of the
non-backtracking path connecting $\emptyset$ to $[\xi_n]_{H_n}$ in
$\tau(\f_n,i_\ast)$.  Let $k_n\in\{0,\ldots,H_n\}$ such that
$$\la_{n}(u_{k_n})=-\min_{[[\emptyset,[\xi_n]_{H_n}]]}\la_n.$$
Recall the identification of $V(Q_n^{\sigma_n})\setminus\{\vd\}$ with
$V(\f_n)$. Denote by $d_n$ the graph distance on
$V(Q_n^{\sigma_n})$. If $v$ is a vertex of $\tau(\f_n,i_\ast)$ that
does not belong to the subtree
$\mathscr{P}\left((\tau(\f_n,i_\ast),\xi_n),k_n\right)$, then the ancestral
lines of $v$ and $\xi_n$ coincide at least up to level $k_n$. In
particular, they both contain the vertex $u_{k_n}$. For such vertices $v$,
we obtain from the cactus bound~\eqref{eq:cactus1} on the event
$\mathcal{E}_1(n,R,H)\cap \mathcal{E}_2(n,H)$ the bound
$$
d_n((0),v)\geq 5r\sqrt{\sigma_n},
$$
with $(0)=\f_n(0)$. See~\cite[Proof of Lemma 8]{CuLG} for the complete
argument (note however that $(0)$ might be the root of a tree different
from $\tau(\f_n,i_\ast)$). On $\mathcal{E}_1(n,R,H)$, using
additionally~\eqref{eq:pruningconsistent},
$$
\mathscr{P}\left((\tau(\f_n,i_\ast),\xi_n),k_n\right)=
\mathscr{P}\left((\tau(\f'_n,i'_\ast),\xi'_n),k_n\right),
$$
and the labelings $\la_n$ and $\la'_n$ restricted to the subtrees on the
left and right, respectively, agree. Therefore, a similar inequality holds
for $Q_{R\sigma^2_n}^{\sigma_n}$, for vertices $v'$ of
$\tau(\f'_n,i'_\ast))$ which do not belong to the subtree
$\mathscr{P}\left((\tau(\f'_n,i'_\ast),\xi'_n),k_n\right)$. Adapting now
the reasoning of~\cite[Proof of Lemma 8]{CuLG} to our situation (see also
the proof of Proposition~\ref{prop:Qn-UIHPQ} above), we obtain that vertices
with distance at most $5r\sqrt{\sigma_n}-1$ from $(0)$ are the same in
$Q_n^{\sigma_n}$ and $Q_{R\sigma^2_n}^{\sigma_n}$ on the event
$\mathcal{E}_1(n,R,H)\cap \mathcal{E}_2(n,H)$, and, with $d_n'$ being the
graph distance in $Q_{R\sigma^2_n}^{\sigma_n}$,
$$
d_n(u,v)=d'_n(u,v)\quad\hbox{whenever }u,v\in B^{(0)}_{2r\sqrt{\sigma_n}}(Q_n^{\sigma_n}).
$$
This finishes our proof.
\end{proof}

\subsection{Brownian half-plane with zero skewness (Theorems~\ref{thm:BHP1}
  and~\ref{thm:UIHPQ-BHP})}

We work in the usual setting from Section~\ref{sec:usualsetting}. In particular,
$Q_n^{\sigma_n}$ denotes a random variable with the uniform distribution
over the set $\mathcal Q_n^{\sigma_n}$ of rooted quadrangulations with $n$
internal faces and $2\sigma_n$ boundary edges. 

Our proofs of Theorems~\ref{thm:BHP1} and~\ref{thm:UIHPQ-BHP} are
essentially consequences of the coupling of balls between the Brownian disk
$\BD_{T,\sqrt{T}}$ and the Brownian half-plane $\BHP$
(Proposition~\ref{prop:isometry-BD-BHP}), of the fundamental convergence
\begin{equation}
\label{eq:BeMi}
\left(V(Q_n^{\sigma_n}),(8/9)^{-1/4}n^{-1/4}\dgr,\rho_n\right) \xrightarrow[n \to \infty]{(d)}
\BD_\sigma=\BD_{1,\sigma}
\end{equation}
proved in~\cite[Theorem 1]{BeMi} for the regime
$\sigma_n\sim\sigma\sqrt{2n}$ when $\sigma\in(0,\infty)$ is a fixed real
and of the coupling between $Q_n^{\sigma_n}$ and the $\UIHPQ$
$Q_\infty^\infty$ (Proposition~\ref{prop:Qn-UIHPQ}).

\begin{proof}[Proof of Theorem~\ref{thm:UIHPQ-BHP}]
  In view of  Remark~\ref{rem:localGH}, the result follows if we show that for every
  $r\geq 0$ and every sequence of positive reals $a_n\rightarrow\infty$,
$$
B_r\left(a_n^{-1}\cdot Q_\infty^{\infty}\right)\xrightarrow[n\to
  \infty]{(d)}B_{r}(\BHP)$$ in distribution in $\mathbb{K}$. For notational simplicity, we
restrict ourselves to the case $r=1$. Fix $\eps>0$. By
Proposition~\ref{prop:isometry-BD-BHP}, we find
$T_0=T_0(\eps)>0$ such that for all $T\geq
T_0$, we can construct copies of $\BD_{T,\sqrt{T}}$ and $\BHP$ on
the same probability space such that 
\begin{equation}
\label{eq:BHP-coupling1}
B_1\left(\BD_{T,\sqrt{T}}\right)=B_1(\BHP)
\end{equation}
with probability at least $1-\eps$.  

Let $\sigma_n=\lceil \sqrt{2n}\rceil$. By
Proposition~\ref{prop:Qn-UIHPQ}, there exists $\delta>0$ such that
for $n$ large enough, we can couple $Q_{n}^{\sigma_{n}}$ and
$Q_{\infty}^{\infty}$ on the same probability space such that with
probability at least $1-\eps$,
$
B_{\delta\sqrt{\sigma_n}}\left(Q_{n}^{\sigma_{n}}\right) =B_{\delta\sqrt{\sigma_n}}\left(Q_{\infty}^{\infty}\right).
$
We can and will assume that $\delta< 2T_0^{-1/4}$. We put $m_n=\lceil \delta^{-4}a_n^4\rceil$. Then
$a_n\leq \delta\sqrt{\sigma_{m_n}}$. With $m_n$
taking the role of $n$, the last observation enables us to find a coupling between $Q_{m_n}^{\sigma_{m_n}}$ and
$Q_{\infty}^{\infty}$ on the same probability space such that for large
$n$, we have with
probability at least $1-\eps$
\begin{equation}
\label{eq:BHP-coupling2}
B_{a_n}\left(Q_{m_n}^{\sigma_{m_n}}\right) =B_{a_n}\left(Q_{\infty}^{\infty}\right).
\end{equation}

Let $F:\mathbb{K}\rightarrow\mathbb{R}$ be bounded and continuous, and put
$T=\delta^{-4}(8/9)$. Note that $T\geq T_0$. We work
with a coupling of $Q_{m_n}^{\sigma_{m_n}}$ and $Q_{\infty}^{\infty}$ as
well as with a coupling of $\BD_{T,\sqrt{T}}$ and $\BHP$ such that the
properties just mentioned hold. Then
\begin{align*}
  \lefteqn{\left|\E\left[F\left(
          B_1\left(a_n^{-1}\cdot Q_\infty^{\infty}\right)\right)\right]-\E\left[F\left(B_1(\BHP)\right)\right]\right|}\\
  &\quad\quad \leq \left|\E\left[F\left( a_n^{-1}\cdot
        B_{a_n}\left(Q_\infty^{\infty}\right)\right)-F\left(
        a_n^{-1}\cdot B_{a_n}\left(Q_{m_n}^{\sigma_{m_n}}\right)\right)\right]\right|\\
  &\quad\quad\quad + \left|\E\left[F\left(
        B_{1}\left(a_n^{-1}\cdot Q_{m_n}^{\sigma_{m_n}}\right)\right)\right]-\E\left[F\left(B_{1}\left(\BD_{T,\sqrt{T}}\right)\right)\right]\right|\\
  &\quad\quad\quad +
  \left|\E\left[F\left(B_{1}\left(\BD_{T,\sqrt{T}}\right)\right)-F\left(B_1\left(\BHP\right)\right)\right]\right|.
\end{align*}
Using the coupling~\eqref{eq:BHP-coupling2} for the first and the
coupling~\eqref{eq:BHP-coupling1} for the third summand on the right hand
side, we see that both of them are bounded from above by $2\eps\sup F$. The
second summand converges to zero as $n\rightarrow\infty$,
using~\eqref{eq:BeMi} and the scaling relation $\BD_{T,\sqrt{T}}=_d
T^{1/4}\BD_{1}$. This concludes the proof of Theorem~\ref{thm:UIHPQ-BHP}.
\end{proof}
\begin{proof}[Proof of Theorem~\ref{thm:BHP1}]
  We have to show that when $1\ll\sigma_n\ll n$, we have for every $r\geq
  0$ and any sequence $1\ll
  a_n\ll\min\{\sqrt{\sigma_n},\,\sqrt{n/\sigma_n}\}$, $
  B_r(a_n^{-1}\cdot Q_n^{\sigma_n})\longrightarrow B_{r}(\BHP)$
  in distribution in $\mathbb{K}$ as $n\rightarrow\infty$. Let $\eps>0$ and
  $r\geq 0$. By Proposition~\ref{prop:Qn-UIHPQ}, we can couple
  $Q_{n}^{\sigma_{n}}$ and $Q_{\infty}^{\infty}$ on the same probability
  space such that with probability at least $1-\eps$, for $n\geq n_0$, $
  B_{ra_n}\left(Q_{n}^{\sigma_{n}}\right)
  =B_{ra_n}\left(Q_{\infty}^{\infty}\right).  $ Then, for
  $F:\mathbb{K}\rightarrow\mathbb{R}$ bounded and continuous,
\begin{align*}
  \left|\E\left[F\left(
          B_r\left(a_n^{-1}\cdot Q_n^{\sigma_n}\right)\right)-F\left(B_r(\BHP)\right)\right]\right|
  &\leq \left|\E\left[F\left(a_n^{-1}\cdot
        B_{ra_n}\left(Q_n^{\sigma_n}\right)\right)-F\left(a_n^{-1}\cdot
        B_{ra_n}\left(Q_\infty^\infty\right)\right)\right]\right|\\
  &\quad + \left|\E\left[F\left(
        B_r\left(\BHP\right)\right)-F\left(a_n^{-1}\cdot
        B_{ra_n}\left(Q_\infty^\infty\right)\right)\right]\right|.
 \end{align*}
 Under our coupling, the first summand behind the inequality is bounded by
 $2\eps\sup|F|$ provided $n\geq n_0$. By
 Theorem~\ref{thm:UIHPQ-BHP}, the second summand converges to zero as
 $n\rightarrow\infty.$
\end{proof}

\begin{remark}
\label{rem:jointconv-CLBr}
Notice that in our proofs of the couplings
Proposition~\ref{prop:isometry-BD-BHP} (between $\BD_\sigma$ and $\BHP$)
and Proposition~\ref{prop:Qn-UIHPQ} (between $Q_n^{\sigma_n}$ and
$\UIHPQ$), we construct in fact joint couplings of contour functions, label
functions and balls in the corresponding metric spaces. As a consequence,
the theorems proved in this section can be strengthened in a way we now 
exemplify based on Theorem~\ref{thm:UIHPQ-BHP}.

Recall that we view the contour and label functions $C_\infty$ and
$\La_\infty$ which specify the $\UIHPQ$ $Q_\infty^\infty$ as (random)
continuous functions on $\R$. The Brownian half-plane $\BHP$ is constructed
from contour and label functions $X^0=(X^0_t,t\in\R)$ and $W^0=(W^0_t,t\in\R)$ as
specified in Section~\ref{sec:recapBHPBD}.

We now claim that for each $r\geq 0$ and any positive sequence $a_n\rightarrow\infty$,
\begin{equation}
\label{eq:jointconv-CLBr}
\left(\frac{C_\infty((9/4)a_n^4\cdot)}{(3/2)a_n^2},\frac{\La_\infty\left((9/4)a_n^4\cdot\right)}{a_n},
  B_r\left(a_n^{-1}\cdot Q_\infty^\infty\right)\right)
\xrightarrow[n\to\infty]{(d)}\left(X^0,W^0,B_{r}(\BHP)\right)
\end{equation}
jointly in the space
$\mathcal{C}(\R,\R)^2\times\mathbb{K}$. The convergence does also hold with
$B_r(a_n^{-1}\cdot Q_\infty^\infty)$ replaced by $B_r^{(0)}(a_n^{-1}\cdot Q_\infty^\infty)$.

To see why~\eqref{eq:jointconv-CLBr} holds, one has to slightly enhance the
proof of Theorem~\ref{thm:UIHPQ-BHP}. Since all the necessary arguments
were already given, we restrict ourselves to a sketch proof and leave it to
the reader to fill in the details.  We assume $r=1$ for simplicity. Let
$T>0$, denote by $(F,W)$ the contour and label function of
$\BD_{T,\sqrt{T}}$, and set $F(-t)=F(T-t)+\sqrt{T}$ and $W(-t)=W(T-t)$ for
$t\in[-T,0]$. Now fix $K>0$.

Firstly, the arguments in the proof of
Proposition~\ref{prop:isometry-BD-BHP} show that for $T>0$ large, one
can construct a coupling such that with high probability,
Equality~\eqref{eq:BHP-coupling1} holds jointly with an equality of
$(F,W)$ and $(\Xha,\Wha)$ on $[-K,K]^2\subset [-T,T]^2$.

Secondly, let $m_n=\lceil\delta^{-4}a_n^4\rceil$ and
$\sigma_{m_n}=\lceil\sqrt{2m_n}\rceil$ as in the proof of
Theorem~\ref{thm:UIHPQ-BHP}. We extend the contour function $C_{m_n}$ of
$Q_{m_n}^{\sigma_{m_n}}$ to $t\in [-(2m_n+\sigma_{m_n}),0]$ by setting
$C_{m_n}(t)=C_{m_n}(2m_n+\sigma_{m_n}+t)+\sigma_{m_n}$. Similarly, we
extend the label function $\La_{m_n}$ by letting
$\La_{m_n}(t)=\La_{m_n}(2m_n+\sigma_{m_n}+t)$ for
$t\in[-(2m_n+\sigma_{m_n}),-1]$, and then by linear interpolation on
$[-1,0]$ between $\La_{m_n}(-1)$ and $\La_{m_n}(0)=0$.

From the proof of Proposition~\ref{prop:Qn-UIHPQ} we see that for
$\delta$ small and $n$ large, one can construct a coupling such that with
high probability, Equality~\eqref{eq:BHP-coupling2} holds jointly with an
equality of $(C_{m_n},\La_{m_n})$ and $(C_\infty,\La_\infty)$ on
$[-Ka_n^4,Ka_n^4]^2$.

Thanks to~\cite{Be3,BeMi}, we already know that the convergence
of $a_n^{-1}\cdot Q_{m_n}^{\sigma_{m_n}}$ to $\BD_{T,\sqrt{T}}$ (with
$T=\delta^{-4}(8/9)$) holds jointly with the convergence
$$\left(\frac{C_{m_n}((9/4)a_n^4\cdot)}{(3/2)a_n^{2}},
    \frac{\La_{m_n}((9/4)a_n^4\cdot)}{a_n}\right)
\xrightarrow[n\to\infty]{(d)}\left(F,W\right)
$$
on $\mathcal{C}([-T, T]^2,\R)^2$. Putting these observations
together,~\eqref{eq:jointconv-CLBr} follows.

We come back to Display~\eqref{eq:jointconv-CLBr} in the proof of
Theorem~\ref{thm:BHP3} below.
\end{remark}

\subsection{Brownian half-plane with non-zero skewness
  (Theorem~\ref{thm:BHP3})}
\label{sec:brownian-half-plane-1}

Theorem~\ref{thm:BHP3} covers the regime $\sqrt{n}\ll \sigma_n\ll n$ when
$a_n\sim 2\sqrt{\theta n/3\sigma_n}$ for some $\theta\in (0,\infty).$ The
parameter $\theta$ measures the skewness of the limiting Brownian
half-plane. Note that the regimes where $\BHP$ corresponding to the choice
$\theta=0$ appears is already treated in Theorem~\ref{thm:BHP1}.

We work in the usual setting introduced Section~\ref{sec:usualsetting}; in
particular, the pair $(Q_n^{\sigma_n},\vd)$ consisting of a quadrangulation
and a distinguished vertex is uniformly distributed over
$\cQ_{n,\sigma_n}^\bullet$ and encoded by a triplet
$((\f_n,\la_n),\br_n)\in\Fo_{\sigma_n}^n \times \mathcal B_{\sigma_n}$. The
associated contour pair is denoted $(C_n,L_n)$, and the corresponding label
function takes the form $\La_n(t) = L_n(t) + \br_n(-\uC_n(t))$, $0\leq
t\leq 2n+\sigma_n$.

It will be convenient to view both $C_n$ and $\La_n$ as continuous
functions on $\R$. Let $N=2n+\sigma_n$. We extend $C_n$ first to $[-N,N]$
by $C_n(t)=C_n(N+t)+\sigma_n$ for $t\in[-N,0]$, and then to all reals $t$
by setting $C_n(t)=C_n(t\vee (-N)\wedge N)$. Similarly, we let
$\La_n(t)=\La_n(N+t)$ for $t\in[-N,-1]$, with linear interpolation on
$[-1,0]$ between $\La_n(-1)$ and $0$. Outside $[-N,N]$, we set
$\La_n(t)=\La_n(t\vee (-N)\wedge N)$.  In this way, we interpret $C_n$ and
$\La_n$ as functions in $\mathcal{C}(\R,\R)$. Recall that they completely
determine $(Q_n^{\sigma_n},\vd)$.

\begin{mdframed} {\bf Idea of the proof.} Fix $r\geq 0$. The ball
  $B_{ra_n}(Q_n^{\sigma_n})$ of radius $ra_n$ around the root in
  $Q_n^{\sigma_n}$ is with high probability encoded by the union of the
  first $ca_n^2$ and last $ca_n^2$ trees of $\f_n$ for some $c>0$, together
  with their labels and the corresponding bridge values along the floor of
  $\f_n$. In Lemma~\ref{lem:RN-Deriv}, we calculate the Radon-Nikodym
  derivative of the law of these $2ca_n^2$ trees with respect to the law of
  $2ca_n^2$ independent critical geometric Galton-Watson trees. In this
  way, we explicitly relate the laws of $B_{ra_n}(Q_n^{\sigma_n})$ and
  $B_{ra_n}(Q_\infty^{\infty})$ to each other. Since we already know that
  $a_n^{-1}\cdot B_{ra_n}(Q_\infty^\infty)$ converges to $B_r(\BHP_0)$
  jointly with its properly rescaled contour and label functions, see
  Remark~\ref{rem:jointconv-CLBr}, it remains to identify the limiting
  Radon-Nikodym derivative, which we find to be the Radon-Nikodym
  derivative of a (two-sided) Brownian motion with drift $-\theta$ with
  respect to standard Brownian motion. An application of the Pitman
  transform then concludes the proof.
\end{mdframed}

Let us now give the details and first introduce some supplementary notation.
Given a continuous function
$f:\mathbb{R}\rightarrow\mathbb{R}$ and $x\in\mathbb{R}$, let
$$
U_x(f)=\inf\{t\leq 0: f(t)=x\}\in [-\infty,0],\quad T_x(f)=\inf\{t\geq 0: f(t)=x\}\in [0,\infty].
$$
In words, $U_x(f)$ is the time of the last visit to $x$ to the left of $0$,
with $U_x(f)=-\infty$ if there is no such time, and $T_x(f)$ is the first
time $f$ visits $x$ to the right of $0$, with $T_x(f)=\infty$ if there is
no such time.  Of course, we can also apply $T_x$ to functions in
$\mathcal{C}([0,\infty),\R)$, and $U_x$ to functions in
$\mathcal{C}((-\infty,0],\R)$.

For $f\in\mathcal{C}(\R,\R)$ and $x>0$, set
$$
v(f,x) = \frac{1}{2}\left(T_{-x}(f)-U_{x}(f) -2x\right)
$$
whenever all terms on the right hand side are finite, and $v(f,x)=\infty$
otherwise. Note that if $x$ is an integer and $f$ is the contour path of an
infinite forest, then $v(f,x)$ is the total number of edges of the $2x$
trees that are encoded by $f$ along the interval $[U_{x}(f),T_{-x}(f)]$.

Let $s>0$ be given. For the rest of this section, we will always set
$s_n=\lfloor (3/2)sa_n^2\rfloor$. Since $a_n^2\ll\sigma_n\ll n$, we  will
implicitly assume that $n$ is so large such that $s_n<\sigma_n<n$.

We first prove an absolute-continuity relation on the interval
$[U_{s_n},T_{-s_n}]$ between $C_n$ and the contour function $C_\infty$ of a
critical infinite forest. For that purpose, we define two probability laws
$\P_{n,r}$, $\Q_{n,r}$ on $\mathcal{C}(\R,\R)$ as follows:
\begin{align*}
\P_{n,s}&=\mathcal{L}\left(\left(C_n(t\vee U_{s_n}(C_n)\wedge
      T_{-s_n}(C_n)),t\in\R\right)\right),\\
\Q_{n,s}&=\mathcal{L}\left(\left(C_\infty(t\vee
    U_{s_n}(C_\infty)\wedge T_{-s_n}(C_\infty)),t\in\R\right)\right).
\end{align*}

\begin{lemma}
\label{lem:RN-Deriv}
Let $s>0$. The laws $\P_{n,s}$ and $\Q_{n,s}$ are absolutely continuous with respect to each
other. Moreover, given $\eps>0$, there exists $n_0\in\N$
such that for all $n\geq n_0$, with $s_n=\lfloor (3/2)sa_n^2\rfloor$,
$$
\sum_{f\in\textup{supp}(\P_{n,s})}\left|\P_{n,s}(f)-\e^{2s\theta-\frac{v(f,s_n)}{(9/4)a_n^4}\theta^2}\Q_{n,s}(f)\right|\leq \eps,
$$
where \textup{supp}$(\P_{n,s})\subset \mathcal{C}(\R,\R)$ denotes the
support of $\P_{n,s}$ (which is equal to \textup{supp}$(\Q_{n,s})$).
\end{lemma}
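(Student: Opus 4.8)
The plan is to compute both $\P_{n,s}$ and $\Q_{n,s}$ explicitly as laws on contour paths of forests and to compare them term by term. First I would set up the combinatorial description: a function $f$ in the support of $\P_{n,s}$ (equivalently of $\Q_{n,s}$) is the contour path of a finite forest of exactly $2s_n$ trees, encoded along $[U_{s_n}(f),T_{-s_n}(f)]$, flattened outside that interval; the number of edges of this forest is $v(f,s_n)$. Writing $v = v(f,s_n)$, I would express $\Q_{n,s}(f)$ using the fact (Section~\ref{sec:constr-UIHPQ}) that the trees of $\f_\infty$ are i.i.d.\ critical geometric Galton--Watson trees, so by~\eqref{eq:criticalGW} each tree $\tau$ has weight $(1/2)4^{-|\tau|}$, and hence $\Q_{n,s}(f)$ is proportional to $(1/2)^{2s_n} 4^{-v}$ times the number of labelings, i.e.\ a constant depending only on $s_n$ times $4^{-v}$ (the $3^v$ labelings play no role since we are only looking at the \emph{contour}, not the label function, so I will suppress them or absorb them—care is needed here). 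For $\P_{n,s}(f)$, I would use the representation from Section~\ref{sec:forests}: the contour path $C_n$ of the uniform forest in $\Fo^{\sigma_n}_n$ is a simple random walk started at $0$ conditioned to hit $-\sigma_n$ at time $2n+\sigma_n$. On the event that the first $s_n$ and last $s_n$ trees have total size $v$, the path $C_n$ restricted to $[U_{s_n},T_{-s_n}]$ is, by the strong Markov property applied at the successive hitting times of $s_n-1,\dots,0,-1,\dots,-s_n$ (and a time-reversal for the left part), distributed like an unconditioned piece of simple random walk bridge of that shape; so $\P_{n,s}(f)$ is proportional to $4^{-v}$ times a ratio of random-walk hitting probabilities, namely something of the form $\P(T_{-(\sigma_n-s_n+1)} = 2(n-v)+\sigma_n-s_n+1)\big/\P(T_{-\sigma_n}=2n+\sigma_n)$ up to elementary factors.

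The core of the proof is then the asymptotic evaluation of that ratio of hitting probabilities. Using Kemperman's formula~\eqref{eq:Kemperman} this ratio reduces to a ratio of local probabilities $\P(S_k=j)$, and I would apply the refined local limit theorem~\eqref{eq:localCLT2} of Bene\v{s}, \emph{not} the crude one, because here $j/k$ is of order $\sigma_n/n$ which need not be small relative to the precision we need (this is exactly the atypical-regime issue flagged in Section~\ref{sec:forests}). Plugging $\sigma_n$ and $\sigma_n-s_n+1$ for the endpoint and the corresponding times, the $\ell=1$ term in the exponential of~\eqref{eq:localCLT2} contributes, after Taylor expansion, a term of the form $-\tfrac{\sigma_n^2}{n}\cdot(\text{something}) + \tfrac{\sigma_n s_n}{n}\cdot(\text{something}) + \dots$; recalling $a_n \sim 2\sqrt{\theta n/(3\sigma_n)}$, so that $\sigma_n/n \sim 4\theta/(3 a_n^2)$ and $s_n \sim (3/2)s a_n^2$, one checks that the leading surviving contributions are a constant $2s\theta$ (from the change in the number of trees, $s_n$ versus going down to $-s_n$, i.e.\ the linear-in-$s_n$ part) and a term $-\tfrac{v}{(9/4)a_n^4}\theta^2$ (from the change in total length $v$, using $\sigma_n^2/n^2 \sim \text{const}/a_n^4$). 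All higher-order $\ell\ge 2$ terms, and the error terms $O(x^2/m^2 + 1/m)$, should be shown to vanish uniformly over the relevant range of $v$, which is the range where $v/a_n^4$ stays bounded—outside that range both measures put vanishing mass (a tail estimate via~\eqref{eq:localCLT} as in Lemmas~\ref{lem:GW2} and~\ref{lem:GW3}). This matching of exponents is exactly where the precise normalization $a_n\sim 2\sqrt{\theta n/3\sigma_n}$ and the factor $(9/4)$ and the $(3/2)$ in $s_n = \lfloor (3/2)sa_n^2\rfloor$ are pinned down.

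Having the pointwise asymptotics $\P_{n,s}(f) \sim \e^{2s\theta - v(f,s_n)\theta^2/((9/4)a_n^4)}\,\Q_{n,s}(f)$ uniformly on a set of $f$ carrying mass $\ge 1-\eps/2$ under both measures, I would conclude the $\ell^1$ (total-variation-type) bound in the statement in the now-standard way: split the sum over $f$ according to whether $v(f,s_n)\le C a_n^4$ or not; on the bounded part use the uniform pointwise estimate together with $\sum \Q_{n,s}(f)\le 1$ and the boundedness of the weight $\e^{2s\theta - v\theta^2/((9/4)a_n^4)}$ there; on the unbounded part use the tail bound on $v$ under $\Q_{n,s}$ (from random-walk estimates, since under $\Q_{n,s}$ the $2s_n$ trees are i.i.d.\ GW trees and $v$ is $\tfrac12(T_{-s_n}-U_{s_n})-s_n$, with $T_{-s_n}$ a sum of $s_n$ i.i.d.\ hitting times) and the fact that $\e^{2s\theta}\P(v > Ca_n^4)$ under $\Q_{n,s}$ can be made $<\eps/4$ by choosing $C$ large, plus the matching bound on $\P_{n,s}(v>Ca_n^4)$. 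Absolute continuity of $\P_{n,s}$ and $\Q_{n,s}$ with respect to each other is immediate once one observes they have the same (countable) support: a finite forest of $2s_n$ trees of total size $v$ with a nonnegative-excursion-of-width-$2s_n$ contour shape can be realized both by the conditioned walk $C_n$ (since $s_n<\sigma_n<n$ leaves room) and by the infinite forest $C_\infty$, each with strictly positive probability.

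\textbf{Main obstacle.} The hard part will be the uniform control of the asymptotic expansion~\eqref{eq:localCLT2}: one must expand $\exp(-\sum_\ell \tfrac{1}{2\ell(2\ell-1)}\tfrac{x^{2\ell}}{m^{2\ell-1}})$ for \emph{two} pairs $(x,m)$ — namely $(\sigma_n, 2n+\sigma_n)$ and $(\sigma_n - s_n +1,\,2(n-v)+\sigma_n - s_n+1)$ — take the ratio, and show that after substituting $a_n^2 \asymp n/\sigma_n$ everything collapses to $2s\theta - v\theta^2/((9/4)a_n^4)$ up to $o(1)$, uniformly in $v$ ranging up to order $a_n^4$ (and hence $v/n \to 0$, $s_n/\sigma_n\to 0$, which is what makes the expansion legitimate). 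Keeping track of which cross-terms in the double Taylor expansion are $o(1)$ and which survive, and verifying the $\ell\ge2$ terms and the Bene\v{s} error terms are negligible on the whole range, is a delicate but ultimately routine computation — it is essentially the analogue, in the regime $\sigma_n\gg\sqrt n$, of the local-limit-theorem manipulations already carried out in the proof of Lemma~\ref{lem:GW3}, and I would model the write-up closely on that proof.
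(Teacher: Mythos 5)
Your proposal follows essentially the same route as the paper: express both $\P_{n,s}(f)$ and $\Q_{n,s}(f)$ explicitly (the latter as $2^{-2(v+s_n)}$ via~\eqref{eq:criticalGW}, the former via Kemperman's formula as $2^{-2(v+s_n)}$ times a ratio of local probabilities for the residual first-passage), apply the refined local CLT~\eqref{eq:localCLT2}, Taylor-expand the ratio of exponentials, split the sum at $v(f,s_n)\le c_v a_n^4$, and control the tail on both sides by hitting-time estimates. This matches the paper's proof step for step, including the modelling on the proof of Lemma~\ref{lem:GW3}. One small slip worth catching when you write it out: since the path carries the first $s_n$ \emph{and} last $s_n$ trees, the residual walk must hit $-(\sigma_n-2s_n)$ in time $2(n-v)+\sigma_n-2s_n$, not $-(\sigma_n-s_n+1)$; the paper's $x_n=\sigma_n-2s_n$, $y_n=2(n-v_n)+\sigma_n-2s_n$ are the correct quantities, and the factor $2$ matters in making the $\ell=1$ term produce $2s\theta$ (rather than $s\theta$).
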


\begin{proof}
  From the constructions of $C_n$ and $C_\infty$, it is clear that each
  realization of $\P_{n,s}$ is a realization of $\Q_{n,s}$, and vice versa.
  Now let $s>0$ and $\eps>0$. We first show that there exists
  $c_v>0$ such that for $n$ sufficiently large,
\begin{equation}\label{eq:RN-Deriv-toshow1}
\sum_{f\in\textup{supp}(\P_{n,s}):\atop v(f,s_n)>c_v a_n^4}\left|\P_{n,s}(f)-\e^{2s\theta-\frac{v(f,s_n)}{(9/4)a_n^4}\theta^2}\Q_{n,s}(f)\right|\leq \eps/2.
\end{equation}
Since $\theta$ and $s$ are fixed, the last display follows if we show that
for some $c_v>0$,
$$
\P_{n,s}\left(f\in \mathcal{C}(\R,\R): v(f,s_n)>c_v a_n^4\right)\leq
\eps/4,\quad\Q_{n,s}\left(f\in \mathcal{C}(\R,\R): v(f,s_n)>c_v
  a_n^4\right)\leq \eps/4. 
$$
Write $T_k$ for the first hitting time of $k$ of a simple random
walk started at zero. By construction of $C_\infty$, we have
$$
\Q_{n,s}\left(\{v(f,s_n)>c_v
  a_n^4\}\right)= \P\left(T_{-2s_n}>2c_v a_n^4+2s_n\right),
$$
and standard random walk estimates give the existence of
$n_0\in \N$ and $c_v>0$ (depending on $s$, but $s$ is fixed) such that for
$n\geq n_0$, $\Q_{n,s}(\{v(f,s_n)>c_v
  a_n^4\})\leq \eps/4.$ Similarly,
$$
\P_{n,s}\left(\{v(f,s_n)>c_v
  a_n^4\}\right) = \P\left(T_{-2s_n}>2c_va_n^4+2s_n\,|\,T_{-\sigma_n}=2n+\sigma_n\right),
$$
and since $\sigma_n\gg \sqrt{n}$, it is easy to check that the probability
on the right is bounded by the unconditioned probability
$\P\left(T_{-2s_n}>2c_v a_n^4+2s_n\right)\leq \eps/4$. This shows~\eqref{eq:RN-Deriv-toshow1}.

It remains to argue that for fixed $c_v$ and all $n$ large enough, we have also
\begin{equation}\label{eq:RN-Deriv-toshow2}
\sum_{f\in\textup{supp}(\P_{n,s}):\atop v(f,s_n)\leq c_v a_n^4}\left|\P_{n,s}(f)-\e^{2s\theta-\frac{v(f,s_n)}{(9/4)a_n^4}\theta^2}\Q_{n,s}(f)\right|\leq \eps/2.
\end{equation}
In this regard, consider a sequence
$f_n\in\mathcal{C}(\R,\R)$ of functions in the support of $\P_{n,s}$ such
that $v_n=v(f_n,s_n)\leq c_v a_n^4$.  Let
$$x_n=\sigma_n -  2s_n, \quad y_n = 2(n-v_n)+ \sigma_n -2s_n.$$ We can
assume that both $x_n$ and $y_n$ are positive numbers. Let $(S(i), i \in
\N_0)$ denote a simple random walk started at $S(0)=0$. The probability
$\P_{n,s}(f_n)$ is given by the
probability to observe $2s_n$ particular trees of total size $v_n$ in a
forest of size $n$ with $\sigma_n$ trees. By Kemperman's
formula~\eqref{eq:Kemperman}, we obtain
\begin{align} 
      \P_{n,s}(f_n) &=
   \frac{\frac{x_n}{y_n }2^{y_n} \P(S(y_n) =
     x_n)}{\frac{\sigma_n}{2n+\sigma_n}
     2^{2n+\sigma_n}  \P(S(2n+\sigma_n)= \sigma_n)}\nonumber\\
   &= \frac{x_n}{y_n}\frac{2n+\sigma_n}{\sigma_n}2^{-2(v_n+s_n)}
   \frac{\P(S(y_n) =
     x_n)}{\P(S(2n+\sigma_n)= \sigma_n)}\,.  
\label{eq:RN-Deriv-eq1}
 \end{align}
 By definition of $C_\infty$, $\Q_{n,s}(f_n)$ is given by a
 particular realization of $2s_n$ independent critical geometric Galton-Watson trees with $v_n$
 edges in total. Therefore, by~\eqref{eq:criticalGW}, 
\begin{equation}
\label{eq:RN-Deriv-eq2}
\Q_{n,s}(f_n)=2^{-2(v_n + s_n)}.
\end{equation}
 Moreover, by assumption on $\sigma_n$ and $a_n$, we have uniformly in all
 possible choices of $f_n$ that satisfy $v_n\leq c_va_n^4$,
\begin{equation}
  \label{eq:RN-Deriv-eq3}
  \left|\frac{x_n}{y_n}\frac{2n+\sigma_n}{\sigma_n}-1\right| = o(1).
\end{equation}
Since $\sigma_n \gg \sqrt n$, the fraction of random walk probabilities
in~\eqref{eq:RN-Deriv-eq1} is not controlled well-enough by a standard
local central limit theorem as formulated in~\eqref{eq:localCLT}. Instead, we
use~\eqref{eq:localCLT2} and obtain
\begin{equation}
\label{eq:RN-Deriv-eq4}
  \frac{\P(S(y_n) = x_n)}{\P(S(2n+\sigma_n)= \sigma_n )} = \exp\left(-\sum_{\ell = 1}^\infty \frac{1}{2\ell (2 \ell
    -1)} \left(\frac{x_n^{2 \ell}}{ {y_n}^{2 \ell  -1}}
  -\frac{\sigma_n^{2\ell}}{ (2n+\sigma_n)^{2\ell-1}}\right)\right)\left(1+o(1)\right).
\end{equation}
We now analyze the terms in the sum inside the exponential in the last
display, similarly to the proof of Lemma~\ref{lem:GW3}. Firstly,
\begin{multline}
  \frac{x_n^{2 \ell}}{ {y_n}^{2 \ell -1}}
  -\frac{\sigma_n^{2\ell}}{ (2n+\sigma_n)^{2\ell-1}} = \\
  \frac{\sigma_n^{2\ell}}{(2n+\sigma_n)^{2 \ell -1}}\left[ -2 \ell
    \frac{2s_n}{\sigma_n} + (2 \ell -1) \frac{2(v_n +
      s_n)}{2n+\sigma_n} + O\left(\left(\frac{s_n}{\sigma_n}\right)^2 \right
    ) + O\left(\left(\frac{v_n+ s_n}{2n+\sigma_n}\right)^2\right)
  \right].
\label{eq:RN-Deriv-eq5}
\end{multline}
We now observe that 
\begin{align*}
  -2\ell\frac{\sigma_n^{2\ell}}{(2n+\sigma_n)^{2 \ell -1}} \frac{ 
2s_n}{\sigma_n} 
  & =  (-4\ell s\theta+o(1))\frac{\sigma_n^{2(\ell-1)}}{(2n+\sigma_n)^{2 
(\ell -1)}},\quad\hbox{ and}\\
(2\ell-1)  \frac{\sigma_n^{2\ell}}{(2n+\sigma_n)^{2 \ell -1}}  \frac{2(v_n +
        s_n)}{2n+\sigma_n}  &  = (2\ell-1)\frac{2v_n}{(9/4)a_n^4}\left(\theta^2+o(1)\right)\frac{\sigma_n^{2(\ell-1)}}{(2n+\sigma_n)^{2 
(\ell -1)}}.
\end{align*}
Since $\sigma_n\ll n$, we deduce from the last display that if
$\ell\geq 2$, all the terms in~\eqref{eq:RN-Deriv-eq5} converge to $0$ as $n\rightarrow\infty$. If 
$\ell =1$,
\begin{align*}
  -2\ell\frac{\sigma_n^{2\ell}}{(2n+\sigma_n)^{2 \ell -1}} \frac{ 
2s_n}{\sigma_n} 
  & =  -4s\theta+o(1),\quad\hbox{ and}\\
(2\ell-1)  \frac{\sigma_n^{2\ell}}{(2n+\sigma_n)^{2 \ell -1}}  \frac{2(v_n +
        s_n)}{2n+\sigma_n}  &  = 
\frac{2v_n}{(9/4)a_n^4}\theta^2 +o(1).
\end{align*}
For $n$ large enough, $\sigma_n/(2n+\sigma_n) <1/2$, so that each
 term in the sum in~\eqref{eq:RN-Deriv-eq4} is bounded by $C(1/2)^{2(\ell-1)}$ for some
 universal constant $C>0$, which is summable. Therefore, by dominated
 convergence
$$  \frac{\P(S(y_n) = x_n)}{\P(S(2n+\sigma_n)= \sigma_n )}  = \exp\left(
    2s \theta - \frac{v_n}{(9/4)a_n^4}\theta^2\right) +o(1).
$$ 
Note that all the error terms above do depend on $f_n$ only through the
constant $c_v$. Combining the last display with~\eqref{eq:RN-Deriv-eq2}
and~\eqref{eq:RN-Deriv-eq3},~\eqref{eq:RN-Deriv-toshow2} and hence the
claim of the lemma follow.
\end{proof}
\begin{remark}
  Note that $C_\infty$ is a discrete analog of the contour function of the
  Brownian half-plane $\BHP$: The process $(C_\infty(i),i\in \N_0)$ is a
  simple random walk, and if $S=(S(i),i\in\N_0)$ denotes another
  (independent) simple random walk, then it is straightforward to check
  that
$$
(C_\infty(-i),i\in\N)=_d \left(S(i+1)-2\min_{0\leq \ell\leq i+1}S(\ell)
  +1,\,i\in\N\right),
$$
i.e., $(C_\infty(-i),i\in\N)$ is a discrete Pitman-type transform of a
simple random walk. In particular, $-U_k(C_\infty)=_d T_{-k}(S)$.
\end{remark}

For proving Theorem~\ref{thm:BHP3}, it is convenient to introduce
some more notation. Let us first define
rescaled versions of the contour and label functions $C_n$ and $\La_n$ that
capture the information encoded by the first $s_n=\lfloor (3/2)sa_n^2\rfloor$ trees
$(\tau_0,\ldots,\tau_{s_n-1})$ and the last $s_n$ trees
$(\tau_{\sigma_n-s_n},\ldots,\tau_{\sigma_n-1})$ of $\f_n$,
\begin{align*}
C_{n,s}=\left(C_{n,s}(t),t\in\R\right)&=\left(\frac{1}{(3/2)a_n^2}C_{n}\left((9/4)a_n^4t\vee
  U_{s_n}(C_n)\wedge T_{-s_n}(C_n)\right),t\in \R\right),\\
\La_{n,s}=\left(\La_{n,s}(t),t\in\R\right)&=\left(\frac{1}{a_n}\La_{n}\left((9/4)a_n^4t\vee
  U_{s_n}(C_n)\wedge T_{-s_n}(C_n)\right),t\in \R\right).
\end{align*}

Let $((\f_{\infty},\la_\infty),\br_\infty)$ encode the $\UIHPQ$, with
$C_\infty$ and $\La_\infty$ denoting the associated contour and label functions.
In analogy to the last display, we set
\begin{align*}
  C^{\infty}_{n,s}=\left(C^{\infty}_{n,s}(t),t\in\R\right)&=\left(\frac{1}{(3/2)a_n^2}C_{\infty}\left((9/4)a_n^4t\vee
      U_{s_n}(C_{\infty})\wedge T_{-s_n}(C_{\infty})\right),t\in \R\right),\\
  \La_{n,s}^\infty=\left(\La_{n,s}^\infty(t),t\in\R\right)&=\left(\frac{1}{a_n}\La_{\infty}\left((9/4)a_n^4t\vee
      U_{s_n}(C_{\infty})\wedge T_{-s_n}(C_{\infty})\right),t\in \R\right).
\end{align*}
We recapitulate the definition of the contour and label functions
$X^\theta=(X^\theta(t),t\in\R)$ and $W^\theta=(W^\theta(t),t\in\R)$ which
encode the Brownian half-plane $\BHP_\theta$: $(X^\theta(t),t\geq 0)$ is given by a Brownian
motion with linear drift $-\theta$, and $(X^\theta(-t),t\geq 0)$ is the
Pitman transform of an (independent) copy of $(X^\theta(t),t\geq
0)$. Moreover, conditionally on $X^\theta$, the label function
$W^\theta=(W^\theta(t),t\in\R)$ is given by
$W^\theta(t)=\gamma(-\underline{X}^\theta(t))+Z^\theta(t)$, $t\in \R$, where
$Z^\theta=(Z^\theta(t),t\in\R)$ is the random snake driven by
$X^\theta-\underline{X}^\theta$, and $\gamma=(\gamma(t),t\in\R)$ is a
two-sided Brownian motion with $\gamma(0)=0$ and scaled by the factor
$\sqrt{3}$, independent of $Z^{X^\theta-\underline{X}^\theta}$.

We set
\begin{align*}
X^{\theta,s}=\left(X^{\theta,s}(t),t\in\R\right)&=\left(X^\theta\left(t\vee
  U_{s}(X^\theta)\wedge T_{-s}(X^\theta)\right),t\in \R\right),\\
W^{\theta,s}=\left(W^{\theta,s}(t),t\in\R\right)&=\left(W^\theta\left(t\vee U_{s}(X^\theta)\wedge
T_{-s}(X^\theta)\right),t\in \R\right).
\end{align*}
Finally, for $f\in\mathcal{C}(\R,\R)$, put
$$
\lambda_{n,s}(f)=\exp\left(2s\theta-\frac{v(f,s_n)}{(9/4)a_n^4}\theta^2\right).
$$
\begin{proof}[Proof of Theorem~\ref{thm:BHP3}]
Let $r\geq 0$. By Lemma~\ref{lem:ball0}, our claim follows if we show that 
$$
B_r^{(0)}\left(a_n^{-1}\cdot Q_n^{\sigma_n}\right)\xrightarrow[n \to
\infty]{(d)}B_r(\BHP_{\theta})$$ in distribution in $\mathbb{K}$, where we
recall that $\theta=\lim_{n\rightarrow\infty}(3/2)a_n^2\sigma_n/2n.$ For
$n\in \N$ and $s>0$, define the events
$$
\mathcal{E}^1(n,s)=\left\{\min_{[0,\,s_n]}\br_n <-3ra_n,\,
  \min_{[\sigma_n-s_n,\,\sigma_n-1]}\br_n<-3ra_n\right\}\cap\left\{\min_{[s_n+1,\sigma_n-(s_n+1)]}\br_n<-3ra_n\right\}$$
and similarly
\begin{align*}\mathcal{E}^2(n,s)&=\left\{\min_{[0,\,s_n]}\br_\infty
   <-\,3ra_n,\,
   \min_{[-s_n,\,0]}\br_\infty<-3ra_n\right\},\\
\mathcal{E}^3(s)&=\left\{\min_{[0,s]}\gamma <-3r,\,
   \min_{[-s,0]}\gamma<-3r\right\}.
\end{align*}
Let
$\eps>0$ be given. Applying Lemma~\ref{lem:bridge0}, we find $n_0\in \N$ and $s>0$ sufficiently
large such that for $n\geq n_0$, $\P(\mathcal{E}^1(n,s))\geq
1-\eps$. For possibly larger values of $n$ and $s$, Donsker's
invariance principle shows that also $\P(\mathcal{E}^2(n,s))\geq
1-\eps$, and standard properties of Brownian motion give $\P(\mathcal{E}^3(s))\geq
1-\eps$ for $s$ large enough. We now fix $s>0$ and $n_0\in\N$ such that for all $n\geq n_0$, each of the
events $\mathcal{E}^1,\mathcal{E}^2,\mathcal{E}^3$ has probability at least $1-\eps$.

As in the proof of Proposition~\ref{prop:Qn-UIHPQ}, we write
$\tau(\f_\infty,k)$ for the tree of $\f_\infty$ which is attached to $(k)$,
$k\in\Z$. We identify $V(\f_\infty)$ with $V(Q_\infty^\infty)$, as
usual. Recall that the root $\rho$ of $\UIHPQ$ is at distance at
most $-\br_\infty(-1)+1$ away from $(0)$.  On the event $\mathcal{E}^2(n,s)$,
the cactus bound~\eqref{eq:cactus3} thus gives for vertices $v\in V(Q_\infty^\infty)$ which do
not belong to any of the trees $\tau(\f_\infty,k)$, $k=-s_n,\ldots,s_n$,
$$
d_{\infty}(0,v) \geq -\max\left\{\min_{[0,s_n]}\br_\infty, \min_{[-s_n,0]}\br_\infty\right\} \geq 3ra_n
$$
for large $n$. Since for vertices $u,v$ in $B_{ra_n}^{(0)}(Q_\infty^\infty)$, any geodesic
between $u$ and $v$ in $Q_\infty^\infty$ lies entirely in
$B_{2ra_n}^{(0)}(Q_\infty^\infty)$, we obtain from the construction of edges in
the Bouttier-Di Francesco-Guitter mapping that the submap
$B_{ra_n}^{(0)}(Q_\infty^\infty)$ is a measurable function of
$(C_{n,s}^\infty,\La_{n,s}^\infty)$. A similar argument which we leave to
the reader (see also the first part of the proof of Proposition~\ref{prop:Qn-UIHPQ})
shows that on $\mathcal{E}^1(n,s)$, the submap $B_{ra_n}^{(0)}(Q_n^{\sigma_n})$ is
given by the {\it same} function of $(C_{n,s},\La_{n,s})$. Moreover, on
$\mathcal{E}^3(s)$, $B_r(\BHP)$ is determined by $(X_{0,s},W_{0,s})$.

By Lemma~\ref{lem:bridge1}, recalling that
$a^2_n\ll\sigma_n$, we have for large $n$
\begin{equation*}
  \begin{split}&\left\|\mathcal{L}((\br_n(\sigma_n-s_n),\ldots,\br_n(\sigma_n-1),\br_n(0),\br_n(1),\ldots,\br_n(s_n)))\right.\\
    &\quad\left. -\,\mathcal{L}((\br_\infty(-s_n),\ldots,\br_\infty(-1),\br_\infty(0),\br_\infty(1),\ldots,\br_\infty(s_n)))\right\|_{\textup{TV}}\leq
    \eps.\end{split}
\end{equation*}
Combining this bound with Lemma~\ref{lem:RN-Deriv}, the above
observations entail that for any measurable and bounded $F:
\mathcal{C}(\R,\R)^2\times\mathbb{K}\rightarrow\R$ and $n$ large enough
\begin{multline}
\label{eq:BHP3-mainproof-eq1}
\left|\E\left[F\left(C_{n,s},\La_{n,s},B_r^{(0)}\left(a_n^{-1}\cdot
      Q_n^{\sigma_n}\right)\right)\1_{\mathcal{E}^1(n,s)}\right] -\right.\\
\left.\E\left[\lambda_{n,s}(C_\infty)F\left(C^\infty_{n,s},
    \La_{n,s}^\infty,B_r^{(0)}\left(a_n^{-1}\cdot
      Q_\infty^\infty\right)\right)\1_{\mathcal{E}^2(n,s)}\right]\right|\leq C\eps,
\end{multline}
where $C>0$ is a constant that depends only on $F$ and $\theta,s$, which
are fixed. Recall from the proof of Lemma~\ref{lem:RN-Deriv} that for each $\delta>0$, we find
$c_\delta>0$ such that $\P(v(C_\infty,s_n)>c_\delta a_n^4)\leq \delta$. The
joint convergence~\eqref{eq:jointconv-CLBr} thus implies
$$
\left(C^\infty_{n,s}, \La_{n,s}^\infty,B_r^{(0)}\left(a_n^{-1}\cdot
      Q_\infty^\infty\right)\right)\xrightarrow[n\to\infty]{(d)} \left(X^{0,s},W^{0,s},B_r(\BHP)\right)
$$
in $\mathcal{C}(\R,\R)^2\times \mathbb{K}$, and
$$\frac{v(C_\infty,s_n)}{(9/4)a_n^4}\xrightarrow[n
\to \infty]{(d)}\frac{1}{2}\left(T_{-s}-U_s\right)(X^{0}),
$$
where, in hopefully obvious notation, $X^0$ stands for the contour function
of the Brownian half-plane with zero skewness, and $X^{0,s}$,$W^{0,s}$ were
defined above in terms of $\BHP$. For large $n$, we can therefore ensure
that
\begin{multline}
\label{eq:BHP3-mainproof-eq2}
  \Big|\E\left[\lambda_{n,s}(C_\infty)F\left(C^\infty_{n,s},
        \La_{n,s}^\infty,B_r^{(0)}\left(a_n^{-1}\cdot
          Q_\infty^\infty\right)\right)\right] -\\
  \E\left[\exp\left(2s\theta-(T_{-s}-U_s)(X^0)\theta^2/2\right)F\left(X^{0,s},W^{0,s},B_r\left(\BHP\right)\right)\right]\Big|\leq
  \eps.
\end{multline}
We will now rewrite the second expectation in the last display using
Girsanov's (and implicitly Pitman's) transform. More specifically, an application of
Girsanov's theorem for Brownian motion with drift $-\theta$ (see, e.g.,~\cite[Chapter
3.5]{KaSc}) shows that for
$G:\mathcal{C}(\R,\R)\rightarrow\R$ continuous and bounded,
$$
\E\left[\exp\left(2s\theta-(T_{-s}-U_s)(X^0)\theta^2/2\right)G\left(X^{0,s}\right)\right]=\E\left[G\left(X^{\theta,s}\right)\right].
$$
Since on the event $\mathcal{E}^3(s)$,
$B_r(\BHP)$ is a measurable function of $(X^{0,s},W^{0,s})$ (and
$B_r(\BHP_\theta)$ is given by the {\it same} measurable function of
$(X^{\theta,s},W^{\theta,s})$), we obtain
\begin{multline}
\label{eq:BHP3-mainproof-eq3}
\E\left[\exp\left(2s\theta-(T_{-s}-U_s)(X^0)\theta^2/2\right)F\left(X^{0,s},W^{0,s},B_r\left(\BHP\right)\right)\1_{\mathcal{E}^3(s)}\right]\\
=\E\left[F\left(X^{\theta,s},W^{\theta,s},B_r\left(\BHP_\theta\right)\right)\1_{\mathcal{E}^3(s)}\right].
\end{multline}
Using that the three events $\mathcal{E}^1(n,s)$, $\mathcal{E}^2(n,s)$ and
$\mathcal{E}^3(s)$ have all probability at least $1-\eps$, a
combination of~\eqref{eq:BHP3-mainproof-eq1},~\eqref{eq:BHP3-mainproof-eq2}
and~\eqref{eq:BHP3-mainproof-eq3} shows that for large $n$
$$
\left|\E\left[F\left(C_{n,s},\La_{n,s},B_r^{(0)}\left(a_n^{-1}\cdot
      Q_n^{\sigma_n}\right)\right)\right] -
  \E\left[F\left(X^{\theta,s},W^{\theta,s},B_r\left(\BHP_\theta\right)\right)\right]\right|\leq
  C'\eps $$
for some $C'$ depending only on $F$ and $s,\theta$. Clearly, this implies our claim.
\end{proof}

\subsection{Coupling Brownian disks (Theorem~\ref{thm:coupling-BD-IBD} and Corollary~\ref{cor:topology-IBD})}
\label{sec:proof-coupling-BD-IBD}
The main ideas are similar to those of
Section~\ref{sec:proof-coupling-BD-BHP}, but closer in spirit to those
of~\cite{CuLG}. We begin
with showing how Theorem~\ref{thm:coupling-BD-IBD} implies that
$\IBD_\sigma$ is homeomorphic to the pointed closed disk
$\overline{\mathbb{D}}\setminus \{0\}$.
\begin{proof}[Proof of Corollary~\ref{cor:topology-IBD}] The arguments are similar
  to the proof of Corollary~\ref{cor:topology-BHP}. First,
  Theorem~\ref{thm:coupling-BD-IBD} shows that with probability $1$, for
  every $r>0$, the ball $B_r(\IBD_\sigma)$ is contained in a set
  homeomorphic to $\overline{\mathbb{D}}\setminus \{0\}$. In particular,
  $\IBD_\sigma$ is a non-compact surface with a boundary homeomorphic to
  the circle $\mathbb{S}^1$, and it has only one end. Let us glue a copy
  $D$ of $\overline{\mathbb{D}}$ along the boundary of $\IBD_\sigma$, hence
  obtaining a non-compact surface $S$ without boundary, which is now simply
  connected. This surface is thus homeomorphic to $\R^2$. Again, the
  Jordan-Schoenflies theorem shows that any homeomorphism from the boundary
  of $\IBD_\sigma$ to $\mathbb{S}^1$ can be extended to a homeomorphism
  from $S$ to $\R^2$, and this homeomorphism must send $\IBD_\sigma$ to the
  unbounded region $\{z:|z|\geq 1\}$, which in turn is homeomorphic to
  $\overline{\mathbb{D}}\setminus\{0\}$, as wanted.
\end{proof}
As in Section~\ref{sec:proof-coupling-BD-BHP}, we first prove the following
simplification of Theorem~\ref{thm:coupling-BD-IBD}.
\begin{prop}\label{prop:isometry-BD-IBD}
  Fix $\sigma \in (0,\infty)$, and let $\eps > 0$, $r \geq 0$. There
  exists $T_0 = T_0(\eps, r, \sigma)$ such that for all $T \geq T_0$, we
  can construct copies of $\BD_{T,\sigma}$ and $\IBD_\sigma$ on the same
  probability space such that with probability at least $1-\eps$,
  the balls $B_r(\BD_{T,\sigma})$ and $B_r(\IBD_\sigma)$
  of radius $r$ around the respective roots are isometric.
\end{prop}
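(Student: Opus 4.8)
The plan is to follow the blueprint of the proof of Proposition~\ref{prop:isometry-BD-BHP}, now adapting it from the half-plane to the disk, and thus closer in spirit to Curien--Le Gall's coupling of the Brownian map with the Brownian plane~\cite{CuLG}. Throughout, $F,W$ denote the contour and label functions of $\BD_{T,\sigma}$ as recalled in Section~\ref{sec:recapBHPBD} (with $\sigma(T)\equiv\sigma$), and $X=Y^\sigma$ together with its label function are the encoding processes of $\IBD_\sigma$ from Definition~\ref{def:IBD}. One structural simplification relative to the $\BHP$ case is worth noting at the outset: in both constructions the boundary-label process is (a $\sqrt3$-multiple of) a Brownian bridge of the \emph{same} duration $\sigma$, so no Girsanov-type comparison of label bridges, i.e.\ no analogue of Lemma~\ref{lem:abs-cont-b}, is needed. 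Once the two contour functions are coupled on a neighbourhood of $0$, the snakes $Z^{F-\underline F}$ and $Z^{X-\uuX}$ --- conditionally centred Gaussian with covariances read off the contour --- couple automatically on that neighbourhood, and the boundary labels coincide there provided $-\underline F$ and $-\uuX^\sigma$ agree, which I will arrange by matching the window near $t=0$ in the disk (resp.\ near $t=T$) with the window near $t=0^+$ in $\IBD_\sigma$ (resp.\ near $t=0^-$, where the label argument is near $\sigma$).

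First I would establish the key new ingredient: an absolute-continuity relation, in the spirit of Lemma~\ref{lem:abs-cont-F}, between the law of $\big((F_t)_{0\le t\le\alpha},(F_{T-t})_{0\le t\le\beta}\big)$ and that of $\big((Y^\sigma_t)_{0\le t\le\alpha},(Y^\sigma_{-t})_{0\le t\le\beta}\big)$ (for fixed $\alpha,\beta>0$), with a Radon--Nikodym derivative $\varphi_{T,\alpha,\beta}$ depending on the paths only through $(F_\alpha,F_{T-\beta})$ resp.\ $(Y^\sigma_\alpha,Y^\sigma_{-\beta})$ and converging to $1$, uniformly on a set of probability at least $1-\eps$, as $T\to\infty$. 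The computation uses that, after the shift by $\sigma$, $F+\sigma$ is a three-dimensional Bessel bridge from $\sigma$ to $0$ of duration $T$ (so its time-reversal $F(T-\cdot)+\sigma$ is a Bessel bridge from $0$ to $\sigma$), together with Williams' time-reversal theorem, which identifies the forward piece $(B_t)_{t\le T_{U_0}}$ of $Y^\sigma$ with a Brownian motion absorbed at $-U_0$ and the backward piece $(B_{T_\sigma+t}+\sigma)$ with a segment of a Bessel-type process related to the last passage of a $\mathrm{BES}(3)$ at level $\sigma$. Writing out the densities in terms of heat kernels, first-passage densities $g_t$ and the Bessel kernel $r_t$ of~\eqref{eq:kernel-bessel}, and integrating out the uniform variable $U_0$ over a bounded range, the claim $\varphi_{T,\alpha,\beta}\to1$ reduces to the elementary asymptotics $p^\ast_{T'}(x,y)\sim 2xy/(\sqrt{2\pi}\,T'^{3/2})$ and $g_{T'}(x)\sim x/(\sqrt{2\pi}\,T'^{3/2})$ as $T'\to\infty$. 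In the same step I would record the auxiliary ``infimum-localization'' events, playing the role of $\mathcal E^4,\mathcal E^5$ in the half-plane proof, on which the relevant infima of $F$ (resp.\ $Y^\sigma$) over the complement of the time window are attained at the window's edges: here one uses transience of the $\mathrm{BES}(3)$ processes $R,R'$ and the fact that, for $U_0$ bounded away from $0$ and $\sigma$, both $T_{U_0}$ and $T_\sigma-T_{U_0}$ are large with high probability.

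Next I would assemble the auxiliary events $\mathcal E^i$ and run the cactus-bound argument. These events record: (i) that the $\sqrt3$-scaled Brownian bridge $\gamma$ of duration $\sigma$ attains values $<-6r$ on each of three nested intervals $[0,a_0]\subset[a_0,a_1]\subset[a_1,a_2]$ with $a_2<\sigma/2$, and symmetrically near $\sigma$; (ii) transience-type controls on $R,R'$ and the bounds on $U_0$ and on $T_{U_0},T_\sigma-T_{U_0}$ mentioned above; (iii) the coupling event from the contour absolute-continuity step, on which, for $T\ge T_0(\eps,r)$, one has $F_t=Y^\sigma_t$ for $t\in[0,u_4]$ and $F_{T-t}=Y^\sigma_{-t}$ for $t\in[0,u_4]$, with $u_4$ fixed in terms of $r$ through (i)--(ii); (iv) the label identities forced by the common-bridge observation. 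On the intersection $\mathcal F$ of these events, of probability at least $1-\eps$, I would prove analogues of Lemmas~\ref{lem:DBD} and~\ref{lem:DH}, using the cactus bound~\eqref{eq:cactus1} in its continuum form~\eqref{eq:cactusDBD} for $D$ of $\BD_{T,\sigma}$ and its straightforward adaptation to the double-infimum label $W=\sqrt3\,\gamma_{-\uuX^\sigma}+Z^{X-\uuX}$ of $\IBD_\sigma$, to conclude that: every point of the ball $B_r$ in either space is the image of some time $t$ lying in the common window $[0,\eta_{\textup l}(a_0)]\cup[T-\eta_{\textup r}(a_0),T]$ (resp.\ $[-\eta_{\textup r}(a_0),\eta_{\textup l}(a_0)]$), and the two distances are computed by the same restricted infimum formula. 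The natural identification of these two windows is then an isometry from $B_r(\BD_{T,\sigma})$ onto $B_r(\IBD_\sigma)$ sending root to root, which is the assertion.

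I expect the main obstacle to be the absolute-continuity lemma for the contour functions. Unlike the half-plane case, where the target contour consisted of a drifted Brownian motion and its Pitman transform --- both diffusions with explicit transition kernels via Pitman--Rogers --- the contour $Y^\sigma$ of $\IBD_\sigma$ is a genuinely composite process: a Brownian motion run up to the first passage at an independent uniform level $-U_0$, continued by a $\mathrm{BES}(3)$, and completed on the negative half-line by a time-reversed Brownian segment and a second $\mathrm{BES}(3)$. Matching its local law near $0$ to that of the first-passage bridge $F$ near its two endpoints $0$ and $T$ requires invoking Williams' time-reversal theorem on the correct fragments, carrying the uniform random variable $U_0$ through the computation, and checking that all the ``unusual conditioning'' corrections --- first-passage densities, the last-passage law of $\mathrm{BES}(3)$ at level $\sigma$, and the bridge normalisation $r_T(0,\sigma)$ --- cancel as $T\to\infty$ up to a factor tending to $1$. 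Once this lemma and its companion localization events are in place, the rest is a routine, if lengthy, transcription of the arguments of Proposition~\ref{prop:isometry-BD-BHP} and of~\cite{CuLG}.
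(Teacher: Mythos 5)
Your high-level plan is correct (couple the contour processes near the origin, observe that the boundary-label bridges are already identical in law, transfer the coupling to the snakes, then run the cactus-bound argument), and your observation that no analogue of Lemma~\ref{lem:abs-cont-b} is needed is right and matches the paper. But the mechanism you propose for coupling the contour functions is genuinely different from what the paper does, and I think it is also the weakest link in your sketch.

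You propose a direct absolute-continuity relation, in the style of Lemma~\ref{lem:abs-cont-F}, between the law of $\bigl((F_t)_{t\le\alpha},(F_{T-t})_{t\le\beta}\bigr)$ and that of $\bigl((Y^\sigma_t)_{t\le\alpha},(Y^\sigma_{-t})_{t\le\beta}\bigr)$, with a density depending only on the endpoints. The paper instead avoids any path-measure density computation on the $Y^\sigma$ side: it decomposes the first-passage bridge $F$ into the excursions above its running minimum, uses Pitman's description of the size-biased jumps (Lemma~\ref{lem:coupl-brown-disks}) to establish a total-variation coupling between the jump-size sequence of $F$ and that of the unconditioned stopped Brownian motion $B$, matching all jumps except the largest (Lemma~\ref{lem:coupl-brown-disks-1} --- the genuine continuum analogue of Lemmas~\ref{lem:GW1} and~\ref{lem:GW2}), then reconstructs both $F$ and $B$ from the same excursions and uniform levels via It\^o synthesis, and finally invokes~\cite[Proposition~3]{CuLG} to couple the single long excursion of $F$ with two independent $\mathrm{BES}(3)$ processes $R,R'$. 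The end product (Proposition~\ref{prop:coupl-brown-disks-2}) is an equality of paths on the relevant windows, not an absolute-continuity statement.

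The reason your route is more delicate than in the $\BHP_\theta$ case is structural, and you should not expect the density calculation to reduce as cleanly as you suggest. In $\BHP_\theta$ the two target pieces $(B_t)_{t\le\alpha}$ and $(\Pi_t)_{t\le\beta}$ are \emph{independent} diffusions with explicit kernels, so $\P_{\alpha,\beta}$ is a product measure and the density factorizes nicely into terms depending only on $\omega_\alpha$ and $\omega'_\beta$. For $\IBD_\sigma$, $(Y^\sigma_t)_{t\le\alpha}$ and $(Y^\sigma_{-t})_{t\le\beta}$ are pieces of the \emph{same} Brownian motion $B$ at opposite ends of $[0,T_\sigma]$; they become independent only after conditioning on $U_0$, and the events $\{\alpha<T_{U_0}\}$ and $\{\beta<T_\sigma-T_{U_0}\}$ under which the window sees no Bessel part are not determined by the endpoints $(Y^\sigma_\alpha,Y^\sigma_{-\beta})$. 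Moreover $Y^\sigma$ is a composite process pasted together at random times, not a diffusion, so the density of its restriction to these windows against a product reference law will generically be path-dependent through $(T_{U_0},T_\sigma)$ and will require integrating out $U_0$ against the first-passage and last-passage laws you mention. None of this is obviously fatal, and one can probably push the computation through on a high-probability event where $U_0$ is bounded away from $0$ and $\sigma$ and $\alpha,\beta$ are small compared to $T_{U_0},T_\sigma-T_{U_0}$; but the clean statement ``density depends only on the two endpoints'' is not true as written, and this is precisely the complication that the paper's excursion-theoretic coupling sidesteps entirely by working with the jump sizes and It\^o synthesis rather than with transition densities of a non-Markov process.

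Two smaller remarks. First, your sentence ``for $U_0$ bounded away from $0$ and $\sigma$, both $T_{U_0}$ and $T_\sigma-T_{U_0}$ are large with high probability'' is not accurate as stated: these are hitting times at fixed levels, so they are only bounded away from zero in probability; what you actually need is that they are large \emph{compared to $\alpha$ and $\beta$}, which you can of course arrange by choosing $\alpha,\beta$ small, but this is a different quantifier order than your phrasing suggests. Second, your approach still needs the auxiliary localization events that ensure $\underline{F}$ and $\uuX^\sigma$ agree on the common window (so that the boundary-label argument $-\underline{F}_t$ matches $-\uuX^\sigma_t$), which is handled in the paper via the events $\mathcal E^3, \mathcal E^4$ and in particular the condition $\inf_{[\alpha,\infty)}R\wedge\inf_{[\alpha,\infty)}R'>A^4$; you allude to this but it deserves to be spelled out, since without it the covariance of the label process $W$ on $[-\eta_{\textup r}(A),\eta_{\textup l}(A)]$ would not be the same function of the contour data on both sides.

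In summary: your outline of the second half (cactus-bound analogues of Lemmas~\ref{lem:DBD} and~\ref{lem:DH}, the map $I$, root-to-root isometry of balls) matches the paper's Lemmas~\ref{lem:DBD2} and the analogue of Lemma~\ref{lem:DH}. The first half is a genuinely different route. It is plausible but significantly more delicate than its $\BHP_\theta$ counterpart for the reasons above, and the paper's excursion-theoretic argument (Pitman's size-biased jump description plus It\^o synthesis plus~\cite[Proposition~3]{CuLG}) is arguably the more natural continuum analogue of the discrete Lemmas~\ref{lem:GW1}--\ref{lem:GW2} that underlie Proposition~\ref{prop:coupling-Qn-largevol}.
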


The crucial step in the proof of the proposition is to show how one can
couple the processes encoding $\BD_{T,\sigma}$ and $\IBD_\sigma$. The rest
of the proof then uses arguments very close to those given in~\cite[Section
3.2]{CuLG} and in Section~\ref{sec:proof-coupling-BD-BHP} above.

\subsubsection{Coupling of contour and label functions} 
Throughout this section, $\sigma\in(0,\infty)$ is fixed, and $T$ denotes
always a strictly positive real. We recall that the main building block of
the Brownian disk $\BD_{T,\sigma}$ is a first-passage Brownian bridge from
$0$ to $-\sigma$ and duration $T$. Let $B=(B_t,t\geq 0)$ be a standard
Brownian motion. In this section, it will be convenient to write
$T_x=\inf\{t\geq 0: B_t<-x\}$ for the first hitting time of $(-\infty,-x)$
of the process $B$, so that $(T_x,0\leq x\leq \sigma)$ is a stable
subordinator of index $1/2$ and Laplace exponent $-\log\E[\exp(-\lambda
T_1)]=\sqrt{2\lambda}$.

Let us write the jump sizes of $(T_x,0\leq x\leq \sigma)$, together
with the times in $[0,\sigma]$ at which they occur, as a point measure
$$\mathcal{M}=\sum_{i\geq 1}\delta_{(\Delta_i,U_i)}\, ,$$
so that $T_{U_i}-T_{U_i-}=\Delta_i$. By well-known properties of
subordinators, this measure is Poisson with intensity measure $(2\pi
y^3)^{-1/2}\d y\otimes \d u\1_{[0,\sigma]}(u)$. As a
consequence, the random variables $U_i,i\geq 1$, are i.i.d.\ uniform in
$[0,\sigma]$ and independent of $(\Delta_1,\Delta_2,\ldots)$. This
property will remain true when we condition the measure $\mathcal{M}$
on events that involve only the sequence
$(\Delta_1,\Delta_2,\ldots)$. 

The first-passage bridge consists in the process $(B_t,0\leq t\leq T)$
conditioned on the event $\{T_{\sigma}=T\}=\{\sum_i\Delta_i=T\}$.  In
order to describe the conditional law of $\mathcal{M}$, we follow
Pitman~\cite[Chapter 4]{Pi} and fix the ordering
$\Delta_1,\Delta_2,\ldots$ as the {\em size-biased ordering} of the
jumps, so that conditionally given $(\Delta_1,\ldots,\Delta_i)$,
$\Delta_{i+1}$ is chosen from all the remaining jumps with probability
that is proportional to its size. 

\begin{lemma}[\cite{Pi}]
  \label{lem:coupl-brown-disks}
  Conditionally given $\{T_{\sigma}=T\}$,
  the law of $\Delta_1$ is
$$\P(\Delta_1\in \d y| T_{\sigma}=T)=\frac{\sigma \d y}{T\, (2\pi
  y)^{1/2}}\frac{q_\sigma(T-y)}{q_\sigma(T)}=\e^{\sigma^2/2T}\sqrt{\frac{T}{y}}q_\sigma(T-y)\d
y\, ,$$
and conditionally given $\{T_{\sigma}=T\, ,\, \Delta_1=y\}$, the
remaining jumps $(\Delta_2,\Delta_3,\ldots)$ have the same
distribution as $(\Delta_1,\Delta_2,\ldots)$ conditionally given
$\{T_{\sigma}=T-y\}$. 
\end{lemma}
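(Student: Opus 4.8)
This is a classical identity going back to Perman, Pitman and Yor; the plan is to reproduce their argument using the Palm (Mecke) formula for the Poisson point process of jumps together with the definition of the size-biased ordering. Recall that $\mathcal{M}=\sum_i\delta_{(\Delta_i,U_i)}$ is Poisson on $(0,\infty)\times[0,\sigma]$ with intensity $(2\pi y^3)^{-1/2}\,\d y\otimes\d u$; discarding the (independent, uniform) marks, the jump sizes form a Poisson process on $(0,\infty)$ with intensity $\nu(\d y)=\sigma(2\pi y^3)^{-1/2}\,\d y$. Since $T_\sigma=\sum_i\Delta_i\in(0,\infty)$ almost surely, the size-biased pick is well defined: conditionally on $\mathcal{M}$, the atom $\Delta_i$ is selected with probability $\Delta_i/T_\sigma$, producing $\Delta_1$ and the residual point measure $\mathcal{M}'=\mathcal{M}-\delta_{\Delta_1}$, whose own size-biased ordering is $(\Delta_2,\Delta_3,\dots)$.

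\textbf{Key computation.} For bounded measurable $f,h\colon(0,\infty)\to\R$ and a bounded measurable functional $\Psi$ of a point measure, I would apply Mecke's formula to $H(x,\mathcal{M})=\tfrac{x}{S(\mathcal{M})}\,f(x)\,\Psi(\mathcal{M}-\delta_x)\,h(S(\mathcal{M}))$, with $S(\mathcal{M})$ the total mass. The left-hand side $\E[\sum_{x\in\mathcal{M}}H(x,\mathcal{M})]$ equals $\E[f(\Delta_1)\,\Psi(\mathcal{M}')\,h(T_\sigma)]$ by the definition of the pick; the right-hand side $\int\nu(\d y)\,\E[H(y,\mathcal{M}+\delta_y)]$ equals $\int_0^\infty\nu(\d y)\,f(y)\,\E\bigl[\tfrac{y}{y+S}\,\Psi(\mathcal{M})\,h(y+S)\bigr]$ for a fresh independent copy of the jump-size process, whose total mass I denote $S$. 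Writing $S$ through its density $q_\sigma$, changing variables $t=y+s$, and using $\nu(\d y)\,\tfrac{y}{t}=\tfrac{\sigma}{t(2\pi y)^{1/2}}\,\d y$, both sides become integrals against $h(t)\,\d t$; since $h$ is arbitrary, this yields, for a.e.\ $t>0$,
$$
q_\sigma(t)\,\E\bigl[f(\Delta_1)\,\Psi(\mathcal{M}')\mid T_\sigma=t\bigr]=\int_0^t f(y)\,\frac{\sigma\,q_\sigma(t-y)}{t\,(2\pi y)^{1/2}}\,\E\bigl[\Psi(\mathcal{M})\mid T_\sigma=t-y\bigr]\,\d y,
$$
where I also used that a fresh copy of the jump-size process conditioned on total mass $t-y$ is exactly the jump measure of $(T_x,0\le x\le\sigma)$ conditioned on $\{T_\sigma=t-y\}$ (removing one atom leaves the Poisson structure intact because the uniform marks are independent of the sizes).

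\textbf{Conclusion.} Taking $\Psi\equiv1$ gives $\P(\Delta_1\in\d y\mid T_\sigma=T)=\tfrac{\sigma\,q_\sigma(T-y)}{T(2\pi y)^{1/2}q_\sigma(T)}\,\d y$; substituting $q_\sigma(t)=\sigma(2\pi t^3)^{-1/2}\e^{-\sigma^2/2t}$ and simplifying recovers the second displayed form $\e^{\sigma^2/2T}\sqrt{T/y}\,q_\sigma(T-y)\,\d y$. Taking $f\equiv1$ and comparing with the displayed identity then shows that conditionally on $\{T_\sigma=T,\ \Delta_1=y\}$ the residual measure $\mathcal{M}'$ is distributed as $\mathcal{M}$ conditioned on $\{T_\sigma=T-y\}$; since $(\Delta_2,\Delta_3,\dots)$ is precisely the size-biased ordering of $\mathcal{M}'$, it is therefore distributed as $(\Delta_1,\Delta_2,\dots)$ conditioned on $\{T_\sigma=T-y\}$, which is the asserted regenerative property.

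\textbf{Main obstacle.} The only genuinely delicate point is upgrading the ``for a.e.\ $t$'' conclusion to the fixed value $t=T$: this requires the regular versions of the conditional laws to be continuous in the conditioning variable, which follows from the smoothness of $q_\sigma$ and a Feller-type continuity argument for the conditioned jump measure, in the spirit of the absolute-continuity manipulations already carried out in Lemma~\ref{lem:abs-cont-F}. One should also confirm that the size-biased sampling identity appearing on the left of Mecke's formula is legitimate despite there being infinitely many jumps, which is immediate from $T_\sigma\in(0,\infty)$ a.s.
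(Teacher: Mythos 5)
The paper does not actually prove this lemma; it is stated as a quoted result from Pitman's \emph{Combinatorial Stochastic Processes} (cf.\ the reference \cite{Pi} in the lemma's header), so there is no in-paper argument for you to reproduce. Your proposed proof via the Mecke (Palm) formula is the standard Perman--Pitman--Yor argument and is correct. The key identity $\nu(\d y)\,\tfrac{y}{t}=\tfrac{\sigma}{t(2\pi y)^{1/2}}\,\d y$, the disintegration against $q_\sigma$, and the algebraic simplification $\tfrac{\sigma}{T(2\pi y)^{1/2}q_\sigma(T)}=\e^{\sigma^2/2T}\sqrt{T/y}$ all check out, and the reduction of the second assertion to the case $f\equiv 1$ after establishing the joint identity is the right move. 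Two minor points worth noting: first, Pitman's exposition in Chapter 4 derives this more in the language of size-biased permutations of the ranked jump sequence rather than directly via Mecke's formula, but the two routes are well known to be equivalent, and yours is arguably cleaner for this particular two-part statement. Second, your ``main obstacle'' about upgrading the a.e.\ $t$ identity to a fixed $T$ is a real but routine issue; in this setting the cleanest resolution is simply to \emph{define} the conditional law of the jump measure given $T_\sigma=T$ to be that of the jump measure of the first-passage bridge (which is the object the paper actually uses), and then observe that this is a version of the disintegration because the resulting kernel is weakly continuous in $T$ by the explicit densities. Your Feller-type continuity remark is pointing at the same thing.
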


This allows us to obtain the main technical lemma of this section,
which one should see as the continuum version of Lemmas~\ref{lem:GW1}
and~\ref{lem:GW2}: it says that given, $T_{\sigma}=T$, the jumps
behave as those of the unconditioned subordinator $(T_{x},0\leq x\leq
\sigma)$, with the exception of the largest jump of size approximately
$T$.

\begin{lemma}
  \label{lem:coupl-brown-disks-1}
\begin{enumerate}
\item For every $\delta\in (0,1)$, one has
$$\liminf_{T\to\infty}\P\left(\Delta_1>(1-\delta)T\, \bigg|\, \sum_i\Delta_i=T\right)=1\,
.$$
\item
One has
$$\lim_{T\rightarrow \infty}\left\|\mathcal{L}\left(\Delta_2,\Delta_3,\ldots\, \bigg|\, \sum_i\Delta_i=T\right)-\mathcal{L}(\Delta_1,\Delta_2,\ldots)\right\|_{\mathrm{TV}}=0.$$
\end{enumerate}
\end{lemma}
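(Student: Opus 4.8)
The plan is to transfer Lemma~\ref{lem:coupl-brown-disks} into a workable description of the size-biased jumps and then to mimic the arguments of Lemmas~\ref{lem:GW1} and~\ref{lem:GW2}, which are the discrete counterparts. First I would record the exact form of the density given in Lemma~\ref{lem:coupl-brown-disks}: conditionally on $\{T_\sigma = T\}$, the first size-biased jump $\Delta_1$ has density $y \mapsto \mathrm{e}^{\sigma^2/2T}\sqrt{T/y}\,q_\sigma(T-y)$ on $(0,T)$, where $q_\sigma$ is the density of $T_\sigma$ (explicitly $q_\sigma(t) = \sigma (2\pi t^3)^{-1/2}\mathrm{e}^{-\sigma^2/2t}$). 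For part (a), I would estimate $\P(\Delta_1 \leq (1-\delta)T \mid T_\sigma = T) = \mathrm{e}^{\sigma^2/2T}\int_0^{(1-\delta)T}\sqrt{T/y}\,q_\sigma(T-y)\,\d y$. Splitting the integral at a fixed cutoff $K$ and at $\delta T/2$, the contribution from $y \in (0,K)$ is $O(q_\sigma(T)\sqrt{T}\cdot\sqrt{K}) = O(T^{-1}\sqrt{K})$ using $q_\sigma(T-y)\sim q_\sigma(T)$ there; the contribution from $y \in (K, (1-\delta)T)$ can be bounded by substituting $z = T-y \in (\delta T, T-K)$ and using $q_\sigma(z) \leq \sigma(2\pi z^3)^{-1/2}$ together with $\sqrt{T/(T-z)} \leq \sqrt{T/K}$, giving a bound of order $\sqrt{T}\cdot T^{-1/2}\cdot(\delta T)^{-1/2}\cdot\sqrt{T/K} \cdot$(something that integrates), i.e.\ $O(K^{-1/2})$ plus lower-order terms. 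Letting $T\to\infty$ then $K\to\infty$ yields $\limsup_T \P(\Delta_1 \leq (1-\delta)T\mid T_\sigma=T) = 0$, which is part (a).

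For part (b), the strategy follows Lemma~\ref{lem:GW2}: I want to show that the law of $(\Delta_2,\Delta_3,\ldots)$ conditioned on $\{\sum_i \Delta_i = T\}$ is total-variation close to the unconditioned law of $(\Delta_1,\Delta_2,\ldots)$. By the second part of Lemma~\ref{lem:coupl-brown-disks}, conditionally on $\{T_\sigma = T,\ \Delta_1 = y\}$ the tail $(\Delta_2,\Delta_3,\ldots)$ is distributed as $(\Delta_1,\Delta_2,\ldots)$ given $\{T_\sigma = T-y\}$. So for a bounded measurable functional $F$,
\begin{equation*}
\E\big[F(\Delta_2,\Delta_3,\ldots) \,\big|\, T_\sigma = T\big] = \int_0^T \mathrm{e}^{\sigma^2/2T}\sqrt{\tfrac{T}{y}}\,q_\sigma(T-y)\,\E\big[F(\Delta_1,\Delta_2,\ldots)\,\big|\,T_\sigma = T-y\big]\,\d y.
\end{equation*}
On the event $\{\Delta_1 > (1-\delta)T\}$, which by (a) has conditional probability tending to $1$, we have $T - y < \delta T$, so I would compare $\mathcal{L}((\Delta_i)_{i\geq 1}\mid T_\sigma = s)$ with $\mathcal{L}((\Delta_i)_{i\geq 1})$ for small $s$. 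This comparison is exactly a continuum analogue of the computation in Lemma~\ref{lem:GW2}: writing things in terms of the restriction of the point process $\mathcal{M}$ to jumps landing in a window $[0,s]$ versus all jumps, one uses that the Poisson intensity $(2\pi y^3)^{-1/2}\d y \otimes \d u\,\mathbf{1}_{[0,\sigma]}(u)$ has total mass over $u$ equal to $\sigma$, so conditioning the total $T_\sigma$ on a value $s$ reweights only through a ratio of $q$-densities of the form $q_{\sigma - s'}(\cdot)/q_\sigma(\cdot)$ which tends to $1$ uniformly as $s \to 0$ on events where the relevant partial sums are small. More precisely, decomposing $(\Delta_1,\Delta_2,\ldots)$ according to the first $k$ jumps and applying Lemma~\ref{lem:coupl-brown-disks} iteratively (as in the identity above), the Radon--Nikodym derivative between the conditioned and unconditioned laws restricted to $\{\sum_{i\leq k}\Delta_i \leq \epsilon'\}$ is $\mathrm{e}^{\sigma^2/2s - \sigma^2/2(s-\sum_{i\leq k}\Delta_i)}\sqrt{s/(s-\sum_{i\leq k}\Delta_i)}\,\big(q_\sigma(s-\sum)\big/q_\sigma(s)\big)$-type expressions; one checks these converge to $1$ uniformly as $s \to 0$, exactly as $Z_n \to 1$ in Lemma~\ref{lem:GW2}, with the tail event $\{\sum_{i\leq k}\Delta_i > \epsilon'\}$ controlled by dominated-convergence-type estimates on the subordinator.

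\textbf{Main obstacle.} The genuinely delicate point will be the uniform-in-$s$ control of the density ratios as $s \downarrow 0$, i.e.\ establishing the continuum analogue of the estimate $\limsup_n \E[|Z_n - 1|] = 0$ in Lemma~\ref{lem:GW2}. One has to show that conditioning a stable-$1/2$ subordinator run for ``time'' $s$ on its terminal value $T_s$ being small does not distort the law of the small jumps; this requires quantitative bounds on $q_\sigma$ and its ratios, and a careful truncation argument separating the bulk of small jumps (where the ratio is $1 + o(1)$) from the rare event that the partial sum of the first few jumps is already macroscopic (whose probability vanishes as $s \to 0$ by the subordinator's own scaling). Everything else — part (a), the iteration of Lemma~\ref{lem:coupl-brown-disks}, the final total-variation bound obtained by integrating over $y$ with $T - y$ playing the role of $s$ — is routine once this uniform estimate is in hand. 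I would also note that, just as in Lemma~\ref{lem:GW2}, the a.s.\ uniqueness of the largest jump (needed to make ``$\Delta_2, \Delta_3, \ldots$'' well-defined as ``all jumps except the largest'') follows from part (a) together with the continuity of the law of $\Delta_1$.
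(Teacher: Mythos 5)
Part (a) is fine, if more laborious than necessary: after the change of variable $x=T-y$ one has
\begin{equation*}
\P\left(\Delta_1>(1-\delta)T\,\Big|\,\sum_i\Delta_i=T\right)=\e^{\sigma^2/2T}\int_0^{\delta T}\sqrt{\frac{T}{T-x}}\,q_\sigma(x)\,\d x\,,
\end{equation*}
and since $\sqrt{T/(T-x)}\leq(1-\delta)^{-1/2}$ on the domain, dominated convergence gives the limit $\int_0^\infty q_\sigma=1$ in one stroke; your three-way splitting reaches the same conclusion.

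Part (b), however, has a genuine gap, and the route you sketch would not close it. You write the correct disintegration (with $x=T-y$),
\begin{equation*}
\mathcal{L}\left(\Delta_2,\Delta_3,\ldots\,\Big|\,\sum_i\Delta_i=T\right)=\int_0^T\e^{\sigma^2/2T}\sqrt{\frac{T}{T-x}}\,q_\sigma(x)\,\mathcal{L}\left(\Delta_1,\Delta_2,\ldots\,\Big|\,\sum_i\Delta_i=x\right)\d x\,,
\end{equation*}
but then propose to compare, pointwise in $s=T-y$, the law $\mathcal{L}\big((\Delta_i)_{i\geq 1}\mid T_\sigma=s\big)$ with the unconditional law $\mathcal{L}\big((\Delta_i)_{i\geq 1}\big)$, invoking a ``small-$s$'' Radon--Nikodym analysis. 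This cannot work, for two reasons. First, for each fixed $s$ these two laws on sequence space are \emph{mutually singular} (the former forces $\sum_i\Delta_i=s$ a.s., the latter has $\sum_i\Delta_i=T_\sigma$ with a density), so no pointwise total-variation or Radon--Nikodym comparison of the \emph{whole sequences} is available. Second, the range produced by (a) is $s=T-y<\delta T$, which does not shrink as $T\to\infty$. Note that Lemma~\ref{lem:GW2} is not of the form you are imitating: there, one compares the law of a \emph{proper subfamily} of trees with the law of an unconditioned family of the same cardinality, whereas you are attempting to compare an entire conditioned sequence with an entire unconditioned one.

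The observation you are missing makes the comparison essentially one-dimensional: trivially,
\begin{equation*}
\mathcal{L}\left(\Delta_1,\Delta_2,\ldots\right)=\int_0^\infty q_\sigma(x)\,\mathcal{L}\left(\Delta_1,\Delta_2,\ldots\,\Big|\,\sum_i\Delta_i=x\right)\d x\,,
\end{equation*}
since $q_\sigma$ is the density of $T_\sigma$. Thus both laws in (b) are mixtures of the \emph{same} one-parameter family of conditional laws, only with different mixing densities on $(0,\infty)$, and the total variation distance between two such mixtures is bounded by the $L^1$-distance of the mixing densities:
\begin{equation*}
\left\|\cdots\right\|_{\mathrm{TV}}\leq\int_T^\infty q_\sigma(x)\,\d x+\int_0^T\left|\e^{\sigma^2/2T}\sqrt{\frac{T}{T-x}}-1\right|q_\sigma(x)\,\d x\,.
\end{equation*}
The first term vanishes outright; the second is handled by dominated convergence on $[0,T/2]$ and by the explicit bound $q_\sigma(T(1-x))\leq 2\sigma/\sqrt{\pi T^3}$ on $[T/2,T]$. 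No control of the conditional laws on sequence space is ever required, which is precisely what sidesteps the obstacle you flagged.
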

\begin{proof}
  From the description of the conditional law of $\Delta_1$ given in
  Lemma~\ref{lem:coupl-brown-disks}, we obtain
$$\P(\Delta_1>(1-\delta)T\, |\,
\sum_i\Delta_i=T)=\e^{\sigma^2/2T}\int_0^{\delta T}\d
x\sqrt{\frac{T}{T-x}}q_\sigma(x)\d
x\underset{T\to\infty}{\longrightarrow} \int_0^\infty q_\sigma(x)\d x=1\,
,$$
by dominated convergence since $\sqrt{T/(T-x)}\leq
(1-\delta)^{-1/2}$. This proves (a). For (b), one can use the second
part of Lemma~\ref{lem:coupl-brown-disks} to obtain the disintegration 
$$\mathcal{L}\left(\Delta_2,\Delta_3,\ldots\, \bigg|\,
  \sum_i\Delta_i=T\right)
=\int_0^T \d
x\, \e^{\sigma^2/2T}\sqrt{\frac{T}{T-x}}\, q_\sigma(x)\mathcal{L}\left(\Delta_1,\Delta_2,\ldots\,
\bigg|\, \sum_i\Delta_i=x\right)\, .$$
Since $q_\sigma$ is the density function of $T_{\sigma}=\sum_i\Delta_i$, we also
have the disintegration
$$\mathcal{L}\left(\Delta_1,\Delta_2,\ldots\right)
=\int_0^\infty \d
x \, q_\sigma(x)\mathcal{L}\left(\Delta_1,\Delta_2,\ldots\,
\bigg|\, \sum_i\Delta_i=x\right)\, ,$$
which entails that 
\begin{align*}
  \lefteqn{\left\|\mathcal{L}\left(\Delta_2,\Delta_3,\ldots\,
        \bigg|\,
        \sum_i\Delta_i=T\right)-\mathcal{L}(\Delta_1,\Delta_2,\ldots)\right\|_{\mathrm{TV}}}\\
&\leq \int_T^\infty q_\sigma(x)\d x +\int_0^T\left|
  \e^{\sigma^2/2T}\sqrt{\frac{T}{T-x}}-1\right|q_\sigma(x)\, \d x\, .
\end{align*}
The first integral obviously converges to $0$, and we can split that
second integral at $T/2$ and rewrite it, after simple manipulations,
as
$$\int_0^{T/2}\left|\e^{\sigma^2/2T}\sqrt{\frac{T}{T-x}}-1\right|q_\sigma(x)\,
\d x +
T\int_0^{1/2}\left|\e^{\sigma^2/2T}\sqrt{\frac{1}{x}}-1\right|q_\sigma(T(1-x))\d
x\, .$$
The first term converges to $0$ by dominated convergence, and the
second vanishes as well since $q_\sigma(T(1-x))\leq 2\sigma/\sqrt{\pi
T^3}$ for every $x\in [0,1/2]$. 
\end{proof}

In the next proposition, we let $(F_t,0\leq t\leq T)$ be a
first-passage bridge of Brownian motion hitting $-\sigma$ for the
first time at $T$. We will let $T^F(x)=\inf\{t\geq 0:F_t<-x\}\wedge T$
for $0\leq x\leq \sigma$. Similarly, we let
$\Delta_0^F,\Delta_1^F,\Delta_2^F,\ldots$ be the jump sizes of $T^F$
ranked in size-biased order, and $U^F_0,U^F_1,\ldots$ be the
corresponding levels. For $i\geq 0$, we let
$$e^F_i(t)=U_i^F+F(T^F(U_i^F-)+t)\, ,\qquad 0\leq t\leq
\Delta_i^F,$$
be the excursion of $F$ above level $-U_i$ --- note that
$\Delta_i^F=T^F(U_i)-T^F(U_i-)$. 

We also let $B$ be a (unconditioned)
standard Brownian motion, and let $\Delta_1,\Delta_2,\ldots$ be the jump sizes
of the first-hitting time subordinator $(T_{x},0\leq x\leq
\sigma)$. We let $R,R'$ be two independent
three-dimensional Bessel processes, independent of $B$. Finally, we let
$U_0$ be a uniform random variable in $[0,\sigma]$, independent of
$B,R,R'$. Figure~\ref{fig:coupling-contour-IBD} illustrates the following proposition.
\begin{figure}[ht]
  \centering
  \includegraphics[width=0.8\textwidth]{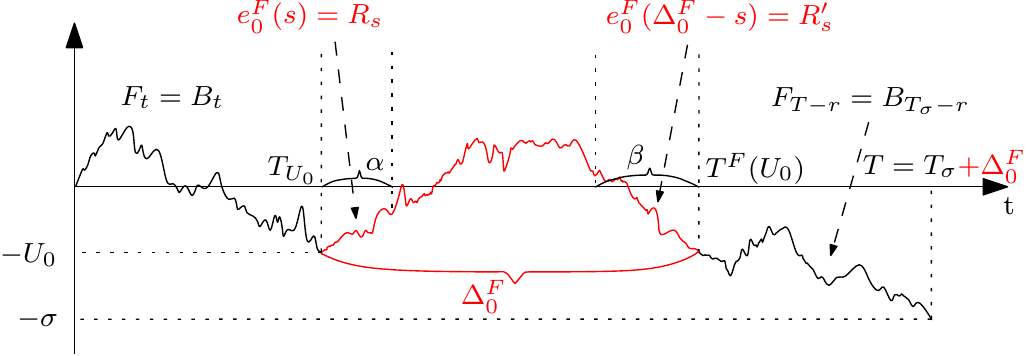}
\caption{The coupling of contour functions stated as
  Proposition~\ref{prop:coupl-brown-disks-2}, with $s=s(t)= t-T_{U_0}$, $r=r(t)=T-t$.}
  \label{fig:coupling-contour-IBD}
\end{figure}

\begin{prop}
  \label{prop:coupl-brown-disks-2}
  For every $\eps\in (0,1)$ and $\alpha,\beta>0$, there exists $T^0>0$ such that for
  every $T>T^0$, it is possible to couple $F,B,R,R',U_0$ on the same
  probability space in such a way that with probability at least
  $1-\eps$, one has $U_0=U_1^F$ and 
$$F_t=B_t,\,0\leq t\leq T^F(U_0-)=T_{U_0}\, ,\qquad
F_{T-t}=B_{T_{\sigma}-t},\,0\leq t\leq T-T^F(U_0)=T_\sigma-T_{U_0}\, ,$$
and
$$e^F_0(t)=R_t,\quad 0\leq t\leq \alpha\, ,\qquad
e^F_0(\Delta_0^F-t)=R'_t,\quad 0\leq t\leq \beta\, ,$$
and finally
$$\inf_{[\alpha,\infty)}R\wedge
\inf_{[\beta,\infty)}R'=\min_{[\alpha,\Delta^F_0-\beta]}e^F_0\, .$$
\end{prop}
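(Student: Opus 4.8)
The statement asserts a coupling of the first-passage bridge $F$ (the contour of $\BD_{T,\sigma}$) with the unconditioned building blocks $B,R,R',U_0$ of $\IBD_\sigma$, valid with high probability for $T$ large. The plan is to build the coupling in two stages: first couple the \emph{jump structure} of the first-hitting-time subordinators, then couple the \emph{excursions} sitting over each jump. For the first stage, recall that $F$ is $(B_t,0\le t\le T)$ conditioned on $\{T_\sigma=T\}=\{\sum_i\Delta_i=T\}$, and that under this conditioning, with the jumps in size-biased order, Lemma~\ref{lem:coupl-brown-disks-1}(a) says the largest jump $\Delta_0^F$ has size $>(1-\delta)T$ with probability $\to 1$, while Lemma~\ref{lem:coupl-brown-disks-1}(b) says the \emph{remaining} jumps $(\Delta_1^F,\Delta_2^F,\ldots)$ are close in total variation to the jumps $(\Delta_1,\Delta_2,\ldots)$ of the unconditioned subordinator $(T_x,0\le x\le\sigma)$. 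Since in both cases the levels $U_i$ (resp.\ $U_i^F$) attached to a given ordered sequence of jump sizes are i.i.d.\ uniform on $[0,\sigma]$ and independent of the jump sizes, this total-variation closeness extends to the marked point measures. So for $T$ large we may couple so that, with probability $\ge 1-\eps/4$ say, $(\Delta_i^F,U_i^F)_{i\ge1}=(\Delta_i,U_i)_{i\ge1}$ and in particular $U_0=U_1^F$ (the level of the big jump of $F$ matches the distinguished level $U_0$ of the $\IBD$ construction), and simultaneously $\Delta_0^F>(1-\delta)T$ for a $\delta$ to be chosen.

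The second stage is to identify the excursions. Conditionally on the jump sizes and levels, the excursions of $F$ over its successive running minima are independent Brownian excursions of the prescribed durations $\Delta_i^F$ (this is the standard description of a first-passage bridge via its excursions; cf.\ \cite{BeChPi,Pi}), and likewise for $B$ up to time $T_\sigma$. For the small jumps $i\ge1$ the durations already agree under our coupling, so we may take the corresponding excursions literally equal, which gives $F_t=B_t$ on $[0,T_{U_0}]$ and $F_{T-t}=B_{T_\sigma-t}$ on $[0,T_\sigma-T_{U_0}]$ (here I use that on $\{U_0=U_1^F\}$ the part of $F$ before $T^F(U_0-)$ and after $T^F(U_0)$ is exactly built from the small-jump excursions, matched to the corresponding part of $B$ split at $T_{U_0}$). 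It remains to handle the big excursion $e_0^F$, of duration $\Delta_0^F\approx T\to\infty$: this is a Brownian excursion of very long duration, and we must match its left end to the Bessel process $R$ on $[0,\alpha]$ and its right end to $R'$ on $[0,\beta]$. Here I invoke the classical absolute-continuity / local-limit description of the two ends of a long Brownian excursion — precisely the analog of Lemma~\ref{lem:abs-cont-F}, or equivalently Williams' decomposition together with the fact that a Brownian excursion conditioned to be long looks near each end like a three-dimensional Bessel process. Formally, the finite-dimensional laws of $(e_0^F(t))_{0\le t\le\alpha}$ are absolutely continuous with respect to those of $(R_t)_{0\le t\le\alpha}$ with Radon--Nikodym density tending to $1$ uniformly on compact sets as $\Delta_0^F\to\infty$ (and independently for the right end via time-reversal, using that the time-reversal of a Brownian excursion is again a Brownian excursion, \cite[Ch.~XI]{ReYo}), so one can couple these restrictions to be equal with probability $\ge 1-\eps/4$ each. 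Combining the three couplings (jump structure, small excursions, two ends of the big excursion) on the same probability space yields all the claimed identities except the last.

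The final identity $\inf_{[\alpha,\infty)}R\wedge\inf_{[\beta,\infty)}R'=\min_{[\alpha,\Delta_0^F-\beta]}e_0^F$ is then a purely deterministic consequence of the preceding equalities \emph{on a further high-probability event}. Indeed, $R$ and $R'$ are transient, so $\inf_{[\alpha,\infty)}R$ and $\inf_{[\beta,\infty)}R'$ are attained at finite (random) times $T_R,T_{R'}$; choose $M$ large enough that $T_R\le M$, $T_{R'}\le M$ and also $\inf_{[M,\infty)}R>\inf_{[\alpha,M]}R$, $\inf_{[M,\infty)}R'>\inf_{[\beta,M]}R'$, all with probability $\ge 1-\eps/4$. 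Now enlarge $\alpha,\beta$ in the second-stage coupling from the fixed values in the statement to $M$ (this costs nothing since $M$ is chosen after $\eps$ but the density-to-$1$ statement is uniform over compacts in $\Delta_0^F$), so that $e_0^F$ agrees with $R$ on $[0,M]$ and with $R'$ (reversed) on the last $M$ units of $[0,\Delta_0^F]$. Since $\Delta_0^F>(1-\delta)T$ is enormous compared to $M$, the interval $[\alpha,\Delta_0^F-\beta]$ splits as $[\alpha,M]\cup[M,\Delta_0^F-M]\cup[\Delta_0^F-M,\Delta_0^F-\beta]$; on the middle piece $e_0^F$ is a long Brownian excursion whose minimum there is, with probability $\ge 1-\eps/4$, larger than the minima over the two end pieces (because a Brownian excursion conditioned long stays well away from $0$ in its bulk — again a standard estimate, uniform as $\Delta_0^F\to\infty$). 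Hence the overall minimum of $e_0^F$ on $[\alpha,\Delta_0^F-\beta]$ equals $\min_{[\alpha,M]}e_0^F\wedge\min_{[\Delta_0^F-M,\Delta_0^F-\beta]}e_0^F=\inf_{[\alpha,\infty)}R\wedge\inf_{[\beta,\infty)}R'$, using the end-piece identities and the choice of $M$. A union bound over the finitely many high-probability events gives total error $\le\eps$ for $T$ large, which is the assertion.

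\textbf{Main obstacle.} The delicate point is the analysis of the long excursion $e_0^F$: one needs the absolute-continuity-to-$1$ of its two ends with respect to Bessel processes \emph{and} the ``stays away from zero in the bulk'' estimate, both uniform as the duration $\to\infty$. These are exactly the continuum counterparts of the discrete inputs used elsewhere in the paper, and I would derive them from Williams' / Bismut's decomposition of the Brownian excursion (or directly from the first-passage-bridge absolute continuity of Lemma~\ref{lem:abs-cont-F} applied inside the big excursion) together with transience and scaling of the three-dimensional Bessel process; assembling the quantitative bounds so that all the ``$\ge 1-\eps/4$'' statements hold simultaneously for $T\ge T^0$ is where the real work lies, the rest being bookkeeping of the coupling.
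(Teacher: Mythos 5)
Your decomposition matches the paper's: first couple the jump structure of the first-hitting subordinators via Lemma~\ref{lem:coupl-brown-disks-1}, then reconstruct $F$ and $B$ by It\^o-synthesis from the same excursions (so the small-jump identities are free), and finally couple the one long excursion $e_0^F$ to the pair of Bessel processes. The only substantive divergence is in the last step: the paper simply invokes Proposition~3 of~\cite{CuLG} (applied as in the proof of Proposition~4 there), which already packages all three remaining identities --- the two end-matchings and the bulk-minimum identity --- into one statement, whereas you propose to re-derive this from Williams' decomposition and a ``bulk stays away from zero'' estimate.

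That re-derivation is where your sketch has a gap. After enlarging the coupling window to $[0,M]$, the bulk interval $[M,\Delta_0^F-M]$ carries the middle part of the excursion, and you assert its minimum exceeds both end minima $\min_{[\alpha,M]}R$ and $\min_{[\beta,M]}R'$ with high probability, ``because a long Brownian excursion stays well away from $0$ in its bulk''. But as $\Delta_0^F\to\infty$, $\min_{[M,\Delta_0^F-M]}e_0^F$ does not go to infinity --- it converges in law to $\inf_{[M,\infty)}R_1\wedge\inf_{[M,\infty)}R_2$ for the two Bessel processes arising from Williams' decomposition of the excursion, which are positive but can be arbitrarily small. To conclude, you must identify those Williams Bessel processes with the $R,R'$ you have already coupled to the excursion's ends, so that the bulk minimum is controlled by $\inf_{[M,\infty)}R\wedge\inf_{[M,\infty)}R'$ rather than by an \emph{independent} pair of Bessel tails. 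This identification of ends and Williams decomposition --- and the resulting control of the bulk minimum --- is precisely the non-trivial content of~\cite[Proposition~3]{CuLG}, so citing it as ``a standard estimate'' leaves the main technical work undone. Once that lemma is invoked (as the paper does), the rest of your argument --- the choice of $M$, the splitting of the interval, and the union bound --- goes through verbatim.
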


\begin{proof}
  By Lemma~\ref{lem:coupl-brown-disks-1}, for $T$ large enough, say
  $T>T^1$, it is
  possible to couple two sequences $\Delta_1,\Delta_2,\ldots$ and
  $\Delta'_0,\Delta'_1,\Delta'_2,\ldots$ on the same probability space such that
\begin{itemize}
\item $(\Delta_1,\Delta_2,\ldots)$ has the law of the jump sizes of
  $(T_{x},0\leq x\leq \sigma)$ ranked in size-biased order, and
\item $(\Delta'_0,\Delta'_1,\Delta'_2,\ldots)$ has the law of
  $(\Delta_1,\Delta_2,\ldots)$ conditionally given $\sum_{i\geq 1}\Delta_i=T$, 
\end{itemize}
in such a way that on an event $\mathcal{E}_1$ of probability at least
$1-\eps/2$, one has
$$\Delta_i=\Delta'_{i}\, ,\qquad i\geq 1\, ,\qquad \mbox{ and
}\qquad \Delta'_0>T/2\, .$$
Extending the probability space if necessary, we can assume that it
also supports an independent family of random variables $e_0,e_1,e_2,\ldots$ that are
independent normalized Brownian excursions, and $U_0,U_1,U_2,\ldots$ that
are independent uniform random variables in $[0,\sigma]$, independent
of all the rest.

By It{\^o}'s synthesis of Brownian motion from its excursions, if we
set, for $i\geq 1$, 
$$B_t=-U_i+\frac{e_i(\Delta_i(t-\sum_{j:U_j<U_i}\Delta_j))}{\sqrt{\Delta_i}}\,
,$$ whenever $\sum_{j\geq 1:U_j<U_i}\Delta_i< t\leq \sum_{j\geq 1:U_j\leq
  U_i}\Delta_j$, then this a.s.\ extends to a continuous path $(B_t,0\leq
t\leq \sum_{i\geq 1}\Delta_i)$ which is a trajectory of Brownian motion
stopped when first hitting $-\sigma$, which occurs at time
$T_{\sigma}=\sum_{i\geq 1}\Delta_i$. Similarly, setting, this time for
$i\geq 0$,
$$F(t)=-U_i+\frac{e_i(\Delta_i'(t-\sum_{j:U_j<U_i}\Delta'_j))}{\sqrt{\Delta'_i}}\,
,$$ whenever $\sum_{j\geq 0:U_j<U_i}\Delta'_i< t\leq \sum_{j\geq 0:U_j\leq
  U_i}\Delta'_j$, this extends to a trajectory of a first-passage bridge
$(F(t),0\leq t\leq T)$ from $0$ to $-\sigma$, as the notation suggests, and
if we set $\Delta^F_i=\Delta'_i$ for $i\geq 0$ then $(\Delta^F_i,i\geq 0)$
is indeed a size-biased ordering of the jumps of the first hitting time
process of negative values of $F$.

On the event $\mathcal{E}_1$, the two processes $B$ and $F$
coincide on the interval $[0,\sum_{j\geq 1:U_j<U_0}\Delta_j]$, and
likewise, $B_{T_{\sigma}-\cdot}$ and $F(T-\cdot)$ coincide on
$[0,\sum_{j\geq 1:U_j>U_0}\Delta_j]$. This yields the first displayed
identity in the statement, since by construction 
$$\sum_{j\geq 1:U_j<U_0}\Delta'_j =T^F(U_0-)\, ,\qquad \sum_{j\geq
  1:U_j<U_0}\Delta_j=\sum_{j\geq 1:U_j\leq U_0}\Delta_j=T_{U_0}\,
, $$
while we have
$$\sum_{j\geq 1:U_j>U_0}\Delta'_j =T-T^F(U_0)\, ,\qquad \sum_{j\geq
  1:U_j>U_0}\Delta_j=\sum_{j\geq 1:U_j\geq
  U_0}\Delta_j=T_{\sigma}-T_{U_0}\, . $$ Finally, in this
construction, and still in restriction to $\mathcal{E}_1$,
$e_0^F=e_0(\Delta_0'\cdot)/\sqrt{\Delta_0'}$ is an excursion of
Brownian motion with duration $\Delta'_0>T/2$. At this point, we can
apply Proposition 3 in~\cite{CuLG}, in the same way as in the proof of
Proposition 4 therein. Up to a further extension of the probability
space, as soon as $T$ is chosen large enough, say $T>T^2$, we can
couple this ``long'' excursion with two independent Bessel processes
$R,R'$ (and independent of all previously defined random variables) in
such a way that the three last identities of the statement are
satisfied on an event $\mathcal{E}_2$ with probability at least
$1-\eps/2$. This yields the wanted result with $T^0=T^1\vee T^2$,
since the intersection $\mathcal{E}_1\cap\mathcal{E}_2$ has
probability at least $1-\eps$.
\end{proof}

\subsubsection{Isometry of balls in {\normalfont $\BD_{T,\sigma}$} and {\normalfont $\IBD_\sigma$}}
We fix $\sigma\in(0,\infty)$, $\eps>0$ and let $r\geq 0$.  With the
coupling from the preceding section at hand, the proof the proposition is a
minor modification of~\cite[Proof of Proposition 4]{CuLG} (see also 
Proposition~\ref{prop:isometry-BD-BHP} and its proof). We will point at the
necessary modifications and then leave it to reader to fill in the
remaining details.

We work in the notation and with the processes of Proposition~\ref{prop:coupl-brown-disks-2} and
denote additionally by $\gamma= (\gamma_u,0\leq u\leq \sigma)$ a standard
Brownian bridge with duration $\sigma$, multiplied by $\sqrt{3}$.

\begin{proof}[Proof of Proposition~\ref{prop:isometry-BD-IBD}]
Let us first introduce a few events. For
$K>0$, put
$$
\mathcal{E}^1(K)=\left\{\max_{[0,\sigma]}\gamma < K\right\}.
$$
Then, given $A>0$, with $\zeta=(\zeta_t,t\geq 0)$ denoting a Brownian
motion started at $0$, let
$$
\mathcal{E}^2(A,K)=\left\{\min_{[0,A]}\zeta<-10r-K,\,\min_{[A,A^2]}\zeta<-10r-K,\,\min_{[A^2,A^4]}\zeta<-10r-K\right\},
$$
and for $A>0$ and $\alpha>0$, set
$$
\mathcal{E}^3(A,\alpha)=\left\{\inf_{[\alpha,\infty)}R\wedge \inf_{[\alpha,\infty)}R'>A^4\right\}.
$$
We first choose $K$ sufficiently large such that $\P(\mathcal{E}^1)\geq
1-\eps/6.$ Then, standard properties of Brownian motion allow us to find
$A>0$ such that $\P(\mathcal{E}^2)\geq 1-\eps/6$ as well, and with such a
fixed $A$, we find by transience of the Bessel process an $\alpha>0$ large
enough such that $\P(\mathcal{E}^3)>1-\eps/3$.

Let us next recall the contour process $Y^\sigma$ of $\IBD_\sigma$
specified just before Definition~\ref{def:IBD} in terms of the Bessel
processes $R$ and $R'$ and the Brownian motion $B$ stopped at times
$T_{U_0}$ and $T_\sigma$. We obtain that on the coupling event
$\mathcal{E}^4=\mathcal{E}^4(\alpha,T)$ described in the statement of
Proposition~\ref{prop:coupl-brown-disks-2} (with $\beta=\alpha$), in the notation from there,
$$ F_t=Y^\sigma_t\quad\mbox{for } t\in[0,T_{U_0}+\alpha]\,,\quad \mbox{ and }\quad 
F_{T-t}+\sigma=Y^\sigma_{-t}\quad\mbox{for }
t\in[0,T_\sigma-T_{U_0}+\alpha]\,.$$ Concerning $\BD_{T,\sigma}$, we use
the notation from Section~\ref{sec:recapBHPBD} (note however that $\sigma$
is now a constant not depending on the volume $T$). We build the 
label process of $\BD_{T,\sigma}$ in the following way: Consider a
Brownian bridge $\gamma$ as specified above, independent of
$(F,B,R,R',U_0)$. Then let $Z=Z^{F-\underline{F}}$ be the random snake
driven by $F-\underline{F}$, and set
$$W_t=\gamma_{-\underline{F}_t}+Z_t,\quad 0\leq t\leq T.$$

Concerning $\IBD^\sigma$, we write $\Zi=Z^{Y^\sigma-\uuY}$ for
the random snake driven by $Y^\sigma-\uuY$, see Definition~\ref{def:IBD}, and 
$\Wi_t=\gamma_{-\uuY_t}+\Zi$ for the label process associated with
$\IBD_\sigma$. Of course, we choose to use
the same bridge $\gamma$ to construct $W$ and $\Wi$. 

We now work always conditionally on $(F,B,R,R',U_0)$.  Similarly to the
considerations around~\eqref{eq:tuple1} and~\eqref{eq:tuple2} in the proof
of Proposition~\ref{prop:isometry-BD-BHP}, one checks that on the event
$\mathcal{E}^4$, the covariance function of
$$(W_t,0\leq t\leq
T_{U_0}+\alpha),(W_{T-t},0\leq t\leq T_\sigma-T_{U_0}+\alpha)$$ on the one
hand, and $$(\Wi_t,0\leq t\leq T_{U_0}+\alpha),(\Wi_{-t},0\leq t\leq
T_\sigma-T_{U_0}+\alpha)$$ on the other hand, are the same. Consequently,
we may assume that $W$ and $\Wi$ are coupled in such a way that, on the
event $\mathcal{E}^4$,
$$
W_t = \Wi_t\quad\hbox{ for all }t\in[0,T_{U_0}+\alpha],\quad W_{T-t} =
\Wi_{-t}\quad\hbox{ for all }t\in[0,T_\sigma-T_{U_0}+\alpha].
$$
From Proposition~\ref{prop:coupl-brown-disks-2}, we derive that for the
choice of $\alpha$ from above, the coupling event $\mathcal{E}^4(\alpha,T)$
has probability at least $1-\eps/3$ provided $T$ is sufficiently large, and
we shall work with such a $T$. The reminder of the proof is now close
to~\cite[Proof of Proposition 4]{CuLG}. For every $x\geq 0$, let
\begin{align*}
\eta_{\textup{l}}(x)&=\sup\{0\leq t\leq \Delta_0^F/2:e_0^F(t)=x\}+T_{U_0},\\
\eta_{\textup{r}}(x)&=\Delta_0^F-\inf\{\Delta_0^F/2\leq t\leq \Delta_0^F:e_0^F(t)=x\}+T_\sigma-T_{U_0},
\end{align*}
and
\begin{align}
\label{eq:etaIBD}
\eta_{\textup{l}}^{\textup{I}}(x)&=\sup\{t\geq 0: R_t=x\}+T_{U_0},\nonumber\\
\eta_{\textup{r}}^{\textup{I}}(x)&=\sup\{t\geq 0: R_t'=x\} +T_\sigma-T_{U_0}.
\end{align}
Then the process
$(Z^\textup{I}_{\eta_{\textup{l}}^\textup{I}(x)}, x\geq 0)$ 
has the law of a standard Brownian motion started at
$Z^\textup{I}_{T_{U_0}}=0$. Choosing this Brownian motion in the definition of the
event $\mathcal{E}^2$ from above, so that on $\mathcal{E}^2$, we have
\begin{equation}
\label{eq:boundZi}
\min_{x\in[0,A]}Z^\textup{I}_{\eta_{\textup{l}}^{\textup{I}}(x)}<-6r-K,\,\min_{x\in[A,A^2]}Z^{\textup{I}}_{\eta_{\textup{l}}^{\textup{I}}(x)}<-6r-K,\,\min_{x\in[A^2,A^4]}Z^{\textup{I}}_{\eta_{\textup{l}}^{\textup{I}}(x)}<-6r-K,
\end{equation}
we shall from now on work on the intersection of events
\begin{equation}
\label{eq:couplingIBD-BD-defF}
\mathcal{F}=\mathcal{E}^1\cap \mathcal{E}^2\cap\mathcal{E}^3\cap\mathcal{E}^4,
\end{equation}
which has probability at least $1-\eps$.

On $\mathcal{E}^3\cap\mathcal{E}^4$, we note that $\min_{[\alpha,\Delta_0^F-\alpha]}e^F_0=\inf_{[\alpha,\infty)}R
\wedge \inf_{[\alpha,\infty)} R' >A^4$, whence for $x\in[0,A^4]$,
$\eta_{\textup{l}}(x)=\eta_{\textup{l}}^{\textup{I}}(x)< T_{U_0}+\alpha$ and 
$\eta_{\textup{r}}(x)=\eta_{\textup{r}}^{\textup{I}}(x)< T_{\sigma}-T_{U_0}+\alpha$. 
It follows that for any $x\in[0,A^4]$,
$$Z_{\eta_{\textup{l}}(x)}= Z_{\eta_{\textup{l}}^\textup{I}(x)}^{\textup{I}}=
Z_{-\eta_{\textup{r}}^\textup{I}(x)}^{\textup{I}} = Z_{T-\eta_{\textup{r}}(x)}.$$

We are now almost in a setting where we can appeal to the reasoning
in~\cite[Section 3.2]{CuLG}. We should still adapt the definition of
$\tilde{d}_W(s,t)$ given just before Lemma~\ref{lem:DBD} to the setting
considered here. Let $s,t\in [0,T]$. If $s,t$ lie both in either
$[0,T_{U_0}+\Delta_0^F/2]$ or in $[T_{U_0}+\Delta_0^F/2,T]$, we let
$$
d'_{W}(s,t) = W_s +W_t -2\min_{[s\wedge t,
  s\vee t]}W.
$$
Otherwise, we set
$$
d'_{W}(s,t) = W_s +W_t -2\min_{[0,s\wedge
  t]\cup[s\vee t,T]}W.
$$

Recall the definition of the pseudo-metric $D(s,t)$ associated to the
Brownian disk $\BD_{T,\sigma}$. The following statement replaces
Lemma~\ref{lem:DBD} and is close to~\cite[Lemma 5(i)]{CuLG}.
\begin{lemma}
\label{lem:DBD2}
 Assume $\mathcal{F}$ holds.  
\begin{enumerate}
\item For every $t\in [\eta_{\textup{l}}(A),T-\eta_{\textup{r}}(A)]$, $D(0,t) >
  r$.
\item For every $s,t\in [0,\eta_{\textup{l}}(A)]\cup[0,T-\eta_{\textup{r}}(A)]$ with
  $\max\{D(0,s), D(0,t)\}\leq r$, it holds that
$$
D(s,t) =\inf_{s_1,t_1,\dots,s_k,t_k}\sum_{i=1}^k d'_{W}(s_i,t_i),
$$
where the infimum is over all possible choices of $k\in\N$ and reals
$s_1,\dots,s_k,t_1,\dots,t_k\in [0,\eta_{\textup{l}}(A^2)]\cup[T-\eta_{\textup{r}}(A^2),T]$ such that
$s_1=s,t_k=t$, and  $d_{F}(t_i,s_{i+1})=0$ for $1\leq i\leq k-1$.
\end{enumerate}
\end{lemma}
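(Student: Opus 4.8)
The plan is to reproduce, \emph{mutatis mutandis}, the proof of Lemma~\ref{lem:DBD} (which is itself the disk version of~\cite[Lemma 5(i)]{CuLG}), the only genuine change being that the coupling of contour and label functions is now the one built in the previous section out of Proposition~\ref{prop:coupl-brown-disks-2}. I would work throughout on the event $\mathcal{F}$ of~\eqref{eq:couplingIBD-BD-defF}, and the two basic inputs are the cactus bound~\eqref{eq:cactusDBD} for the pseudo-metric $D$ of $\BD_{T,\sigma}$ together with the identities $W_0=W_T=0$, which as in Lemma~\ref{lem:DBD} force $|W_s|\le r$ whenever $D(0,s)\le r$.

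The step playing the role of the estimates ``$\min_{[0,A]}\gamma<-6r$'' from the proof of Lemma~\ref{lem:DBD} is to exhibit, on $\mathcal{F}$, ``barriers'' of $W$ on each dyadic scale below $A^4$. First, on $\mathcal{E}^3\cap\mathcal{E}^4$ the big excursion $e_0^F$ of $F$ coincides with $R$ on $[0,\alpha]$ (and with $R'$ reversed near $\Delta_0^F$) and stays above $A^4$ on $[\alpha,\Delta_0^F-\alpha]$, so that for $x\in[0,A^4]$ one gets $\eta_{\textup{l}}(x)=\eta_{\textup{l}}^{\textup I}(x)\le T_{U_0}+\alpha$ and, by reversing $e_0^F$ from its midpoint (where it exceeds $A^4$), the monotonicity $\eta_{\textup{l}}(x)\le\eta_{\textup{l}}(A)\le\eta_{\textup{l}}(A^2)\le\eta_{\textup{l}}(A^4)$ for $x\in[0,A]$, and symmetrically for $\eta_{\textup{r}}$. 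Along the big excursion $-\underline F$ is constant equal to $-U_0$, hence $W_{\eta_{\textup{l}}(x)}=\gamma_{U_0}+Z_{\eta_{\textup{l}}(x)}=\gamma_{U_0}+\Zi_{\eta_{\textup{l}}^{\textup I}(x)}$ (using $W=\Wi$ on $[0,T_{U_0}+\alpha]$), and likewise $W_{T-\eta_{\textup{r}}(x)}=\gamma_{U_0}+\Zi_{-\eta_{\textup{r}}^{\textup I}(x)}$. Since $x\mapsto\Zi_{\eta_{\textup{l}}^{\textup I}(x)}$ is a standard Brownian motion, \eqref{eq:boundZi} holds (together with its analogue along $R'$), and $\max_{[0,\sigma]}\gamma<K$ on $\mathcal{E}^1$, I would conclude that $W$ drops below $-6r$ somewhere on each of $[0,\eta_{\textup{l}}(A)]$, $[\eta_{\textup{l}}(A),\eta_{\textup{l}}(A^2)]$, $[\eta_{\textup{l}}(A^2),\eta_{\textup{l}}(A^4)]$ and on the three mirror intervals obtained by replacing $\eta_{\textup{l}}$ by $T-\eta_{\textup{r}}$.

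Given these barriers, part (a) is immediate from the cactus bound: for $t\in[\eta_{\textup{l}}(A),T-\eta_{\textup{r}}(A)]$ each of $[0,t]$ and $[t,T]$ contains one of the intervals $[0,\eta_{\textup{l}}(A)]$, $[T-\eta_{\textup{r}}(A),T]$, so $D(0,t)\ge W_t-2\max\{\min_{[0,t]}W,\min_{[t,T]}W\}>6r>r$, exactly as in Lemma~\ref{lem:DBD}(a). For part (b) I would argue as in Lemma~\ref{lem:DBD}(b): from $D(s,t)\le D(0,s)+D(0,t)\le 2r$ one may restrict the infimum in~\eqref{eq:dist-DBD} to chains with $\sum_i d_{W}(s_i,t_i)\le 3r$; the barrier on $[\eta_{\textup{l}}(A),\eta_{\textup{l}}(A^2)]$ (and its mirror) together with the cactus bound then rules out any $s_i$ or $t_i$ lying outside $[0,\eta_{\textup{l}}(A^2)]\cup[T-\eta_{\textup{r}}(A^2),T]$, since such a point would be at $D$-distance $\ge 5r$ from $s$; and the barrier on $[\eta_{\textup{l}}(A^2),\eta_{\textup{l}}(A^4)]$ shows $d_{W}(s_i,t_i)=d'_{W}(s_i,t_i)$ for every admissible chain, by the same case distinction as in Lemma~\ref{lem:DBD}(b) with the midpoint $T_{U_0}+\Delta_0^F/2$ of the big excursion $e_0^F$ taking over the role of $T/2$.

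I expect the first step to be where the actual work lies, and the only real departure from a verbatim transcription of Lemma~\ref{lem:DBD}: one must make precise that, on $\mathcal{E}^3\cap\mathcal{E}^4$, the contour function $F$ of $\BD_{T,\sigma}$ agrees with the contour function $Y^\sigma$ of $\IBD_\sigma$ on $[0,T_{U_0}+\alpha]$ (so the snake identity $W=\Wi$ is available there) and that $e_0^F$ behaves like $R$ up to level $A^4$, yielding the identities $\eta_{\textup{l}}(x)=\eta_{\textup{l}}^{\textup I}(x)$ and their monotonicity in $x$; the transience event $\mathcal{E}^3$ is precisely what makes this go through (after time $\alpha<\Delta_0^F/2$, the process $e_0^F$ never returns to $[0,A^4]$). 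Once this is in place, the remaining estimates are a routine repetition of the arguments in the proof of Lemma~\ref{lem:DBD} (and~\cite[Lemma 5(i)]{CuLG}), to which I would simply refer for the details.
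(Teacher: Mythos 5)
Your proposal follows the paper's own proof closely: the same cactus bound~\eqref{eq:cactusDBD}, the same identification $\eta_{\textup{l}}(x)=\eta_{\textup{l}}^{\textup{I}}(x)$ for $x\in[0,A^4]$ on $\mathcal{E}^3\cap\mathcal{E}^4$ via the coupling $e_0^F=R$ (resp.\ $R'$) near the endpoints and transience beyond $\alpha$, the same use of~\eqref{eq:boundZi} and $\mathcal{E}^1$ to produce barriers of $W$ at each scale, and the same observation that the modification from Lemma~\ref{lem:DBD} is the replacement of $T/2$ by the midpoint $T_{U_0}+\Delta_0^F/2$ of the large excursion. The paper only writes out part (a) and refers to Lemma~\ref{lem:DBD}(b) and~\cite[Lemma 5(i)]{CuLG} for part (b); your sketch of part (b) fills in that reference in the expected way, so the argument is correct and not a genuine departure.
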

\begin{proof}
  One can follow the same line of reasoning as in~\cite[Proof of Lemma
  5(i)]{CuLG}, with one small modification, which is apparent from the
  proof of (a), so let us prove this part. If $t\in
  [\eta_{\textup{l}}(A),T-\eta_{\textup{r}}(A)]$, then by the cactus
  bound~\eqref{eq:cactusDBD}, 
$$
D(0,t)\geq W_t-2\max\left\{\min_{[0,t]}W,\min_{[t,T]}W\right\}\geq
  -\max\left\{\min_{[0,\eta_{\textup{l}}(A)]}W,\min_{[T-\eta_{\textup{r}}(A),T]}W\right\}.
$$
Recalling that $W_t=\gamma_{-\underline{F}_t}+Z_t$, we first remark that on
the event $\mathcal{E}^3\cap\mathcal{E}^4$, since $\eta_{\textup{l}}(A)<
T_{U_0}+\alpha$, we have $Z=\Zi$ on $[0,\eta_{\textup{l}}(A)]$. Since
$\eta_{\textup{l}}([0,A])\subset [0,\eta_{\textup{l}}(A)]$, it follows now
from~\eqref{eq:boundZi} that the minimum of $Z$ on
$[0,\eta_{\textup{l}}(A)]$ is bounded from above by $-6r-K$. But on
$\mathcal{E}^1$, $\max \gamma < K$, so that
$\min_{[0,\eta_{\textup{l}}(A)]}W\leq -6r$. A similar argument holds for
the second minimum, so that in fact $D(0,t)\geq 6r$ whenever
$t\in[\eta_{\textup{l}}(A),T-\eta_{\textup{r}}(A)]$. For (b), one can
follow~\cite[Proof of Lemma 5(i)]{CuLG}, or modify the proof of (b) in
Lemma~\ref{lem:DBD}.
\end{proof}
Entirely similar, one finds the corresponding statement for the
pseudo-metric $\Di$ of $\IBD_\sigma$ that replaces Lemma~\ref{lem:DH}: In
the statement there, $\eta_{\textup{r}}$ and $\eta_{\textup{l}}$ have to be
replaced by $\eta_{\textup{r}}^{\textup{I}}$ and
$\eta_{\textup{l}}^{\textup{I}}$ as defined under~\eqref{eq:etaIBD}, and
$D_\theta$, $d_{W^\theta}$ by $\Di$ and $d_{\Wi}$. Following
again~\cite{CuLG}, or adapting the second part of the proof of
Proposition~\ref{prop:isometry-BD-BHP}, these two lemmas lead to the stated
isometry between $B_r(\BD_{T,\sigma})$ and $B_r(\IBD_\sigma)$ on the event
$\mathcal{F}$ of probability at least $1-\eps$, finishing thereby the proof
of Proposition~\ref{prop:isometry-BD-IBD}. 
\end{proof}

It remains to show how Proposition~\ref{prop:isometry-BD-IBD} can be
improved to the coupling stated in Theorem~\ref{thm:coupling-BD-IBD}.

\subsubsection{Proof of Theorem~\ref{thm:coupling-BD-IBD}}

The proof is close in spirit to that of Theorem~\ref{thm:coupling-BD-BHP}:
for a fixed $r>0$, we must find some $r_0>r$ large enough so that for all
$T$ sufficiently large, the ball $B_{r_0}(\BD_{T,\sigma})$ contains with
high probability an open set $A_{\BD}$ that is homeomorphic to the pointed
closed disk $\overline{\mathbb{D}}\setminus \{0\}$ and that, in turn,
contains the ball $B_r(\BD_{T,\sigma})$ with high probability. Then we will
apply the coupling Proposition~\ref{prop:isometry-BD-IBD} in order to
couple the balls $B_{r_0}(\BD_{T,\sigma})$ and $B_{r_0}(\IBD_{\sigma})$.
The set $A_{\BD}$ will be defined as a region bounded by certain geodesic
paths.

We use a similar notation to that of the proof of
Theorem~\ref{thm:coupling-BD-BHP}, setting $\Y=([0,T]/\{D=0\},D,\rho)$ and
letting $p_\Y$ be the associated canonical projection, so that $Y$ has the
law of the Brownian disk $\BD_{T,\sigma}$ with root $\rho=p_\Y(0)$. We will also
use the geodesic paths $\Gamma_s,s\in [0,T]$, in $\Y$ respectively from
$p_\Y(s)$ to $x_*=p_\Y(s_*)$ defined around
Lemma~\ref{lem:proof-prop-refpr-4}, together with the properties stated
there.

Let $a_0>0$ be a large number to be specified later on. We let
$A_\BD^0=[0,\eta_{\textup{l}}(a_0)]\cup [T-\eta_{\textup{r}}(a_0),T]$,
where $\eta_{\textup{l}},\eta_{\textup{r}}$ are defined in the proof of
Proposition~\ref{prop:isometry-BD-IBD}. We will work on the event that
$s_*\notin A_\BD^0$, a fact that will be granted later with high
probability by the fact that $T$ is going to infinity.  With our notation,
the definition of $A_\BD^0$ is the exact same as $O_\BD^0$ in the proof of
Theorem \ref{thm:coupling-BD-BHP}, but note that by contrast, the points
$p_\Y(\eta_{\textup l}(a_0))$ and $p_\Y(T-\eta_{\textup r}(a_0))$ are now
equal, and we denote it by $x_0$. Note in passing that $x_0\notin \partial
\Y$ by Lemma \ref{lem:proof-prop-refpr}.  We let $t_*\in A^0_\BD$ be such
that $W_{t_*}=\min_{A^0_\BD}W$. The geodesic paths $\Gamma_{\eta_l(a_0)}$
and $\Gamma_{T-\eta_r(a_0)}$ both start from $x_0$, but by (d) in Lemma
\ref{lem:proof-prop-refpr-4}, they become disjoint until they meet again
for the first time at the point $p_\Y(t_*)$. Therefore, the segments of
these geodesics between $x_0$ and $p_\Y(t_*)$ form a simple loop $P$, which
is disjoint from the boundary $\partial \Y$ by (c) in Lemma
\ref{lem:proof-prop-refpr}. We point at Figure~\ref{fig:topo-IBD} for an
illustration. The analog of Lemma \ref{lem:proof-prop-refpr-5} is the
following.

\begin{lemma}
  \label{sec:proof-theor-refthm:c}
The set $P$ is a simple loop in $\Y$ containing $x_0$, and that does
not intersect $\partial \Y$. Letting  $A_\BD$ be the connected
component of $\Y\setminus P$ that contains $p_\Y(0)$, then $A_\BD$ is
almost surely homeomorphic to the pointed closed disk
$\overline{\mathbb{D}}\setminus \{0\}$, and is the interior of the set
$p_\Y(A^0_\BD)$ in $\Y$. 
\end{lemma}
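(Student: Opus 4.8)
The plan is to mimic the proof of Lemma~\ref{lem:proof-prop-refpr-5} (and the analogous argument in~\cite{CuLG}), the only structural changes being that the two geodesic segments making up $P$ now emanate from the \emph{same} point $x_0$, so $P$ is a simple loop rather than an arc, and that here $\sigma$ is a fixed constant, which will force the whole of $\partial\Y$ to lie inside $p_\Y(A^0_\BD)$. First I would verify that $P$ is a simple loop through $x_0$ that avoids $\partial\Y$. That $\Gamma_{\eta_{\textup{l}}(a_0)}$ and $\Gamma_{T-\eta_{\textup{r}}(a_0)}$ leave $x_0$ in distinct directions and coincide only on a common terminal segment ending at $y_\ast=p_\Y(t_\ast)$ is precisely part~(d) of Lemma~\ref{lem:proof-prop-refpr-4} combined with the standing hypothesis $s_\ast\notin A^0_\BD$, so the two initial segments between $x_0$ and $y_\ast$ form a Jordan curve $P$, which passes through $x_0$ by construction. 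By part~(c) of Lemma~\ref{lem:proof-prop-refpr-4}, each of these two geodesics meets $\partial\Y$ at most at its origin, namely $x_0$ in both cases; and $x_0\notin\partial\Y$, because $F_{\eta_{\textup{l}}(a_0)}=a_0-U_0>-U_0\ge\underline{F}_{\eta_{\textup{l}}(a_0)}$ shows $\eta_{\textup{l}}(a_0)\notin p_\Y^{-1}(\partial\Y)$ by Lemma~\ref{lem:proof-prop-refpr}. Hence $P\cap\partial\Y=\emptyset$.

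For the topology of $A_\BD$: by Lemma~\ref{lem:proof-prop-refpr}, $\Y$ is a.s.\ homeomorphic to $\overline{\mathbb{D}}$, and $P$ is a Jordan curve contained in the open set $\Y\setminus\partial\Y$. The Jordan curve theorem together with the Schoenflies theorem then shows that $\Y\setminus P$ has exactly two connected components: a bounded one, homeomorphic to $\mathbb{R}^2$ (the inside of $P$), and one whose frontier is $P\cup\partial\Y$, hence homeomorphic to $\overline{\mathbb{D}}\setminus\{0\}$. Since $F_0=0=\underline{F}_0$, the root $\rho=p_\Y(0)$ lies in $\partial\Y$, thus neither in $P$ nor in the bounded component; so $A_\BD$, the component containing $\rho$, is the second one and is a.s.\ homeomorphic to $\overline{\mathbb{D}}\setminus\{0\}$.

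It remains to identify $A_\BD$ with the interior of $p_\Y(A^0_\BD)$, and here I would re-run the final argument of the proof of Lemma~\ref{lem:proof-prop-refpr-5}. The tool is the family of paths $p_\Y\circ\Xi_s$, with $\Xi_s(r)=\sup\{t\le s:F_t=F_s-r\}$: each runs from $p_\Y(s)$ to $\pi(s)=p_\Y(\sup\{t\le s:F_t=\underline{F}_t\})\in\partial\Y$, and its interior points are $p_\Y$--images of right-increase points of $F$, hence lie outside $P\setminus\{x_0\}$ by Lemma~\ref{lem:proof-prop-refpr-3}. The input specific to a fixed boundary length is that, throughout the ``big'' excursion $e^F_0$, $F$ stays strictly above its running minimum $-U_0$; since $A^0_\BD$ covers the initial descent of $F$ to $-U_0$ and its subsequent descent to $-\sigma$, this yields $p_\Y^{-1}(\partial\Y)\subset A^0_\BD$, so $\partial\Y\subset p_\Y(A^0_\BD)$ and, being connected and disjoint from $P$, $\partial\Y\subset A_\BD$, while $[0,T]\setminus A^0_\BD$ lies in the interior of that big excursion. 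For $s\in A^0_\BD$ one checks that the whole path $p_\Y\circ\Xi_s$ stays in $p_\Y(A^0_\BD)$ and does not meet $P$, so $p_\Y(s)\in A_\BD\cup P$; for $s\notin A^0_\BD$, one has $\pi(s)=p_\Y(\sup\{t\le s:F_t=-U_0\})\in\partial\Y\subset A_\BD$, so the path $p_\Y\circ\Xi_s$ from $p_\Y(s)$ to that point must cross $P$, and since it avoids $P\setminus\{x_0\}$ it crosses only at the pinch-point $x_0$, which forces $p_\Y(s)$ into the bounded component, i.e.\ $p_\Y(s)\notin A_\BD$. Combining these, $A_\BD\subset p_\Y(A^0_\BD)\subset A_\BD\cup P$, and since $A_\BD$ is open while no point of $P$ has a neighbourhood inside $A_\BD\cup P$, taking interiors yields $A_\BD=\mathrm{int}\big(p_\Y(A^0_\BD)\big)$, as claimed. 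The main obstacle, exactly as in~\cite{CuLG} and in Lemma~\ref{lem:proof-prop-refpr-5}, is this last bookkeeping: verifying that the geodesic segments bounding $P$ do not re-enter $A^0_\BD$, so that the bounded component really is $p_\Y$ of the big-excursion interior, and that the path $p_\Y\circ\Xi_s$ behaves as asserted near $P$ and near $\partial\Y$; both reduce to local statements about the identification $\{D=0\}$ handled through Lemmas~\ref{lem:proof-prop-refpr-1} and~\ref{lem:proof-prop-refpr-3}.
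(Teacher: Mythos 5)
Your overall strategy matches the paper's: establish that $P$ is a simple loop through $x_0$ avoiding $\partial\Y$, deduce the topology of $A_\BD$ from the fact that $\Y\cong\overline{\mathbb{D}}$, and then re-run the $\Xi_s$-path bookkeeping from the proof of Lemma~\ref{lem:proof-prop-refpr-5} to identify $A_\BD=\mathrm{int}(p_\Y(A^0_\BD))$. The first two paragraphs are fine (in particular your computation that $x_0\notin\partial\Y$ via $F_{\eta_{\textup l}(a_0)}=a_0-U_0>-U_0=\underline{F}_{\eta_{\textup l}(a_0)}$ is correct), and the claim $p_\Y(A^0_\BD)\setminus P\subset A_\BD$ via $p_\Y\circ\Xi_s$ is essentially the paper's argument (you don't actually need the extra claim that the path ``stays in $p_\Y(A^0_\BD)$''; what is used is only that its interior consists of right-increase points of $F$ and hence avoids $P\setminus\{x_0\}$ by Lemma~\ref{lem:proof-prop-refpr-3}).

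The gap is in the treatment of $s\notin A^0_\BD$. You argue that $p_\Y\circ\Xi_s$ ends at $\pi(s)\in\partial\Y\subset A_\BD$, ``must cross $P$,'' and since it meets $P$ only at $x_0$, this ``forces $p_\Y(s)$ into the bounded component.'' That inference is not valid: a path that merely \emph{touches} a Jordan curve at a single point need not pass from one complementary region to the other, so knowing that $p_\Y\circ\Xi_s$ passes through $x_0$ and lands in $A_\BD$ says nothing about which side $p_\Y(s)$ is on. Deciding whether the path genuinely crosses $P$ at $x_0$ requires a local analysis of how the branches of the two geodesics and the $\T_F$-ancestral line are arranged around the pinch point, which is exactly the kind of statement one is trying to avoid proving directly. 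This is the place where the IBD situation genuinely differs from Lemma~\ref{lem:proof-prop-refpr-5}: there $\partial\Y$ has two arcs $S,S'$ on opposite sides of $P$, so one may link points of $p_\Y(O^0_\BD)$ to $S$ and points of the complement to $S'$; here $\partial\Y$ is entirely on the $A_\BD$ side, and linking to $\partial\Y$ only works for one of the two regions.

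The paper's fix is to link every $x=p_\Y(s)$ with $s\notin A^0_\BD$ to $p_\Y(s_\ast)$ (not to $\partial\Y$), by concatenating the initial segments of $p_\Y\circ\Xi_s$ and $p_\Y\circ\Xi_{s_\ast}$ only down to their common $\T_F$-branch point, which lies at level $\min_{[s\wedge s_\ast,\,s\vee s_\ast]}F>a_0-U_0$; this concatenated path therefore never reaches $x_0$ and avoids $P$ entirely (except possibly at its origin $p_\Y(s)$). Hence all such points lie in one and the same component of $\Y\setminus P$; since they cannot all lie in $A_\BD$ (otherwise $\Y\setminus P$ would be connected), they lie in the other component, and the identification $A_\BD=\mathrm{int}(p_\Y(A^0_\BD))$ follows. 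If you replace the ``crossing'' argument with this connect-to-$s_\ast$ argument, the proof is complete.
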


\begin{proof}
  The proof is very similar to that of Lemma
  \ref{lem:proof-prop-refpr-5}. The fact that $A_\BD$ is a.s.\ homeomorphic
  to $\overline{\mathbb{D}}\setminus\{0\}$ is a direct consequence of
  the fact that $\Y$ is homeomorphic to $\overline{\mathbb{D}}$ and
  that $P$ is a simple loop not intersecting $\partial \Y$. 
The only thing that remains to be
  proved given our discussion so far is that $A_\BD$ is indeed the
  interior of $p_\Y(A^0_\BD)$.  However, using the exact same
  definition of the paths $\Xi_s$, it is simple to see that a point in
  $p_\Y(A^0_\BD)$ is linked to $\partial \Y$, and hence to $p_\Y(0)$,
  by a simple path that intersects $P$, if at all, only at its
  starting point. It remains to verify that for every $x\in
  \Y\setminus p_\Y(A^0_\BD)$, we can find a simple path from $x$ to
  $p_\Y(s_*)$ that possibly intersects $P$ only at its origin. Writing
  $x=p_\Y(s)$, we leave it to the reader that such a path can be
  obtained by concatenating segments of the paths $p_\Y\circ
  \Xi_s$ and $p_\Y\circ \Xi_{s_*}$. The conclusion follows.
\end{proof}

Now for a fixed $r>0$ and $\eps>0$, we choose $a_0>0$ large enough so that 
$$\P\left(\min_{[0,a_0]}\Zi_{\eta^\textup{I}_{\textup l}(\cdot)}<-2r\right)\geq
1-\eps/4,$$
and then let $r_0>r$ be large enough so that 
$$\P\left(\omega(\Wi,[-\eta_r^{\textup{I}}(a_0),\eta_l^{\textup{I}}(a_0)])\leq
  r_0/2\right)\geq 1-\eps/4\, .$$ We use this value of $r_0$ to apply the
coupling of $\Y=\BD_{T,\sigma}$ and $\IBD_\sigma$ of Proposition
\ref{prop:isometry-BD-IBD}, guaranteeing that the balls of radius $r_0$ in
these pointed spaces are isometric with probability at least
$1-\eps/2$. From there, we conclude exactly as in the end of the proof of
Theorem \ref{thm:coupling-BD-BHP}, replacing $O_\BD$ and $O_\BHP=I(O_\BD)$
by $A_\BD$ and $A_\BHP=I(A_\BD)$, where $I$ is defined as before Corollary
\ref{cor:DBD-DH} and defines an isometry between $B_{r_0}(\Y)$
and $B_{r_0}(\IBD_{\sigma})$ on the coupling event $\mathcal{F}$ given
by~\eqref{eq:couplingIBD-BD-defF} (note that on this event, one has in
particular $\eta_{\textup l}(x)=\eta^{\textup{I}}_{\textup l}(x)$ and
$\eta_{\textup r}(x)=\eta_{\textup r}^{\textup{I}}(x)$ for every $x\leq
A^4$, and without loss of generality, we can choose $A$ so that $A^4>a_0$.)
This completes the proof of Theorem~\ref{thm:coupling-BD-IBD}.

\begin{figure}[htbp!]
  \centering
  \includegraphics[width=0.5\textwidth]{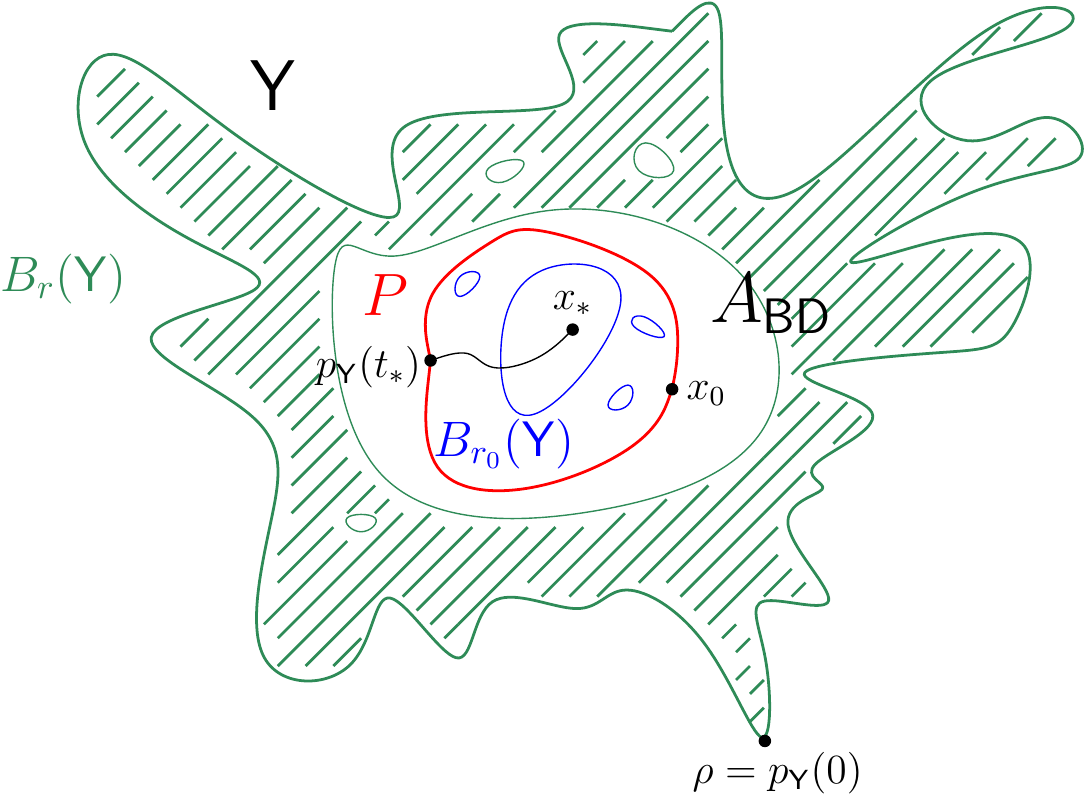}
  \caption{Illustration of the proof of
    Theorem~\ref{thm:coupling-BD-IBD}. Note that in this figure, we look at
    the disk $\Y=\BD_{T,\sigma}$ from above. The ball
    $B_{r}(\Y)$ contains the full boundary of $\Y$ and is
    included in the larger ball $B_{r_0}(\Y)$, whose boundary
    in $\Y$ is indicated by the loops in blue. The ball
    $B_{r_0}(\Y)$ encompasses the open set $A_\BD$, which is
    homeomorphic to the pointed disk
    $\overline{\mathbb{D}}\setminus\{0\}$. The set $A_\BD$ is bordered by
    the boundary of $\Y$ and the simple loop $P$ (in red) in $\Y$. The loop
    $P$ is formed by two segments of geodesics between $x_0$ and
    $p_{\Y}(t_\ast)$.}
  \label{fig:topo-IBD}
\end{figure}

\subsection{Infinite-volume Brownian disk (Theorem~\ref{thm:IBD})}
\label{sec:proof-thmIBD}
For proving Theorem~\ref{thm:IBD}, we will combine the 
convergence towards the Brownian disk $\BD_{T,\sigma}$ proved
in~\cite[Theorem 1]{BeMi} (see Display~\eqref{eq:BeMi}) with the couplings
Theorem~\ref{thm:coupling-BD-IBD} and
Proposition~\ref{prop:coupling-Qn-largevol}. We work in the usual setting
specified in Section~\ref{sec:usualsetting}.
 \begin{proof}[Proof of Theorem~\ref{thm:IBD}]
   Assume $1\ll \sigma_n\ll\sqrt{n}$ and $a_n\sim
   (4/9)^{1/4}\sqrt{\sigma_n/\sigma}$ for some $\sigma\in(0,\infty)$. We
   have to show that for each $r\geq 0$,
$$
B_r\left(a_n^{-1}\cdot Q_n^{\sigma_n}\right)\xrightarrow[n\to
\infty]{(d)}B_{r}(\IBD_\sigma)$$ in distribution in $\mathbb{K}$. We fix
$\eps>0$ and $r\geq 0$. By Theorem~\ref{thm:coupling-BD-IBD}, we find $T_0$
such that for all $T\geq T_0$, we can construct on the same probability
space copies of $\BD_{T,\sigma}$ and $\IBD_\sigma$ such that with
probability at least $1-\eps$, we have an isometry of balls
\begin{equation}
\label{eq:IBD-coupling1}
B_r(\BD_{T,\sigma})=B_r(\IBD_\sigma).
\end{equation}
By Proposition~\ref{prop:coupling-Qn-largevol}, we find $R_0\geq
T_0/(2\sigma^2)$ such that for $R\geq R_0$ and $n$ sufficiently large, we can
construct on the same probability space copies of
$Q_n^{\sigma_n}$ and $Q_{R\sigma_n^2}^{\sigma_n}$ such
that with probability at least $1-\eps$, there is the isometry
\begin{equation}
\label{eq:IBD-coupling2}
B_{ra_n}(Q_n^{\sigma_n})=B_{ra_n}\left(Q_{R\sigma_n^2}^{\sigma_n}\right).
\end{equation}
Now let $F:\mathbb{K}\rightarrow\R$ be a bounded and continuous function
and $R\geq R_0$. We assume that $Q_{n}^{\sigma_n}$ and
$Q_{R\sigma_n^2}^{\sigma_n}$ are constructed on the same probability space
such that~\eqref{eq:IBD-coupling2} holds, and similarly $\BD_{2R\sigma^2,\sigma}$ and
$\IBD_\sigma$ so that~\eqref{eq:IBD-coupling1} is satisfied. We write
\begin{align*}
  \lefteqn{\left|\E\left[F\left(
          B_r\left(a_n^{-1}\cdot Q_n^{\sigma_n}\right)\right)\right]-\E\left[F\left(B_r\left(\IBD_\sigma\right)\right)\right]\right|}\\
  &\quad\quad\leq \left|\E\left[F\left( a_n^{-1}\cdot
        B_{ra_n}\left(Q_n^{\sigma_n}\right)\right)-F\left(
        a_n^{-1}\cdot B_{ra_n}\left(Q_{R\sigma_n^2}^{\sigma_n}\right)\right)\right]\right|\\
  &\quad\quad\quad + \left|\E\left[F\left(
        a_n^{-1}\cdot B_{ra_n}\left(Q_{R\sigma_n^2}^{\sigma_n}\right)\right)\right]-\E\left[F\left(B_r\left(\BD_{2\sigma^2R,\sigma}\right)\right)\right]\right|\\
  &\quad\quad\quad + \left|
    \E\left[F\left(B_r(\BD_{2\sigma^2R,\sigma})\right)-F\left(B_r\left(\IBD_\sigma\right)\right)\right]\right|.
\end{align*}
Using~\eqref{eq:IBD-coupling2} and~\eqref{eq:IBD-coupling1} (note that
$2\sigma^2R\geq T_0$), the first and
third summand on the right hand side are bounded from above by
$2\eps\sup F$. The scaling property $\lambda\cdot \BD_{1,\sigma}=_d
\BD_{\lambda^4 ,\lambda^2\sigma}$ for $\lambda>0$ combined with the
convergence~\eqref{eq:BeMi} implies that the second summand converges to
zero as $n\rightarrow\infty$. This finishes the proof of
Theorem~\ref{thm:IBD}.
\end{proof}

\subsection{Brownian disk limits
  (Corollaries~\ref{cor:BD1},~\ref{cor:BD4},~\ref{cor:BD2}
  and~\ref{cor:BD3}).}
\label{sec:proofs-BDlimits}
\begin{proof}[Proof of Corollaries~\ref{cor:BD1},~\ref{cor:BD4},~\ref{cor:BD2} 
  and~\ref{cor:BD3}.]
  We have to show that for each $r\geq 0$, when $T$ tends to infinity,
  $B_r(\BD_{T,\sigma(T)})$ converges in law to the ball of radius $r$
  around the root in the limit space $\mathcal{X}$ that appears in the
  corresponding corollary. As usual, we consider only the case $r=1$.

  Let $F:\mathbb{K}\rightarrow\mathbb{R}$ be bounded and continuous. For
  $T\in\N$ and $n\in\N$, we set
  $$
  m_n(T)=Tn,\quad
  \sigma_n(T)=\lfloor\sigma(T)\sqrt{2n}\rfloor,\quad a_n=(8/9)^{1/4}n^{1/4}.
  $$
We write 
\begin{align*}
  \lefteqn{\left|\E\left[F\left(
          B_1\left(\BD_{T,\sigma(T)}\right)\right)\right] -
      \E\left[F\left(
          B_1\left(\mathcal{X}\right)\right)\right]\right|}\\
  &\quad\quad \leq \left|\E\left[F\left(
        B_1\left(\BD_{T,\sigma(T)}\right)\right)\right]
    -\E\left[F\left(a_n^{-1}\cdot
        B_{a_n}\left(Q_{m_n(T)}^{\sigma_n(T)}\right)\right)\right)\right|\\
  &\quad\quad\quad +\left|\E\left[F\left(a_n^{-1}\cdot
        B_{a_n}\left(Q_{m_n(T)}^{\sigma_n(T)}\right)\right)\right]
    -\E\left[F\left(B_1\left(\mathcal{X}\right)\right)\right]\right|.
 \end{align*}
 For each fixed $T\in\N$, the convergence~\cite[Theorem
 1]{BeMi} towards the Brownian disk with volume $T$ and perimeter
 $\sigma(T)$ (see~\eqref{eq:BeMi} above) implies that the first summand on
 the right hand side is bounded by $\eps$, provided $n\geq n_0(T)$.

 Concerning the second summand, we argue by contradiction that for large
 enough $T$, there exists $n_0=n_0=(T,\eps)$ such that for any $n\geq n_0$, the
 second summand is bounded by $\eps$ as well.  Indeed, assuming this is not
 the case, we find a sequence of integers $(T_k,k\in\N)$ with
 $T_k\rightarrow\infty$ and a sequence of integers $(n_k,k\in \N)$ with
 $n_k$ depending on $T_k$ and $n_k\rightarrow\infty$, such that
\begin{equation}
\label{eq:BDlimits-eq1}
\left|\E\left[F\left(a_{n_k}^{-1}\cdot
        B_{a_{n_k}}\left(Q_{m_{n_k}(T_k)}^{\sigma_{n_k}(T_k)}\right)\right)\right]
    -\E\left[F\left(B_1\left(\mathcal{X}\right)\right)\right]\right|>\eps.
\end{equation}
We put $T_0=n_0=0$ and for $n\in\N$,
$$
\tilde{a}_n= (8/9)^{1/4}\left(n/T_k\right)^{1/4},\quad
\tilde{\sigma}_n=\lfloor \sigma(T_k)\sqrt{2n/T_k}\rfloor,\quad \textup{ if
} T_{k}n_{k-1}<
n\leq T_k n_k.
$$
For concreteness, we now consider the framework of Corollary~\ref{cor:BD1},
where $\mathcal{X}=\BP$ and $\sigma(T)\rightarrow 0$ as $T$ tends to
infinity. Then $\tilde{\sigma}_n\ll\sqrt{n}$, and
$\sqrt{\tilde{\sigma}_n}\ll \tilde{a}_n\ll n^{1/4}$, so that we can apply
the convergence towards $\BP$ proved in Theorem~\ref{thm:BP} (with
$\sigma_n$ and $a_n$ there replaced by $\tilde{\sigma}_n$ and
$\tilde{a}_n$). However, observing the quadrangulations at sizes
$m_k=T_kn_k$,~\eqref{eq:BDlimits-eq1} contradicts the convergence towards
$\BP$.

In the setting of Corollary~\ref{cor:BD4}, we use Theorem~\ref{thm:IBD}
instead of Theorem~\ref{thm:BP} and the fact that
$\sigma(T)\rightarrow\varsigma\in(0,\infty)$ as $T\rightarrow\infty$. An
identical argument allows us to finish the proof in this
case, with $\mathcal{X}$ given by $\IBD_\varsigma$. In the
framework of Corollary~\ref{cor:BD3} where $\sigma(T)/T\rightarrow\infty$
as $T$ tends to infinity, we apply Theorem~\ref{thm:ICRT} instead.

Let us finally look at Corollary~\ref{cor:BD2}. There,
$\sigma(T)\rightarrow\infty$ and
$\sigma(T)/T\rightarrow\theta\in[0,\infty)$. If $\theta=0$, then, along
sequences $(T_m,m\in\N)$ tending to infinity for which
$\sigma(T_m)/\sqrt{T_m}\rightarrow 0$ as $m\rightarrow\infty$, we follow
the same argumentation by contradiction and use Theorem~\ref{thm:BHP1},
whereas if $\liminf_{m\rightarrow\infty}\sigma(T_m)/\sqrt{T_m}>0$, the
corollary is a direct consequence of Theorem~\ref{thm:coupling-BD-BHP}, and
so is it in the case $\theta>0$.
\end{proof}

\subsection{Infinite continuum random tree (Theorem~\ref{thm:ICRT})}
We use the second construction of $\ICRT=(\cT_Y,d_Y,[0])$ from
Section~\ref{sec:def} in terms of a standard two-sided Brownian motion
$(Y_t,t\in\mathbb{R})$ with $Y_0=0$. Recall that Theorem~\ref{thm:ICRT}
treats the regime $\sigma_n \gg \sqrt{n}$, and we explicitly allow
$\sigma_n$ to grow faster than $n$.

\begin{mdframed}{\bf Idea of the proof.} Let $(\f_n,\la_n)$ be a random
  uniform element of $\Fo_{\sigma_n}^n$. We show that under the rescaling
  by a factor $\max\{1,\sqrt{n/\sigma_n}\} \ll a_n \ll \sqrt{\sigma_n}$,
  the labels of the first and last $ca_n^2$ trees of $\f_n$ converge to
  zero. For the rescaled submap $a_n^{-1}B_{ra_n}(Q_n^{\sigma_n})$, this
  means that for large $n$, the boundary dominates and folds the map into a
  tree-shaped object, which we identify with the $\ICRT$ in the limit
  $n\rightarrow\infty$.
\end{mdframed}

\begin{proof}[Proof of Theorem~\ref{thm:ICRT}]
  We work in the usual setting, cf.~\ref{sec:usualsetting}. We will
  construct a set $\mathcal{R}_n$ which shares the properties of
  Lemma~\ref{lem:localGHconv}, and for that reason, we shall consider both
  $V(Q_n^{\sigma_n})$ and the corresponding length space ${\mathbf
    Q}_n^{\sigma_n}$, cf. Section~\ref{sec:locGH}.

  Let $r\geq 0$. Still the vertex set $V(\f_n)$ is naturally
  identified with points of ${\mathbf Q}_n^{\sigma_n}$, and we may consider
  the ball $B_r^{(0)}({\mathbf Q}_n^{\sigma_n})$ in ${\mathbf
    Q}_n^{\sigma_n}$ of radius $r$ around $\f_n(0)=(0)$ (with respect to
  the shortest-path metric $d$). Since the Gromov-Hausdorff distance
  between $B_r^{(0)}(Q_n^{\sigma_n})$ and $B_r^{(0)}({\mathbf
    Q}_n^{\sigma_n})$ is bounded by one, the theorem follows from
  Lemma~\ref{lem:ball0} if we show that
$$
B_r^{(0)}\left(a_n^{-1}\cdot {\mathbf Q}_n^{\sigma_n}\right)\xrightarrow[n
\to \infty]{(d)}B_r(\cT_Y),$$ for any scaling sequence $a_n$ with
$\max\{1,\sqrt{n/\sigma_n}\} \ll a_n \ll \sqrt{\sigma_n}$.
 
For ease of reading, we restrict ourselves to the case
$r=1$. Define
$$
Y_n(t)= \left\{\begin{array}{l@{\quad\mbox{if }}l}
\br_n(\sigma_n -1 + (a_n^2/2)t)&  -(\sigma_n-2)/a_n^2\leq t<0\\
\br_n((a_n^2/2)t)& 0\leq t\leq \sigma_n/a_n^2
\end{array}\right..
$$
We extend the definition of $Y_n$ to all reals by letting
$Y_n\equiv\br_n(\sigma_n/2)$ on $\{|t|> \sigma_n/a_n^2\}$. Note that $Y_n$
is c\`adl\`ag, with only one possible jump at $t=0$ of height
$-\br_n(\sigma_n-1)$, which is stochastically bounded in probability as
$n\rightarrow \infty$.

From Lemma~\ref{lem:bridge0} we know that $
(1/\sqrt{2\sigma_n})(\br_n(\sigma_nt),0\leq t\leq 1)$ converges to a
standard Brownian bridge $\mathbbm{b}$ in $\mathcal{C}([0,1],\mathbbm{R})$
as $n\rightarrow\infty$. From this and the fact that $a_n\ll
\sqrt{\sigma_n}$, we obtain by standard reasoning (see, e.g., the proof
of~\cite[Lemma 10]{Be1})
\begin{equation}
\label{eq:convYn-Y}
\left(\frac{Y_n(t)}{a_n},t\in\mathbb{R}\right)\xrightarrow[n \to
\infty]{(d)}\left(Y_t,t\in\mathbb{R}\right),
\end{equation}
where $Y=(Y_t,t\in\mathbb{R})$ is a two-sided Brownian motion with $Y_0=0$,
and the convergence holds in the space of c\`adl\`ag functions on
$\mathbb{R}$ equipped with the compact-open topology.

By the Skorokhod representation theorem, we may assume that
\eqref{eq:convYn-Y} holds almost surely uniformly over compacts. Let
$\eps>0$. By standard properties of Brownian motion, we find
$\alpha>0$ and $n_0\in \N$ such that
\begin{equation}
\label{eq:YYn-prop}
\max\left\{\min_{[0,\alpha]}Y_,\min_{[-\alpha,0]}Y\right\}<-1\quad\textup{and}\quad
\max\left\{\min_{[0,\alpha]}Y_n,\min_{[-\alpha,0]}Y_n\right\} < -a_n
\end{equation}
with probability at least $1-\eps$ for $n\geq n_0$. We fix such an
$\alpha$ and work on the event where~\eqref{eq:YYn-prop} holds true.
In the following, we make no difference between the
root vertices $(0),\ldots,(\sigma_n-1)$ and the integers
$0,\ldots,\sigma_n-1$. Moreover, recall that if $v$ is a vertex of a tree of $\f_n$,
$\an(v)$ denotes the root vertex of that tree. For
$v\in V(\f_n)$, the cactus bound~\eqref{eq:cactus2}
gives
\begin{equation}
\label{dgr1v}
\dgr((0),v) \geq -\max\left\{\min_{[
    0,\an(v)]}\br_n, \min_{[\an(v),\sigma_n-1]}\br_n\right\}.
\end{equation}
Every point $v$ in the length space ${\mathbf Q}_n^{\sigma_n}$ lies on a
segment $e$ of length one connecting two vertices in $V(Q_n^{\sigma_n})$.
We associate to each $v\in{\mathbf Q}_n^{\sigma_n}$ that endpoint $v'\in
V(Q_n^{\sigma_n})$ of $e$ which satisfies $d(0,v)\geq \dgr(0,v')$. We then
extend the definition of $\an$ to all points $v$ in ${\mathbf
  Q}_n^{\sigma_n}$ by letting $\an(v)=\an(v')$, where we agree that
$\an(\vd)=\infty$. Next define the subset of vertices
$$
A_n = \left\{v\in {\mathbf Q}_n^{\sigma_n}: \an(v) \in [0,\alpha a_n^2/2] \cup
[\sigma_n-1-\alpha a_n^2/2,\sigma_n-1]\right\}.
$$
On the event where~\eqref{eq:YYn-prop} holds, we have
for vertices $v\in {\mathbf Q}_n^{\sigma_n}\setminus A_n$ by~\eqref{dgr1v}
(and by~\eqref{eq:distance-vdot} for $v$ with $v'=\vd$) for $n$ sufficiently large 
$$d(0,v)\geq \dgr(0,v') >a_n.$$  In words, $A_n$ contains the ball of radius
$a_n$ around $(0)$ in $({\mathbf Q}_n^{\sigma_n},d)$.

We now consider the $\ICRT$ $(\cT_Y, d_Y,[0])$ defined in terms of $Y$ and
write $p_Y:\mathbb{R}\rightarrow \cT_Y$ for the canonical projection. 
Define a subset $\mathcal{R}_n\subset
{\mathbf Q}_n^{\sigma_n}\times\cT_Y$ by putting
\begin{equation*}
  \mathcal{R}_n=\left\{(v,p_Y(t)) :\begin{split}& v\in A_n,
      t\in[0,\alpha]\hbox{ with }\an(v) = \lfloor (a_n^2/2)t\rfloor,\hbox{ or }\\
  &v\in A_n, t\in[-\alpha,0]\hbox{ with
    }\an(v) = \lfloor\sigma_n-1-(a_n^2/2)t\rfloor\end{split}\right\}.
\end{equation*}
On the event where~\eqref{eq:YYn-prop} is true, the set $\mathcal{R}_n$
fulfills the conditions of Lemma~\ref{lem:localGHconv} (with $\rho=(0)$,
$\rho'=p_Y(0)$, $r=a_n$). Thus it remains to show that
$\lim_{n\rightarrow\infty}a_n^{-1}\dis(\mathcal{R}_n)= 0$ in
probability. Note that in the notation from above, we have $|d(v,w)-
\dgr(v',w')| \leq 2$. Recall that the label function $\La_n$ is given by
the labels $\la_n$ of the tree vertices, shifted according to the label of
the corresponding root vertex which is carried by the bridge $\br_n$. By
the distance bounds~\eqref{eq:cactus1} and~\eqref{eq:dist-upperbound}, we
obtain for $v,w\in A_n$ with $\an(v)\leq \an(w)$,
\begin{equation*}
\begin{split}
\left|d(v,w)-\left(\br_n(\an(v))+\br_n(\an(w))-2\max\left\{\min_{[\an(v),\an(w)]}
  \br_n,\min_{[0,\an(v)]\cup[\an(w),\sigma_n-1]}\br_n\right\}\right)\right|\\
\leq 3\left(\sup_{A_n} \la_n-\inf_{A_n} \la_n\right)+4.
\end{split}
\end{equation*}
Since $Y_n$ converges to $Y$,
cf.~\eqref{eq:convYn-Y}, the last display entails that 
\begin{equation}
\label{eq:thmICRT-distortion}
\limsup_{n\rightarrow\infty}\frac{1}{a_n}\dis(\mathcal{R}_n) \leq
\limsup_{n\rightarrow\infty}\frac{3\left(\sup_{A_n} \la_n-\inf_{A_n}
    \la_n\right)}{a_n},
\end{equation}
and we are reduced to show that the right hand side equals zero. For that
purpose, recall that $(C_n,L_n)$ denotes the contour pair associated to
$(\f_n,\la_n)$, where $C_n=(C_n(t),0\leq t\leq 2n+\sigma_n)$ is distributed
as a (linearly interpolated) simple random walk conditioned to first hit
$-\sigma_n$ at time $N=2n+\sigma_n$. For $t\geq 0$, put 
$$
\tilde{C}_n(t) =
\frac{1}{a_n^2}C_n\left(\left(N/\sigma_n\right)a_n^2t\wedge
  N\right),\quad\tilde{L}_n(t)=\frac{1}{a_n}L_n\left(\left(N/\sigma_n\right)a_n^2t\wedge N\right).
$$
The following lemma will complete our proof of Theorem~\ref{thm:ICRT}.
\begin{lemma}
\label{lem:ICRT}
In the regime $\sigma_n\gg \sqrt{n}$, we have for sequences $a_n$ of
positive reals that 
satisfy $\max\{1,\sqrt{n/\sigma_n}\}\ll a_n\ll\sqrt{\sigma_n}$
$$
\left((\tilde{C}_n(t),\tilde{L}_n(t)),t\geq 0\right)\xrightarrow[n\to
\infty]{(p)} \left((-t,0),t\geq 0\right)\quad\textup{in }\mathcal{C}([0,\infty),\mathbb{R})^2.
$$
\end{lemma}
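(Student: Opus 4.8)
\textbf{Proof proposal for Lemma~\ref{lem:ICRT}.}
The plan is to analyze the two components separately, using that in the regime $\sigma_n\gg\sqrt n$ the total tree size $n$ is negligible compared to the number of trees $\sigma_n$, so that after the time-rescaling by $N/\sigma_n\approx 2n/\sigma_n + 1$ the contour path barely moves before it reaches $-\sigma_n$. For the contour component, recall that $C_n$ has the law of a simple random walk started at $0$ and conditioned to hit $-\sigma_n$ for the first time at time $N=2n+\sigma_n$; equivalently, its increments encode a forest of $\sigma_n$ trees with $n$ edges in total. First I would note that for the unconditioned first-passage subordinator, $T_{-\lfloor\sigma_n s\rfloor}/\sigma_n\to s$ in probability uniformly on compacts (law of large numbers for the hitting-time process, whose increments have mean $1$ per unit of level crossed when counted in the ``level'' parametrization), and then transfer this to the conditioned walk. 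Concretely, $\tilde C_n(t) = a_n^{-2}C_n((N/\sigma_n)a_n^2 t\wedge N)$; since $C_n$ decreases by $\sigma_n$ over the full interval $[0,N]$ and $(N/\sigma_n)a_n^2 t$ is a fraction $\approx (a_n^2/\sigma_n)t$ of $N$ when $t$ is in a compact set (using $a_n^2\ll\sigma_n$), the walk has dropped by roughly $(a_n^2/\sigma_n)t\cdot\sigma_n = a_n^2 t$ at that time, i.e.\ $\tilde C_n(t)\approx -t$. To make this rigorous I would use the forest--random-walk correspondence of Section~\ref{sec:forests} together with Kemperman's formula~\eqref{eq:Kemperman} and the refined local limit theorem~\eqref{eq:localCLT2} (which is the one valid in the atypical regime $\sigma_n\gg\sqrt n$, exactly as in the proof of Lemma~\ref{lem:GW3}) to control the fluctuations of $C_n$ around its linear drift; alternatively, absolute continuity of the conditioned walk with respect to the unconditioned first-passage subordinator on a compact time window, whose Radon--Nikodym derivative is bounded and close to $1$ by the same local-limit estimates, reduces the statement to the unconditioned case where it is a routine law of large numbers.

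For the label component, the key point is that, conditionally on $\tilde C_n$, the label function $L_n$ is obtained by attaching i.i.d.\ $\{-1,0,1\}$-uniform increments along the edges of the forest, so that $\sup_{[0,s]}|L_n| $ restricted to the trees visited up to time $(N/\sigma_n)a_n^2 s$ is controlled by a martingale whose quadratic variation equals (twice) the number of edges in those trees, which is the $\tilde C_n$-fluctuation quantity analyzed above. Since that number of edges is $O(a_n^2 s)$ with high probability (again by the hitting-time estimate, the trees labelled $0,\dots,\lfloor (a_n^2/\sigma_n)\cdot s\cdot\sigma_n\rfloor$ have total size of order $a_n^2$ by the same LLN), a Doob-type maximal inequality gives $\sup_{[0,s]}|\tilde L_n| = a_n^{-1}\sup|L_n|$ of order $a_n^{-1}\sqrt{a_n^2} = 1$ in $L^2$ — which is not yet small. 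To get convergence to $0$ one must use the extra room in the hypothesis: the relevant number of edges is actually $o(a_n^2)$ because the trees indexed up to a fixed compact $t$ occupy only a fraction $\approx a_n^2/\sigma_n$ of the $n$ edges, giving total size $\approx n a_n^2/\sigma_n$, and the hypothesis $a_n\gg\sqrt{n/\sigma_n}$ is precisely $n/\sigma_n\ll a_n^2$, so $n a_n^2/\sigma_n \ll a_n^4$ and hence $a_n^{-1}\sqrt{n a_n^2/\sigma_n} = a_n^{-1}a_n\sqrt{n/\sigma_n}=\sqrt{n/\sigma_n}\ll a_n$ — wait, more cleanly: $\sup|L_n|$ over those trees is $O_P(\sqrt{na_n^2/\sigma_n})$, and $a_n^{-1}\sqrt{na_n^2/\sigma_n}=\sqrt{n/\sigma_n}/\sqrt{\,}$ ... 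I would write this as $\sup_{A_n}|\la_n| = O_P(\sqrt{K_n})$ where $K_n$ is the total edge count of the relevant trees, $K_n = O_P(n a_n^2/\sigma_n)$ by the LLN for the hitting time, and then $a_n^{-1}\sqrt{K_n} = O_P(a_n^{-1}\sqrt{n/\sigma_n}\cdot a_n) = O_P(\sqrt{n/\sigma_n}) = o_P(a_n) $ — so after dividing by $a_n$ the whole thing tends to $0$ exactly when $a_n^{-2}\cdot(n a_n^2/\sigma_n)\to 0$, i.e.\ $n/\sigma_n\cdot a_n^{-0}$... The bookkeeping is: $a_n^{-1}L_n$ has size $a_n^{-1}\sqrt{K_n}$, which is $o(1)$ iff $K_n = o(a_n^2)$ iff $n a_n^2/\sigma_n = o(a_n^2)$ iff $n/\sigma_n = o(1)$ — but that is not assumed. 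So the correct accounting must instead be: $K_n$, the number of edges in the first $c a_n^2$ trees, is of order $a_n^2 \cdot (n/\sigma_n)$ (each tree has mean size $n/\sigma_n$), hence $\sup|\la_n| = O_P(\sqrt{a_n^2 n/\sigma_n}) = O_P(a_n\sqrt{n/\sigma_n})$, and dividing by $a_n$ gives $O_P(\sqrt{n/\sigma_n})$, which is $o(1)$ precisely because in this regime $\sigma_n\gg n$ would be needed — so the genuinely correct point, which I will have to pin down, is that $\sqrt{n/\sigma_n}\ll a_n$ forces $n/\sigma_n\ll a_n^2$, and thus $\sup|\la_n|/a_n = O_P(\sqrt{n/\sigma_n}\,)\cdot$ (something that does go to $0$): indeed $\sqrt{n/\sigma_n} \ll a_n$ and separately $a_n \ll \sqrt{\sigma_n}$ together with $\sigma_n\gg\sqrt n$ pin down the decay.

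I would organize the write-up as: (i) a LLN statement for the hitting-time process of $C_n$ giving $\tilde C_n(t)\to -t$ uniformly on compacts in probability, via absolute continuity against the unconditioned first-passage subordinator and the local limit estimate~\eqref{eq:localCLT2} (reusing verbatim the computations of Lemma~\ref{lem:GW3}); (ii) a deterministic consequence that the total number $K_n$ of edges among the first and last $O(a_n^2)$ trees is $O_P(a_n^2\,n/\sigma_n)$; (iii) a conditional-Gaussian / Doob maximal-inequality bound giving $\E[\sup_{[0,s]}L_n^2\mid \f_n]\lesssim K_n$, hence $\sup_{[0,s]}|\tilde L_n| = O_P(a_n^{-1}\sqrt{a_n^2 n/\sigma_n})$, which tends to $0$ in probability because $\sqrt{n/\sigma_n}\ll a_n$ implies $n/(\sigma_n a_n^2)\to 0$; (iv) combining (i) and (iii) with tightness (Kolmogorov criterion for the label part, exactly as invoked in the proof of Lemma~\ref{lem:BP2}) to upgrade finite-dimensional convergence to convergence in $\mathcal{C}([0,\infty),\R)^2$. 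The main obstacle I anticipate is step (i): establishing that the conditioning $\{T_{-\sigma_n}=N\}$ does not distort the behaviour of $C_n$ on the initial and terminal compact windows, because the regime $\sigma_n\gg\sqrt n$ is exactly the one where the standard local CLT~\eqref{eq:localCLT} is insufficient and one must invoke Bene\v{s}' expansion~\eqref{eq:localCLT2}; this is the same technical heart as in Lemmas~\ref{lem:GW3} and~\ref{lem:RN-Deriv}, and I would cross-reference those proofs rather than redo the estimate in full. A secondary subtlety is the careful constant-chasing in (ii)--(iii) to confirm that the gap between the two hypotheses $\sqrt{n/\sigma_n}\ll a_n$ and $a_n\ll\sqrt{\sigma_n}$ is indeed what makes both $\tilde C_n$ stay near the diagonal and $\tilde L_n$ vanish; once the $O_P$ bounds are in place this is elementary.
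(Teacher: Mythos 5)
Your proposal splits into a contour part and a label part, and the issues are different for each.

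\textbf{Contour part.} You propose to transfer the drift estimate from the unconditioned first-passage subordinator to the conditioned walk via absolute continuity and the Bene\v{s} local limit theorem~\eqref{eq:localCLT2}, cross-referencing the computations of Lemmas~\ref{lem:GW3} and~\ref{lem:RN-Deriv}. This is a plausible route, but it is not what the paper does and it is needlessly heavy. The paper's own argument is more elementary: it first replaces the first-passage conditioning $\{T_{-\sigma_n}=N\}$ by the endpoint conditioning $\{S_N=-\sigma_n\}$ via Kemperman's formula (picking up a bounded factor), then performs an exponential change of measure turning $S_i+(\sigma_n/N)i$ into a mean-zero walk $\tilde S$ with $\tilde S_N$ conditioned to vanish, removes the bridge conditioning by a monotone coupling that turns one up-step into a down-step, and finally applies Doob's $L^2$ maximal inequality, giving $\P\bigl(\sup_{i\leq K_n}|\tilde S_i|\geq\eps a_n^2\bigr)\leq 4K_n/(\eps^2a_n^4)=o(1)$ precisely because $a_n^2\gg \max\{1,n/\sigma_n\}$. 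No refined LLT is required, and this delivers a uniform $\sup$-bound over $[0,K_n]$, which is exactly the quantity the label argument needs.

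\textbf{Label part.} Here there is a genuine gap, and you partially recognize it in the muddled bookkeeping. Your bound $\E[\sup_{[0,s]}L_n^2\mid\f_n]\lesssim K_n$ with $K_n\approx a_n^2\,n/\sigma_n$ the total edge count of the relevant trees gives $\sup|\tilde L_n|=O_P(\sqrt{n/\sigma_n})$, which does \emph{not} tend to zero in the allowed regime (for $\sqrt n\ll\sigma_n\ll n$ the ratio $n/\sigma_n$ is unbounded). Your follow-up claim that it vanishes ``because $\sqrt{n/\sigma_n}\ll a_n$ implies $n/(\sigma_n a_n^2)\to 0$'' is a non-sequitur: the latter controls $\sqrt{n/\sigma_n}/a_n$, not $\sqrt{n/\sigma_n}$. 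The bound by the total edge count is the wrong scale: the labels reset to zero at each tree root, so $L_n(j)$ is a random-walk sum of length equal to the \emph{height} $C_n(j)-\min_{[0,j]}C_n$ of $\f_n(j)$ in its tree, not of length equal to the number of contour steps elapsed. The correct input is that, by the contour estimate, $\sup_{j\leq K_n}(C_n(j)-\min_{[0,j]}C_n)\leq 2\sup_{j\leq K_n}|C_n(j)+(\sigma_n/N)j|=o_P(a_n^2)$; so the conditional variance of $\tilde L_n(t)$ is $o_P(1)$, and that is what makes the finite-dimensional limit equal to zero (equivalently, the reflected contour $\tilde C_n-\underline{\tilde C}_n$ tends to $0$ because the limit $-t$ is nonincreasing, so the limit snake is trivial). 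The paper packages this as: a.s.\ uniform convergence of $\tilde C_n$ gives finite-dimensional convergence of $(\tilde C_n,\tilde L_n)$ by the standard argument of \cite[Proof of Theorem~4.3]{LGMi}, and tightness of $\tilde L_n$ follows from the Kolmogorov bound $\E\bigl[|\tilde L_n(s)-\tilde L_n(t)|^4\mid\mathcal E_n'\bigr]\leq M'\,\E\bigl[|\Delta\tilde C_n(s,t)|^2\mid\mathcal E_n'\bigr]$, since conditionally on $C_n$ the label difference is a sum of at most $a_n^2\Delta\tilde C_n(s,t)$ i.i.d.\ $\{-1,0,1\}$ increments and $\Delta\tilde C_n(s,t)\to|s-t|$ boundedly. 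Your step (iii) must therefore be replaced: the label fluctuations are controlled by the contour \emph{excursion} supremum from step (i), not by the aggregate edge count, and the Kolmogorov moment bound must be phrased in terms of the contour increment $\Delta\tilde C_n(s,t)$.
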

Splitting the set $A_n$ from above into the disjoint sets $A_n=A_n^+\cup
A_n^{-}$, where 
$$
A_n^+=\{v\in {\mathbf Q}_n^{\sigma_n}: \an(v) \in [0,\alpha a_n^2/2]\},\quad
A_n^-=\{v\in {\mathbf Q}_n^{\sigma_n}: \an(v) \in [\sigma_n-1-\alpha a_n^2/2,\sigma_n-1]\},
$$
Lemma~\ref{lem:ICRT} shows that $\sup_{A_n^+}\la_n-\inf_{A_n^+}\la_n =
o(a_n)$ in probability as $n\rightarrow\infty$. By exchangeability of the trees of
$\f_n$, we obtain however also that $\sup_{A_n^-}\la_n-\inf_{A_n^-}\la_n =
o(a_n)$. Thus the right hand side of~\eqref{eq:thmICRT-distortion} is equal
to zero, and the proof of the theorem follows.
\end{proof}
It remains to prove Lemma~\ref{lem:ICRT}.
\begin{proof}[Proof of Lemma~\ref{lem:ICRT}]
  We first prove convergence of the first component. Let
  $0<\eps<1$. We have to show that for each
  $K>0$, as $n\rightarrow\infty$,
  \begin{equation}
\P\left(\sup_{t\in[0,K]}\left|\tilde{C}_n(t)+t\right|\geq
  \eps\right)\longrightarrow 0.
\end{equation}
Set $N=2n+\sigma_n$, and denote by $(S_i,i\in\N_0)$ a simple random walk
started from $S_0=0$. Write $T_{-\sigma_n}$ for its first hitting time of
$-\sigma_n$. Fix $K\geq 1$, and set
$K_n=\lceil(N/\sigma_n)a_n^2K\rceil$. Note that $K_n\ll N$.
By definition of $\tilde{C}_n$, we obtain
\begin{equation}
\label{eq:lemICRT-eq1}
  \P\left(\sup_{t\in[0,K]}\left|\tilde{C}_n(t)+t\right|\geq
    \eps\right) \leq \P\left(\sup_{0\leq i \leq K_n
    }\left|S_i+\frac{\sigma_n}{N}i\right|\geq \eps
    a_n^2\,\big |\,T_{-\sigma_n}=N\right).
  \end{equation}
With the abbreviation
$$\mathcal{E}_n=\left\{\sup_{0\leq i \leq
  K_n}\left|S_i+\frac{\sigma_n}{N}i\right|\geq \eps a_n^2\right\},
$$
we claim that as $n\rightarrow\infty$,
\begin{equation}
\label{eq:lemICRT-eq2}
\P\left(\mathcal{E}_n\,\big |\,T_{-\sigma_n}=N\right)\leq 3
\P\left(\mathcal{E}_n\,\big |\,S_N=-\sigma_n\right) + o(1).
\end{equation}
Indeed, on the one hand, we have by the Markov property at time $K_n$,
\begin{align*}
\lefteqn{\P\left(\mathcal{E}_n\cap\{S_{K_n}< \sigma_n\} \big
    |\,T_{-\sigma_n}=N\right)}\\
&=\E\left[\1_{\mathcal{E}_n\cap\{S_{K_n}< \sigma_n\}\cap\{S_i>-\sigma_n \hbox{\small{ for
    all }}i\leq K_n\}}
  \frac{\P\left(T_{-(\sigma_n+S_{K_n})}=N-K_n\,|\,S_{K_n}\right)}{\P\left(T_{-\sigma_n}=N\right)}\right].
\end{align*}
By Kemperman's formula, we obtain on the event $\{|S_{K_n}| <\sigma_n\}$
for $n$ sufficiently large
\begin{align*}
\frac{\P\left(T_{-(\sigma_n+S_{K_n})}=N-K_n\,|\,S_{K_n}\right)}{\P\left(T_{-\sigma_n}=N\right)}
&=\frac{|\sigma_n+S_{K_n}|}{N-K_n}\frac{N}{\sigma_n}\frac{\P\left(S_{N-K_n}=-(\sigma_n+S_{K_n})\,|\,S_{K_n}\right)}{\P\left(S_N=-\sigma_n\right)}\\
&\leq 3\frac{\P\left(S_{N-K_n}=-(\sigma_n+S_{K_n})\,|\,S_{K_n}\right)}{\P\left(S_N=-\sigma_n\right)}.
\end{align*}
Plugging this into the expression above, we arrive at
$$\P\left(\mathcal{E}_n\cap\{S_{K_n}<\sigma_n\} \big
    |\,T_{-\sigma_n}=N\right)\leq 3\P\left(\mathcal{E}_n\cap\{S_{K_n}< \sigma_n\} \big
    |\,S_N=-\sigma_n\right).
$$
On the other hand, arguing as above,
we obtain
\begin{align*}
  \lefteqn{\P\left(\mathcal{E}_n\cap\{S_{K_n}\geq \sigma_n\} \big
      |\,T_{-\sigma_n}=N\right)=\P\left(S_{K_n}\geq \sigma_n \big
      |\,T_{-\sigma_n}=N\right)}\\
  &\quad= \frac{N}{(N-K_n)\sigma_n}\E\left[(\sigma_n+S_{K_n})\frac{\P\left(S_{N-K_n}=-(\sigma_n+S_{K_n})\,|\,S_{K_n}\right)}{\P\left(S_N=-\sigma_n\right)}\1_{\{S_{K_n}\geq
      \sigma_n\}}\right].
\end{align*}
Clearly, on the event $\{S_{K_n}\geq \sigma_n\}$, the fraction of the two
probabilities inside the expectation is bounded by $1$. Moreover, keeping
in mind that $K_n\ll\sigma_n^2$, we have by the local central limit
theorem~\eqref{eq:localCLT}
$$
\E\left[(\sigma_n+S_{K_n})\1_{\{S_{K_n}\geq
    \sigma_n\}}\right]\leq
2\sum_{\ell=\sigma_n}^{K_n}\ell\,\P\left(S_{K_n}=\ell\right)\lesssim K_n^{1/2}.
$$
We obtain
$\P\left(\mathcal{E}_n\cap\{S_{K_n}\geq \sigma_n\} \big
      |\,T_{-\sigma_n}=N\right)=o(1),$
and~\eqref{eq:lemICRT-eq2} follows. 

In order to conclude the convergence of the first component, we now show
that the probability on the right hand side of~\eqref{eq:lemICRT-eq2} tends
to zero as $n\rightarrow\infty$.  Our proof is based on a change-of-measure
argument in spirit of~\cite[Proof of Lemma 24]{Be3}. We consider the random
walk $\tilde{S}=(\tilde{S}_i,i\in\N_0)$ started at zero with step
distribution
$$
\P\left(\tilde{S}_{i+1}-\tilde{S}_i=1+\frac{\sigma_n}{N}\right)=\frac{1-\sigma_n/N}{2},\quad\P\left(\tilde{S}_{i+1}-\tilde{S}_i=-1+\frac{\sigma_n}{N}\right)=\frac{1+\sigma_n/N}{2}.
$$
The walk $\tilde{S}$ is a martingale, and a direct computation
shows that for every measurable
function $f$ and any $\ell\in\N_0$,
$$
\E\left[f\left(\left(S_i+\frac{\sigma_n}{N}i\right)_{0\leq i\leq
      \ell}\right)\right]=\E\left[\left(1-\frac{\sigma_n^2}{N^2}\right)^{-\ell/2}\left(\frac{1-\sigma_n/N}{1+\sigma_n/N}\right)^{-\frac{1}{2}(\tilde{S}_\ell-\ell\sigma_n/N)}f\left((\tilde{S}_i)_{0\leq
      i\leq \ell}\right)\right].
$$
With the last display,
we can rewrite the probability on the right hand side of~\eqref{eq:lemICRT-eq2} as
$$
\P\left(\sup_{0\leq i \leq
    K_n}\left|S_i+\frac{\sigma_n}{N}i\right|\geq \eps
  a_n^2\,\Big |\,S_{N}=-\sigma_n\right)=\P\left(\sup_{0\leq i \leq
    K_n}\left|\tilde{S}_i\right|\geq \eps
  a_n^2\,\Big |\,\tilde{S}_{N}=0\right).
$$
We estimate
\begin{align}\label{eq:lemICRT-eq3}
\lefteqn{\P\left(\sup_{0\leq i \leq
    K_n}\left|\tilde{S}_i\right|\geq \eps
  a_n^2\,\Big |\,\tilde{S}_{N}=0\right)}\nonumber\\
&\leq \P\left(\inf_{0\leq i \leq
    K_n}\tilde{S}_i\leq -\eps
  a_n^2\,\Big |\,\tilde{S}_{N}=0\right) +\P\left(\sup_{0\leq i \leq
    K_n}\tilde{S}_i\geq \eps
  a_n^2\,\Big |\,\tilde{S}_{N}=0\right),
\end{align}
and we show next that the first summand on the right tends to zero as
$n\rightarrow\infty$.  First, note that for $k\in\Z$, $(\tilde{S}_i,0\leq
i\leq N)$ is under $\P(\cdot\,|\,\tilde{S}_N=2k)$ uniformly distributed
among all paths starting at $0$ at time $0$, ending at $2k$ at time $N$ and
making upward steps of size $ 1+\sigma_n/N$ and downward steps of size
$-1+\sigma_n/N$. Switching an upward step chosen uniformly at random into a
downward step gives a path with law $\P(\cdot\,|\,\tilde{S}_N=2(k-1))$, which
lies below the original one. Therefore, for all $k\leq 0$ such
that $\P(\tilde{S}_{N}=2k) >0$,
$$
\P\left(\inf_{0\leq i \leq K_n}\tilde{S}_i\leq -\eps
  a_n^2\,\Big |\,\tilde{S}_{N}=0\right)\leq\P\left(\inf_{0\leq i \leq
    K_n}\tilde{S}_i\leq -\eps
  a_n^2\,\Big |\,\tilde{S}_{N}=2k\right).
$$
From this inequality, we deduce that
\begin{align*}
  \lefteqn{\P\left(\inf_{0\leq i \leq K_n}\tilde{S}_i\leq -\eps
      a_n^2\,\Big |\,\tilde{S}_{N}=0\right)}\\
  &\leq {\P\left(\tilde{S}_N\leq
      0\right)}^{-1}\sum_{k=0}^\infty\P\left(\tilde{S}_N=-2k\right)\P\left(\inf_{0\leq
      i \leq K_n}\tilde{S}_i\leq -\eps a_n^2\,\Big|\,\tilde{S}_{N}=2k\right)\\
  &\leq {\P\left(\tilde{S}_N\leq 0\right)}^{-1}\P\left(\inf_{0\leq i \leq
      K_n}\tilde{S}_i\leq -\eps a_n^2\right).
\end{align*}
The central limit theorem bounds the probability
$\P(\tilde{S}_N\leq 0)$  from below by $1/3$ for $n$ large
enough. From Doob's inequality, we get 
$$
\P\left(\inf_{0\leq i \leq
    K_n}\tilde{S}_i\leq -\eps
  a_n^2\right)\leq \P\left(\sup_{0\leq i\leq K_n}|\tilde{S}_i|\geq \eps
  a_n^2\right)\leq 
\frac{1}{\eps^2a_n^4}\E\left[\tilde{S}_{K_n}^2\right] \leq
\frac{4K_n}{\eps^2a_n^4} = o(1),
$$
where we have used that $a_n^2\gg \max\{1,n/\sigma_n\}$.  The second term
of~\eqref{eq:lemICRT-eq3} involving the supremum of $\tilde{S}$ up to time
$K_n$ is treated similarly, by switching downward into upward steps.  We
conclude that the probability on the right hand side
of~\eqref{eq:lemICRT-eq1} converges to zero as $n$ tends to infinity.

It remains to prove the joint convergence of $(\tilde{C}_n,\tilde{L}_n)$
stated in the lemma. Let again $K>0$. By what we just have proved and
Skorokhod's theorem, we can assume that $(\tilde{C}_n(t),t\geq 0)$
converges almost surely to $(t, t\geq 0)$ in
$\mathcal{C}([0,K])$. Finite-dimensional convergence of
$(\tilde{C}_n,\tilde{L}_n)$ then follows from standard arguments as for
example given in~\cite[Proof of Theorem 4.3]{LGMi}. We are left with
showing tightness of the laws of $\tilde{L}_n$ on $\mathcal{C}([0,K])$. By
the theorem of Arzel\`a-Ascoli, we have to show that for every $\eps>0$,
\begin{equation}
\label{eq:lemICRT-eq4}
\lim_{\delta\downarrow 0}\limsup_{n\rightarrow\infty}\P\left(\sup_{s,t\in[0,K], |s-t|\leq
    \delta}\left|\tilde{L}_n(s)-\tilde{L}_n(t)\right|\geq \eps\right) =0.
\end{equation}
Recall that $K_n=\lceil(N/\sigma_n)a_n^2K\rceil$. Since by the first part of the lemma,
$$
\P\left(\sup_{i\leq K_n}|C_n(i)| > 2Ka_n^2\right)\longrightarrow 0$$ as $n$
tends to infinity, we may instead show tightness of the laws of
$\tilde{L}_n$ given the event $\mathcal{E}_n'=\{\sup_{i\leq K_n}|C_n(i)|
\leq 2Ka_n^2\}$.  By Kolmogorov's criterion, tightness follows if we show
that there exists a constant $M<\infty$ such that for all $n$ and all $s,t
\in [0,K]$,
\begin{equation}
\label{eq:lemICRT-tightness}
\E\left[|\tilde{L}_n(s)-\tilde{L}_n(t)|^{4}\,\Big |\,\mathcal{E}_n'\right]\leq M|s-t|^2.
\end{equation}
Since $L_n$ is Lipschitz, we can restrict ourselves to the case where $(N/\sigma_n)a_n^2s$ and
$(N/\sigma_n)a_n^2t$ are integers. Let
$$\Delta \tilde{C}_n(s,t)
=\tilde{C}_n(s)+\tilde{C}_n(t)-2\min_{[s,t]}\tilde{C}_n.$$ By definition of
the contour pair $(C_n,L_n)$, conditionally given $C_n$, the difference
$a_n(\tilde{L}_n(s)-\tilde{L}_n(t))$ is distributed as a sum of
i.i.d. variables $\eta_i$ with the uniform law on $\{-1,0,1\}$. By
construction, the sum involves at most $a_n^2\Delta\tilde{C}_n(s,t)$
summands. We thus obtain for some $M'>0$
$$
  \E\left[|\tilde{L}_n(s)-\tilde{L}_n(t)|^{4}\,\Big
    |\,\mathcal{E}_n'\right]\leq
  \frac{1}{a_n^{4}}\E\left[\left(\sum_{i=1}^{a_n^2\Delta\tilde{C}_n(s,t)}\eta_i\right)^{4}\,\Big
    |\,\mathcal{E}_n'\right]
  \leq M'\E\left[|\Delta\tilde{C}_n(s,t)|^2\,\Big |\,\mathcal{E}_n'\right].
$$
By the first part of the lemma, $|\Delta\tilde{C}_n(s,t)|$ converges to
$|s-t|$ in probability, and $|\Delta\tilde{C}_n(s,t)|$ is uniformly bounded
on $\mathcal{E}_n'$ by $8K$. Therefore, the  expectation on the right
converges to $|s-t|^2$, and~\eqref{eq:lemICRT-tightness} follows. 
This completes the proof of the lemma.
\end{proof}

\noindent {\bf Acknowledgments.}
We would like to thank Lo\"ic Richier for helpful discussions, and for
introducing EB to IPE.


\begin{thebibliography}{Heu}
\addcontentsline{toc}{section}{References}

\bibitem{Al1} {Aldous, D.:} {The continuum random tree I.}
  {\it Ann. Probab.} {\bf 19} (1991), 1--28.
\bibitem{Al2} {Aldous, D.:} {The continuum random tree III.}
  {\it Ann. Probab.} {\bf 21}(1) (1993), 248--289.
\bibitem{Be} {Bene\v{s}, G.C.:} {A Local Central Limit Theorem and Loss of
    Rotational Symmetry of Planar Simple Random Walk.} Preprint (2013),
  arXiv:1302.2971v1.
\bibitem{BeSc}{Benjamini, I., Schramm, O.: Recurrence of distributional
    limits of finite planar graphs.} {\it Electron. J. Probab.} {\bf 6}(23)
  (2001), 13 pp.
\bibitem{Be1} {Bettinelli, J.:} {Scaling Limits for Random Quadrangulations
    of Positive Genus.} {\it Electron. J. Probab.} {\bf 15}(52) (2010), 1594--1644.
\bibitem{Be2} {Bettinelli, J.:} {The topology of scaling limits of positive
    genus random quadrangulations.} {\it Ann. Probab.} {\bf 40}(5) (2012),
  1897--1944.
\bibitem{Be3} {Bettinelli, J.:} {Scaling limit of random planar
    quadrangulations with a boundary.} {\it Ann. Inst. H. Poincar\'e} {\bf
    51}(2) (2015), 432--477.
\bibitem{Be4} {Bettinelli, J.:} {Geodesics in Brownian surfaces (Brownian
    maps).} {\it Ann. Inst. H. Poincar\'e} {\bf 52}(2) (2016), 612--646.
\bibitem{BeMi} {Bettinelli, J., Miermont, G.:} {Compact Brownian surfaces
    I. Brownian disks.} Preprint (2015). arXiv:1507.08776.
\bibitem{BeChPi}{ Bertoin, J., Chaumont, L., Pitman, J.:} Path
  transformations of first passage bridges. {\it Elect. Comm. Probab.} {\bf
    8} (2003), 155--166.
\bibitem{BoBo}{Borovkov, A. A., Borovkov, K. A.: } {\it Asymptotic Analysis of
    Random Walks.} Encyclopedia of Mathematics and its Applications ({\bf
  118}). Cambridge, Cambridge University Press (2008). 
\bibitem{BoDFGu} {Bouttier, J., Di Francesco, P., Guitter, E.:} {Planar
    maps as labeled mobiles.} {\it Electron. J. Combin.} {\bf 11}(1)
  (electronic) (2004).
\bibitem{BoGu} {Bouttier, J., Guitter, E.:} {Distance statistics in
    quadrangulations with a boundary, or with a self-avoiding loop.}  {\it
    J. Phys. A. }{\bf 42}(46) (2009), 465208, 44 pp.
\bibitem{Bu} {Budzinski, T.:} {The hyperbolic Brownian plane.} Preprint
  (2016). arXiv:1604.06622.
\bibitem{BuBuIv} {Burago, D., Burago, I., Ivanov, S.:} {\it A course in
    metric geometry.} Graduate Studies in Mathematics (Book 33), AMS (2001).
\bibitem{CaCu} Caraceni, A., Curien, N.: {Geometry of the uniform infinite
    half-planar quadrangulation.}  Preprint (2015), arXiv:1508.00133.
\bibitem{ChDu} {Chassaing, P., Durhuus, B.:} {Local limit of labeled trees
    and expected volume growth in a random quadrangulation.} {\it
    Ann. Probab.} {\bf 34}(3) (2006), 879--917.
\bibitem{CoVa} {Cori, R., Vauquelin, B.:} {Planar maps are well labeled
    trees.} {\it Canad. J. Math. }{\bf 33}(5) (1981), 1023--1042.
\bibitem{CuLG} Curien, N., Le Gall, J.-F.: {The Brownian plane.}  {\it
    J. Theor. Probab.} {\bf 27}(4) (2014), 1249--1291.
\bibitem{CuLG2} Curien, N., Le Gall, J.-F.: {The hull process of the
    Brownian plane.} Preprint (2014), arXiv:1409.4026. To appear in
  {\it Probab. Theory Related Fields.}
\bibitem{CuMeMi} Curien, N., M\'enard, L., Miermont, G.: {A view from
    infinity of the uniform infinite planar quadrangulation.}
  {\it Lat. Am. J. Probab. Math. Stat. }{\bf 10}(1) (2013), 45--88.
\bibitem{CuMi} Curien, N., Miermont, G.: {Uniform infinite planar
    quadrangulations with a boundary.} {\it Random Structures Algorithms.}
  {\bf 47}(1) (2015), 30--58.
\bibitem{KaSc} Karatzas, I., Shreve, S. E.: {\it Brownian Motion and
    Stochastic Calculus.} Graduate Texts in Mathematics (Book 113),
  Springer, 2nd edition (1991).
\bibitem{Kr} Krikun, M.: {\it Local structure of random quadrangulations.}
  arXiv:math/0512304 (2005).
\bibitem{La} Lawler, G. F.: {\it Intersections of Random
    Walks.} Birkh\"auser, Boston (1991).
\bibitem{LG1} Le Gall, J.-F.: {\it Spatial branching processes, random
    snakes and partial differential equations.} Lectures in Mathematics ETH
  Z\"urich. Birkh\"auser, Basel (1999).
\bibitem{LG2} Le Gall, J.-F.: {The topological structure of scaling limits
    of large planar maps.} {\it Invent. Math.} {\bf
    169}(4) (2007), 621--670.
\bibitem{LG4} Le Gall, J.-F.: {Geodesics in large planar maps and in the
    Brownian map.} {\it Acta Math.} {\bf 205} (2010), 287--360.
\bibitem{LG3} Le Gall, J.-F.: {Uniqueness and universality of the Brownian
    map.} {\it Ann. Probab. }{\bf 41}(4) (2013), 2880--2960.
\bibitem{LGMi} Le Gall, J.-F., Miermont, G.: {\it Scaling Limits of Random
    Trees and Planar Maps.} Clay Mathematics Proceedings {\bf 15} (2012).
\bibitem{LGWe} Le Gall, J.-F., Weill, M.: {Conditioned Brownian trees.}
  {\it Ann. Inst. Henri Poincar\'e} {\bf 42} (2006), 455--489. 
\bibitem{Mi1} Miermont, G.: {Tesselations of random maps of arbitrary
    genus.} {\it Ann. Sci. \'Ec. Norm. Sup\'er. (4)} {\bf 42}(5) (2009),
    725--781.
\bibitem{Mi2} Miermont, G.: {The Brownian map is the scaling limit of
    uniform random plane quadrangulations.} {\it Acta Math. }{\bf 210}(2)
  (2013), 319--401.
\bibitem{Mi3} Miermont, G.: {Aspects of random planar maps.} {\'Ecole
    d'\'et\'e de Probabilit\'es de St. Flour.} Available at
  http://perso.ens- lyon.fr/gregory.miermont/coursSaint-Flour.pdf (2014).
\bibitem{MiSh} Miller, J., Sheffield, S.: {An axiomatic characterization of
    the Brownian map.} Preprint (2015), arXiv:1506.03806.
\bibitem{Pi} Pitman, J. W.: {\sl Combinatorial Stochastic
    Processes.} {\'Ecole d'\'et\'e de Probabilit\'es de St. Flour.} Lecture
  Notes in Mathematics {\bf 1875}, Springer (2006).
\bibitem{PiRo} Pitman, J. W., Rogers, L. C. G.: {Markov functions.} {\it
    Ann. Probab. }{\bf 9}(4) (1981), 573--582.
\bibitem{ReYo} Revuz, D., Yor, M.: {\it Continuous Martingales and Brownian
    Motion.} Grundlehren der mathematischen Wissenschaften (Book 293),
  Springer, 3rd edition (2004).
\bibitem{Ri} Richards, I.: On the classification of noncompact
  surfaces. {\it Trans. Amer. Math. Soc.} {\bf 106} (1963), 259--269.
\bibitem{Sc} Schaeffer, G.: {Conjugaison d'arbres et cartes combinatoires
    al\'eatoires.} PhD thesis (1998).
\end{thebibliography}
\end{document}